\documentclass[11pt]{article}

\usepackage{amsmath,amsfonts,amsthm,latexsym}

\textwidth=16. true cm
\textheight=24.1 true cm
\voffset=-3. true cm
\hoffset = -2. true cm

\theoremstyle{plain}
\newtheorem{lemma}{Lemma}[section]
\newtheorem{theorem}[lemma]{Theorem}
\newtheorem{theo}[lemma]{Theorem}
\newtheorem{corollary}[lemma]{Corollary}
\newtheorem{coro}[lemma]{Corollary}
\newtheorem{proposition}[lemma]{Proposition}
\newtheorem{prop}[lemma]{Proposition}
\newtheorem{claim}[lemma]{Claim}
\newtheorem*{corollary*}{Corollary}
\newtheorem*{theorem*}{Theorem}
\newtheorem*{lemma*}{Lemma}
\newtheorem*{claim*}{Claim}
\newtheorem*{proposition*}{Proposition}
\newtheorem* {theorem A}{Theorem A}
\newtheorem* {theorem A'}{Theorem A'}
\newtheorem* {theorem B}{Theorem B}
\newtheorem* {theorem C}{Theorem C}
\newtheorem* {theorem C'}{Theorem C'}

\newtheorem* {conjecture palis}{Conjecture (Palis)}
\newtheorem* {conjecture A}{Conjecture A}
\newtheorem* {corollary A}{Corollary A}
\newtheorem* {corollary A1}{Corollary A1}
\newtheorem* {corollary A2}{Corollary A2}
\newtheorem* {corollary B}{Corollary B}
\newtheorem* {corollary B2}{Corollary B2}
\newtheorem* {franks' lemma}{Franks' Lemma}
\newtheorem* {sigmund}{Theorem (Sigmund, 1970)}
\newtheorem* {theo1'}{Theorem \ref{theo1}'}
\newtheorem* {theo2'}{Theorem \ref{theo2}'}

\newtheorem* {main lemma}{Main Lemma}
\newtheorem* {main lemma'}{Main Lemma'}
\newtheorem* {connecting lemma}{Connecting Lemma}
\newtheorem* {topological lemma}{Topological Lemma}

\newtheorem* {ergodicdec}{Ergodic Decomposition Theorem}

\newtheorem* {ergodicclos}{Ergodic Closing Lemma}

\newtheorem* {semi}{Semicontinuity Lemma}

\newtheorem* {proposition A}{Proposition A}
\newtheorem* {proposition A'}{Proposition A'}
\newtheorem* {propositionbi'}{\textbf{Proposition \ref{propb.i}'}}
\newtheorem* {propositionbii'}{\textbf{Proposition \ref{propb.ii}'}}
\newtheorem* {propositionbiii'}{\textbf{Proposition \ref{propb.iii}'}}
\newtheorem* {propositionbiv'}{\textbf{Proposition \ref{propb.iv}'}}
\newtheorem* {theorem 0}{Theorem 0}
\newtheorem* {proof of theorem A}{\emph{Proof of Theorem A}}
\newtheorem* {proof of theorem B}{\emph{Proof of Theorem B}}
\newtheorem {question}{Question}

\newtheorem{definition}[lemma]{Definition}
\theoremstyle{remark}
\newtheorem{remark}[lemma]{Remark}

%%%%%%%%%%%%%%%%%%%%%%%%%%%%%%%%%%%%%%%%%%%%%%%%%%

\def\trans{\mbox{~$|$\hspace{ -.46em}$\cap$}~}

\def\cO{\mathcal{O}}

\def\RR{\mathbb{R}}

\def\ZZ{\mathbb{Z}}
\def\NN{\mathbb{N}}

\def\eps{\varepsilon}
\def\empty{\emptyset}
\def\diff{\mathrm{Diff}^1(M)}

\def\CS{\mathcal{S}}

\def\cC{\mathcal{C}}

\def\cR{\mathcal{R}}
\def\cD{\mathcal{D}}
\def\cS{\mathcal{S}}
\def\cU{\mathcal{U}}
\def\cZ{\mathcal{Z}}
\def\cV{\mathcal{V}}
\def\cW{\mathcal{W}}
\def\cG{\mathcal{G}}
\def\cO{\mathcal{O}}
\def\cA{\mathcal{A}}
\def\cM{\mathcal{M}}
\def\cP{\mathcal{P}}
\def\cK{\mathcal{K}}
\def\mfla{\mathcal{M}_f(\La)}
\def\mfga{\mathcal{M}_f(\Gamma)}
\def\mflerg{\mathcal{M}_f^{erg}(\La)}
\def\mfmerg{\mathcal{M}_f^{erg}(M)}

\def\mfhp{\mathcal{M}_f(H(p))}
\def\pfla{\mathcal{P}_f(\La)}

\def\perfla{Per_f(\La)}

\def\mfm{\mathcal{M}_f(M)}
\def\pfm{\mathcal{P}_f(M)}

\def\mm{\mathcal{M}(M)}
\def\over{\overline}

\def\La{\Lambda}
\def\la{\lambda}

\def\te{\tilde{E}}
\def\tf{\tilde{F}}
\def\cms{\cC\cM(\Sigma_p)}
\def\mx{\mathcal{M}_A(X)}
\def\loc{\text{loc}}
\def\cC{\mathcal{C}}
\def\supp{\operatorname{Supp}}
\def\diam{\operatorname{Diam}}
\def\interior{\operatorname{Int}}

%%%%%%%%%%%%%%%%%%%%%%%%%%%%%%%%%%%%%%%%%%%%%%%%%%

\title{Nonuniform hyperbolicity for $C^1$-generic diffeomorphisms}
\author{Flavio Abdenur\footnote{Partially supported by a CNPq/Brazil research grant.}, Christian Bonatti, and Sylvain Crovisier}
%\date{today}

\begin{document}

\maketitle

\begin{abstract}
We study the ergodic theory of non-conservative  $C^1$-generic diffeomorphisms. First, we show that
homoclinic classes of arbitrary diffeomorphisms exhibit ergodic
measures whose supports coincide with the homoclinic class. Second,
we show that generic (for the weak topology) ergodic
measures of $C^1$-generic diffeomorphisms are nonuniformly hyperbolic:
they exhibit no zero Lyapunov exponents. Third, we extend a theorem
by Sigmund on hyperbolic basic sets: every isolated transitive set $\La$ of any
$C^1$-generic diffeomorphism $f$ exhibits many ergodic hyperbolic measures
whose supports coincide with the whole set $\La.$

In addition, confirming a claim made by R. Ma\~n\'e in $1982$, we
show that hyperbolic measures whose Oseledets splittings are
dominated satisfy Pesin's Stable Manifold Theorem, even if the
diffeomorphism is only $C^1$.

\bigskip

\noindent
{\bf Keywords:} dominated splitting, nonuniform hyperbolicity, generic dynamics, Pesin theory.

\medskip

\noindent {\bf MSC 2000:} 37C05, 37C20, 37C25, 37C29, 37D30.
\end{abstract}

%%%%%%%%%%%%%%%%%%%%%%%%%%%%%%%%%%%%%%%%%%%%%%%%%%%%%%%%%%%%%%
\section{Introduction}
%%%%%%%%%%%%%%%%%%%%%%%%%%%%%%%%%%%%%%%%%%%%%%%%%%%%%%%%%%%%%%%%

In his address to the 1982 ICM, R. Ma\~n\'e \cite{M3} speculated on
the ergodic properties of $C^1$-generic diffeomorphisms. He divided
his discussion into two parts, the first dealing with
non-conservative (i.e. ``dissipative'') diffeomorphisms, the second
with conservative diffeomorphisms.

In the first part, drawing inspiration from the work of K. Sigmund
\cite{Sig} on generic measures supported on basic sets of Axiom A
diffeomorphisms, Ma\~n\'e first used his Ergodic Closing Lemma
\cite{M2} to show that ergodic measures of generic diffeomorphisms
are approached in the weak topology by measures associated to
periodic orbits (this is item (i) of Theorem \ref{theo3} of this
paper; we include a detailed proof, since Ma\~n\'e did not). He then
went on to prove that the Oseledets splittings of generic ergodic
measures\footnote{The space $\mfmerg$ of ergodic measures of a
diffeomorphism $f$ is a Baire space when endowed with the weak
topology, so that its residual subsets are dense; see Subsection
\ref{b.isection}.} of generic diffeomorphisms are in fact uniformly
dominated, and to claim that such conditions -- uniformly dominated
Oseledets splittings -- together with nonuniform hyperbolicity are
sufficient to guarantee the existence of smooth local stable
manifolds at $\mu$-a.e. point, as in Pesin's Stable Manifold Theorem
\cite{Pe}.

In the second part, discussing the case of conservative
diffeomorphisms, he stated a $C^1$-generic dichotomy between (some
form of) hyperbolicity and an abundance of orbits with zero Lyapunov
exponents. In the two-dimensional setting this reduced to a
dichotomy between Anosov diffeomorphisms and those having zero
exponents at almost every orbit. Ma\~n\'e never published a proof of
this dichotomy.

\medskip

For conservative diffeomorphisms much progress has been made. The
generic dichotomy between hyperbolicity and zero Lyapunov exponents
for surface diffeomorphisms, in particular, was proven by Bochi
\cite{Boc1} in 2000, later extended to higher dimensions by Bochi
and Viana \cite{BocV}, and finally settled in the original (symplectic, in arbitrary dimension) statement of Ma\~n\'e by Bochi \cite{Boc2} in 2007. Many other important results have been
obtained for $C^1$-generic conservative diffeomorphisms, see for
instance  \cite{ArBC,DW,HoT}.

By contrast, there was for a long time after Ma\~n\'e's address
little progress towards the development of the ergodic theory for
$C^1$-generic dissipative diffeomorphisms. This is in our view due
to the two following obstacles:

\begin{itemize}
\item
\textbf{Obstacle 1: The Absence of Natural Invariant Measures.}
Conservative diffeomorphisms are endowed with a natural
invariant measure, namely the volume that is preserved. In the
dissipative context, hyperbolic basic sets are endowed with some
very interesting invariant measures, such as the measure of maximal
entropy (see \cite{Bow}), or, in the case of hyperbolic attractors,
the Sinai-Ruelle-Bowen measure (see for instance \cite{Ru}). In the
case of $C^1$-generic dissipative diffeomorphisms, however, it is
difficult to guarantee the existence of measures describing most of the underlying dynamics.
For instance, Avila-Bochi \cite{AvBoc} have recently shown that
$C^1$-generic maps do not admit absolutely continuous
invariant measures.
\item
\textbf{Obstacle 2: The $C^1$-Generic Lack of $C^2$-Regularity.} For
much of differentiable ergodic theory the hypothesis of $C^1$
differentiability is insufficient; higher regularity, usually $C^2$
but at least $C^1$+H\"{o}lder, is required. This is the case for
instance of Pesin's Stable Manifold Theorem \cite{Pe} for
nonuniformly hyperbolic dynamics \footnote{Obstacle 2,
unlike Obstacle 1, is of course also a problem in the conservative
setting.}.

\end{itemize}

\medskip

%It is the first, ``dissipative,'' part of Ma\~n\'e's address that
%most concerns us.
The aim of this paper is to realize \emph{some of
Ma\~n\'e's vision of an ergodic theory for non-conservative
$C^1$-generic diffeomorphisms.} Some of our results confirm claims
made without proof by Ma\~n\'e; others extend Sigmund's work to the
nonhyperbolic $C^1$-generic setting; and still others go beyond the
scope of both of these previous works. In any case, our results
begin to tackle both of the aforementioned obstacles to a generic
ergodic theory. We hope that this work will help the development
of a rich ergodic theory for $C^1$-generic
dissipative diffeomorphisms.

\medskip

Our starting point is the generic \emph{geometric} theory for
dissipative diffeomorphisms, that is, the study from the $C^1$-generic viewpoint of non-statistical
properties: transitivity,  existence of dominated
splittings,  Newhouse phenomenon (coexistence of an infinite
number of periodic sinks or sources)\dots
There has been, especially
since the mid-$90$'s, an explosion of important generic geometric
results, thanks largely to Hayashi's Connecting Lemma \cite{H}.
It turns out, however, that many of
these tools -- especially from \cite{ABCDW}, \cite{BDP}, and
\cite{BDPR} -- are also useful for the study of generic ergodic
problems. Our results on generic ergodic theory follow largely from
the combined use of these geometric tools with techniques by Sigmund
and Ma\~n\'e.
\medskip

Some of our results hold for every diffeomorphism, some require a $C^1$-generic assumption.
We can group them into three types:

\begin{itemize}
\item[a)]
\textbf{Approximations by Periodic Measures.} A classical consequence of Ma\~n\'e ergodic
closing lemma \cite{M2} is that, for $C^1$-generic diffeomorphisms, every invariant measure is the weak limit of
a convex sum of dirac measures along periodic orbits.
We propose some variation on this statement, for instance:

\emph{If $f$ is a $C^1$-generic diffeomorphism then}
\begin{itemize}
\item \emph{any ergodic measure $\mu$  is
the weak and Hausdorff limit of periodic measures whose Lyapunov
exponents converge to those of $\mu$} ~(Theorem~\ref{theo3});
\item \emph{any (non necessarily ergodic) measure supported on
an isolated transitive set $\La$ is the weak limit of periodic
measures supported on $\La$} ~(Theorem~\ref{theo4}~part (a)).
\end{itemize}

The idea  is to show that, analogously with
what occurs from the ``geometric'' viewpoint with Pugh's General
Density Theorem \cite{Pu1}, generically hyperbolic periodic measures
are abundant (e.g., dense) among ergodic measures, and so provide a
robust skeleton for studying the space of invariant measures.

\item[b)]
\textbf{Geometric Properties of Invariant Measures.} Some of our
results deal with the geometric and topological aspects of the
invariant measures, such as the sizes of their supports, their
Lyapunov exponents and corresponding Lyapunov spaces, and the
structure of their stable and unstable sets. For instance:
\begin{itemize}
\item \emph{Let $\La$ be an isolated transitive set of a $C^1$-generic diffeomorphism $f$. Then every generic
measure with support contained in $\Lambda$ is ergodic, has no zero Lyapunov exponents (i.e. is nonuniformly hyperbolic) and its support is equal to $\Lambda$}
(Theorem~\ref{theo4} ~part ( b)).
\item \emph{Let $\mu$ be an ergodic measure without zero Lyapunov exponent,
and whose support admits a dominated splitting
corresponding to the stable/unstable spaces of $\mu$.
Then there exists stable and unstable manifolds a
$\mu$-almost every point}  (Theorem \ref{theo5}).
\end{itemize}

\item[c)]
\textbf{Ergodic Properties of Invariant Measures.} Finally, many of
our results deal with ``statistical'' properties such as ergodicity
and entropy of the invariant measures. For instance:
\begin{itemize}
\item
\emph{Any homoclinic class coincides with the support of an ergodic measure with zero entropy}
(Theorem \ref{theo1}).
\end{itemize}

\end{itemize}

Some of our results may admit extensions to or analogues in
the conservative setting, but we have not explored this direction. 
%%%%%%%%%%%%%%%%%%%%%%%%%%%%%%%%%%%%%%%%%%%%%%%%%%%%%%%%%%%%%%%%%%%
\section{Preliminaries}
%%%%%%%%%%%%%%%%%%%%%%%%%%%%%%%%%%%%%%%%%%%%%%%%%%%%%%%%%%%%%%%%%%%%

\subsection{General definitions}
Given a compact boundaryless $d$-dimensional manifold $M$, denote by
$\diff$ the space of $C^1$ diffeomorphisms of $M$
endowed with the usual $C^1$ topology.

\smallskip

Given a diffeomorphism $f \in \diff$, a point $x \in M$, and a constant
$\varepsilon > 0$, then the \emph{stable set} of $x$ is
$$W^s(x) := \{y \in M : d(f^k(x), f^k(y)) \to 0 \text{ as } k \to +\infty \}$$
and the \emph{$\varepsilon$-local stable set} of $x$ is
$$W^s_{\varepsilon}(x) := \{y \in W^s(x) : d(f^k(x), f^k(y)) \leq \varepsilon \text{ for every } k \in \NN\}.$$
The \emph{unstable set} $W^u(x)$ and the \emph{$\varepsilon$-local unstable set} $W^s_{\varepsilon}(x)$ are defined analogously.

\smallskip

Given $f \in \diff$, a compact $f$-invariant set $\La$ is
\emph{isolated} if there is some
neighborhood $U$ of $\La$ in $M$ such that $$\La = \bigcap_{k \in
\ZZ} f^k(U).$$

A compact $f$-invariant set $\La$ is \emph{transitive} if there
is some $x \in \La$ whose forward orbit
is dense in $\La$. A transitive
set $\La$ is \emph{trivial} if it consists of a periodic orbit.

We denote by $\cO(p)$ the orbit of a periodic point $p$ and by $\Pi(p)$ its period.

For $A\in GL(\RR,d)$ we denote by $m(A)=\|A^{-1}\|^{-1}$
its minimal dilatation.

%%%%%%%%%%%%%%%%%%%%%%%%%%%%%%%%%%%%%%%
\subsection{Homoclinic classes}
%%%%%%%%%%%%%%%%%%%%%%%%%%%%%%%%%%%%%%%

The Spectral Decomposition Theorem splits the nonwandering set
of any Axiom A diffeomorphism into \emph{basic sets} which are pairwise disjoint isolated
transitive sets. They are the homoclinic classes of periodic orbits. This
notion of homoclinic class can be defined in a more general setting:

\begin{definition}
Let $\cO(p)$ be a hyperbolic periodic orbit of $f \in \diff$. Then
\begin{itemize}
\item
the \emph{homoclinic class} of $\cO(p)$ is the set
$$H(\cO(p)) := \over{W^s(\cO(p)) \trans W^ u(\cO(p))};$$
\item
given an open set $V$ containing $\cO(p)$, the \emph{homoclinic
class of $\cO(p)$ relative to $V$} is the set
$$H_V(\cO(p)) := \over{\{x \in W^s(\cO(p)) \trans W^ u(\cO(p)) : \cO(x) \subset V\}}.$$
\end{itemize}
\end{definition}
Although the homoclinic class is
associated to the periodic orbit $\cO(p)$ of $p$, we write sometimes $H(p)$
instead of $H(\cO(p))$.

Relative homoclinic classes like full homoclinic
classes are compact transitive sets with dense subsets of periodic
orbits. There is another characterization of homoclinic classes:
\begin{definition}
Two hyperbolic periodic points $p$ and $q$ having the same stable dimension
are \emph{homoclinically related} if
$$W^s(\cO(p)) \trans W^u(\cO(q)) \neq \empty \text{ and } W^u(\cO(p)) \trans W^s(\cO(q))
\neq \empty$$
\end{definition}
If we define $\Sigma_p$ as the set of hyperbolic periodic points that are
homocliically related to $p$, then $\Sigma_p$ is $f$-invariant and
its closure coincides with $H(p)$.

In the relative case in an open set $V$ we denote by $\Sigma_{V,p}$ the set of hyperbolic periodic points
whose orbit is contained in $V$ and which are homoclinically related with $p$ by orbits contained in $V$.
Once more $H_V(p)$ is the closure of $\Sigma_{V,p}$.

%%%%%%%%%%%%%%%%%%%%%%%%%%%%%%%%%%%%%%%%%%%%%%%%%%%%%%%%%%%%%%%%%
\subsection{Invariant measures and nonuniform hyperbolicity}
%%%%%%%%%%%%%%%%%%%%%%%%%%%%%%%%%%%%%%%%%%%%%%%%%%%%%%%%%%%%%

The statements of many of our results involve two different types of
weak hyperbolicity: \emph{nonuniform hyperbolicity} and
\emph{dominated splittings}. We now recall the first of these two notions.

\medskip

\noindent -- The support of a measure $\mu$ is denoted by $\supp(\mu)$.
Given $\La$ a compact $f$-invariant set of some $f \in \diff$, set
$$\mfla := \{\mu: \mu \text{ is an $f$-invariant Borel
probability on $M$ such that $\supp(\mu)\subset \La$}\},$$
endowed with the weak topology.
Then, $\mfla$ is a compact metric space hence a
Baire space.

\medskip

\noindent -- We denote by $\mflerg$ the set of ergodic measures $\mu\in \mfla$.
This set is a $G_\delta$ subset of $\mfla$ (see Proposition~\ref{propb.i}), and
hence is a Baire space.
\medskip

\noindent -- Given $\gamma$ a periodic orbit of $f \in \diff$,
its associated \emph{periodic measure} $\mu_{\gamma}$
is defined by
$$\mu_{\gamma} := \frac{1}{\#\gamma} \sum_{p\in\gamma} \, \delta_{p}.$$
Given $\La$ a compact $f$-invariant set of some $f \in \diff$, set
$$\pfla := \{\mu_\gamma: \gamma \text{ is a periodic orbit in }\La \} \subset\mfm,$$
$$\perfla := \{p: p \text{ is a periodic point in }\La \}\subset M.$$
\medskip

\noindent --  Given any ergodic invariant probability
$\mu$ of a diffeomorphism of a compact manifold of dimension $d$
the \emph{Lyapunov vector of $\mu$}  denoted by  $L(\mu)\in\RR^d$
is the $d$-uple of the Lyapunov exponents of $\mu$, with
multiplicity,  endowed with an increasing order.
\medskip

\noindent An ergodic  measure $\mu \in \mfmerg$ is \emph{nonuniformly hyperbolic} if the
Lyapunov exponents of $\mu$-a.e. $x \in M$ are all non-zero. The
\emph{index} of a nonuniformly hyperbolic  measure $\mu$ is the sum of
the dimensions of Lyapunov spaces corresponding to its negative
exponents.
\medskip

\noindent A measure $\mu \in \mfm$ is \emph{uniformly hyperbolic} if
$\supp(\mu)$ is a hyperbolic set.
\medskip

\noindent -- Given a nonuniformly hyperbolic measure $\mu$ then its
\emph{hyperbolic Oseledets splitting}, defined at $\mu$-a.e. $x$, is
the $Df$-invariant splitting given by
$$\te^s(x) := \bigoplus_{\lambda_x < 0} \te(\lambda_x) \text{  and  } \te^u(x) := \bigoplus_{\lambda_x > 0} \te(\lambda_x),$$
where $\te(\lambda_x)$ is the Lyapunov space
corresponding to the Lyapunov exponent $\lambda_x$ at $x$.
\medskip

\noindent -- A point $x\in M$ is called \emph{irregular for positive
iterations} (or shortly \emph{irregular$^+$}) if there is a
continuous function $\psi\colon M\to \RR$ such that the sequence
$\frac 1n\sum_{t=0}^{n-1} \psi(f^t(x))$ is not convergent. A point
$x$ is \emph{Lyapunov irregular$^+$} if the Lyapunov exponents of
$x$ are not well-defined for positive iteration. \emph{Irregular$^-$} and \emph{Lyapunov irregular$^-$} points are defined analogously, considering negative iterates instead.

A point is \emph{regular} if it is regular$^+$ and regular$^-$ and
if furthermore the positive and negative average of any given
continuous function converge to the same limit.

%%%%%%%%%%%%%%%%%%%%%%%%%%%%%%%%%%%%%%%%%%%%%%%%%%%%%%%%%%%%%%%%%%%%%%%%%%
\subsection{Dominated splitting}
%%%%%%%%%%%%%%%%%%%%%%%%%%%%%%%%%%%%%%%%%%%%%%%%%%%%%%%%%%%%%%%%
We recall the definition and some properties of dominated splittings
(see~\cite[Appendix B]{BDV}).

\medskip

A $Df$-invariant splitting $T_\La M=E \oplus F$ of the tangent bundle over an
$f$-invariant set $\Lambda$ is \emph{dominated} if there exists $N\geq 1$
such that given any $x \in \La$, any unitary vectors $v \in E(x)$
and $w \in F(x)$, then
$${\|D_xf^N(v)\|}\leq \frac 1 2 {\|D_xf^N(w)\|}.$$
This will be denoted by $E\oplus_< F$.

More generally, a $Df$-invariant splitting $E_1 \oplus_< \ldots \oplus_<
E_t$ of the tangent bundle $T_{\La}M$ is a \emph{dominated
splitting} if given any $\ell \in \{1, \ldots, t - 1\}$ then the
splitting
$$(E_1 \oplus \ldots \oplus E_{\ell}) \oplus (E_{\ell + 1} \oplus \ldots \oplus E_t)$$
is dominated. A dominated splitting is \emph{non-trivial} if contains at least two
non-empty bundles.

\medskip

If an
invariant set $\La$ admits a dominated splitting $E_1 \oplus_<
\ldots \oplus_< E_t$, then:
\begin{itemize}

\item[a)]
the splitting $E_1(x) \oplus_< \ldots \oplus_< E_t(x)$ varies
continuously with the point $x \in \La$;

\item[b)]
the splitting $E_1 \oplus \ldots \oplus E_t$ extends to a dominated
splitting (also denoted by $E_1 \oplus_< \ldots \oplus_< E_t$) over the
closure $\over{\La}$ of $\La$;

\item[c)]
there is a neighborhood $V$ of $\over{\La}$ such that every
$f$-invariant subset $\Upsilon$ of $V$ admits a dominated splitting
$E_1' \oplus_< \ldots \oplus_< E_t'$  with $dim(E_i') = dim(E_i)$ for
each $i \in \{1, \ldots, t\}$.

\end{itemize}

There always exists a (unique) \emph{finest dominated splitting}
$F_1 \oplus_< \ldots \oplus_< F_k$ over $T_{\La}M$, characterized by
the following property: given
any dominated splitting $E' \oplus_< F'$ over $\La$ then there is
some $\ell \in \{1, \ldots, k - 1\}$ such that
$$E' = F_1 \oplus_< \ldots \oplus_< F_{\ell} \text{ and } F' = F_{\ell + 1} \oplus_< \ldots \oplus_< F_k.$$
That is, the finest dominated splitting $F_1 \oplus_< \ldots \oplus_< F_k$ is
\emph{minimal} in the sense that every dominated splitting over
$\La$ can be obtained by bunching together bundles of the finest
dominated splitting.
Equivalently, each of the bundles $F_i$ of the finest dominated
splitting is \emph{indecomposable}, in the sense that there exist no
subbundles $F_i^1$ and $F_i^2$ such that $F_i = F_i^1 \oplus F_i^2$
and
$$F_1 \oplus \ldots \oplus F_{i - 1} \oplus F_i^1 \oplus F_i^2 \oplus F_{i + 1} \oplus \ldots \oplus F_k$$
\noindent is a dominated splitting. Roughly speaking: ``there is no
domination within each $F_i$''.

The finest dominated splitting  ``separates Lyapunov
exponents''. That is, given $\mu \in \mfga$ an ergodic measure with
Oseledets splitting
$\te_1 \oplus \ldots \oplus \te_s$
and corresponding Lyapunov exponents $\la_1 < \la_2 <
\ldots < \la_s$ defined at $\mu$-a.e. $x$, then there are numbers $0
= j_0 < j_1 < j_2 < \ldots j_k = s$ such that for each $i \in \{1,
\ldots, k\}$
$$\bigoplus_{j_{i - 1} \, < m \; \leq j_i} \te_m(x) = F_i(x)$$
\noindent at $\mu$-a.e. $x$, where the $F_i$ are the bundles of the finest dominated splitting.
In other words, the bundles of the finest
dominated splitting can be written as sums of the Lyapunov spaces of
the increasing Lyapunov exponents of $\mu$. So we speak of the Lyapunov spaces  and  of the
Lyapunov exponents ``inside'' each bundle $F_i$. We denote by $L|_F(\mu)$
 the set of Lyapunov exponents of $\mu$  inside the bundle $F$; likewise,
given a Lyapunov-regular point $x \in \La$, we denote by $L|_F(x)$ the
set of Lyapunov exponents of $x$ inside $F$.

%%%%%%%%%%%%%%%%%%%%%%%%%%%%%%%%%%%%%%%%%%%
\subsection{Semicontinuity and genericity}\label{ss.semi}
%%%%%%%%%%%%%%%%%%%%%%%%%%%%%%%%%%%%%%%%%%%%
Given $Y$ a compact metric space, we denote by $\cK(Y)$ the space of
compact subsets of $Y$ endowed with the \emph{Hausdorff distance}:
given two non-empty sets $K_1, K_2 \in \cK(Y)$, set
$$d_H(K_1, K_2) := \inf \{\eps>0: B_{\eps}(K_1) \supset K_2 \text{ and } B_{\eps}(K_2) \supset K_1\},$$

\noindent where $B_{\eps}(K)$ denotes the $\eps$-ball centered on
the set $K$. (The distance from the empty set to any non-empty set
is by convention equal to $\diam(Y)$.)

Then the space $(\cK(Y), d_H)$ is itself a compact (and hence a Baire)
metric space.

\begin{definition}
Given  a topological space $X$ and a compact metric space $Y$, a map $\Phi: X \rightarrow \cK(Y)$ is
\begin{itemize}
\item \emph{lower-semicontinuous}  at $x \in X$ if for any open $V \subset Y$ with $V \cap \Phi(x)
\neq \empty$,  there is a
neighborhood $U$ of $x$ in $X$ such that $V \cap \Phi(x') \neq
\empty$  for every $x' \in U$;
\item
\emph{upper-semicontinuous} at $x \in X$ if for any open $V \subset Y$ containing $\Phi(x)$,  there is a
neighborhood $U$ of $x$ in $X$ such that $V $ contains $\Phi(x')$ for every $x' \in U$;

\item  \emph{lower-semicontinuous} (resp,
\emph{upper-semicontinuous}) if it is lower-semicontinuous (resp,
upper-semicontinuous) at every $x \in X$.
\end{itemize}
\end{definition}

Now, we can state a result
from general topology (see for instance \cite{Kur}) which is one of
the keys to most of the genericity arguments in this paper:

\begin{semi}
Given $X$ a Baire space, $Y$ a compact metric space, and
$\Phi: X \rightarrow \cK(Y)$
\noindent a lower-semicontinuous (resp, upper-semicontinuous) map,
then there is a residual subset $\cR$ of $X$ which consists of
continuity points of $\Phi$.
\end{semi}

\begin{remark}
In this paper $X$ is usually either $\diff$ (with the $C^1$
topology) or else $\mfm$ (with the weak topology), while $Y$ is
usually $M$ or else $\mm$.
\end{remark}

%\subsection{Dominated splitting and Oseledets splitting}
%A couple of definitions which combine nonuniform
%hyperbolicity and uniform domination:
%\begin{itemize}
%\item
%An ergodic nonuniformly hyperbolic measure $\mu \in \mfm$ has
%\emph{dominated hyperbolic Oseledets splitting} if its hyperbolic
%Oseledets splitting $\te^s \oplus \te^u$ is a dominated splitting,
%or equivalently if  $\te^s \oplus \te^u$ extends to a dominated
%splitting over $\supp(\mu)$.

%\item
%More generally, an ergodic measure $\mu \in \mfm$ has
%\emph{dominated Oseledets splitting} if its Oseledets splitting
%$\te_1 \oplus \ldots \oplus \te_k$ is a dominated splitting, or
%equivalently if $\te_1 \oplus \ldots \oplus \te_k$ extends to a
%dominated splitting over $\supp(\mu)$.

%\end{itemize}

\bigskip

In a Baire space, a set is \emph{residual} if it contains a countable
intersection of dense open sets. We establish a convention: the phrases ``generic
diffeomorphisms $f$ (resp., measures $\mu$) satisfy...'' and ``every
generic diffeomorphism $f$ (resp., measure $\mu$) satisfies...''
should be read as ``there exists a residual subset $\cR$ of $\diff$
(resp., of $\mfla$) such that every $f \in \cR$ (resp., every $\mu
\in \cR$) satisfies...'' 
%%%%%%%%%%%%%%%%%%%%%%%%%%%%%%%%%%%%%%%%%%%%%%%%%%%%%%%
\section{The Main Results} \label{resultssection}
%%%%%%%%%%%%%%%%%%%%%%%%%%%%%%%%%%%%%%%%%%%%%%%%%%%%%%%

\subsection{Homoclinic classes admit  ergodic measures with full support}
\begin{theorem} \label{theo1}
Let $H(p)$ be a relative homoclinic class of
a diffeomorphism $f \in \diff$. Then there is a measure $\mu \in
\mfhp$ which

\begin{itemize}
\item[i)]
is ergodic;

\item[ii)]
has ``full support'': $\supp(\mu) = H(p)$;

\item[iii)]
has zero entropy: $h_\mu(f) = 0$.

\end{itemize}
\end{theorem}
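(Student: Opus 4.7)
The plan is to construct $\mu$ directly as a weak-$*$ limit of periodic measures on $H_V(p)$, arranged in a self-similar (``rank-one'' style) tower so that ergodicity, full support, and zero entropy are forced simultaneously.

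First I set up the transition machinery. Every pair of orbits in $\Sigma_{V,p}$ is homoclinically related inside $V$, so the Inclination ($\lambda$-) Lemma produces arbitrarily precise heteroclinic jumps between them. From this I extract a quantitative transition lemma: given orbits $\gamma^1,\dots,\gamma^k$ in $\Sigma_{V,p}$, positive integers $(n_1,\dots,n_k)$, and $\eps>0$, there is a periodic orbit $\Gamma \in \Sigma_{V,p}$ whose full period $\eps$-shadows the concatenation of $n_j$ periods of $\gamma^j$ for $j=1,\dots,k$, up to short transition segments between consecutive blocks.

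Next, I fix a dense sequence $\{q_k\}_{k\ge 1}\subset \Sigma_{V,p}$, weights $\alpha_k>0$ with $\sum_k \alpha_k <+\infty$, and radii $\delta_k\to 0$. I inductively build periodic orbits $\gamma_n \in \Sigma_{V,p}$ so that $\gamma_{n+1}$ is an $\eps_{n+1}$-shadow of a concatenation of $N_n$ copies of $\gamma_n$ followed by a short visit to a $\delta_{n+1}$-neighborhood of $\cO(q_{n+1})$, where $N_n$ is taken so large that the weak-$*$ distance $d(\mu_{\gamma_n},\mu_{\gamma_{n+1}})<2^{-n}$, and so that the frequency of visits to $B(\cO(q_k),\delta_k)$ in $\gamma_m$ stays $\ge \alpha_k$ for every $m\ge k$. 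At the same time I keep the period $\Pi(\gamma_n)$ growing only polynomially, so that $\log \Pi(\gamma_{n+1}) = o(\Pi(\gamma_n))$.

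The limit $\mu := \lim \mu_{\gamma_n}$ is then an invariant measure with $\mu(B(\cO(q_k),\delta_k)) \ge \alpha_k$ for all $k$, so $\supp(\mu) \supseteq \overline{\bigcup_k \cO(q_k)} = H_V(p)$, which gives (ii). The self-similar tower structure makes $\mu$ ergodic (this is analogous to the ergodicity of rank-one transformations), giving (i). Finally, the slow growth of $\Pi(\gamma_n)$ combined with the tower structure makes the number of $(n,\eps)$-separated orbits inside a set of $\mu$-measure $\ge 1/2$ grow subexponentially, and Katok's entropy formula then yields $h_\mu(f)=0$, giving (iii). The subtlest point, and the main obstacle, is the ergodicity: a naive weak-$*$ limit of periodic measures is typically a non-ergodic convex combination, so the repetition parameter $N_n$ must be made large enough at each stage to ``drown out'' the asymmetry introduced by the short visit to $\cO(q_{n+1})$; this has to be balanced against the slow-growth condition on the periods needed to guarantee zero entropy.
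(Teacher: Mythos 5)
Your route is genuinely different from the paper's, which never constructs a single measure by hand: the paper works in the compact space $\cC\cM(\Sigma_{V,p})$, the weak closure of the convex hull of the periodic measures on orbits homoclinically related to $p$ inside $V$. The transition (barycenter) property makes periodic measures dense there, and then three abstract Baire-category lemmas finish the job: ergodicity is a $G_\delta$ property (hence generic once dense), the support map is lower semicontinuous (so a generic measure has support $\overline{\Sigma_{V,p}}=H_V(p)$), and $\mu\mapsto h(\mu,\cP_k)$ is upper semicontinuous at measures giving zero mass to $\partial\cP_k$, forcing zero entropy generically. A generic element of $\cC\cM(\Sigma_{V,p})$ then has all three properties at once, which in particular proves existence. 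Your concatenation scheme is the constructive (Sigmund/GIKN-style) alternative and can in principle be made to work, but two of its key steps are not proofs as written.

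First, ergodicity of the weak limit is precisely the point that needs an argument: weak limits of periodic measures are typically non-ergodic convex combinations, and ``the self-similar tower structure makes $\mu$ ergodic, as for rank-one transformations'' is an analogy, not a criterion. What is needed is a quantitative good-approximation lemma (in the spirit of Gorodetski--Ilyashenko--Kleptsyn--Nalsky): at each stage a proportion at least $1-\epsilon_n$ of the points of $\gamma_{n+1}$ spend a proportion at least $1-\epsilon_n$ of their period $\delta_n$-shadowing $\gamma_n$, with $\sum_n\epsilon_n<\infty$ and $\sum_n\delta_n<\infty$, from which one deduces convergence of Birkhoff averages of a dense family of continuous functions $\mu$-a.e. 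Second, the entropy step fails as stated: you cannot keep $\Pi(\gamma_n)$ growing ``only polynomially,'' since $\Pi(\gamma_{n+1})\geq N_n\,\Pi(\gamma_n)$ with $N_n\to\infty$, so the periods grow super-exponentially in $n$; the relevant condition is not slow growth of periods but subexponential growth of the number of $(n,\eps)$-Bowen balls needed to cover a set of $\mu$-measure close to $1$, which requires an actual counting of itineraries through the tower, combined with Katok's entropy formula. Finally, $\mu\bigl(B(\cO(q_k),\delta_k)\bigr)\geq\alpha_k$ alone does not give $\cO(q_k)\subset\supp(\mu)$: you must arrange that the concatenated orbits pass $\delta_k$-close to every point of $\cO(q_k)$ with definite frequency (e.g. by inserting full periods of $q_k$) and that the tail $\{q_k\}_{k\geq K}$ remains dense in $H_V(p)$ for every $K$. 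The paper's genericity argument avoids all of this bookkeeping, at the price of exhibiting no explicit measure.
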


So any homoclinic class of \emph{any} diffeomorphism exhibits
at least one ergodic measure with full support. Theorem \ref{theo1}
is in fact a corollary of Theorem~\ref{theo1}'  stated in Section~\ref{theo1section}.

\begin{remark}
\begin{itemize}
\item One intriguing consequence of Theorem \ref{theo1} is this: given $f$
a $C^1$-generic \emph{conservative} diffeomorphism, then $f$ admits
at least one ergodic measure $\mu$ whose support coincides with all
of $M$. This follows from Theorem \ref{theo1} and the
fact that for
$C^1$-generic conservative diffeomorphisms the manifold $M$
is a homoclinic class (see \cite{BC}).

\item We think furthermore that the
($f$-invariant) volume  $m$ is approached in the
weak topology by ergodic measures with full support $\mu$;
we have not checked this completely,
the missing ingredient is a conservative version of the Transition Property Lemma in
Subsection \ref{mlemmaingredients}.
\end{itemize}
\end{remark}

\subsection{Generic ergodic measure of $C^1$-generic diffeomorphisms}

Methods similar to those used in the proof of Theorem \ref{theo1}'
yield an analogous result in the wider space of ergodic measures:

\begin{theorem}\label{theo2}
Given a $C^1$-generic diffeomorphism $f$ then every generic measure $\mu$
in $\mfmerg$

\begin{itemize}
\item[i)]
has zero entropy: $h_\mu(f) = 0$;

\item[ii)]
is nonuniformly hyperbolic and its Oseledets splitting
$\te_1 \oplus \ldots \oplus \te_k$ is dominated.
\end{itemize}

\end{theorem}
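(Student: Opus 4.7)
The plan is to show each of the claimed properties defines a residual subset of the Baire space $\mfmerg$ (a $G_\delta$ subset of $\mfm$, hence itself Baire), for $f$ in a suitable residual subset of $\diff$. The backbone is the density of hyperbolic periodic measures in $\mfmerg$ for generic $f$: this follows from Ma\~n\'e's Ergodic Closing Lemma (item (i) of Theorem~\ref{theo3}) combined with Kupka--Smale. Each hyperbolic periodic measure has zero entropy, all Lyapunov exponents nonzero, and a (trivially) dominated Oseledets splitting on its finite support, so the three desired properties are simultaneously satisfied on a weakly dense subset, and the task reduces to showing each is $G_\delta$.

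For part~(i), I would write $\{\mu : h_\mu(f) = 0\} = \bigcap_n \{\mu : h_\mu(f) < 1/n\}$ and show each set in the intersection is open and dense. Density is immediate from the density of periodic measures. Openness is the subtle step, since the $C^1$ metric entropy need not be upper-semicontinuous on $\mfm$; here I would replace the classical specification argument used by Sigmund by the Transition Property Lemma of Subsection~5.2 (the same tool used for Theorem~\ref{theo1}), which produces nearby ergodic measures concentrated on very long, spread-out pseudo-periodic orbits whose metric entropy is arbitrarily small. This yields openness and hence residuality of the zero-entropy set.

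For part~(ii), the nonuniform hyperbolicity half uses that the integrated Lyapunov exponents $\mu\mapsto \lambda_i(\mu)$ are semicontinuous on $\mfm$ in the weak topology, so each set $\{\mu : |\lambda_i(\mu)| > 1/n\}$ is (one-sidedly) open, and density is again supplied by the hyperbolic periodic measures. For the dominated splitting claim I would invoke the $C^1$-generic perturbative technology of \cite{ABCDW} and \cite{BDP}: for $C^1$-generic $f$, whenever an ergodic measure $\mu$ has a spectrum gap $\lambda_i(\mu) > \lambda_{i+1}(\mu)$, the set $\supp(\mu)$ must admit a dominated splitting of index $i$. Otherwise, Franks-type perturbations would produce, arbitrarily close to $\mu$ in $\mfmerg$, periodic measures whose $(i,i{+}1)$-eigenvalue moduli coincide, contradicting the assumed gap in the limit. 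Iterating over all indices and applying the characterization from Subsection~2.4 that the finest dominated splitting separates the Lyapunov exponents, the Oseledets splitting of the generic $\mu$ is dominated.

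The main obstacle is the openness step in~(i): since $C^1$ metric entropy is not upper-semicontinuous in general, Sigmund's specification argument does not transfer directly, and the heart of the proof lies in extracting the correct quantitative form of the Transition Property Lemma to perform the required entropy reduction through nearby ergodic measures. A secondary subtlety is ensuring that the BDP/ABCDW perturbative results can be applied uniformly across all spectrum gaps and approximating periodic measures, so that the entire Oseledets decomposition of $\mu$ — and not only its stable/unstable splitting — is matched by a dominated splitting of $\supp(\mu)$.
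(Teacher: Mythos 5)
Your reduction of item (i) to showing that each set $\{\mu: h_\mu(f)<1/n\}$ is open and dense does not work, and the tool you invoke cannot repair it. Openness of these sublevel sets is precisely upper semicontinuity of $\mu\mapsto h_\mu(f)$, which fails for $C^1$ diffeomorphisms; and the Transition Property Lemma (the barycenter property for periodic points homoclinically related inside a homoclinic class) only produces periodic orbits shadowing prescribed proportions of two given periodic orbits --- it yields density statements for periodic measures, but says nothing about the entropy of an \emph{arbitrary} measure close to a given one, so it cannot give openness. The paper neither proves nor claims that the zero-entropy set is $G_\delta$: residuality is obtained from Proposition \ref{propb.iii}, i.e.\ by fixing a generating sequence of finite partitions $\cP_k$ whose boundaries are not charged by a dense family $\cD$ of measures (here the finite convex combinations of periodic measures, dense in $\mfm$ by Theorem \ref{M'sEGDT}, which comes from the Ergodic Closing Lemma, not from the Transition Property Lemma); the partition entropy $h(\cdot,\cP_k)$ is upper semicontinuous at each $\nu\in\cD$ and vanishes there, so each set $\{h(\cdot,\cP_k)<1/m\}$ contains an open dense subset, and the Kolmogorov--Sinai theorem then yields a residual set of zero-entropy measures.

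The nonuniform hyperbolicity half of item (ii) is also broken as stated. Individual exponents $\mu\mapsto\lambda_i(\mu)$ are not semicontinuous in the direction you need (only sums of the largest, resp.\ smallest, exponents are upper, resp.\ lower, semicontinuous), so $\{|\lambda_i(\mu)|>1/n\}$ is not open; worse, it need not be dense for a fixed $n$, because nothing keeps the exponents of the approximating hyperbolic periodic measures uniformly away from zero --- if your scheme worked, nonuniform hyperbolicity would be open and dense, which is much stronger than what is true. The paper instead works with the \emph{continuous} functionals $I_F(\mu)=\int\log\|\det Df|_F\|\,d\mu$ attached to each bundle $F$ of the finest dominated splitting over $\supp(\mu)$ (continuity near $\mu$ uses that generic measures are continuity points of $\mu\mapsto\supp(\mu)$, so that the splitting and $I_F$ extend to nearby measures), shows $I_F\neq 0$ on an open dense set via the Franks-lemma genericity that all partial sums of exponents of periodic orbits are nonzero, and then --- this is the step your sketch lacks entirely --- proves that the generic measure carries a \emph{single} exponent inside each bundle, equal to $I_F(\mu)/\dim F$, by combining Corollary \ref{cor.bgv} (BGV perturbations equalizing the exponents of the closing periodic orbits within each bundle), item (iii) of Theorem \ref{theo3} (convergence of the Lyapunov vectors of the closing orbits, itself a nontrivial refinement of the Ergodic Closing Lemma proved via Proposition \ref{p.Lyap}), and Proposition \ref{BocV} (generic measures are continuity points of the Lyapunov vector map on $\mfmerg$). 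This simultaneously gives the nonvanishing of all exponents and the identification of the Oseledets splitting with the finest dominated splitting; your gap-versus-domination contradiction argument is in the spirit of Ma\~n\'e's claim and of the paper's use of \cite{BGV}, but even granting it, it only addresses domination and leaves the nonvanishing of the exponents unproved.
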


In Theorem~\ref{theo2} the domination of the  Oseledets splitting is due to Ma\~n\'e  \cite{M3}.

\begin{remark}
The splitting above is trivial when $\mu$
is supported on a periodic sink or source.
\end{remark}

%%%%%%%%%%%%%%%%%%%%%%%%%%%%%%%%%%%%%%%%%%%%%%%%%%%%%%%%%%%%%%%%%%%%%
\subsection{Isolated transitive sets of $C^1$-generic diffeomorphisms}
%%%%%%%%%%%%%%%%%%%%%%%%%%%%%%%%%%%%%%%%%%%%%%%%%%%%%%%%%%%%%%%%%%%%%%%

Isolated transitive sets are natural generalizations of
hyperbolic basic sets. Bonatti-Diaz \cite{BD2} used Hayashi's
Connecting Lemma \cite{H} to show that every isolated transitive set
of a $C^1$-generic diffeomorphisms is a relative homoclinic class
(see also~\cite{Ab1}).
Though at this point it is not known whether every generic
diffeomorphism exhibits some isolated transitive set, there are several
examples of locally generic diffeomorphisms having some non-hyperbolic isolated transitive sets,
for instance nonhyperbolic robustly transitive sets and
diffeomorphisms.

Theorem~\ref{theo4} below presents a overview of $C^1$-generic properties
satisfied by measures contained in an isolated transitive set.
\begin{theorem}\label{theo4}
Let $\La$ be an isolated non-trivial transitive set of a $C^1$-generic
diffeomorphism $f \in \diff$ and let $F_1 \oplus \ldots \oplus F_k$
be the  finest dominated splitting over $\La$. Then

\begin{itemize}
\item[a)]
The set $\pfla$ of periodic measures supported in $\La$ is a dense
subset of the set $\mfla$ of invariant measures supported in $\La$.

\item[b)]
For every generic measure $\mu \in \mfla$,

\begin{itemize}
\item[b.i)]
$\mu$ is ergodic;

\item[b.ii)]
$\mu$ has full support: $\supp(\mu) = \La$;

\item[b.iii)]
$\mu$ has zero entropy: $h_\mu(f) = 0$;

\item[b.iv)]
for $\mu$-a.e. point $x$ the Oseledets splitting coincides with $F_1(x) \oplus \ldots \oplus F_k(x)$;

\item[b.v)] $\mu$ is \emph{nonuniformly} hyperbolic.
\end{itemize}

\item[c)]
There exists a dense subset $\cD$ of $\mfla$ such that every $\nu
\in \cD$,

\begin{itemize}
\item[c.i)]
is ergodic;

\item[c.ii)]
has positive entropy: $h_{\nu}(f) > 0$;

\item[c.iii)]
is \emph{uniformly} hyperbolic.

\end{itemize}

\end{itemize}

\end{theorem}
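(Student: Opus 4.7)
The proof splits along the three parts. For part (a), I would use the Bonatti--Diaz result that for $C^1$-generic $f$ every isolated transitive set $\La$ is a relative homoclinic class $H_V(p)$, so that periodic orbits are dense in $\La$. Ma\~n\'e's Ergodic Closing Lemma then shows that every ergodic $\nu \in \mflerg$ is the weak limit of periodic measures; since $\La$ is isolated, the approximating periodic orbits eventually lie in $\La$ itself. By the Ergodic Decomposition Theorem, an arbitrary $\mu \in \mfla$ is the barycenter of its ergodic components, hence approximated by convex combinations of periodic measures; the Transition Property Lemma promotes such convex combinations to genuine periodic measures, yielding density of $\pfla$ in $\mfla$.

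For part (b), the strategy is Baire category built on part (a). Statement (b.i) follows from the fact that $\mflerg$ is a $G_\delta$ subset of $\mfla$ (Proposition \ref{propb.i}), which is dense because periodic measures are ergodic and dense. For (b.ii), the map $\mu \mapsto \supp(\mu)$ is lower-semicontinuous, so the Semicontinuity Lemma produces a residual set of continuity points; at any such $\mu$, for every nonempty relatively open $U \subset \La$, density of periodic orbits in $\La$ lets us approximate $\mu$ by a periodic measure with orbit meeting $U$, forcing $\mu(U)>0$ and hence $\supp(\mu)=\La$. For (b.iii), I would combine the Transition Property Lemma with the vanishing of entropy for periodic measures to show that $\{\mu : h_\mu(f) < \eps\}$ is dense for every $\eps>0$, and then use an appropriate semicontinuity argument at periodic measures (in the same vein as Sigmund's original proof for basic sets) to take a countable intersection giving a residual set with $h_\mu(f)=0$. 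For (b.iv) and (b.v), I would invoke the perturbation machinery of \cite{BDP} and \cite{ABCDW}: inside a homoclinic class equipped with the finest dominated splitting $F_1 \oplus \ldots \oplus F_k$, one constructs periodic orbits whose mean Lyapunov exponents inside each bundle $F_i$ coalesce to a single nonzero value; transferring this property to weak-topology limits yields on a dense $G_\delta$ of measures the coincidence of the Oseledets splitting with the finest dominated splitting, together with nonuniform hyperbolicity.

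For part (c), the dense set $\cD$ is produced from hyperbolic horseshoes inside $\La$. Near any periodic orbit of $\La$, classical horseshoe constructions (available because $\La$ is a homoclinic class) give a basic set $K \subset \La$ whose measure of maximal entropy is ergodic, has positive entropy, and is uniformly hyperbolic, since its support is the hyperbolic set $K$. Because periodic measures are dense in $\mfla$ by part (a) and horseshoes can be placed arbitrarily close to each periodic orbit, the family of such measures of maximal entropy supplies the required dense subset.

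The main obstacle, as I see it, lies in (b.iv) together with (b.v): propagating precise Lyapunov-exponent information from approximating periodic orbits to a generic measure while keeping all periodic approximants inside $\La$. The finest dominated splitting by itself only groups Oseledets spaces inside the $F_i$'s; one must actively show that for generic $\mu$ all Lyapunov exponents within each $F_i$ coincide and that none of them vanish. This demands the construction of perturbed periodic orbits with finely controlled Lyapunov data, compatibly with the weak topology on measures and without leaving the homoclinic class, which is the technical heart of the argument.
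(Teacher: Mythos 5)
Your parts (a), (b.i)--(b.iii) and (c) follow essentially the same route as the paper: Bonatti--Diaz to identify $\La$ with a relative homoclinic class, the Ergodic Closing Lemma plus a genericity/continuity argument and the Ergodic Decomposition Theorem, the barycenter (transition) property to turn convex combinations into genuine periodic measures inside $\La$, the abstract $G_\delta$/semicontinuity statements for ergodicity, full support and zero entropy, and horseshoes shadowing periodic orbits for the dense set of uniformly hyperbolic positive-entropy measures. (Two small caveats there: in (a) weak closeness alone does not confine a periodic orbit to the isolating neighborhood -- you need the Hausdorff control of supports that the Ergodic Closing Lemma's shadowing provides, as in Theorem~\ref{theo3}(ii); and in (c) the horseshoe lies near the closure of a homoclinic orbit, so weak closeness of its measures to $\mu_{\cO(p)}$ comes from the orbits spending an arbitrarily large proportion of time shadowing $\cO(p)$, not from Hausdorff proximity alone.)

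The genuine gap is in (b.iv)--(b.v). You correctly identify the difficulty -- propagating Lyapunov-exponent information from periodic approximants to a weak limit -- but the phrase ``transferring this property to weak-topology limits'' is precisely the step that fails without further input, because Lyapunov exponents are not continuous (nor even well-behaved) under weak convergence of measures. The paper resolves this with three ingredients you do not supply: (1) a Bochi--Viana-type semicontinuity statement for the map $\nu\mapsto L(\nu)$ on $\mflerg$ (Proposition~\ref{BocV}), which makes \emph{generic} measures continuity points of the Lyapunov vector, the only mechanism by which exponent data of the periodic approximants reaches $\mu$; (2) the refined closing lemma with exponent control (Proposition~\ref{p.Lyap}, Theorem~\ref{theo3}(iii)) followed by the perturbation result of [BGV] (Theorem~\ref{bgv}, Corollary~\ref{cor.bgv}) -- not [BDP]/[ABCDW] -- to coalesce the exponents of the approximating periodic orbits inside each bundle $F_i$; and (3) the bundle-wise integrated Jacobian $I(\mu)=\int\log\|\det Df|_{F_i}\|\,d\mu$, which \emph{is} weakly continuous because $F_i$ is a continuous bundle over $\La$, together with a Franks-lemma genericity argument making $I$ nonvanishing on periodic measures and hence, by density and continuity, on generic measures; this is what forces the single exponent $\la_{F_i}=I(\mu)/\dim F_i$ to be nonzero. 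Without (1) and (3) your argument shows nothing about the exponents of the limit measure $\mu$, so as written (b.iv) and (b.v) are not proved.
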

\begin{remark}\begin{enumerate}
\item The conclusion of Theorem \ref{theo4} does not apply to
isolated transitive sets of arbitrary diffeomorphisms: consider for
example a normally hyperbolic irrational rotation of the circle
inside a two-dimensional manifold.
\item Recently D\'\i az and Gorodetski \cite{DG} have shown that
non-hyperbolic homoclinic classes of $C^1$-generic diffeomorphisms
always support at least one  ergodic measure which is not nonuniformly hyperbolic.
\end{enumerate}
\end{remark}

Theorem \ref{theo4}~parts (a)~and~(b) is a nonhyperbolic, $C^1$-generic version of the
following theorem by Sigmund on  hyperbolic
basic sets:

\begin{sigmund}
Given $\La$ a \emph{hyperbolic} isolated transitive set of a
diffeomorphism $f \in \diff$, then the set $\pfla$ of periodic measures in $\La$ is a dense subset of
the set $\mfla$ of invariant measures in $\La$.  Moreover every generic measure $\mu \in \mfla$ is ergodic,
$\supp(\mu) = \La$, and $h_{\mu}(f) = 0$.
\end{sigmund}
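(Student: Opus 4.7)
The proof rests on two classical properties of a hyperbolic basic set $\La$: Bowen's specification property (every finite collection of orbit segments can be $\eps$-shadowed by a single periodic orbit, with uniform control of the transition times) and the expansivity of $f|_\La$, which yields upper-semicontinuity of the metric entropy map. These are the only ``hard'' ingredients; everything else is soft Baire-category machinery together with the semicontinuity tools already set up in the paper.

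First I would establish density of $\pfla$ in $\mfla$, which is the heart of Sigmund's argument. By the ergodic decomposition theorem it suffices to approximate a finite convex combination $\sum_i \alpha_i \mu_i$ of ergodic measures $\mu_i\in\mflerg$. For a single ergodic $\mu_i$, pick a Birkhoff-generic point $x$ so that $\frac{1}{n}\sum_{k=0}^{n-1} \delta_{f^k(x)}\to \mu_i$ weakly, and use specification to shadow the segment $x,f(x),\dots,f^{n-1}(x)$ by a periodic orbit $\gamma_n\subset\La$; for large $n$ the periodic measure $\mu_{\gamma_n}$ is close to the empirical average, hence close to $\mu_i$. For a convex combination, apply specification to concatenate segments of lengths approximately $\alpha_i n$ lying close to each ergodic component $\mu_i$ into a single periodic orbit whose empirical measure approximates $\sum_i \alpha_i \mu_i$.

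Once density is secured, the three residual properties are formal. For \emph{ergodicity}: $\mflerg$ is a $G_\delta$ subset of $\mfla$ (cf.\ Proposition~\ref{propb.i}), and it contains the dense set $\pfla$ of periodic measures (every periodic measure is ergodic), hence it is residual. For \emph{full support}: fix a countable basis $\{V_n\}$ for the topology of $\La$; each set $\{\mu\in\mfla : \mu(V_n)>0\}$ is open, by lower-semicontinuity of $\mu\mapsto \mu(V_n)$ for the open set $V_n$, and it is dense, because given any $\nu\in\mfla$ and any periodic orbit $\gamma\subset\La$ meeting $V_n$ (periodic orbits are dense in $\La$), the perturbation $(1-\eps)\nu+\eps\mu_\gamma\in\mfla$ is close to $\nu$ and charges $V_n$. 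The countable intersection is residual, and at every $\mu$ in it one has $\supp(\mu)=\La$. For \emph{zero entropy}: expansivity of $f|_\La$ implies that $\mu\mapsto h_\mu(f)$ is upper-semicontinuous on $\mfla$, so each set $\{\mu : h_\mu(f)<1/n\}$ is open; it is also dense, since it contains the dense set $\pfla$ of periodic measures, which have zero entropy. Intersecting over $n$ yields the desired residual set. Intersecting the three residual sets gives the statement.

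The main obstacle is the density step, which is the true nontrivial content of Sigmund's theorem; the crucial external input is Bowen's specification theorem. I expect no other serious difficulty: the ergodicity and entropy steps are completely formal once one cites the $G_\delta$ property of $\mflerg$ and the upper-semicontinuity of entropy under expansivity; the full-support step uses only lower-semicontinuity of $\mu\mapsto\mu(V)$ for $V$ open, together with density of periodic orbits in $\La$ (a standard consequence of specification).
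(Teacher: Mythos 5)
Your proposal is correct, and it is essentially the classical Sigmund argument: density of $\pfla$ via Bowen's specification plus the ergodic decomposition, and then three soft Baire-category arguments. Note, though, that the paper itself does not prove this statement — it quotes it from Sigmund's 1970 paper and only cites Bowen's specification property in Section~\ref{mlemmaingredients} — so the natural comparison is with the paper's abstract machinery, which reproves exactly the ``moreover'' part in the generic setting: your ergodicity step is Proposition~\ref{propb.i} verbatim ($\mflerg$ is $G_\delta$ and contains the dense set of periodic measures); your full-support step (countable basis, openness of $\{\mu:\mu(V_n)>0\}$, convex perturbation by a periodic measure) is an equivalent reformulation of Proposition~\ref{propb.ii}, which instead uses lower-semicontinuity of $\mu\mapsto\supp(\mu)$ and the same convex-combination trick; your zero-entropy step genuinely differs from Proposition~\ref{propb.iii}: you invoke expansivity of $f|_\La$ to get upper-semicontinuity of $\mu\mapsto h_\mu(f)$ on all of $\mfla$, whereas the paper works with a fixed sequence of partitions whose boundaries are not charged by a dense set of measures, precisely because expansivity is unavailable for the nonhyperbolic isolated transitive sets of Theorem~\ref{theo4}. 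In the hyperbolic setting your route is perfectly legitimate (and is Sigmund's original one); the paper's route buys generality at the cost of the partition bookkeeping. Likewise, for the density step the paper's generic analogue replaces specification by Ma\~n\'e's ergodic closing lemma plus the barycenter property, while in the hyperbolic case it simply cites Bowen, as you do. One small point to make explicit in a complete write-up: strict specification requires topological mixing, so for a merely transitive basic set you should pass through the spectral decomposition of $\La$ into finitely many mixing pieces for an iterate $f^k$ (cf.\ Lemma~\ref{l.mix}) and transfer the approximation back to $f$-invariant measures; this is standard and does not affect the conclusion.
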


\medskip

\begin{remark}
Although this was not stated by Sigmund,  the statement of Theorem \ref{theo4} part (c)
applies also
to the space of measures over any non-trivial hyperbolic basic set.
\end{remark}

%%%%%%%%%%%%%%%%%%%%%%%%%%%%%%%%%%%%%%%%%%%%%%%%%%
\subsection{Approximation by periodic measures}
%%%%%%%%%%%%%%%%%%%%%%%%%%%%%%%%%%%%%%%%%%%%%%%%%%
Many of our results rely in a
fundamental way on the approximation of invariant measures  by periodic measures.
The following theorem
 is at the heart of the proofs of both
Theorem \ref{theo2} and Theorem \ref{theo4}.

\begin{theorem}\label{theo3}
Given an ergodic measure $\mu$ of a $C^1$-generic diffeomorphism $f$,
there is a sequence $\gamma_n$ of periodic orbits such that

\begin{itemize}
\item[i)]
the measures $\mu_{\gamma_n}$ converge to $\mu$ in the weak
topology;

\item[ii)]
the periodic orbits $\gamma_n$ converge to $\supp(\mu)$ in the
Hausdorff topology;

\item[iii)]
the Lyapunov vectors $L(\mu_{\gamma_n})$ converge to the Lyapunov
vector $L(\mu)$.
\end{itemize}
\end{theorem}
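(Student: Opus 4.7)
My plan is to combine a semicontinuity argument for a suitable map $\Phi$ on $\diff$ with a strengthened (metric) version of Ma\~n\'e's Ergodic Closing Lemma. The overall strategy refines the classical deduction, from the closing lemma, that for $C^1$-generic $f$ every ergodic measure is a weak limit of periodic measures: here we simultaneously track the underlying set and the Lyapunov data.

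Set $Y := \mm \times \RR^d \times \cK(M)$ (where the Lyapunov coordinates are confined to a compact box depending on a neighborhood of $f$) and define
$$\Phi \colon \diff \to \cK(Y), \qquad \Phi(f) := \overline{\bigl\{\,(\mu_\gamma,\, L(\mu_\gamma),\, \gamma) : \gamma \text{ is a hyperbolic periodic orbit of } f\,\bigr\}}.$$
Hyperbolic periodic orbits persist, and their periodic measures, Lyapunov vectors and supports depend continuously on $f$ in the $C^1$-topology by the implicit function theorem. Hence $\Phi$ is lower-semicontinuous, and the Semicontinuity Lemma produces a residual subset $\cR \subset \diff$ consisting of continuity points of $\Phi$.

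The key approximation step is to show that for \emph{every} $f \in \diff$ and every ergodic $\mu \in \mfmerg$,
$$(\mu,\, L(\mu),\, \supp(\mu)) \in \limsup_{g \to f} \Phi(g).$$
Given $\eps > 0$, fix a point $x$ that is simultaneously Lyapunov-regular, Birkhoff-generic, and in the $\mu$-full-measure subset on which Ma\~n\'e's Ergodic Closing Lemma applies. A metric version of that lemma then produces $g \in \diff$ with $d_{C^1}(f,g) < \eps$ and a hyperbolic periodic orbit $\gamma$ of $g$ whose iterates shadow a long segment of the $f$-orbit of $x$. For the shadowing segment long enough and $\eps$ small: Birkhoff's theorem yields $d_W(\mu_\gamma, \mu) < \eps$; the fact that the forward $f$-orbit of $x$ becomes arbitrarily dense in $\supp(\mu)$ yields $d_H(\gamma, \supp(\mu)) < \eps$; and the comparison of the tangent cocycle along $\gamma$ with its Oseledets-regular counterpart along the shadowed segment yields $|L(\mu_\gamma) - L(\mu)| < \eps$.

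Combining the two steps, for $f \in \cR$ one has $\Phi(f) \supset \limsup_{g \to f} \Phi(g)$, so $(\mu, L(\mu), \supp(\mu)) \in \Phi(f)$ for every ergodic $\mu$. Unpacking the closure defining $\Phi(f)$ yields a sequence $\gamma_n$ of hyperbolic periodic orbits \emph{of $f$ itself} realizing (i)--(iii). The main obstacle is statement (iii): the classical Ergodic Closing Lemma directly gives only (i), and extending control to the Lyapunov exponents, in the absence of any domination assumption, requires either a refined shadowing that controls the Birkhoff averages of the exterior-power cocycles $\log\|\wedge^k Dg\|$, or an auxiliary Franks-type perturbation along the new periodic orbit fitting the eigenvalues of its linearization to the prescribed Lyapunov exponents of $\mu$.
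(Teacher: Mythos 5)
Your genericity scaffolding is exactly the paper's: the paper also works with the space of triples $(\mu,K,L)\in\mm\times\cK(M)\times\RR^d$, associates to $f$ the closure $X_f$ of $\{(\mu_\gamma,\gamma,L(\mu_\gamma))\}$, gets lower semicontinuity (there, on the Kupka--Smale residual set), and concludes at continuity points once the perturbative approximation step is available. Items (i) and (ii) of that step are indeed the Ergodic Closing Lemma plus Birkhoff, as you say. The genuine gap is item (iii), which you assert in one line (``the comparison of the tangent cocycle along $\gamma$ with its Oseledets-regular counterpart along the shadowed segment yields $|L(\mu_\gamma)-L(\mu)|<\eps$'') and then flag as ``the main obstacle''. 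Shadowing does \emph{not} give this: even if $Dg$ along $\gamma$ is $\eps$-close to $Df$ along the shadowed segment, the Lyapunov exponents of the closed orbit are the eigenvalue moduli of the return map, and these can collapse because the angles between the images of the Oseledets subspaces may degenerate over the long closing time. Your first remedy cannot repair this: controlling Birkhoff averages of $\log\|\wedge^k Dg\|$ only bounds the sums of the top $k$ exponents of $\gamma$ \emph{from above} (by submultiplicativity), and together with the determinant this still allows the spectrum to collapse toward its mean; e.g.\ in dimension $2$ a long product of hyperbolic matrices with large norms can be an isometry-like matrix with both exponents near $\tfrac12(\lambda_1+\lambda_2)$, consistent with all such bounds. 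Your second remedy points in the right direction but is not a proof: Franks' Lemma does not let you ``fit the eigenvalues'' of the return map to a prescribed vector; it only permits small modifications of the cocycle.

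This missing step is precisely the content of the paper's Proposition~\ref{p.Lyap} and occupies Section~\ref{lyapexponents}. There, after closing the orbit of a regular point $x$ (chosen so that, in addition, the angles between the iterated Oseledets spaces decay subexponentially), Franks' Lemma is used twice: first to make the derivative of the perturbed map along the closed orbit equal to $Df$ along the original orbit, and then to compose the last step with a small isometry $P_n$ chosen so that the images $Df_n^{t_n}(x)E_{i,j}$ of all partial sums of Oseledets spaces have uniformly bounded inclination with respect to $E_{i,j}$ (Lemma~\ref{l.perturbation}). Only then does a cone-field argument (Lemma~\ref{l.Lyap2}) give two-sided estimates: invariant cones around each flag member in which $P_n\circ Df^{t_n}(x)$ expands at rate at least $\lambda_{j+1}-\nu$ and, on the complementary cones, at most $\lambda_j+\nu$, whence invariant subspaces $E^n_i$ of the correct dimensions with exponents $\nu$-close to those of $\mu$ (Lemma~\ref{l.Lyap}). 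Without this inclination control and cone estimate your proposal does not establish (iii), which is the new content of the theorem relative to Ma\~n\'e's classical statement.
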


As already said, the main novelty here is that,
at the same time, the Lyapunov exponents of the periodic measures converge to those of the measure $\mu$.
Theorem \ref{theo3} is a generic consequence of the perturbative result Proposition~\ref{p.Lyap} which  refines Ma\~n\'e's Ergodic
Closing Lemma.

Consider now the finest dominated splitting supported by the ergodic measure $\mu$. Then \cite{BGV} produces perturbations of the derivative of $Df$ along the orbits of the periodic orbits $\gamma_n$ which make all of the exponents inside a given subbundle coincide.
One deduces:

\begin{coro}\label{cor.bgv}
Given an ergodic measure $\mu$ of a $C^1$-generic diffeomorphism $f$, let  $F_1 \oplus \ldots \oplus F_k$ be the finest dominated splitting over $\supp(\mu)$. Then
there is a sequence of periodic orbits $\gamma_k$ which converges to $\mu$ in the weak topology, to $\supp(\mu)$ in the
Hausdorff topology,  and  such that for each $i \in \{1, \ldots, k\}$ the Lyapunov exponents of
$\gamma_k$ inside $F_i$ converge to the mean value $\la_{E_i}$  of the Lyapunov exponents of $\mu$ inside the $F_i$.
\end{coro}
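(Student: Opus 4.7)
\emph{Proof plan.} The strategy is to combine Theorem~\ref{theo3} with the cocycle perturbation result of Bonatti--Gourmelon--Vivier~\cite{BGV}, and then to promote the resulting perturbative statement to a statement about $f$ itself via a standard genericity upgrade.

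First, apply Theorem~\ref{theo3} to produce periodic orbits $\gamma_n$ of $f$ with $\mu_{\gamma_n}\to\mu$ in the weak topology, $\gamma_n\to\supp(\mu)$ in Hausdorff distance, and $L(\mu_{\gamma_n})\to L(\mu)$. Because the finest dominated splitting $F_1\oplus\cdots\oplus F_k$ extends to an open neighborhood of $\supp(\mu)$, each $\gamma_n$ (for $n$ large) inherits a dominated splitting of the same dimensions, and the Lyapunov exponents of $\gamma_n$ inside $F_i$ converge to the Lyapunov exponents of $\mu$ inside $F_i$. In general these exponents inside $F_i$ are not all equal, so this does not yet give the conclusion.

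Next, apply \cite{BGV}: along each $\gamma_n$, an arbitrarily $C^0$-small perturbation of the derivative cocycle equalizes all the Lyapunov exponents inside each bundle $F_i$, leaving them all equal to the mean of the original exponents inside $F_i$. Franks' Lemma promotes this cocycle perturbation to a $C^1$-small perturbation $\tilde{g}_n$ of $f$, supported in a small tubular neighborhood of $\gamma_n$, for which $\gamma_n$ remains a hyperbolic periodic orbit whose Lyapunov exponents inside each $F_i$ are now all equal to a single value; by Theorem~\ref{theo3}(iii) these common values tend to $\la_{E_i}$ as $n\to\infty$.

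Finally, promote the perturbative statement to a statement about $f$. Consider the map $\Phi\colon\diff\to\cK(\mfm\times\cK(M)\times\RR^d)$ that sends $g$ to the closure of the set of triples $(\mu_\gamma,\gamma,L(\mu_\gamma))$ as $\gamma$ ranges over the hyperbolic periodic orbits of $g$. Hyperbolic periodic orbits persist under small $C^1$-perturbations, with continuous variation of their periodic measures, supports and Lyapunov vectors, so $\Phi$ is lower-semicontinuous. The Semicontinuity Lemma gives a residual set $\cR\subset\diff$ of continuity points, which we intersect with the residual set of Theorem~\ref{theo3}. For $f\in\cR$, the density of perturbations $\tilde{g}_n\to f$ carrying periodic orbits of the required type forces $\Phi(f)$ itself to contain arbitrarily good approximations of the desired triples, and a diagonal extraction yields the sought sequence of periodic orbits of $f$.

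The main obstacle is this last step: since the decomposition $F_1\oplus\cdots\oplus F_k$ depends on $g$, one has to encode enough of the dominated-splitting data in $\Phi$ (for instance, augment the triple by the list of dimensions of the bundles and by the vector of block-means of the exponents), so that when passing from $\tilde{g}_n$ back to $f$, the convergence of the exponents inside each $F_i$ is preserved. The facts that the dimensions of the finest dominated splitting are locally constant and that the bundles vary continuously make the bookkeeping routine, but this is precisely where the genericity of $f$ is genuinely used.
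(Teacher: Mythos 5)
Your overall architecture coincides with the paper's: Theorem~\ref{theo3} to get approximating periodic orbits, then \cite{BGV} plus Franks' Lemma to equalize the exponents inside each bundle, then the semicontinuity/continuity-point argument for the map $g\mapsto X_g$ (closure of the triples $(\mu_\gamma,\gamma,L(\mu_\gamma))$) to pass from perturbations back to $f$ itself. However, the middle step contains a genuine gap as written. It is not true that ``along each $\gamma_n$, an arbitrarily $C^0$-small perturbation of the derivative cocycle equalizes all the Lyapunov exponents inside each bundle $F_i$'': for a \emph{fixed} periodic orbit the exponents depend continuously on the derivative along that (finite) orbit, so if two exponents inside $F_i$ are distinct, every sufficiently small perturbation keeps them distinct. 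What \cite{BGV} (Theorem~\ref{bgv} of the paper) gives is a statement about a \emph{family} of periodic orbits, under the hypothesis that no infinite subfamily carries a dominated splitting strictly finer than $F_1\oplus_<\cdots\oplus_< F_k$: for every $\varepsilon>0$ an $\varepsilon$-perturbation equalizes the exponents of \emph{some} orbit of the family. So you must verify the no-finer-domination hypothesis for the tail family $\{\gamma_n\}_{n\ge N}$ --- this is exactly where the Hausdorff convergence $\gamma_n\to\supp(\mu)$ is used: a strictly finer dominated splitting over an infinite subfamily would extend to the closure of its union, which contains $\supp(\mu)$, contradicting that $F_1\oplus\cdots\oplus F_k$ is the finest splitting there --- and you only obtain equalized orbits along a subsequence, with perturbation sizes tending to $0$. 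That suffices for your final diagonal/continuity argument, but the per-orbit ``arbitrarily small'' claim is precisely the point that the hypothesis of Theorem~\ref{bgv} is designed to handle, and it is missing from your proposal.

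Two smaller remarks. First, the equalized values are (up to a small error controlled by the size of the perturbation) the block means of the exponents of $\gamma_n$ inside $F_i$, and these converge to the block means of $\mu$ because the $F_i$ are continuous bundles on a neighborhood of $\supp(\mu)$ and $\int\log\|\det Df|_{F_i}\|\,d\mu_{\gamma_n}\to\int\log\|\det Df|_{F_i}\|\,d\mu$; it is cleaner to argue this way than via Theorem~\ref{theo3}(iii). Second, the bookkeeping you fear in the last step is unnecessary: the paper sidesteps it by targeting the \emph{intrinsic} triple $(\mu,\supp(\mu),v)$, where the coordinates of $v$ are the numbers $\int\log\|\det Df|_{F_i}\|\,d\mu/\dim(F_i)$ repeated $\dim(F_i)$ times, so no augmentation of the map $\Phi$ by splitting data is needed --- one simply checks that this fixed triple lies in $X_f$ at a continuity point of $g\mapsto X_g$.
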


We state another result which allows to approximate measures
by periodic measures contained in a homoclinic class.

\begin{theo}\label{t.convexsum} For any $C^1$-generic diffeomorphism
$f\in\diff$, any open set $V\subset M$
and any relative homoclinic class $\La=H_V(\cO)$ of $f$ in $V$,
the closure (for the weak topology) of the set $\pfla$ of periodic measures supported
in $\La\cap V$ is convex.
\end{theo}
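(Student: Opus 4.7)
The plan is to reduce to showing that $\overline{\pfla}$ is stable under binary convex combinations of two \emph{periodic} measures; once this holds, convexity of the full closure $\overline{\pfla}$ follows by a standard approximation, using continuity of $(\nu_1,\nu_2)\mapsto t\nu_1+(1-t)\nu_2$ in the weak topology. So fix two periodic measures $\mu_{\gamma_1},\mu_{\gamma_2}\in\pfla$ and $t\in[0,1]$; the task is to approximate $t\mu_{\gamma_1}+(1-t)\mu_{\gamma_2}$ by elements of $\pfla$.

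The first step is to reduce to the case where $\gamma_1,\gamma_2\in\Sigma_{V,\cO}$, i.e.\ where both orbits are homoclinically related to $\cO$ through orbits contained in $V$. Since $\La=H_V(\cO)=\overline{\Sigma_{V,\cO}}$, every hyperbolic periodic orbit in $\La\cap V$ can be weakly approximated by orbits in $\Sigma_{V,\cO}$ for $C^1$-generic $f$, using Hayashi's Connecting Lemma as in \cite{BD2,ABCDW}. After this reduction, transitivity of the homoclinic relation inside $V$ makes $\gamma_1$ and $\gamma_2$ homoclinically related to each other by orbits in $V$.

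Next, given $\eps>0$, I would approximate $t$ by a rational of the form $m_1\Pi_1/(m_1\Pi_1+m_2\Pi_2)$, where $\Pi_i$ is the period of $\gamma_i$ and $m_1,m_2$ are large positive integers. Using the Transition Property Lemma of Subsection~\ref{mlemmaingredients}, or equivalently the horseshoe produced by Smale--Birkhoff from the heteroclinic intersections between $\gamma_1$ and $\gamma_2$ inside $V$, I would construct a periodic orbit $\gamma\subset V$ whose itinerary shadows $\gamma_1$ for $m_1\Pi_1$ iterates, transitions to $\gamma_2$ in bounded time $T$, shadows $\gamma_2$ for $m_2\Pi_2$ iterates, and transitions back in bounded time $T'$. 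As $m_1,m_2\to\infty$, the transition segments of length $T,T'$ contribute $o(1)$ mass, so $\mu_\gamma$ converges weakly to $t\mu_{\gamma_1}+(1-t)\mu_{\gamma_2}$; moreover $\gamma\subset V$ is homoclinically related to $\cO$ inside $V$, so $\mu_\gamma\in\pfla$.

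The main obstacle is carrying out the transitions and the closing step without allowing $\gamma$ to leave $V$. This is exactly what the Transition Property Lemma is designed to deliver in the relative setting: since $\gamma_1,\gamma_2\in\Sigma_{V,\cO}$, heteroclinic connections with orbits entirely contained in $V$ are available, and the periodic orbit shadowing the concatenation can be arranged to stay in $V$ as well. Once that ingredient is in place, the remaining work — rational approximation of $t$ and the bookkeeping that makes the bounded transition segments contribute negligibly to $\mu_\gamma$ — is routine.
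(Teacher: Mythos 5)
There is a genuine gap, and it sits exactly where the paper's proof spends all of its effort: your reduction ``every hyperbolic periodic orbit in $\La\cap V$ can be weakly approximated by orbits in $\Sigma_{V,\cO}$'' is unjustified, and it is essentially the hard part of the statement rather than a preliminary step. A relative homoclinic class $H_V(\cO)$ of a $C^1$-generic $f$ may contain hyperbolic periodic orbits whose index differs from that of $\cO$; such orbits are \emph{not} homoclinically related to $\cO$, and there is no known elementary argument (and certainly none provided by Hayashi's Connecting Lemma alone, which only produces perturbed diffeomorphisms, not orbits of $f$ itself) showing that their periodic measures lie in the weak closure of $\{\mu_\gamma:\gamma\in\Sigma_{V,\cO}\}$. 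Once this reduction is removed, your horseshoe/transition construction only applies to pairs of orbits that are homoclinically related inside $V$, i.e.\ it reproves Proposition~\ref{trans} (the classical Bowen-type argument via the transition property of \cite{BDP}), which is not enough: the whole point of Proposition~\ref{trans2}, from which Theorem~\ref{t.convexsum} follows via Remark~\ref{r.trans}, is to obtain the barycenter property for pairs of periodic orbits of \emph{different} indices. The paper handles that case by first normalizing the eigenvalues, then using the Connecting Lemma to create a heterodimensional cycle between the two orbits, linearizing it via \cite[Lemma 3.4]{ABCDW} to get an affine cycle producing periodic orbits $r_{\ell,m}$ that spend prescribed proportions of time near each orbit, and finally transferring this perturbative construction back to the generic $f$ by a persistence/genericity argument using \cite{BD2}. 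Note also that the orbits produced there are only shown to satisfy one-sided heteroclinic intersections with $p$ and $q$, so even after the construction one does not land back in $\Sigma_{V,\cO}$; this is why the statement is about periodic orbits in $H_V(\cO)\cap V$ and not about orbits homoclinically related to $\cO$.

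A secondary, fixable omission: in the relative setting the family of open sets $V$ is uncountable, so ``for $C^1$-generic $f$, for every $V$\dots'' cannot be obtained by intersecting residual sets indexed by $V$; the paper gets around this by exhausting $V$ by a countable increasing family $V_n$ and working with $Per_f(H_{V_n}(\cO)\cap V_n)$. Your sketch keeps all constructions inside $V$ (which is the right instinct for the shadowing and closing steps), but it does not address this countability issue, nor the possibility of periodic orbits of the class lying on $\partial V$, which is why the theorem is stated for measures supported in $\La\cap V$.
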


In other words, every convex sum of periodic measures in $\La\cap V$
is the weak limit of periodic orbits in $\La\cap V$.

%%%%%%%%%%%%%%%%%%%%%%%%%%%%%%%%%%%%%%%%%%%%%%%%%%%%%%%%%%%%
\subsection{$C^1$-Pesin theory for dominated splittings}
%%%%%%%%%%%%%%%%%%%%%%%%%%%%%%%%%%%%%%%%%%%%%%%%%%%%%%%%%%%

Theorems \ref{theo1}, \ref{theo2}, and \ref{theo4} constitute as a
group an assault on Obstacle 1. Our next result deals with Obstacle
2. Pugh has built a $C^1$-diffeomorphism which is a counter-example~\cite{Pu2}
to Pesin's Stable Manifold Theorem.
It turns out however that Pesin's Stable Manifold Theorem
\emph{does} hold for maps which are \emph{only} $C^1$, as long as
the $C^1$+H\"{o}lder hypothesis is replaced by a uniform domination
hypothesis on the measure's Oseledets splitting. This has been already
done by Pliss~\cite{Pl}
in the case when all the exponents are strictly negative. The difficulty for applying Pliss argument
when the measure has positive and negative exponents is that we have no control on the geometry of iterated
disks tangent to the stable/unstable directions. The dominated splitting provides us this control
solving this difficulty.

Theorem~\ref{theo5} below is a simpler statement of our complete result
stated in Section~\ref{pesinsection}, where we show that Pesin's Stable Manifold Theorem
applies to ergodic nonuniformly hyperbolic measures with dominated hyperbolic Oseledets
splitting .

\begin{theorem} \label{theo5}
Let $\mu \in \mfm$ be an ergodic nonuniformly hyperbolic measure of
a diffeomorphism $f \in \diff$. Assume that its hyperbolic Oseledets
splitting $\te^s \oplus \te^u$ is dominated.

 Then, for $\mu$-a.e. $x$, there is $\varepsilon(x)>0$ such that
the local stable set $W^s_{\varepsilon(x)}(x)$ is an embedded $C^1$ disc, tangent at $x$ to $\te^s(x)$ and contained in the stable set $W^s(x)$. Furthermore, one can choose $\varepsilon(x)$ in such a way that $x\mapsto \varepsilon(x)$ is a measurable map and such that the family $W^s_{\varepsilon(x)}(x)$ is a measurable family of discs.
\end{theorem}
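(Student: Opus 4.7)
The plan is to combine a Pliss-hyperbolic-times argument along the forward orbit with the backward invariance of a stable cone field furnished by the dominated splitting, replacing the $C^{1+\alpha}$ distortion estimates used in Pesin theory by purely cone-theoretic control. Since $\tilde E^s\oplus_<\tilde E^u$ is dominated, it extends to a continuous dominated splitting on the compact set $\supp(\mu)$, and on a neighborhood $U$ of $\supp(\mu)$ there exist continuous cone fields $C^s,C^u$ that are, for some iterate of $f$, strictly forward/backward-invariant: if $v\in C^s(y)$ then $D_{f(y)}f^{-1}(v)\in C^s(f^{-1}(y))$ with uniform opening contraction, and symmetrically for $C^u$ and $Df$.

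First, I would apply Birkhoff's ergodic theorem: for $\mu$-a.e.\ $x$, the top exponent $\lambda^s<0$ in $\tilde E^s$ satisfies $\tfrac1n\log\|Df^n|_{\tilde E^s(x)}\|\to \lambda^s$. Pliss's Lemma, applied to the sequence $a_n=\log\|Df|_{\tilde E^s(f^n x)}\|$, yields positive upper density of forward hyperbolic times, that is, integers $n$ for which
\[
\|Df^{m-n}|_{\tilde E^s(f^n(x))}\|\le e^{(m-n)\lambda^s/2}\quad\text{for every } m\ge n.
\]
Restricting to a full-measure invariant set, $\mu$-a.e.\ $x$ admits a sequence of hyperbolic times $n_k(x)\to\infty$, chosen measurably in $x$.

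Second, I would fix a uniform radius $r_0>0$ and work in exponential charts of radius $r_0$. Call an embedded $C^1$ disc $\Delta\ni y$ an \emph{admissible stable disc} at $y$ if, in the chart, it is a graph over $\tilde E^s(y)$ of radius at most $r_0$ whose tangent spaces lie in $C^s$. A standard graph-transform argument, using only cone invariance (no higher regularity!), shows that if $\Delta$ is an admissible stable disc at $y$ then $f(\Delta)$ contains an admissible stable disc at $f(y)$, and conversely $f^{-1}(\Delta)$ contains one at $f^{-1}(y)$; this is where the dominated splitting replaces the missing $C^{1+\alpha}$ hypothesis, since it rules out any oscillation of tangent planes outside the cone. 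At a hyperbolic time $n_k$ of $x$, start with the graph of $0$ over $\tilde E^s(f^{n_k}(x))$ in the chart at $f^{n_k}(x)$, of some uniform radius $\varepsilon_0>0$ depending only on the cone widths. The uniform forward contraction granted by the hyperbolic-time estimate, combined with cone invariance, forces the forward iterates of this disc to remain admissible stable discs of diameters going to $0$ exponentially; hence the disc lies in $W^s(f^{n_k}(x))$ and in fact in $W^s_{\varepsilon_0}(f^{n_k}(x))$.

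Finally, I would pull this disc back by $f^{-n_k}$. By backward cone invariance, $f^{-n_k}(\Delta_k)$ contains an admissible stable disc through $x$ tangent to $\tilde E^s(x)$, though possibly of large extrinsic size; I define $\varepsilon(x)>0$ to be a radius small enough that this disc contains a graph over $\tilde E^s(x)$ of radius $\varepsilon(x)$ centered at $x$. The required size is controlled by the derivative cocycle $Df^{n_k}$ along the finite orbit segment $x,\dots,f^{n_k}(x)$, which depends measurably on $x$; thus $\varepsilon(x)$ and the resulting family of discs $W^s_{\varepsilon(x)}(x)$ are measurable. The main obstacle is the one flagged in the introduction: without $C^{1+\alpha}$ one cannot invoke Pesin's distortion estimates to guarantee that iterates of a tangent disc remain graphs with controlled geometry. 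The dominated splitting resolves this precisely because admissibility is stable under iteration as a \emph{cone condition}, not a distortion condition; the Pliss hyperbolic times are then used only to guarantee uniform exponential contraction along the forward orbit, which suffices to identify the admissible disc with the local stable set.
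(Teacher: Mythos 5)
Your plan has the right flavor---replace the missing $C^{1+\alpha}$ distortion control by domination, choose radii measurably, use Pliss-type times---but the central dynamical step is not correct as stated. For a dominated splitting $E\oplus_< F$ the cone field around $E$ is invariant under $Df^{-1}$, \emph{not} under $Df$: domination makes the $F$-component of a tangent vector grow relative to its $E$-component, so forward iteration pushes tangent planes of a disc tangent to the stable cone out of that cone. Hence the claim that ``if $\Delta$ is an admissible stable disc at $y$ then $f(\Delta)$ contains an admissible stable disc at $f(y)$'' cannot be obtained ``using only cone invariance'', and the assertion that the forward iterates of the flat disc taken at a hyperbolic time remain admissible graphs with exponentially decaying diameters is unjustified and in general false: that flat disc is not the stable manifold, its points carry nonzero components along $F$, and since $\mu$ has positive exponents in $F$ those points are expanded away from the orbit, so the iterated discs eventually stretch and fold out of the cone. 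The hyperbolic-time estimate only controls $\|Df^m|_{E}\|$ along the orbit of the center; it says nothing about tangent vectors of the iterated disc once they leave the $E$-cone. What makes forward iteration work in the paper is the Hirsch--Pugh--Shub plaque family theorem (Section~\ref{pesinsection}): a locally invariant continuous family of $C^1$ plaques tangent to $E$, produced by a genuine graph-transform fixed-point construction, inside which the forward iterates are trapped; norms of $Df^N$ along plaque tangent directions are then compared with $\|Df^N|_E\|$ at the center by plain uniform continuity (estimate~(\ref{e.control-cone})), and a measurable, subexponentially varying function $A(x)$ (Proposition~\ref{p.metric}) fixes the radii $\delta/A(x)$ so that Lemma~\ref{l.growth-disk} yields the exponential decay. (A correct ``hands-on'' alternative would build the disc by \emph{backward} graph transforms, the direction in which the $E$-cone is actually invariant.) A secondary gap: Pliss's lemma applied to the one-step sequence $\log\|Df|_{E(f^nx)}\|$ requires its Birkhoff average $\int\log\|Df|_E\|\,d\mu$ to be negative, which may fail when $\dim E\geq 2$ even though $\lambda_E^+<0$; one must first pass to a large iterate $f^N$ and handle the possible non-ergodicity of $\mu$ for $f^N$, which is exactly the content of Lemma~\ref{l.birkhoff}.

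Moreover, even granting a $C^1$ disc through $x$ contained in $W^s(x)$, you have not proved the statement: Theorem~\ref{theo5} asserts that the local stable \emph{set} $W^s_{\varepsilon(x)}(x)$ \emph{is} that disc, so you must also show that every point whose forward orbit stays $\varepsilon(x)$-close to the orbit of $x$ lies on the disc. This converse inclusion is precisely where the positivity of the exponents in $F$ (nonuniform hyperbolicity) enters---a hypothesis your argument never uses. The paper devotes Section~\ref{ss.speed} to it: an invariant cone field around $F$ along geodesic segments, a projection onto the plaques (Lemmas~\ref{l.conefield}--\ref{l.projection}), and the dichotomy of Lemma~\ref{l.speed}, which shows that a point off the local disc separates from the orbit at an exponential rate and therefore cannot stay $\varepsilon(x)$-close. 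Without an argument of this kind, your $W^s_{\varepsilon(x)}(x)$ could a priori be strictly larger than the disc you construct.
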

\noindent In other words:

\emph{(Nonuniform hyperbolicity) + (Uniform domination) $\Rightarrow$
(Pesin Stable Manifold Theorem).}

\noindent We note that its statement includes no genericity assumption on
$f$ or on $\mu$. It has already been used, in a $C^1$-generic context, to
obtain results on ergodic measures of diffeomorphisms far from
homoclinic tangencies \cite{J}.

Theorem \ref{theo5} seems to be a folklore result.
Indeed, R. Ma\~n\'e
\cite{M3} announced this result without proof\footnote{He did
provide the following one-line proof: ``This follows from the
results of Hirsch, Pugh, and Shub.'' Since the ingredients for the
proof we provide in Section \ref{pesinsection} are all classical and
were available in $1982$, we believe that Ma\~n\'e did indeed know
how to prove it, but never wrote the proof (possibly because at the
time there was little motivation for obtaining a Pesin theory for
maps which are $C^1$ but \emph{not} $C^1$+H\"{o}lder).} in his ICM
address.
Although no one seems to have written a full proof under our very
general hypotheses, some authors have used the conclusion implicitly in their work.
Gan \cite{G}, for instance,
uses this kind of idea to extend Katok's
celebrated result on entropy and horseshoes of $C^{1 +
\alpha}$ surface diffeomorphisms to a $C^1$-diffeomorphisms.

\medskip

Theorems \ref{theo2} and \ref{theo4} show that dominated hyperbolic
Oseledets splittings occur quite
naturally in the $C^1$-generic context. We thus obtain:

\begin{corollary} \label{coro1}
Let $f$ be a $C^1$-generic diffeomorphism.
Then for any generic ergodic measure $\mu$,
$\mu$-a.e. $x$ exhibits a $C^1$ stable local manifold $W_{loc}^s(x)$ tangent
to $E(x)$ at $x$ as in Theorem~\ref{theo5}.
\end{corollary}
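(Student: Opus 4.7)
The plan is to deduce this corollary by assembling Theorem \ref{theo2} and Theorem \ref{theo5}, the bridge between them being that the hyperbolic Oseledets splitting of a nonuniformly hyperbolic measure is obtained by bunching adjacent bundles of the full (dominated) Oseledets splitting.

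First, apply Theorem \ref{theo2}: there is a residual subset $\cR\subset\diff$ such that for every $f\in\cR$ and every generic $\mu\in\mfmerg$, the measure $\mu$ is nonuniformly hyperbolic and its Oseledets splitting $\te_1\oplus\cdots\oplus\te_k$ is dominated over $\supp(\mu)$. Since $\mu$ has no zero exponent, each $\te_i$ consists of vectors whose Lyapunov exponent has a definite sign. Ordering the $\te_i$ by increasing exponent (as is done throughout the paper when writing a dominated splitting), the stable and unstable Oseledets bundles are
$$\te^s(x) \;=\; \bigoplus_{\lambda(\te_i)<0}\te_i(x), \qquad \te^u(x)\;=\;\bigoplus_{\lambda(\te_j)>0}\te_j(x),$$
so $\te^s\oplus\te^u$ is nothing but the coarsening obtained by cutting $\te_1\oplus\cdots\oplus\te_k$ at the (unique) position separating the negative exponents from the positive ones. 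Any such coarsening of a dominated splitting is dominated, so $\te^s\oplus\te^u$ is a dominated splitting over $\supp(\mu)$.

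Second, the pair $(f,\mu)$ now satisfies exactly the hypotheses of Theorem \ref{theo5}: $\mu$ is an ergodic nonuniformly hyperbolic measure whose hyperbolic Oseledets splitting $\te^s\oplus\te^u$ is dominated. Applying Theorem \ref{theo5} produces, for $\mu$-a.e.\ $x$, a measurable function $\varepsilon(x)>0$ and a measurable family of $C^1$-embedded discs $W^s_{\varepsilon(x)}(x)\subset W^s(x)$ tangent at $x$ to $\te^s(x)$. These are the desired local stable manifolds (setting $E(x):=\te^s(x)$); the analogous local unstable manifolds are obtained by applying the same argument to $f^{-1}$.

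No serious obstacle arises: the argument is a direct assembly of the two theorems. The only point that requires a line of verification is the passage from the \emph{finest} dominated splitting provided by Theorem \ref{theo2} to the \emph{hyperbolic} (coarser) dominated splitting required by Theorem \ref{theo5}; this passage is automatic once one observes that nonuniform hyperbolicity forces the sign change between negative and positive exponents to occur precisely at a boundary between two consecutive bundles of the fine splitting.
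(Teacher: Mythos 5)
Your proposal is correct and follows exactly the paper's route: the corollary is obtained there by combining Theorem~\ref{theo2} (generic ergodic measures of $C^1$-generic diffeomorphisms are nonuniformly hyperbolic with dominated Oseledets splitting) with Theorem~\ref{theo5}, the only intermediate observation being precisely yours, that the hyperbolic splitting $\te^s\oplus\te^u$ is a coarsening of the dominated Oseledets splitting and hence dominated (by the very definition of domination for multi-bundle splittings given in the preliminaries).
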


\begin{corollary} \label{coro2}
Let $\La$ be an isolated transitive set of a $C^1$-generic diffeomorphism $f$. Then for any generic ergodic measure $\mu$,
$\mu$-a.e. $x$ exhibits a $C^1$ stable local manifold $W_{loc}^s(x)$ tangent
to $E(x)$ at $x$ as in Theorem~\ref{theo5}.
\end{corollary}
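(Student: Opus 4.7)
The plan is to read this off as a direct combination of Theorem \ref{theo4} with Theorem \ref{theo5}; no new perturbative work is needed. First I would dispose of the trivial case: if $\La$ is a periodic orbit, then for a $C^1$-generic $f$ it is hyperbolic (Kupka--Smale), so the classical hyperbolic stable manifold theorem gives the conclusion. Assume henceforth that $\La$ is a non-trivial isolated transitive set of a generic $f$, and let $F_1\oplus_<\dots\oplus_< F_k$ denote its finest dominated splitting. By Theorem~\ref{theo4}(b), there is a residual subset $\cR_\La\subset \mfla$ such that every $\mu\in\cR_\La$ is ergodic, has full support $\supp(\mu)=\La$, is nonuniformly hyperbolic, and satisfies, for $\mu$-a.e. $x$, the equality of the Oseledets splitting with $F_1(x)\oplus\dots\oplus F_k(x)$.

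The key observation is then that the hyperbolic Oseledets splitting of such a $\mu$ is automatically a dominated splitting over $\supp(\mu)=\La$. Indeed, since $\mu$ has no zero Lyapunov exponents and the finest dominated splitting separates Lyapunov exponents, there is an index $j\in\{0,1,\dots,k\}$ such that the exponents carried by $F_1,\dots,F_j$ are all strictly negative and those carried by $F_{j+1},\dots,F_k$ are all strictly positive. Setting
\[
\te^s := F_1\oplus\dots\oplus F_j, \qquad \te^u := F_{j+1}\oplus\dots\oplus F_k,
\]
we obtain a coarsening of the finest dominated splitting on all of $\La$, hence itself a dominated splitting on $\La$, which coincides $\mu$-a.e. with the hyperbolic Oseledets splitting $\te^s\oplus \te^u$ of $\mu$. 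Theorem~\ref{theo5} therefore applies: for $\mu$-a.e. $x$ there is $\varepsilon(x)>0$ so that $W^s_{\varepsilon(x)}(x)$ is an embedded $C^1$ disc tangent to $\te^s(x)=E(x)$ at $x$, with the asserted measurability of the family.

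The remaining point is purely a Baire-category bookkeeping: the set of $\mu \in \mfla$ for which this $C^1$ local stable disc exists contains $\cR_\La$, and restricting to $\mfla^{erg}$, which is $G_\delta$ in $\mfla$ (Proposition~\ref{propb.i}) and dense by Theorem~\ref{theo4}(a)--(b.i), yields a residual subset of the Baire space $\mfla^{erg}$. The only step I would expect to require care is the identification $\te^s\oplus \te^u$ with a coarsening of the finest dominated splitting \emph{on all of $\La$} (not merely on a full $\mu$-measure set), so that Theorem~\ref{theo5}'s domination hypothesis on $\supp(\mu)$ is truly satisfied; this is handled by property (b) of dominated splittings, which extends such a splitting continuously to the closure, together with Theorem~\ref{theo4}(b.ii) giving $\supp(\mu)=\La$.
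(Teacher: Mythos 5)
Your proposal is correct and follows exactly the route the paper intends: the corollary is stated as an immediate consequence of Theorem~\ref{theo4}, items (b.iv)--(b.v) (generic measures on $\La$ are ergodic, nonuniformly hyperbolic, with Oseledets splitting equal to the finest dominated splitting $F_1\oplus_<\dots\oplus_< F_k$ over $\La$), combined with Theorem~\ref{theo5}, and your grouping of the bundles carrying negative and positive exponents into $\te^s\oplus\te^u$ is precisely the observation that makes the domination hypothesis of Theorem~\ref{theo5} hold on all of $\supp(\mu)=\La$. The remaining Baire-category bookkeeping you give is also consistent with the paper's conventions, so there is nothing to add.
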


%%%%%%%%%%%%%%%%%%%%%%%%%%%%%%%%%%%%%%%%%%%%%%%%%%%%%%%%%%%%%%%%%%%%%%%%%%%
\subsection{Genericity of irregular points}
%%%%%%%%%%%%%%%%%%%%%%%%%%%%%%%%%%%%%%%%%%%%%%%%%%%%%%%%%%%%%%%%%%%%%%%%%

Our final two results make precise  some informal statements of
Ma\~n\'e\footnote{``In general, regular points are few from the topological point
of view -- they form a set of first category''. \cite[Page 264]{M4}} regarding the irregularity of generic
points of $C^1$-generic diffeomorphisms.

\begin{theo}\label{irregular0} Given any $C^1$-generic diffeomorphism $f\in \diff$ there is a
residual subset $R\subset M$ such that every $x\in R$ is irregular.
\end{theo}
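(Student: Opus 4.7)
Plan: I plan to construct, via a Baire category argument combined with the $C^1$-generic tools already available in the paper (Hayashi's Connecting Lemma, Theorems~\ref{theo1} and~\ref{theo3}), a residual subset of $M$ consisting of points whose positive and negative Birkhoff averages of some continuous function differ, thus violating the third clause in the definition of regularity.

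Fix a countable dense family $\{\psi_k\}_{k\ge 1}$ in $C^0(M,\RR)$ and set $A_n^\pm\psi_k(x) = \tfrac 1n \sum_{t=0}^{n-1}\psi_k(f^{\pm t}(x))$. For each triple $(k,a,b)$ with $k\in\NN$ and rationals $a<b$, consider
\[
E(k,a,b) := \Big(\bigcap_{N}\bigcup_{n\ge N}\{x\in M : A_n^+\psi_k(x)<a\}\Big)\cap\Big(\bigcap_{N}\bigcup_{n\ge N}\{x\in M : A_n^-\psi_k(x)>b\}\Big).
\]
This set is a $G_\delta$, and any $x\in E(k,a,b)$ has $\liminf_n A_n^+\psi_k(x)\le a<b\le\limsup_n A_n^-\psi_k(x)$, so either a branch of averages fails to converge, or the two limits exist and are unequal; in either case $x$ is irregular. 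It then suffices to show that the countable union $\bigcup_{(k,a,b)} E(k,a,b)$ contains a residual subset of $M$, i.e.\ for a suitable choice of triples, each of the open sets $\bigcup_{n\ge N}\{A_n^+\psi_k<a\}$ and $\bigcup_{n\ge N}\{A_n^-\psi_k>b\}$ is dense in the appropriate open region of $M$.

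Density is proven by producing invariant measures $\mu^-,\mu^+$ with $\int\psi_k\,d\mu^-<a<b<\int\psi_k\,d\mu^+$ using Theorem~\ref{theo1}: non-trivial homoclinic classes support ergodic full-support measures distinct from the periodic measures they contain, so suitable triples $(k,a,b)$ always exist. Theorem~\ref{theo3} then supplies periodic orbits $\gamma^\pm$ whose periodic measures approximate $\mu^\pm$ weakly, hence whose integrals of $\psi_k$ are close to $\int\psi_k\,d\mu^\pm$. Hayashi's Connecting Lemma, in the $C^1$-generic form underlying the proofs of Theorems~\ref{theo3} and~\ref{theo4}, then produces from any open $W\subset M$ (starting near a periodic point, dense in the nonwandering set by Pugh's General Density Theorem) true orbits whose forward portion shadows $\gamma^-$ long enough to force $A_n^+\psi_k<a$ for some arbitrarily large $n$, and whose backward portion shadows $\gamma^+$ long enough to force $A_n^-\psi_k>b$. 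Because the Conley decomposition for $C^1$-generic $f$ gives countably many quasi-attractor/quasi-repeller pairs whose forward/backward basins cover a residual subset of $M$, the construction can be localized inside each pair of basins, and the countable union of the resulting $E(k,a,b)$ covers the target residual set.

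The main obstacle lies in the last step: for every open set $W\subset M$, one must reach the prescribed approximating periodic orbits $\gamma^\pm$ from $W$ without crossing chain-class boundaries forbidden by the $\omega$- and $\alpha$-limits of points near $W$. This is the point-wise analogue of the density of periodic measures in $\mathcal{M}_f(\Lambda)$ proven in Theorem~\ref{theo4}(a), and its technical heart is a Transition Property Lemma of the kind used elsewhere in the paper, which produces shadowing orbits within a prescribed chain class. Once this shadowing is in hand, the Baire assembly of the residual set $R\subset M$ is routine.
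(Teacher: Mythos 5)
Your Baire framework (the $G_\delta$ sets $E(k,a,b)$ and the reduction of irregularity to a density statement for the open sets $\bigcup_{n\ge N}\{A_n^\pm\psi_k\lessgtr a,b\}$) is fine, but the heart of the proof — the density step — is exactly what you leave unproven, and the tools you invoke for it do not deliver it. Hayashi's Connecting Lemma is a perturbative statement: it produces orbits for a perturbation $g$ of $f$, not for the fixed generic $f$, and unlike the measure-theoretic statements (Theorem~\ref{theo3}, Theorem~\ref{theo4}(a)) there is no semicontinuity argument supplied here that would transfer ``an orbit through $W$ shadowing $\gamma^-$ forward and $\gamma^+$ backward'' back to $f$ itself. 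Worse, for a globally chosen pair $\gamma^\pm$ the density claim is simply false: if $W$ lies in the basin of a sink, every point of $W$ has forward averages converging to the sink's value, so no orbit through $W$ can be made to shadow a prescribed $\gamma^-$ forward; the choice of $(k,a,b)$ and of the target orbits must be adapted to the local structure of basins, and the ``Transition Property Lemma'' you appeal to (the barycenter property of Proposition~\ref{trans}) only concerns periodic orbits inside one homoclinic class — it says nothing about reaching a class from an arbitrary open set $W\subset M$, which may be wandering. So the proposal, as written, reduces the theorem to an unproved shadowing statement that is in fact the whole difficulty.

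The paper's proof avoids perturbation and prescribed connecting orbits altogether. It splits a generic point $x$ into two cases. If $x\in M\setminus\Omega(f)$, then (using that generically the chain recurrent set equals $\Omega(f)$, by \cite{BC}) Conley theory \cite{Con} puts $\alpha(x)$ and $\omega(x)$ in disjoint chain classes, and a continuous $\varphi$ equal to $0$ on $\alpha(x)$ and $1$ on $\omega(x)$ has backward and forward averages converging to $0$ and $1$: irregularity is automatic, with no shadowing needed. If $x\in\interior\Omega(f)$, then by \cite{BC,MP} a generic $x$ lies (and is generic) in $K=\overline{W^s(\cO(p))}$ for a periodic orbit with non-trivial homoclinic class, and Proposition~\ref{p.irregular} produces a residual subset of $K$ of irregular$^+$ points: the density of the oscillation sets is obtained for the \emph{fixed} $f$ by an itinerary construction in a hyperbolic basic set via Markov partitions (Lemma~\ref{l.oscillation}) together with the density of stable manifolds (Lemmas~\ref{l.mix} and~\ref{l.stablebasic}). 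If you want to salvage your scheme, you need to replace the Connecting Lemma step by precisely these ingredients: a structure theorem covering a residual subset of $M$ by closures of stable sets of non-trivial classes plus the wandering case, and an intrinsic (non-perturbative) mechanism — hyperbolic shadowing inside a horseshoe of the class and density of its stable lamination — forcing the oscillation of forward averages on a dense set of each such closure.
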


This result does not hold if we replace regular points by regular$^+$ points: every point in the basin of a (periodic) sink is
regular$^+$. We conjecture that if one excludes the basins of sinks, generic points of $C^1$-generic diffeomorphisms are irregular$^+$.
Our next result is that this conjecture is true in the setting of \emph{tame} diffeomorphisms\footnote{Indeed a recent result by J. Yang \cite{Y} allows us to extend Theorem~\ref{t.irregular} to $C^1$-generic diffeomorphisms far from tangencies: Yang announced that, in this setting, generic points belongs to the stable set of homoclinic classes.}.

Recall that a diffeomorphism is called \emph{tame} if all its
chain recurrence classes are robustly isolated (see~\cite{BC}). The set of tame
diffeomorphisms is a $C^1$-open set which strictly contains the
set of Axiom A+no cycle diffeomorphisms. The chain recurrent set
of $C^1$-generic tame diffeomorphisms consist of finitely many
pairwise disjoint homoclinic classes.  Our result is :

\begin{theo}\label{t.irregular}
If $f$ is a $C^1$-generic tame diffeomorphism then there is a
residual subset $R\subset M$ such that if $x\in M$ and $\omega(x)$
is not a sink, then $x$ is both irregular$^+$ and Lyapunov-
irregular$^+$.
\end{theo}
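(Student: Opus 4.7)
The plan is to exploit the tame structure to reduce the statement to a density claim inside each non-sink homoclinic class. For $f$ a $C^1$-generic tame diffeomorphism, the chain recurrent set decomposes as a finite disjoint union $CR(f)=H_1\sqcup\cdots\sqcup H_m$ of pairwise isolated homoclinic classes, and every $x\in M$ has $\omega(x)\subset H_i$ for a unique $i$. Let $S\subset M$ be the (open) union of basins of the $H_i$ that are periodic sinks, so that every point of $M\setminus S$ has $\omega$-limit in a non-sink class. Write $B(H_i):=\{x:\omega(x)\subset H_i\}$; then $M\setminus S$ is the disjoint union of $B(H_i)$ over non-sink $H_i$. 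It suffices to construct, for each non-sink $H_i$, a residual subset of $B(H_i)$ (as a subspace of $M$) on which every point is both irregular$^+$ and Lyapunov irregular$^+$; since there are only finitely many such $H_i$ and since residuality in the closed Baire subspace $M\setminus S$ lifts to residuality in $M$ modulo the open set $S$, the required residual $R$ is then obtained.

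Fix a non-sink $H_i$ and let $F_1\oplus_<\cdots\oplus_< F_k$ be its finest dominated splitting, which extends to a neighborhood of $H_i$ by property (c) of Subsection~2.4. By Theorem~\ref{theo4}(a), periodic measures are dense in $\mathcal{M}_f(H_i)$, and since $H_i$ is non-trivial we can select two periodic orbits $\gamma_1,\gamma_2\subset H_i$ whose measures $\mu_{\gamma_1},\mu_{\gamma_2}$ differ \emph{and} whose Lyapunov vectors differ in at least one coordinate $j$. Choose a continuous $\psi$ with $\int\psi\,d\mu_{\gamma_1}\neq\int\psi\,d\mu_{\gamma_2}$ and the continuous function $\varphi(x):=\log\|D_xf|_{F_j(x)}\|$, defined in a neighborhood of $H_i$, whose integrals against the two periodic measures also differ. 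For each $\Phi\in\{\psi,\varphi\}$ and rationals $q_1<q_2$ bracketing these two integrals, set
\[
E_{\Phi,q_1,q_2}:=\Big\{x\in B(H_i):\liminf_{n}\tfrac{1}{n}\sum_{t=0}^{n-1}\Phi(f^tx)<q_1<q_2<\limsup_{n}\tfrac{1}{n}\sum_{t=0}^{n-1}\Phi(f^tx)\Big\}.
\]
Each such set is $G_\delta$ in $B(H_i)$, and a point in $E_{\psi,q_1,q_2}\cap E_{\varphi,q_1',q_2'}$ is simultaneously irregular$^+$ and Lyapunov irregular$^+$; the task thus reduces to proving each $E_{\Phi,q_1,q_2}$ is dense in $B(H_i)$.

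The main technical obstacle is precisely this density claim. Given $x\in B(H_i)$ and $\varepsilon>0$, the goal is to exhibit $y\in B(H_i)\cap B_\varepsilon(x)$ whose forward orbit alternates between arbitrarily small neighborhoods of $\gamma_1$ and $\gamma_2$, with stretches of unbounded length, so that the Birkhoff averages of $\Phi$ oscillate across $(q_1,q_2)$. Two ingredients drive the construction. First, the Transition Property Lemma, applied at generic $f$, produces \emph{actual} orbit segments inside $H_i$ realizing any prescribed alternating itinerary near $\gamma_1,\gamma_2$ by splicing heteroclinic connections between the two orbits (which exist because $\gamma_1$ and $\gamma_2$ are homoclinically related inside $H_i$). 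Second, the tameness hypothesis---robust isolation of $H_i$---yields filtrating neighborhoods trapping every forward orbit in $B(H_i)$, so that one can locate $y$ near $x$ on the stable leaf of such a long concatenated orbit, with the trapping ensuring $\omega(y)\subset H_i$. Letting the alternating lengths grow forces the Birkhoff averages to oscillate with amplitude approaching $|\int\Phi\,d\mu_{\gamma_2}-\int\Phi\,d\mu_{\gamma_1}|$, eventually crossing $(q_1,q_2)$. Once density is established, applying the Baire category theorem and intersecting over $\Phi\in\{\psi,\varphi\}$ and over the finitely many non-sink $H_i$, then adjoining $S$, assembles the residual $R\subset M$ of the theorem. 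The crux is thus the shadowing-by-itinerary construction, with the Transition Property Lemma playing, in the generic non-hyperbolic setting, the role that uniform hyperbolic shadowing plays in Sigmund's original Axiom A argument.
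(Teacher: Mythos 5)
Your global reduction (finitely many classes, basins of non-sink classes, Baire argument in the closed complement of the sink basins) is sound and parallels the paper, but the two steps that carry the actual content both have gaps. The more serious one concerns Lyapunov irregularity. Making the Birkhoff averages of the continuous function $\varphi=\log\|Df|_{F_j}\|$ oscillate only makes the point irregular$^+$; it does not make it Lyapunov irregular$^+$. By submultiplicativity one only has $\frac1n\log\|Df^n|_{F_j}(x)\|\le\frac1n\sum_{t=0}^{n-1}\varphi(f^tx)$, so the excursions of the Birkhoff average above $q_2$ give no lower bound on the subadditive quantity that actually defines the exponents, and nothing forces $\dim F_j=1$ (where the two quantities would coincide). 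This is exactly the difficulty the paper isolates: its Proposition~\ref{p.Lyapirregular} does not use Birkhoff sums at all, but chooses two homoclinically related saddles $p,q$ with distinct largest exponents, builds inside a horseshoe containing both an itinerary whose successive stays $t_i$ near $p$ and $q$ satisfy $t_i/\sum_{j<i}t_j\to\infty$, and estimates $\|Df^m(z)\|$ directly for every $z$ in the stable manifold of that orbit, so that the past segment is negligible and $\frac1m\log\|Df^m(z)\|$ genuinely oscillates across $(\alpha,\beta)$. (The paper also warns that, unlike Birkhoff irregularity, Lyapunov irregularity does not automatically pass to points of a stable manifold, which is why this explicit estimate is needed; your proposal never addresses this transfer.)

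The second gap is the density claim itself, which you acknowledge as the crux but do not prove. The Transition Property Lemma (barycenter property) produces \emph{periodic orbits} spending prescribed proportions of time near two given periodic orbits; it does not produce, near an \emph{arbitrary} point $x$ of the basin $B(H_i)$, a point $y$ whose forward orbit realizes an alternating itinerary with unbounded stretches, and tameness/filtrations do not supply such a $y$ either. The paper gets density differently: for a $C^1$-generic tame $f$, if $\omega(x)$ is not a sink then (by [MP, BC]) $\omega(x)$ is an attracting non-trivial class $H(p)$ whose open basin is contained in $\overline{W^s(\cO(p))}$, so one only needs residual subsets of $K=\overline{W^s(\cO(p))}$; inside a hyperbolic basic set of $H(p)$ containing the two reference orbits, Markov-partition itineraries produce the oscillating orbits, and Lemmas~\ref{l.mix} and~\ref{l.stablebasic} show the stable manifold of such an orbit is dense in $K$, which is precisely the mechanism that "locates $y$ near $x$" and is missing from your sketch. (A smaller point: the existence in $H_i$ of two periodic orbits with distinct largest exponents also requires a genericity argument, as in the paper's proof of Theorem~\ref{t.irregular}.) As it stands, the proposal would need to be reworked along these lines to close both gaps.
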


%%%%%%%%%%%%%%%%%%%%%%%%%%%%%%%%%%%%%%%%%%%%%%%%%%%%%%%%%%%%%%%%%%%%%%%%%%%%

\subsection{Layout of the Paper}
The remainder of this paper is organized as follows:

\begin{itemize}

\item
In Section \ref{mainlemmasection} we prove
an ergodic analogue of Pugh's General Density Theorem which we call
\emph{Ma\~n\'e's Ergodic General Density Theorem}. It implies
items (i) and (ii) of Theorem \ref{theo3}.
We also prove a ``generalized specification property" satisfied by $C^1$-generic
diffeomorphisms inside homoclinic classes: this gives Theorem~\ref{t.convexsum}.
One deduces from these results the parts (a) and (c) of Theorem \ref{theo4}.

\item
In Section \ref{theo4section} we state and prove some abstract
results on ergodicity, support, and entropy of generic measures. We
show then how these abstract results yield Theorem~\ref{theo1},
item (i) of Theorem~\ref{theo2} and items (b.i), (b.ii), and
(b.iii) of Theorem \ref{theo4}.

\item
In Section \ref{lyapexponents} we control the Lyapunov exponents of the periodic measures
provided by Ma\~ n\'e's ergodic closing lemma. This implies the item (iii) of Theorem
\ref{theo3}.

\item
In Section \ref{prelimhyp} we prove Corollary~\ref{cor.bgv}
and we combine most of the previous machinery with some new ingredients in order to obtain our results
on nonuniform hyperbolicity of generic measures:
item (ii) of Theorem~\ref{theo2} and items (b.iv) and (b.v) of Theorem \ref{theo4}.

\item
In Section \ref{pesinsection} we construct an adapted
metric for the Oseledets splittings and then use it
to prove Theorem \ref{theo5}.

\item
Finally, in Section \ref{irregular} we prove Theorems \ref{irregular0} and \ref{t.irregular}.
\end{itemize}

%%%%%%%%%%%%%%%%%%%%%%%%%%%%%%%%%%%%%%%%%%%%%%%%%%%%%%%%%%%%%%%%%%%%%%%%%%%%

\paragraph{Acknowledgements.}
 The authors would like to thank the following people for useful suggestions and comments:
A. Avila, A. Baraviera, J. Bochi, F. B\'eguin, L. J. Diaz, F. Le Roux, L.Wen, J. Yang.

This paper has been prepared during visits of the authors at the Universit\'e de Paris XIII, at the Universit\'e de Bourgogne and at IMPA, which were financed by the Brazil-France Cooperation Agreement on Mathematics. The text has been finished during the Workshop on Dynamical Systems
at  the ICTP in july 2008.
% and the ANR project "DynNonHyp";
We thank these institutions for their kind hospitality.

%%%%%%%%%%%%%%%%%%%%%%%%%%%%%%%%%%%%%%%%%%%%%%%%%%%%%%%%%%%%%%%%%%%%%%%%%%% 
%%%%%%%%%%%%%%%%%%%%%%%%%%%%%%%%%%%%%%%%%%%%%%%%%%%%%%%%%%%%%%%%%%%%%%%%%%%

\section{Approximation of invariant measures by periodic orbits} \label{mainlemmasection}

\subsection{Ma\~n\'e's Ergodic General Density Theorem}

In \cite{M4} Ma\~n\'e states without proof the following fact (called Ma\~n\'e's Ergodic General Density Theorem):

\begin{theo}\label{M'sEGDT}For any $C^1$-generic diffeomorphism $f$,  the  convex hull of periodic measures
%$$Conv(\pfm) \equiv \{\la_1 \mu_{p_1} + \ldots + \la_k \mu_{p_k}: \la_i > 0, \la_1 + \ldots + \la_k = 1, \text{ and } p_i \in Per_f(M)\}$$
is  dense in $\mfm$.

More precisely, every  measure $\mu\in\mfm$ is approached in the weak topology by a measure $\nu$ which is the convex sum of finitely many periodic measures and whose support $\supp(\nu)$ is arbitrarily close to $\supp(\mu)$.
\end{theo}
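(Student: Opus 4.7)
The plan is to reduce the general statement to the ergodic case, which is handled by items (i) and (ii) of Theorem~\ref{theo3}: they furnish a residual set $\cR\subset\diff$ on which every ergodic measure of any $f\in\cR$ is simultaneously the weak limit and the Hausdorff limit of periodic measures of $f$. I will show that on the very same residual $\cR$, every $\mu\in\mfm$ is approached in the weak topology by a finite convex combination of periodic measures of $f$ whose total support is Hausdorff-close to $\supp(\mu)$.

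First, I invoke the Ergodic Decomposition Theorem to express $\mu$ as the barycenter of a probability measure $\hat{\mu}$ on the metrizable space $\mfmerg$. Standard approximation of measures on a metrizable space by finitely supported probabilities yields measures $\sum_{i=1}^{k}\alpha_i\delta_{m_i}$, with each $m_i\in\mfmerg$, that are weakly close to $\hat{\mu}$; since the evaluation map $m\mapsto\int\phi\,dm$ is weakly continuous for every $\phi\in C(M)$, passing to barycenters shows that $\nu_0:=\sum_i\alpha_i m_i$ is weakly close to $\mu$. Choosing the $m_i$ among the ergodic components of $\mu$ itself ensures $\supp(m_i)\subset\supp(\mu)$, hence $\supp(\nu_0)\subset\supp(\mu)$.

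Second, to make $\supp(\nu_0)$ also Hausdorff-\emph{cover} $\supp(\mu)$, fix $\eps>0$ and a finite $\eps$-net $\{y_1,\dots,y_N\}\subset\supp(\mu)$. For each ball $U_j=B_\eps(y_j)$, the identity $\mu(U_j)=\int \mu_x(U_j)\,d\mu(x)$ provided by the ergodic decomposition produces, since $\mu(U_j)>0$, an ergodic component $m_j'$ of $\mu$ with $\supp(m_j')\cap U_j\neq\emptyset$. I adjoin each $m_j'$ to the convex combination with a small weight $\beta_j$, taking $\sum_j\beta_j$ small enough that the weak approximation of $\mu$ is preserved (and renormalizing the $\alpha_i$). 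The resulting measure $\nu_0'$ is still a finite convex combination of ergodic components of $\mu$, is weak-close to $\mu$, and its support is Hausdorff-close to $\supp(\mu)$ since it contains a point in each $U_j$ while remaining inside $\supp(\mu)$.

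Third, I replace each ergodic $m_i$ and $m_j'$ by a periodic measure of $f$, using items (i) and (ii) of Theorem~\ref{theo3}: there exist periodic orbits $\gamma_i,\gamma_j'$ of $f$ such that $\mu_{\gamma_i}$ is weakly close to $m_i$ with $\gamma_i$ Hausdorff-close to $\supp(m_i)$, and analogously for $\gamma_j'$. The measure $\nu=\sum_i\alpha_i\mu_{\gamma_i}+\sum_j\beta_j\mu_{\gamma_j'}$ is then a convex combination of finitely many periodic measures, weak-close to $\mu$, and has $\supp(\nu)=\bigcup_i\gamma_i\cup\bigcup_j\gamma_j'$ Hausdorff-close to $\supp(\mu)$.

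The main obstacle is that weak convergence of measures and Hausdorff convergence of their supports are a priori independent, so a naive application of Choquet-type density arguments does not yield the Hausdorff conclusion. This is handled at the level of individual ergodic measures by the stronger ergodic closing input of Theorem~\ref{theo3}(i)--(ii), which couples weak and Hausdorff convergence; and at the level of $\mu$ by the explicit covering step that enriches the convex combination with ergodic components whose supports form an $\eps$-net of $\supp(\mu)$.
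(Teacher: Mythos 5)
Your argument is correct and follows essentially the same route as the paper: decompose $\mu$ into a finite convex combination of its ergodic components (the approximative Ergodic Decomposition step, with the components supported in $\supp(\mu)$), then replace each component by a periodic measure using items (i)--(ii) of Theorem~\ref{theo3}. The only difference is cosmetic: to get the covering half of the Hausdorff estimate you adjoin small-weight ergodic components meeting an $\eps$-net of $\supp(\mu)$, whereas the paper simply invokes the lower semicontinuity of $\nu\mapsto\supp(\nu)$ in the weak topology, which already forces $\supp(\nu)$ to nearly cover $\supp(\mu)$ once $\nu$ is weakly close to $\mu$.
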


We now prove a more precise result which corresponds to items (i) and (ii) of Theorem \ref{theo3}:  the
ergodic measures are approached by periodic measures in the
weak and Hausdorff senses.
In Section \ref{lyapexponents} we shall modify the
proof in order to include also the approximation of the mean
Lyapunov exponents in each bundle of the finest dominated
splitting (item (iii)).

\begin{theo}\label{ergodlemma}
Given $\mu \in \mfm$ an ergodic measure of a $C^1$-generic diffeomorphism
$f$, then for every neighborhood $\cV$ of $\mu$ in $\mfm$ and every
neighborhood $\cW$ of $\supp(\mu)$ in $\cK(M)$ there is some
periodic measure $\mu_\gamma$ of $f$ such that $\mu_\gamma \in \cV$ and
$\supp(\mu_\gamma) \in \cW$.
\end{theo}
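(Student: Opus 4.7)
The plan is to combine Ma\~n\'e's Ergodic Closing Lemma with Birkhoff's Ergodic Theorem and a genericity argument based on the Semicontinuity Lemma.

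First I would define the map $\Phi\colon \diff \to \cK(\mm \times \cK(M))$ by
\[
\Phi(f) := \overline{\{(\mu_\gamma, \gamma) : \gamma \text{ is a periodic orbit of } f\}},
\]
the closure being taken in the compact metric space $\mm \times \cK(M)$. Using persistence of hyperbolic periodic orbits together with the Kupka--Smale theorem, $\Phi$ is lower-semicontinuous at each Kupka--Smale diffeomorphism; the Semicontinuity Lemma (applied after intersecting with the residual Kupka--Smale set) then yields a residual subset $\cR \subset \diff$ at which $\Phi$ is continuous. I claim that every $f\in\cR$ satisfies the conclusion.

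Fix $f \in \cR$ and an ergodic measure $\mu$ of $f$ with support $\La := \supp(\mu)$. Three standard facts combine to give a good sequence of orbit segments of $f$ approximating $\mu$. By Birkhoff's Ergodic Theorem, on a full $\mu$-measure set of points $x$ the empirical measures $\mu_n^x := \frac 1 n \sum_{k=0}^{n-1} \delta_{f^k(x)}$ converge weakly to $\mu$. Since $\mu$ is ergodic, $\mu$-a.e.\ $x$ has forward orbit dense in $\La$, so the orbit segments $O_n(x) := \{x, f(x), \dots, f^{n-1}(x)\}$ converge to $\La$ in Hausdorff distance. Finally, by Ma\~n\'e's Ergodic Closing Lemma, $\mu$-a.e.\ $x$ is \emph{closeable}: for every $\eps > 0$ there exist $g\in\diff$ with $\|g-f\|_{C^1}<\eps$ and a $g$-periodic point $p$ whose orbit $\eps$-shadows the segment $O_{\Pi(p)}(x)$. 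Picking $x$ in the intersection of these three full-measure sets and applying the closing lemma with $\eps\to 0$ and $\Pi(p)\to \infty$ produces a sequence $g_n \to f$ in $\diff$ together with $g_n$-periodic orbits $\gamma_n$ such that $(\mu_{\gamma_n},\gamma_n) \to (\mu,\La)$ in $\mm \times \cK(M)$. Since each $(\mu_{\gamma_n}, \gamma_n)$ belongs to $\Phi(g_n)$, continuity of $\Phi$ at $f$ forces $(\mu, \La) \in \Phi(f)$. Unfolding the definition of $\Phi(f)$ as a closure, $f$ itself has periodic orbits whose associated measures and supports lie in any prescribed neighborhoods $\cV$ of $\mu$ and $\cW$ of $\La$, which is precisely the conclusion of the theorem.

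The main technical obstacle is upgrading the Ergodic Closing Lemma --- which delivers only weak approximation of $\mu$ by periodic measures of \emph{perturbations} of $f$ --- to the simultaneous weak-plus-Hausdorff approximation we need. The resolution is that the closing lemma already produces a shadowing periodic orbit of the segment $O_{\Pi(p)}(x)$; for a Birkhoff-generic $x$ this segment itself approximates $\La$ in Hausdorff metric as its length grows, so the Hausdorff convergence is automatic from shadowing, while the weak convergence follows from Birkhoff. The semicontinuity--continuity machinery then transfers the conclusion from the perturbations $g_n$ back to $f$ itself.
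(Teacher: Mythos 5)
Your overall route is the same as the paper's: the same map $\Phi(f)=\overline{\{(\mu_\gamma,\gamma):\gamma \text{ periodic for } f\}}$, the same Kupka--Smale plus Semicontinuity Lemma argument, and the same use of Birkhoff's theorem together with Ma\~n\'e's Ergodic Closing Lemma to produce, from a typical point of $\mu$, closed orbits of small perturbations that are simultaneously weak-close to $\mu$ and Hausdorff-close to $\supp(\mu)$ (this is exactly the paper's ``metric'' corollary of the closing lemma). However, there is a genuine gap at the transfer step. The Semicontinuity Lemma applied to $\Phi$ restricted to the Kupka--Smale residual set only yields a residual set of continuity points of the \emph{restriction} $\Phi|_{\cR}$; it does not give continuity of $\Phi$ on all of $\diff$ at those points, and one should not expect it: a non-generic $g$ arbitrarily $C^1$-close to $f$ may carry periodic orbits (for instance non-hyperbolic ones created by a bifurcation) with no counterpart for $f$, so $\Phi(g)$ can jump upward. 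Your final step invokes ``continuity of $\Phi$ at $f$'' along the sequence $g_n$ produced by the Ergodic Closing Lemma, and those $g_n$ have no reason whatsoever to be Kupka--Smale, so no continuity statement you have actually established applies to them.

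The missing ingredient is precisely the pair of extra perturbations in the paper's proof: once the closing lemma gives $g'$ with a periodic orbit $\gamma$ whose pair $(\mu_\gamma,\gamma)$ is close to $(\mu,\supp(\mu))$, perturb again so that $\gamma$ becomes hyperbolic, hence robust and with continuously varying continuation, and then perturb once more to land in the residual set $\cR$ while keeping the continuation of $\gamma$ and its pair inside a fixed compact neighborhood $\cZ$ of $(\mu,\supp(\mu))$. Only then can you let these generic diffeomorphisms tend to $f$ and use the continuity of $\Phi|_{\cR}$ at $f$, together with the compactness of $\cZ$, to conclude that $\Phi(f)\cap\cZ\neq\emptyset$, i.e.\ that $f$ itself has a periodic orbit with $\mu_\gamma\in\cV$ and $\gamma\in\cW$. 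With that step inserted, your argument coincides with the paper's; without it, the key passage from the perturbations back to $f$ is unsupported.
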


Just as the $C^1$-generic density of  $Per_f(M)$  in
$\Omega(f)$ follows from Pugh's Closing
Lemma \cite{Pu1}, Theorem~\ref{ergodlemma} follows
from Ma\~n\'e's Ergodic Closing Lemma \cite{M2} (discussed below).

\begin{definition}
A (recurrent) point $x$ of $f \in \diff$ is \emph{well-closable} if
given any $\eps > 0$ and any neighborhood $\cU$ of $f$ in $\diff$
there is some $g \in \cU$ such that $x \in Per_g(M)$ and moreover
$$d(f^k(x), g^k(x)) < \eps$$
\noindent for all $k\geq 0$ smaller than the period of $x$ by $g$.
\end{definition}

That is, a point $x$ is well-closable if its orbit can be closed via
a small $C^1$-perturbation in such a way that the resulting periodic
point ``shadows'' the original orbit along the periodic point's
entire orbit. Ma\~n\'e proved that almost every point of any invariant measure is
well-closable:

\begin{ergodicclos}
Given $f \in \diff$ and $\mu \in \mfm$, $\mu$-a.e. $x$ is
well-closable.
\end{ergodicclos}

Birkhoff's ergodic theorem and Ma\~n\'e's ergodic closing lemma implies:

\begin{coro}\label{ergodicclosmet}
Given $f \in \diff$ and an ergodic measure $\mu \in \mfm$, then for
any neighborhoods $\cU$ of $f$ in $\diff$ and $\cW$ of $\mu$  in
$\mm$ and  any $\varepsilon>0$  there is $g \in \cU$ having a periodic orbit
$\gamma$  such that $\mu_\gamma \in \cW$ and the Hausdorff distance between $\supp(\mu)$ and $\gamma$
is less than $\varepsilon$.
\end{coro}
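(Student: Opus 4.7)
The strategy is to combine the pointwise content of Ma\~n\'e's Ergodic Closing Lemma with Birkhoff's ergodic theorem, using the ergodicity of $\mu$ to force the approximating periodic orbit both to equidistribute near $\mu$ and to fill out $\supp(\mu)$ in the Hausdorff sense. If $\mu$ is itself supported on a single $f$-periodic orbit, the conclusion is trivial by choosing $g=f$, so I reduce to the case where $\mu$-a.e.\ point is non-periodic. I then pick a base point $x$ lying simultaneously in four $\mu$-full-measure subsets of $M$: the set $\Sigma_1$ of well-closable points (from Ma\~n\'e's lemma); the set $\Sigma_2$ of points for which $\tfrac1n\sum_{k=0}^{n-1}\delta_{f^k(x)}$ converges to $\mu$ in the weak topology (Birkhoff applied to a countable weak-dense family of continuous test functions, combined with ergodicity); the set $\Sigma_3$ of points whose forward orbit is dense in $\supp(\mu)$ (Birkhoff applied to indicators of a countable basis of $\supp(\mu)$, each of positive $\mu$-measure); and the set $\Sigma_4$ of non-$f$-periodic points.

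Next I fix a smaller weak-neighborhood $\cW'\subset\cW$ of $\mu$ and an integer $N_\ast$ large enough that for every $n\ge N_\ast$ the empirical measure $\tfrac1n\sum_{k=0}^{n-1}\delta_{f^k(x)}$ belongs to $\cW'$ and the orbit segment $\{x,f(x),\dots,f^{n-1}(x)\}$ is $\varepsilon/4$-dense in $\supp(\mu)$; both conditions are available since $x\in\Sigma_2\cap\Sigma_3$. Because $x\in\Sigma_4$, the quantity $\alpha:=\min_{1\le k\le N_\ast}d(f^k(x),x)$ is strictly positive, so I shrink $\cU$ to a sub-neighborhood $\cU'$ inside which every $g$ satisfies $d(g^k(x),f^k(x))<\alpha/2$ for $0\le k\le N_\ast$. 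Any $g\in\cU'$ for which $x$ is periodic must then have period strictly greater than $N_\ast$.

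Finally I apply well-closability at $x$ inside $\cU'$ with shadow error $\delta<\varepsilon/4$, chosen small enough, via the uniform continuity of a generating family of test functions, that any $f$-empirical measure in $\cW'$ is carried by a $\delta$-shadow into $\cW$. This yields $g\in\cU$ and a $g$-periodic orbit $\gamma$ through $x$ of period $N>N_\ast$ with $d(g^k(x),f^k(x))<\delta$ for $0\le k<N$. Then $\mu_\gamma\in\cW$ by the choice of $\delta$; on the Hausdorff side, $\gamma$ lies within $\delta$ of $\{f^k(x):0\le k<N\}\subset\supp(\mu)$, while $\supp(\mu)$ lies within $\varepsilon/4$ of the same segment by denseness, so $d_H(\gamma,\supp(\mu))\le\delta+\varepsilon/4<\varepsilon$. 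The main technical point I expect is controlling the closing period: the Ergodic Closing Lemma only delivers some period, not a lower bound, and the step above converts the freedom to choose $\cU$ small into a large period via the non-periodicity of the base point $x$. The remaining work is routine bookkeeping among the three size parameters $\cU$, $\cW$, and $\varepsilon$, which avoids circularity because $x$ is fixed once and for all before any shrinking takes place.
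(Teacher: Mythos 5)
Your argument is correct and takes essentially the same route as the paper, which states this corollary without proof beyond attributing it to Birkhoff's ergodic theorem plus Ma\~n\'e's Ergodic Closing Lemma: your write-up simply supplies the details (the full-measure choice of the base point, forcing the closing period beyond $N_\ast$ by shrinking the neighborhood of $f$, and the $\delta$/$\varepsilon$ bookkeeping), all of which check out. One micro-remark: also require $x\in\supp(\mu)$ (a full-measure condition) so that the segment $\{f^k(x)\}$ genuinely lies in $\supp(\mu)$ for the Hausdorff estimate.
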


\begin{proof}[Proof of Theorem~\ref{ergodlemma}]

We consider  $X  :=\mm \times \cK(M)$ endowed  with the product metric. The
space $\cK(\mm \times \cK(M))$ of compact subsets of $\mm \times
\cK(M)$ is a compact metric space when endowed with the Hausdorff
distance. Consider the map $\Phi: \diff \longrightarrow \cK(\mm
\times \cK(M))$, which associates to each diffeomorphism $f$  the
closure of the set of pairs $(\mu_\gamma,\gamma)$ where $\gamma$
is a periodic orbit of $f$.

Kupka-Smale Theorem asserts that there is a residual set $\cR$ of
$\diff$ such that every periodic orbit of $g\in\cR$ is
hyperbolic. Then the robustness of hyperbolic
periodic orbits implies that the map $\Phi$ is
lower-semicontinuous at $g \in \cR$. Applying the Semicontinuity
Lemma (see Section~\ref{ss.semi}) to $\Phi|_{\cR}$,
we obtain a residual subset $\cS$ of $\cR$
(and hence of $\diff$) such that every $g \in \cS$ is a continuity
point of $\Phi|_{\cR}$. We shall now show that each such continuity
point satisfies the conclusion of Lemma \ref{ergodlemma}:

Consider $g\in\cS$  and $\mu$  an
ergodic measure of $g$. Fix an open
neighborhood $\cZ_0$ of $(\mu, \supp(\mu))$ in $\cK(\mm \times
\cK(M))$. We need to prove  that there exists a pair
$(\mu_{\gamma}, \gamma)$ in ${\cZ_0}$, where $\gamma$ is a periodic orbit of $g$.
Fix now a compact neighborhood $\cZ\subset \cZ_0$ of  $(\mu, \supp(\mu))$;
it is enough to prove that $\Phi(g)\cap \cZ\neq\emptyset$.

Applying the Corollary~\ref{ergodicclosmet} to $g$, we obtain  an
arbitrarily small $C^1$-perturbation $g'$ of $g$ having a periodic orbit
$\gamma$  such that simultaneously $\mu_{\gamma}$ is
weak-close to $\mu$ and $\gamma$ is Hausdorff-close to
$\supp(\mu)$.
With another arbitrarily small $C^1$-perturbation
$g''$ we make $\gamma$ hyperbolic and hence robust, while keeping
$\mu_{\gamma}$ close to $\mu$ and $\gamma$ close to $\supp(\mu)$.
With yet another small $C^1$-perturbation $g'''$, using the
robustness of $\gamma$, we guarantee that $g''' \in \cR$ and $(\mu_\gamma,\gamma)\in \cZ\cap\Phi(g''')$.

By letting $g'''$ tends to $g$, using the continuity of  $\Phi|_\cR$ at $g$ and the compactnes of $\cZ$
one gets that $\cZ\cap \Phi(g)\neq \emptyset$ as announced.
\end{proof}

Theorem~\ref{M'sEGDT} now follows by combining Theorem \ref{ergodlemma} with the
 the following ``approximative'' version of the Ergodic
Decomposition Theorem, which is easily deduced from the standard
statement:

\begin{ergodicdec}
Given a homeomorphisms $f $ of a compact metric space $M$ and $\mu \in \mfm$, then for any neighborhood
$\cV$ of $\mu$ in $\mfm$ there are a finite set of ergodic measures
$\mu_1, \ldots, \mu_k \in \mfm$ and positive numbers $\la_1, \ldots,
\la_k$ with $\la_1 + \ldots + \la_k = 1$ such that
$$\la_1 \mu_1 + \ldots + \la_k \mu_k \in \cV.$$
\end{ergodicdec}

That is, any invariant measure may be approached by finite
combinations of its ergodic components.

\begin{proof}[Proof of Theorem~\ref{M'sEGDT}]

Let $\mu$ be an invariant measure of a $C^1$-generic diffeomorphism $f$ as
in the statement of Lemma \ref{ergodlemma}.  Fix a
neighborhood $\cV$  of $\mu$ in $\mfm$ and  a number $\varepsilon>0$.

Let $\tilde\cV$ denote $\cV\cap\cM_f(\supp(\mu))$: it is a neighborhood of $\mu$ in $\cM_f(\supp(\mu))$. By the Ergodic Decomposition
Theorem applied to $f|_{\supp(\mu)}$ and $\tilde\cV$ there is a convex combination
$$\la_1 \mu_1 + \ldots + \la_k \mu_k$$

\noindent of ergodic measures which belongs to $\cV$ and supported in $\supp(\mu)$.
Now, by Theorem~\ref{ergodlemma}, each ergodic component $\mu_i$ is weak-approached
by periodic measures $\mu_{\gamma_i}$ of $f$ whose support $\gamma_i$ is contained in the $\varepsilon$-neighborhood of $\supp(\mu_i)$ and hence of $\supp(\mu)$.

Now the convex sum $\nu = \la_1 \mu_{\gamma_1} + \ldots + \la_k \mu_{\gamma_k}$ is close to $\mu$ for the weak topology and its support is contained in the $\varepsilon$-neigborhood of $\supp(\mu)$. As the support of a measure varies lower-semicontinuously with the measure in the weak topology, we get that $\supp(\mu)$ and $\supp(\nu)$ are close in the Hausdorff distance.

\end{proof}

%%%%%%%%%%%%%%%%%%%%%%%%%%%%%%%%%%%%%%%%%%%%%%%%%%%%%%%%%%%%%%%%%%%%%%%%%%%%%%%%%%%%
\subsection{Periodic measures in homoclinic classes of $C^1$-generic diffeomorphisms}
%%%%%%%%%%%%%%%%%%%%%%%%%%%%%%%%%%%%%%%%%%%%%%%%%%%%%%%%%%%%%%%%%%%%%%%%%%%%%%%%%%%%%
\label{mlemmaingredients}

Through the use of Markov partitions, Bowen \cite{Bow}
showed that every hyperbolic basic set $\Lambda$ contains
periodic orbits with an arbitrarily prescribed itinerary
(this is known as the \emph{specification property}).
So the invariant probabilities supported in $\Lambda$
are approached in the weak topology by periodic orbits
 in $\Lambda$. An intermediary step for this
result consists in proving
that every convex sum of periodic measures in $\Lambda$
is approached by periodic orbits in $\Lambda$.
One thus defines:
\begin{definition}\label{transition}
A set of periodic points $\Sigma \subset Per_f(M)$
has the \emph{barycenter property} if, for any two points $p ,q\in  \Sigma$, any  $\la\in[0,1]$, and $\eps > 0$, there exists $x \in
\Sigma$ and pairwise disjoint sets $I, J\subset \NN\cap [0,\Pi(x))$ such that
\begin{enumerate}
\item
$\la - \eps < \frac{Card(I)}{\Pi(p)} <
\la + \eps$  and  $(1-\la) - \eps < \frac{Card(J)}{\Pi(q)} <
(1-\la) + \eps$,

\item
$d(f^m(x), f^m(p)) < \eps$ for every $ m \in I$ and
$d(f^m(x), f^m(q)) < \eps$ for every $m \in J$.
\end{enumerate}
\end{definition}

The barycenter property implies that for any two
periodic points $p,q$ in $\Sigma$ and $\lambda\in(0,1)$
 there is some periodic point $x \in \Sigma$, of very high
period, which  spends a portion
approximately equal to $\la$ of its period shadowing the orbit of
$p$ and a portion equal to $1-\la$ shadowing the orbit of $q$. As a consequence we get:

\begin{remark}\label{r.trans} If a set $\Sigma\subset\pfm$ has the barycenter property then the closure of $\{m_{\cO(p)}, p\in\Sigma\}\subset \mfm$ is convex.
\end{remark}

Consider now the set $\Sigma_p$ of periodic
points homoclinically related to a hyperbolic periodic point $p$ of an arbitrary diffeomorphism $f$.
Then $\Sigma_p$ is contained in an increasing
sequence of basic sets contained in the homoclinic class $H(p)$. For this reason, it remains
true that every convex sum of periodic measure $\mu_{\gamma_i}$ with
$\gamma_i\in \Sigma(p)$ is approached by a periodic orbit in the basic set.
From the transition property in \cite{BDP}, we thus have:

\begin{proposition} \label{trans}
For any open set $V\subset M$ and any hyperbolic periodic point
$p$ whose orbit is contained in $V$, the set $\Sigma_{V,p}$ of periodic points related to $p$ in $V$
satisfies the barycenter property.
\end{proposition}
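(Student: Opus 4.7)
The plan is to reduce the statement to the classical shadowing property in a transverse heteroclinic horseshoe built entirely inside $V$. Fix two periodic points $p_1, p_2 \in \Sigma_{V, p}$, a proportion $\la \in [0,1]$, and $\eps > 0$. By the definition of $\Sigma_{V, p}$, both $p_1$ and $p_2$ are homoclinically related to $p$ by orbits in $V$, so they are homoclinically related to each other by orbits in $V$; in particular I can choose transverse heteroclinic points $a \in W^u(\cO(p_1)) \trans W^s(\cO(p_2))$ and $b \in W^u(\cO(p_2)) \trans W^s(\cO(p_1))$ with $\cO(a) \cup \cO(b) \subset V$. I would then take a neighborhood $U$ of the compact set $\cO(p_1) \cup \cO(p_2) \cup \cO(a) \cup \cO(b)$ with $\overline{U} \subset V$. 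If $U$ is tubular and thin enough along these orbits, the standard horseshoe construction attached to a transverse heteroclinic cycle produces a transitive, uniformly hyperbolic, locally maximal set $\Gamma_0 \subset U \subset V$ containing $p_1$, $p_2$ and the chosen heteroclinic orbits, and whose periodic points are all homoclinically related to $p$ by orbits in $V$.

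On $\Gamma_0$, shadowing produces, for any admissible periodic symbolic itinerary (symbols encoding ``near $\cO(p_1)$'', ``near $\cO(p_2)$'', and the two transition blocks through $a$ and $b$ of uniformly bounded lengths $T_a$, $T_b$), a true periodic orbit of $f$. For $N$ large I would use the itinerary that follows $\cO(p_1)$ for $n_1 := \lfloor \la N \rfloor$ iterations, transits via $a$ in $T_a$ steps, follows $\cO(p_2)$ for $n_2 := \lfloor (1-\la) N \rfloor$ iterations, transits via $b$ in $T_b$ steps, and closes up. Shadowing delivers a periodic orbit $\cO(x) \subset \Gamma_0$ of period $\Pi(x) = n_1 + n_2 + T_a + T_b$; letting $I, J \subset \NN \cap [0,\Pi(x))$ be the iterates at which $x$ is $\eps$-close to $\cO(p_1)$ and $\cO(p_2)$ respectively, the prescribed density condition on $|I|$ and $|J|$ follows by choosing $N$ large compared to $\Pi(p_1)$, $\Pi(p_2)$, $T_a$, $T_b$ and $1/\eps$, while the distance condition follows by shrinking $U$.

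Finally, $x \in \Gamma_0 \subset V$, so $\cO(x) \subset V$; and because $\Gamma_0$ is a transitive hyperbolic set containing $p_1$ (hence homoclinically related to $p$ via orbits in $V$) and contained in $V$, the orbit of $x$ is homoclinically related to $p$ by orbits in $V$, whence $x \in \Sigma_{V, p}$ as required.

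The main obstacle is precisely the relative aspect: all the objects involved -- the heteroclinic connections, the neighborhood $U$, the horseshoe $\Gamma_0$, and the constructed periodic orbit -- must remain in the open set $V$. This is handled by first selecting heteroclinic orbits inside $V$ (which is possible by the definition of $\Sigma_{V, p}$) and then shrinking the tubular neighborhood $U$ so that $\overline{U} \subset V$; everything produced afterwards lies in $U$ and hence in $V$ for free. This is exactly the content of the ``transition property'' of \cite{BDP} that the proof invokes.
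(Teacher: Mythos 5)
Your proposal is correct and follows essentially the same route as the paper, which simply observes that periodic orbits homoclinically related to $p$ inside $V$ lie in a common hyperbolic basic set contained in $V$ and then invokes the specification/transition property of \cite{BDP} — exactly the heteroclinic-cycle horseshoe plus shadowing argument you spell out. The only detail worth adding is that when closing up the pseudo-orbit you should adjust the transition blocks (by a bounded amount) so that the shadowed segment near $\cO(q)$ is in phase with $f^m(q)$ at the times $m\in J$, as the definition requires closeness to $f^m(q)$ itself and not merely to the orbit $\cO(q)$.
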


Proposition~\ref{trans} does not hold a priori for the set of periodic orbits
in an homoclinic class $H(\cO)$ in particular in the case where
$H(\cO)$ contains periodic points of different indices
(which thus are not homoclinically related).
However when two hyperbolic periodic orbits $\gamma_1,\gamma_2$ of different
indices are related by a heterodimensional cycle, \cite{ABCDW} shows that
one can produce, by arbitrarily small $C^1$-perturbations, periodic orbits
which spend a prescribed proportion of time shadowing the orbit of $\gamma_i$, $i=1,2$.
Furthermore, if $\gamma_1$ and $\gamma_2$ are robustly in the same chain recurrence class,
then the new orbits also belongs to the same class.
This allows one to prove that the barycenter property holds generically:

\begin{proposition} \label{trans2} Let $f$ be a $C^1$-generic diffeomorphism and
$\cO$ a hyperbolic periodic orbit and $V\subset M$ be an open set.
Then the set  of periodic orbits contained in $V\cap H_V(\cO)$
satisfies the barycenter property.
\end{proposition}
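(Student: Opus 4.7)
The plan is to upgrade Proposition~\ref{trans} from periodic orbits homoclinically related in $V$ (necessarily sharing the index of $\cO$) to all periodic orbits contained in $V\cap H_V(\cO)$ of a $C^1$-generic $f$. The two new ingredients are the heterodimensional connecting technique of \cite{ABCDW} (to handle pairs of periodic orbits of different indices) and a semicontinuity argument in the spirit of the proof of Theorem~\ref{ergodlemma} (to transfer the conclusion back from the perturbation to $f$ itself).

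First I would reduce the statement to a countable conjunction. For each rational $\lambda\in[0,1]$ and $\epsilon>0$ and each ordered pair of hyperbolic periodic orbits $(\gamma_1,\gamma_2)$ of $f$ with orbits in $V\cap H_V(\cO)$, I require the existence of a hyperbolic periodic orbit $\gamma\subset V\cap H_V(\cO)$ fulfilling the shadowing conditions of Definition~\ref{transition}. Each such requirement is $C^1$-open: hyperbolic periodic orbits and their continuations persist under perturbation, the shadowing data involve only finitely many iterates, and inclusion in the open set $V$ is stable. Hence it suffices to establish density of each condition inside a preliminary residual set $\cR\subset\diff$ of Kupka-Smale diffeomorphisms on which $H_V(\cO)$ coincides with the relative chain recurrence class of $\cO$ in $V$ (cf.~\cite{BC}) and on which $f\mapsto H_V(\cO(f))$ is continuous (applying the Semicontinuity Lemma to a countable family of open sets $V$).

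For $f\in\cR$ I distinguish two cases. If $\gamma_1$ and $\gamma_2$ share the index of $\cO$, then genericity forces them to be homoclinically related to $\cO$ inside $V$, hence to each other, and Proposition~\ref{trans} supplies the desired $\gamma$ directly. If $\gamma_1$ and $\gamma_2$ have different indices, the fact that both lie in the relative chain recurrence class of $\cO$ combined with Hayashi's Connecting Lemma yields, by an arbitrarily small $C^1$-perturbation, a heterodimensional cycle between the continuations of $\gamma_1,\gamma_2$ whose connecting orbits remain in $V$. The perturbative device of \cite{ABCDW} then produces, after one further small perturbation $g$, a hyperbolic periodic orbit $\gamma_g\subset V$ whose itinerary shadows $\gamma_1$ a proportion $\lambda$ of its period and $\gamma_2$ a proportion $1-\lambda$ of its period, and which remains in the chain recurrence class of $\cO(g)$.

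The main obstacle is to conclude that this orbit lies in $V\cap H_V(\cO)$ for $f$ itself, not merely for the perturbation $g$. Here the restriction to $\cR$ does the work: since $\gamma_g$ is hyperbolic, it is the continuation of a hyperbolic periodic orbit $\gamma$ of $f$ (letting $g\to f$) whose orbit still lies in $V$ and whose shadowing proportions agree with $\lambda$ and $1-\lambda$ up to the slack absorbed by $\epsilon$. Continuity of $f\mapsto H_V(\cO(f))$ on $\cR$, combined with the generic identification of $H_V(\cO)$ with the relative chain recurrence class, forces $\gamma\subset H_V(\cO(f))$. A countable intersection over rational pairs $(\lambda,\epsilon)$ and over the countable list of hyperbolic periodic orbits of a Kupka-Smale $f$ then produces the residual subset on which the full barycenter property of the statement holds.
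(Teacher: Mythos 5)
Your overall route is the paper's: handle same-index pairs via Proposition~\ref{trans} together with the genericity results of \cite{BD2}, handle pairs of different indices by creating a heterodimensional cycle with the connecting lemma and invoking the construction of \cite{ABCDW}, and then transfer the conclusion from the perturbation back to the generic $f$. The transfer step, however, is where your argument genuinely fails. You claim that the hyperbolic orbit $\gamma_g$ of the perturbation $g$ is ``the continuation of a hyperbolic periodic orbit $\gamma$ of $f$ (letting $g\to f$)''. Continuations go the other way: $\gamma_g$ persists only on a neighborhood of $g$ whose size depends on $\gamma_g$ (its period and hyperbolicity strength), and as the perturbation size shrinks these orbits change and their periods blow up, so nothing forces them to persist down to $f$; indeed the whole point of using the connecting lemma is that the orbit does not exist for $f$ beforehand. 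The conclusion for $f$ has to come from a Baire argument, as in the proof of Theorem~\ref{ergodlemma}: the existence of a witness orbit is a condition that is dense and (up to a residual set) open, and $f$ is generic. Your first paragraph sets this scheme up, but the openness claim hides the real difficulty: for a witness orbit whose index differs from that of $\cO$, the inclusion $\gamma\subset H_V(\cO)$ is \emph{not} a robust condition, and continuity of $f\mapsto H_V(\cO(f))$ on a residual set does not make it one. This is precisely the extra information the paper extracts from \cite{ABCDW}: the orbit $r_{\ell,m}$ is produced together with $W^u(\cO(p))\trans W^s(r_{\ell,m})\neq\emptyset$ and $W^u(r_{\ell,m})\trans W^s(\cO(q))\neq\emptyset$, and these persistent intersections force $H(r_{\ell,m})=H(p)=H(q)=H(\cO)$ for every generic diffeomorphism in an open set near the perturbation, which is what allows the generic $f$ to already contain such orbits.

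Two further points are missing. First, the affine heterodimensional cycle of \cite[Lemma 3.4]{ABCDW} requires the two periodic points to have real eigenvalues of distinct moduli and multiplicity one; the paper first replaces $p$ by a point $p_\varepsilon$ homoclinically related to $p$, shadowing it during most of its period and having simple real spectrum, before creating the cycle. Without this reduction the linearization step you are quoting as a black box is not available. Second, the countability bookkeeping is not as innocent as your sketch suggests: a residual subset of $\diff$ cannot be defined by intersecting over ``the countable list of hyperbolic periodic orbits of a Kupka-Smale $f$'', which depends on $f$, and the statement quantifies over \emph{all} open sets $V$, an uncountable family, with the extra nuisance that periodic orbits of $H_V(\cO)$ may meet $\partial V$. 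The paper resolves this by restricting to the orbits contained in $V$ and exhausting $V$ by countably many open sets $V_n$, using $Per_f(H_V(\cO)\cap V)=\bigcup_n Per_f(H_{V_n}(\cO)\cap V_n)$; some substitute for this reduction is needed in your argument as well.
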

Notice that this proposition together with Remark~\ref{r.trans}
implies Theorem~\ref{t.convexsum}.

\begin{proof}
We first give the proof for whole homoclinic classes (i.e. when $V=M$).
According to \cite{BD2}, for every $C^1$-generic diffeomorphism $f$ and every periodic point $p,q$ of $f$ the homoclinic classes are either equal or disjoint; furthermore, if $H(p)=H(q)$ then there is an open neighborhood $\cU$ of $f$ such that for every generic $g\in \cU$ the homoclinic classes of the continuations of $p$ and $q$ for $g$ are equal; moreover if  $H(p)=H(q)$ and if $p$ and $q$ have the same index, then they are homoclinically related. Hence the barycenter property is satisfied for pairs of point of the same index in an homoclinic class.

Hence we now assume that $H(\cO)$ contains periodic points $p$ and $q$ with different
indices, and we fix some number $\lambda\in (0,1)$.
We want to prove the barycenter property for $p$, $q$ and $\lambda$.
Notice that the homoclinic classes of $p$ and $q$ are not trivial
and from~\cite{BC}, one may assume that they coincide with $H(\cO)$.
The next lemma will allow us to assume that $p$ and $q$ have all their eigenvalues real,
of different modulus, and of multiplicity equal to $1$.

\begin{lemma} Let $f$ be a $C^1$-generic diffeomorphism and $p$ be
a periodic point of $f$ whose homoclinic class is non-trivial.
Then for every $t\in(0,1)$ and $\varepsilon>0$ there is a periodic point
$p_\varepsilon$ homoclinically related with $p$,
and a segment $I=\{i,i+1,\dots,i+j\}\subset [0,\Pi(p_\varepsilon)-1]\cap \NN$  with
$1-\frac j{\Pi(p)}<\varepsilon$ such that:
\begin{itemize}
\item  for every $k\in\{0,\dots,j\}$ one has $d(f^{i+k}(p_\varepsilon),f^k(p))<\varepsilon$,
\item  the eigenvalues of $Df^{\Pi(p_\varepsilon)}(p_\varepsilon)$ are real;
have different modulus, and multiplicity equal to $1$;
\item $p$ and $p_\varepsilon$ have the same index of $p$.
\end{itemize}
\end{lemma}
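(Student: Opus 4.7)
The strategy is to combine three ingredients: a homoclinic loop construction producing a periodic orbit that shadows $p$ for almost a full period; a Franks' Lemma perturbation along the transition part of that orbit to arrange the eigenvalues; and a standard residual-set packaging to turn the perturbative statement into a $C^1$-generic one. I will fix a $C^1$-generic $f$ and a hyperbolic periodic point $p$ whose homoclinic class is non-trivial, and for each rational $\varepsilon>0$ will construct the periodic point $p_\varepsilon$.

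First, since $H(p)$ is non-trivial, $p$ admits a transverse homoclinic point $z \in W^s(\cO(p)) \pitchfork W^u(\cO(p))$. By the $\lambda$-lemma and the classical horseshoe construction near a transverse homoclinic orbit, for every $\delta>0$ I obtain a hyperbolic periodic point $p_0$, of period $\Pi(p_0) = \Pi(p) + N$ for some $N = N(\delta)$, which is homoclinically related to $p$, has the same index as $p$, and whose orbit splits into two segments: one of length $j+1$ with $j>(1-\varepsilon)\Pi(p)$ on which it $\delta$-shadows the orbit of $p$, and a transition segment of length $N$ realized by following (approximately) the homoclinic orbit of $z$. Taking $\delta\ll \varepsilon$ yields the first two conditions of the lemma for $p_0$.

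Second, I apply Franks' Lemma at a single point $y = f^{i_0}(p_0)$ chosen in the transition part of $\cO(p_0)$ (so that $y$, and a small neighborhood of it, stays away from $\cO(p)$ and from the $j$-iterate shadowing portion). Varying $Df(y)$ in a small neighborhood inside $GL(d,\R)$ corresponds, via Franks' Lemma, to a $C^1$-small perturbation $\tilde f$ of $f$ supported near $y$; the new full-period derivative $D\tilde f^{\Pi(p_0)}(p_0)$ ranges over a small open set in $GL(d,\R)$, obtained from $Df^{\Pi(p_0)}(p_0)$ by right-multiplication with a matrix close to the identity. The subset of matrices whose spectrum is real, of pairwise distinct moduli, with all eigenvalues simple is open and dense in $GL(d,\R)$, hence reachable. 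The perturbation being localized away from $\cO(p)$ and the shadowing segment, both are untouched; hyperbolicity and transverse intersections being $C^1$-robust, the unique continuation $\tilde p_0$ of $p_0$ under $\tilde f$ persists, keeps the same index, still $\delta$-shadows $\cO(p)$ on $j+1$ iterates, and remains homoclinically related to (the continuation of) $p$.

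Third, I package this as a genericity statement. For each hyperbolic periodic orbit $\cO$ of some generic reference diffeomorphism (of which there are only countably many, by Kupka--Smale), each integer bound on the period, and each rational $\varepsilon>0$, the set of $g \in \diff$ possessing a periodic point with the four prescribed properties (shadowing $\cO$ on a $(1-\varepsilon)\Pi(\cO)$-segment, same index, real--simple--distinct-modulus eigenvalues, homoclinically related to the continuation of $\cO$) is open, by robustness of each of these conditions, and dense near any diffeomorphism for which $\cO$ has non-trivial homoclinic class, by Steps~1--2. A countable intersection yields the required residual subset of $\diff$. The main difficulty is to satisfy simultaneously the shadowing, the spectral, and the homoclinic-relation constraints at the same periodic point; this is precisely what is gained by performing the Franks perturbation in the \emph{transition} portion of $\cO(p_0)$, disjoint from the shadowing arc and from the homoclinic intersections that encode the relation with $p$.
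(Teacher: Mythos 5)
There is a genuine gap in your spectral step. You claim that ``the subset of matrices whose spectrum is real, of pairwise distinct moduli, with all eigenvalues simple is open and dense in $GL(d,\RR)$, hence reachable.'' This set is open but it is \emph{not} dense: having a pair of non-real complex conjugate eigenvalues is a robust property (think of a planar rotation by $\pi/2$; every nearby matrix still has non-real spectrum). Since your Franks perturbation is supported at a \emph{single} point $y$ of the transition arc, the full-period derivative $D\tilde f^{\Pi(p_0)}(p_0)$ only moves inside a small neighborhood of $Df^{\Pi(p_0)}(p_0)$ (right multiplication by a near-identity matrix), so if $Df^{\Pi(p_0)}(p_0)$ happens to have complex eigenvalues you cannot make the spectrum real, and in general you also cannot split a robustly multiple modulus this way. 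The conclusion of this step therefore does not follow.

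The paper avoids this by exploiting the fact that the shadowing periodic orbits can be taken of \emph{arbitrarily large period}, and by perturbing the derivative at many points along the whole orbit rather than at one point: composing a large number of $\varepsilon$-small modifications (e.g.\ small rotations at each iterate) produces a total effect of arbitrary size on the period map, which is what allows one to make all eigenvalues real, simple, and of pairwise distinct moduli while keeping the moduli (hence the index) essentially unchanged; this is the standard perturbation technique along long periodic orbits (as in \cite{BDP}, \cite{BGV}). Your first step (horseshoe/shadowing of the homoclinic orbit, same index, homoclinic relation to $p$) and your third step (openness of the four conditions plus genericity, so that the generic $f$ itself already carries such orbits with no perturbation needed) are in line with the paper's argument; it is only the single-point Franks perturbation that must be replaced by a perturbation distributed along an orbit of sufficiently large period.
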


\begin{proof} The proof consists in considering periodic orbits of very large period
shadowing the  orbit of an homoclinic intersection associated to $p$.
An arbitrarily small perturbation of the derivative of such orbits produces eigenvalues
that are real, have different modulus and multiplicity $1$.
As this property is an open property, the genericity assumption implies that
$f$ already exhibits the announced periodic orbits, without needing perturbations.
\end{proof}
Notice that if, for every $\varepsilon>0$, the barycenter property is satisfied for
$p_\varepsilon$, $q$ and $\lambda$, then it also holds for  $p$, $q$ and $\lambda$.
Hence we may assume that the points $p$ and $q$ have different indices and
have all their eigenvalues real, of different modulus, and of multiplicity equal to $1$.
For fixing the idea one assume $\dim W^s(p)<\dim W^s(q)$.
Furthermore $H(p)=H(q)$ from~\cite{BD2}
and this property persists for any $C^1$-generic diffeomorphism close to $f$.

The end of the proof now follows from \cite{ABCDW};
however there is no precise statement in this paper of the result we need.
For this reason we recall here the steps of the proof.
First  by using Hayashi connecting lemma, one creates an heterodimensional cycle
associated to the points $p$ and $q$: one has
$W^u(\cO(p))\trans W^s(\cO(q))\neq\emptyset$
and $W^s(\cO(p))\cap W^u(\cO(q))\neq \emptyset$.
Then \cite[Lemma 3.4]{ABCDW} linearizes the heterodimensional cycle producing
an \emph{affine heterodimensional cycle}.
This heterodimensional cycle \cite[Section 3.2]{ABCDW}
produces, for every large $\ell,m$, a periodic point $r_{\ell,m}$
whose orbit spends exactly $\ell.\Pi(p)$ times shadowing the orbit of $p$
and $m.\Pi(q)$ times shadowing the orbit of $q$
and an bounded time outside a small neighborhood of these two orbits.
So, we can choose $\ell$ and $m$ such that the orbit of $r_{\ell,m}$
spends a proportion of time close to the orbit of $p$ which is almost $\lambda$
and a proportion of time close to the orbit of $q$ which is almost $1-\lambda$.
Furthermore, one has  $W^u(\cO(p))\trans W^s(r_{\ell,m})\neq\emptyset$ and-
$W^u(r_{\ell,m})\trans W^s(\cO(q))\neq\emptyset$.
One deduces that, for any $C^1$-generic diffeomorphism in an open set close to $f$,
$H(r_{\ell,m})=H(p)=H(q)=H(\cO)$.
Since $f$ is generic, the class $H(\cO)$ for $f$ already contained periodic orbits
that satisfy the barycenter property.
\bigskip

In the proof for relative homoclinic classes,
there are several new difficulties: the relative homoclinic class of $\cO$
in an open set $V$  is the closure of periodic orbits in $V$ related to $\cO$
by orbits in $V$, but some periodic orbit may also be contained in the closure of $V$.
Furthermore the set of open sets is not countable:
hence the set of relative homoclinic classes is not countable, leading to some difficulty
for performing an argument of genericity. We solved these difficulties by considering
the set $Per_f(H_V(\cO)\cap V)$ of  periodic orbits of $H_V(\cO)$
which are contained in $V$.
Then, if $V$ is an increasing union of open subsets $\cdots \subset V_n\subset
V_{n+1}\subset \cdots$ then
$$Per_f(H_V(\cO)\cap V)=\bigcup_n Per_f(H_{V_n}(\cO)\cap V_n).$$
This argument allows us to deal with a countable family of open sets $V_i$, $i\in\NN$.
One now argues in a very similar way as before
(just taking care that all the orbits we use are contained in the open set $V$).
\end{proof}

%%%%%%%%%%%%%%%%%%%%%%%%%%%%%%%%%%%%%%%%%%%%%%%%%%%%%%%%%%%%%%%%%%%%%%%%%%%%%%%%%%%%%%
\subsection{Approximation of measures in isolated transitive sets}
%%%%%%%%%%%%%%%%%%%%%%%%%%%%%%%%%%%%%%%%%%%%%%%%%%%%%%%%%%%%%%%%%%%%%%%%%%%%%%%%%%%%%%%%
One of the main remaining open question for $C^1$-generic diffeomorphisms is
\begin{question} Given a $C^1$-generic diffeomorphism $f\in\diff$ and a homoclinic class
$H(p)$ of $f$, is $\cP_f(H(p))$ dense in $\mfhp$? In other words, is every measure supported on $H(p)$ approached by periodic orbits inside the class?
\end{question}
The fact that we are not able to answer to this question is the main reason for which we will restrict the study to isolated transitive set classes, in this section.

An argument by Bonatti-Diaz \cite{BD2}, based on Hayashi
Connecting Lemma, shows that isolated transitive sets $\La$
of $C^1$-generic diffeomorphisms are  relative homoclinic classes:

\begin{theorem}\cite{BD2}\label{bd2}
Given $\La$ an isolated transitive set of a $C^1$-generic diffeomorphism
$f$ and let $V$ be an isolating open neighborhood of $\La$, then
$$\La = H_V(\cO)$$
\noindent for some periodic orbit $\cO\subset \La$ of $f$.
\end{theorem}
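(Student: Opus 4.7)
The inclusion $H_V(\mathcal{O}) \subset \Lambda$ is immediate for any hyperbolic periodic orbit $\mathcal{O} \subset \Lambda$: every point defining $H_V(\mathcal{O})$ has full orbit contained in $V$, and by isolation $\Lambda = \bigcap_{k\in\ZZ} f^k(V)$ contains every $f$-invariant subset of $V$. The substance of the proof is the reverse inclusion $\Lambda \subset H_V(\mathcal{O})$, and the plan is to combine Pugh's General Density Theorem, Hayashi's Connecting Lemma, and the Semicontinuity Lemma from Section~\ref{ss.semi}.

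First, I would dispose of the trivial case (where $\Lambda$ is a single periodic orbit and the statement is tautological) and select the periodic orbit $\mathcal{O}$. By Pugh's Closing Lemma together with the Kupka-Smale Theorem, there is a residual set $\cR_1 \subset \diff$ on which periodic orbits are hyperbolic and dense in $\Omega(f) \supset \Lambda$; taking $f \in \cR_1$ with $\Lambda$ non-trivial yields a hyperbolic periodic orbit $\mathcal{O} \subset \Lambda$, which I fix.

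Second, for an arbitrary $x \in \Lambda$ and $\varepsilon > 0$, I would apply Hayashi's Connecting Lemma twice to produce an arbitrarily small $C^1$-perturbation $g$ of $f$ admitting a genuine orbit that leaves a neighborhood of $\mathcal{O}_g$ along $W^u(\mathcal{O}_g)$, passes within $\varepsilon$ of $x$, and returns along $W^s(\mathcal{O}_g)$. The pseudo-orbit to be shadowed can be chosen inside $\Lambda \subset V$ by using transitivity of $\Lambda$ (which yields points whose forward and backward orbits both accumulate on $\mathcal{O}$ and pass near $x$); keeping the support of the perturbation small and using the isolation of $\Lambda$, one arranges that the resulting genuine orbit also stays in $V$. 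A further Kupka-Smale perturbation makes this homoclinic intersection transverse, and hence robust under further $C^1$-perturbations, so for $g$ in an open set of diffeomorphisms $C^1$-close to $f$ one has $H_V(\mathcal{O}_g) \cap B_\varepsilon(x) \neq \emptyset$.

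Third, I would close the genericity loop using the Semicontinuity Lemma. The map $\Phi: g \mapsto H_V(\mathcal{O}_g) \in \cK(M)$, defined on the open set of diffeomorphisms for which the hyperbolic continuation $\mathcal{O}_g$ survives, is lower-semicontinuous (since transverse homoclinic intersections persist), so it admits a residual set $\cR_2$ of continuity points; similarly, $g \mapsto \Lambda_g := \bigcap_{k\in\ZZ} g^k(V)$ is upper-semicontinuous with residual set $\cR_3$ of continuity points. Choosing a countable dense subset $\{x_n\} \subset M$ and a countable base of radii $\varepsilon$, I would promote the perturbative statement of the previous step into a countable family of dense open conditions, whose intersection with $\cR_1 \cap \cR_2 \cap \cR_3$ forms a residual set $\cR$. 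On $\cR$, continuity of both $\Phi$ and $\Lambda_\cdot$ turns the perturbative conclusion into the pointwise statement $x_n \in H_V(\mathcal{O}_f)$ whenever $x_n \in \Lambda_f$, and density of $\{x_n\}$ together with closedness of $H_V(\mathcal{O}_f)$ yields $\Lambda_f \subset H_V(\mathcal{O}_f)$.

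The main obstacle is the correct use of Hayashi's Connecting Lemma. One must (i) guarantee that the connecting orbit remains inside $V$ so that the intersection contributes to the \emph{relative} class $H_V(\mathcal{O})$, not merely to $H(\mathcal{O})$, and (ii) realize both connections (from $\mathcal{O}$ to near $x$ and from near $x$ back to $\mathcal{O}$) by a single small $C^1$-perturbation. Point (i) follows from the isolation of $\Lambda$, which forces long-enough shadowings of pseudo-orbits built inside $\Lambda$ to stay in $V$ whenever the perturbation is small and supported in $V$; point (ii) is a standard application of the connecting lemma for pseudo-orbits concatenated along the pseudo-orbit $\mathcal{O} \rightsquigarrow x \rightsquigarrow \mathcal{O}$.
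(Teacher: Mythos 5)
First, a point of comparison: the paper does not prove Theorem~\ref{bd2} at all --- it is quoted from \cite{BD2}, whose strategy (Hayashi's connecting lemma to create homoclinic points through prescribed points of $\La$, plus a semicontinuity/genericity argument as in Section~\ref{ss.semi}) is indeed the one you are reconstructing. So your overall route is the right one, but as written it has a genuine gap at the very first step. From the $C^1$-generic density of periodic points in $\Omega(f)$ you conclude that $\La$ itself contains a hyperbolic periodic orbit. This does not follow: density of $Per(f)$ in $\Omega(f)\supset\La$ only gives periodic \emph{points} accumulating on $\La$, and nothing forces their whole orbits to stay in the isolating neighborhood $V$, which is exactly what is needed to place them in $\La=\bigcap_{k\in\ZZ}f^k(V)$ (Pugh's closing lemma controls the closed orbit only near the recurrent point, not along its whole period). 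Producing a periodic orbit \emph{inside} an isolated transitive set is a nontrivial part of the theorem; it requires either Ma\~n\'e's ergodic closing lemma (whose shadowing along the entire period keeps the closed orbit in a small neighborhood of $\La$, hence in $V$, for a perturbation $g$) or a connecting-lemma construction with the same control, followed by the hyperbolic-continuation/semicontinuity trick to bring the orbit back to the generic $f$ itself --- compare the proof of Theorem~\ref{ergodlemma} in this paper, which is built precisely to overcome this difficulty.

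A second, softer weakness is the genericity bookkeeping in your third step. The residual set you describe is manufactured from the data $(\La,V,\cO)$, which themselves depend on the generic $f$ you want to conclude about; a correct argument must quantify first over a \emph{countable} family of candidate objects (a countable basis of isolating neighborhoods $V$, periodic orbits handled through their hyperbolic continuations on the open sets where they persist) and formulate the dense open conditions so that they are vacuous when $x_n\notin\La_g$ or when no continuation exists. This is exactly the uncountability issue for relative homoclinic classes that the paper itself flags and resolves in the proof of Proposition~\ref{trans2}, and it also hides the need to use $\overline V$ (or a compact $\overline U\subset V$) to get upper semicontinuity of $g\mapsto\La_g$. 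Your treatment of the connecting lemma itself (keeping the connecting orbit in $V$ because it is made of segments of true orbits near $\La$ and of the local invariant manifolds of $\cO$, with perturbation supports small and inside $V$; performing the two connections with disjoint supports) is essentially correct and is the heart of the argument in \cite{BD2}, but the two points above need to be repaired before the sketch becomes a proof.
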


\begin{proof}[Proof of Theorem~\ref{theo4} part (a)]
Let $\La$ be an isolated transitive set of a $C^1$-generic diffeomorphism
$f$ and $\mu$ be an invariant measure supported on $\La$. According to Theorem~\ref{M'sEGDT},
the  measure $\mu\in\mfm$ is approached in the weak topology by a measure
$\nu$ which is the convex sum of finitely many periodic measures and
whose support $\supp(\nu)$ is arbitrarily close to $\supp(\mu)$.

On the other hand $\La$ is the relative homoclinic class $H_V(p)$ of some periodic point $p \in
\La$ in some isolating open neighborhood $V$; as the support of $\nu$ is close to the support of $\mu$
one gets that $\supp(\nu)$ is contained in $V$.  As $V$ is an isolated neighborhood of $\La$
the measure $\nu$ is in fact supported in $\La$: hence it is the
convex sum of finitely many periodic measures in $\pfla$.

As $\La$ is compact and contained in $V$ it does not
contain periodic orbits on the boundary of $V$. Hence
Theorem~\ref{t.convexsum} implies
that the closure of the set $\pfla$ is convex;
this implies that $\nu$ belongs to the closure of $\pfla$,
ending the proof.
\end{proof}

\begin{proof}[Proof of Theorem \ref{theo4} part (c)]
Let $\La$ be a (non-trivial) isolated transitive set of a $C^1$-generic
diffeomorphism $f$. By Theorem \ref{bd2}, every periodic point $p$
in $\La$ has homoclinic class equal to $\La$, and hence exhibits
some transverse homoclinic orbit. This implies that there are
hyperbolic horseshoes $\Gamma$ arbitrarily close to this homoclinic orbit.
The points in $\Gamma$ spend arbitrarily large
fractions of their orbits shadowing the orbit $\cO(p)$ of $p$ as
closely as we want.

Every horseshoe $\Gamma$ supports  ergodic measures $\nu$
which have positive entropy. Since each such $\nu$ is
supported in a hyperbolic horseshoe, it follows that $\nu$ is also
uniformly hyperbolic. Now, because the periodic horseshoe $\Gamma$
shadows $\cO(p)$ along most of its orbit, it follows that $\nu$
 is
close in the weak topology to the periodic measure $\mu_{\cO(p)}$
associated to the orbit of $p$.

Since by the Theorem~\ref{theo4} part (a) the set of periodic measures $\pfla$ is
dense in $\mfla$, then it follows that the set of ergodic,
positive-entropy, and uniformly hyperbolic measures $\nu$ as above
is also dense in $\mfla$.
\end{proof}

%%%%%%%%%%%%%%%%%%%%%%%%%%%%%%%%%%%%%%%%%%%%%%%%%%%%%%%%%%%%%%%%%%%%%%
\section{Ergodicity, Support, Entropy}\label{theo4section}
%%%%%%%%%%%%%%%%%%%%%%%%%%%%%%%%%%%%%%%%%%%%%%%%%%%%%%%%%%%%%%%%%%%%%%%

In this section we prove three ``abstract'' results on generic
measures, dealing respectively with their ergodicity, support, and
entropy. These results, together with the Theorem \ref{theo4} part
(a), respectively imply items (b.i), (b.ii), and (b.iii) of
Theorem \ref{theo4}. We also use these general results
to obtain Theorem \ref{theo1} and item (i) of Theorem \ref{theo2}.

\medskip

\subsection{Ergodicity}\label{b.isection}

Let $\La$ be an isolated transitive set of a $C^1$-generic
diffeomorphism $f$. By Theorem \ref{theo4} (a), ergodicity is
a dense property in $\mfla$, since periodic measures are ergodic.

Since dense $G_\delta$ sets are residual, we need only prove that
ergodicity is $G_\delta$ in the weak topology in order to conclude
that ergodicity is generic in $\mfla$. And indeed we have the
following general result (which implies in particular item (b.i) of
Theorem \ref{theo4}):

\begin{proposition} \label{propb.i}
Let $X$ be a compact metric space, $A: X \rightarrow X$ be a
continuous map, and $\mathcal{M}_A(X)$ denote the space of
$A$-invariant Borel probabilities on $X$, endowed with the weak
topology. Then ergodicity is a $G_{\delta}$ property in
$\mathcal{M}_A(X)$. In particular, if there exists a dense subset
$\cD$ of $\mathcal{M}_A(X)$ which consists of ergodic measures, then
every generic measure in $\mathcal{M}_A(X)$ is ergodic.
\end{proposition}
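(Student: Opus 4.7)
\textbf{Plan for Proposition~\ref{propb.i}.} The strategy is the standard one: characterize ergodicity as extremality in the compact convex set $\mx$, and show that the \emph{non}-ergodic measures form an $F_\sigma$. The second sentence of the proposition is then automatic from the Baire category theorem, since a dense $G_\delta$ in a Baire space is residual.

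First I would recall that, because $X$ is compact metric, $C(X)$ is separable and hence $\mx$, seen inside the dual of $C(X)$ via Riesz representation, is a compact metrizable convex set; fix a compatible metric $d$ coming from the weak topology (for instance $d(\mu,\nu)=\sum_i 2^{-i}|\!\int \psi_i\, d\mu-\!\int \psi_i\, d\nu|$ for a countable dense sequence $(\psi_i)$ in the unit ball of $C(X)$). Recall also the classical characterization: $\mu\in\mx$ is ergodic if and only if it is an extreme point of $\mx$.

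Next I would exhibit the non-ergodic set as a countable union of closed sets. For each integer $n\geq 2$, set
\[
E_n:=\bigl\{\mu\in\mx:\ \exists\, \mu_1,\mu_2\in\mx,\ \exists\, t\in[1/n,1-1/n],\ d(\mu_1,\mu_2)\geq 1/n\ \text{and}\ \mu=t\mu_1+(1-t)\mu_2\bigr\}.
\]
Clearly $\mu\in\mx$ is non-ergodic if and only if $\mu\in\bigcup_{n\geq 2}E_n$, since any non-trivial convex decomposition has coefficients bounded away from $0$ and $1$ and components at positive distance, once $n$ is chosen large enough. The key verification is that each $E_n$ is closed: if $\mu^{(k)}=t_k\mu_1^{(k)}+(1-t_k)\mu_2^{(k)}\to \mu$ with $t_k\in[1/n,1-1/n]$ and $d(\mu_1^{(k)},\mu_2^{(k)})\geq 1/n$, the compactness of $\mx$ and of $[1/n,1-1/n]$ lets us extract subsequences converging to $\mu_1,\mu_2\in\mx$ and $t\in[1/n,1-1/n]$; the weak-topology continuity of convex combinations gives $\mu=t\mu_1+(1-t)\mu_2$, while the continuity of $d$ preserves the separation $d(\mu_1,\mu_2)\geq 1/n$, so $\mu\in E_n$.

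Hence the set of ergodic measures is $\bigcap_{n\geq 2}(\mx\setminus E_n)$, a countable intersection of open subsets of $\mx$, i.e.\ a $G_\delta$. For the second assertion, $\mx$ is a compact metric space, hence a Baire space; if a dense subset $\cD$ of $\mx$ consists of ergodic measures, then the ergodic measures are a dense $G_\delta$ in $\mx$, therefore a residual subset, which is exactly the statement that a generic measure in $\mx$ is ergodic. The only step that requires any care is the closedness of $E_n$, but this is a routine application of sequential compactness; no further obstacle arises.
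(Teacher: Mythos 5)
Your proof is correct, but it follows a genuinely different route from the paper. You characterize ergodicity as extremality in the compact convex metrizable set $\mx$ and then run the standard argument that the non-extreme points form an $F_\sigma$: the sets $E_n$ of measures admitting a decomposition $t\mu_1+(1-t)\mu_2$ with $t\in[1/n,1-1/n]$ and $d(\mu_1,\mu_2)\geq 1/n$ are closed by sequential compactness of $\mx\times\mx\times[1/n,1-1/n]$ and continuity of convex combination in the weak topology, so the ergodic measures are a $G_\delta$; the ``in particular'' clause is then Baire category. The paper instead never mentions extreme points: it fixes a countable dense family $(\psi_k)$ in $C^0(X)$, shows for each $\psi$ that the set of measures $\mu$ for which the Birkhoff averages of $\psi$ converge $\mu$-a.e.\ to $\int\psi\,d\mu$ is a $G_\delta$ (written as $\bigcap_\ell\bigcup_n$ of sets that are open because the relevant integral depends continuously on $\mu$), intersects over $k$, and concludes ergodicity by an approximation argument. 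The trade-off: your argument outsources the ergodic theory to the classical fact that ergodic measures are exactly the extreme points of $\mx$ (a Radon--Nikodym argument), after which the topology is a general statement about extreme points of compact metrizable convex sets; the paper's argument stays in explicitly ergodic-theoretic terms, leaning on Birkhoff's theorem and a density argument in $C^0(X)$, and produces the open sets directly from integral inequalities. Both are complete modulo standard facts, and your closedness verification of $E_n$ -- the only delicate step in your route -- is carried out correctly.
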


\begin{proof}
Let $\psi \in C^0(X)$ be a continuous real-valued function on $X$.
The set
$$\mathcal{M}_{A, \psi}^{erg}(X) := \left\{{\mu \in \mathcal{M}_A(X): \int \psi \; d{\mu} = \lim_{k \to +\infty} \frac{1}{k} \; \sum_{j =1}^{k} \psi(A^j(x)) \text{ for $\mu$-a.e. $x$}}\right\}$$
\noindent of measures which are ``ergodic with respect to $\psi$''
is given by

$$\mathcal{M}_{A, \psi}^{erg}(X) = \bigcap_{\ell \in \NN} \; \bigcup_{n \in \NN} \;
\left\{\mu \in \mx: \int\|\frac{1}{n} \sum_{j =1}^{n} \psi(f
A^j(x)) - \int \psi \; d{\mu}\| \; d{\mu(x)} <
\frac{1}{\ell}\right\}.$$
In particular $\mathcal{M}_{A,
\psi}^{erg}(X)$ is a $G_\delta$ set: the integral in the
right-hand side of the bracket varies continuously with the
measure $\mu$, and so the set defined within the brackets is open
in $\mx$; this shows that $\mathcal{M}_{A, \psi}^{erg}(X)$ is a
countable intersection of open sets.

Now let $\{\psi_k\}_{k \in \NN}$ be a countable dense subset of
$C^0(X)$. By the argument above, for each $k \in \NN$ there is
some $G_{\delta}$ subset $\cS_k$ of $\mx$ consisting of measures
which are ergodic with respect to $\psi_k$. The measures $\mu$
which belong to the residual subset $\cS$ of $\mx$ obtained by
intersecting the $\cS_k$'s are precisely the measures which are
simultaneously ergodic with respect to every $\psi_k$. Using
standard approximation arguments one can show that such $\mu$ are
ergodic with respect to \emph{any} $\psi \in C^0(X)$, and hence  are
ergodic.
\end{proof}

\begin{remark}
Proposition \ref{propb.i} implies in particular that the space
$\mflerg$ of ergodic measures of a diffeomorphism $f$ is a Baire
space when endowed with the weak topology. Indeed, any $G_\delta$
subset $\cA$ of a compact metric space is Baire, since $\cA$ is
residual in $\over{\cA}$.
\end{remark}

\subsection{Full Support}\label{b.iisection}

Given $\La$ an isolated transitive set of a $C^1$-generic
diffeomorphism $f$, then $\La$ is a homoclinic class, and hence
has a dense subset $\perfla$ of periodic points. This last fact
suffices to prove that generic measures on $\La$ have full support
(item (b.ii) of Theorem \ref{theo4}), as the following general
result shows:

\begin{proposition}\label{propb.ii}
Let $X$ be a compact metric space, $A: X \rightarrow X$ be a
continuous map, and $\mathcal{M}_A(X)$ denote the space of
$A$-invariant Borel probabilities on $X$, endowed with the weak
topology. Then every generic measure $\mu$ in $\mathcal{M}_A(X)$
satisfies
$$\supp(\mu) = \bigcup_{\nu \in \mathcal{M}_A(X)} \; \supp(\nu).$$
In particular, if the set of periodic points of $A$ is dense in $X$,
then every generic $\mu$ satisfies $\supp(\mu) = X$.
\end{proposition}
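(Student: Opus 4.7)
The plan is to show that the support map $\mu\mapsto \supp(\mu)$ from $\mathcal{M}_A(X)$ into $\cK(X)$ is lower-semicontinuous, then apply the Semicontinuity Lemma, and finally identify the conclusion at each continuity point via a straightforward convex perturbation. Let $K := \bigcup_{\nu\in\mathcal{M}_A(X)}\supp(\nu)$; by definition $\supp(\mu)\subseteq K$ for every invariant $\mu$, so it suffices to establish the reverse inclusion on a residual subset.

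First, I would verify lower-semicontinuity of $\supp$. If $\mu_n\to \mu$ weakly in $\mathcal{M}_A(X)$ and $U$ is an open set meeting $\supp(\mu)$, then $\mu(U)>0$, so by the portmanteau theorem $\liminf_n \mu_n(U)\geq \mu(U)>0$. Hence $\supp(\mu_n)\cap U\neq\emptyset$ for $n$ large, which is precisely the lower-semicontinuity condition for $\supp\colon \mathcal{M}_A(X)\to \cK(X)$. The Semicontinuity Lemma then furnishes a residual subset $\cR\subseteq \mathcal{M}_A(X)$ of continuity points of $\supp$ (continuous now with respect to the Hausdorff distance on $\cK(X)$).

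The key step is to show that $\supp(\mu)=K$ for every $\mu\in\cR$. Given such $\mu$ and any $\nu\in \mathcal{M}_A(X)$, consider the one-parameter family
\[
\mu_t := (1-t)\mu + t\nu, \qquad t\in(0,1).
\]
Each $\mu_t$ is invariant, $\mu_t\to \mu$ weakly as $t\to 0$, and, crucially, $\supp(\mu_t)=\supp(\mu)\cup \supp(\nu)$ for \emph{every} $t\in(0,1)$. By continuity of $\supp$ at $\mu$, one concludes
\[
\supp(\mu)=\lim_{t\to 0}\supp(\mu_t)=\supp(\mu)\cup\supp(\nu),
\]
so $\supp(\nu)\subseteq \supp(\mu)$. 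Since $\nu$ was arbitrary, this gives $K\subseteq \supp(\mu)$, and combining with the trivial inclusion yields the desired equality.

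For the final assertion, assume the set of periodic points of $A$ is dense in $X$. Each periodic orbit $\cO(p)$ carries the invariant measure $\mu_{\cO(p)}$ whose support contains $p$, so $K$ contains every periodic point. Since $\supp(\mu)=K$ is closed for $\mu\in\cR$ and contains the dense set of periodic points, one gets $K=X$, hence $\supp(\mu)=X$ throughout $\cR$. I do not expect any serious obstacle: the whole argument reduces to the elementary observation that $\supp\bigl((1-t)\mu+t\nu\bigr)$ does not depend on $t\in(0,1)$, which is precisely what converts continuity of $\supp$ at $\mu$ into a maximality statement about its value there.
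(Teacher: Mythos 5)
Your proof is correct and follows essentially the same route as the paper: lower-semicontinuity of $\mu\mapsto\supp(\mu)$, the Semicontinuity Lemma, and the observation that $\supp((1-t)\mu+t\nu)=\supp(\mu)\cup\supp(\nu)$ converges to $\supp(\mu)$ at a continuity point, forcing $\supp(\nu)\subseteq\supp(\mu)$. The only difference is that you spell out the portmanteau argument and the final density-of-periodic-points step, which the paper leaves implicit.
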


\begin{proof}
Consider the map
$$\Phi: \mx \rightarrow \cK(X)$$
$\hspace{200pt} \mu \; \; \; \mapsto \supp(\mu).$

It is easy to see that $\Phi$ is lower-semicontinuous. By the
Semicontinuity Lemma, there is a residual subset $\cS$ of $\mx$
which consists of continuity points of $\Phi$. The following claim
then concludes the proof:

\begin{claim*}
Given $\mu \in \mx$ a continuity point of $\Phi$, then $\supp(\mu) =
\bigcup_{\nu \in \mathcal{M}_A(X)} \; \supp(\nu)$.
\end{claim*}

Let us now prove the claim. One considers any measure $\nu\in
\mathcal{M}_A(X)$. The measures $(1 - \la) \mu + \la \, \nu$
converge to $\mu$ as $\la$ goes to zero, and hence their supports,
which equal $\supp(\mu) \cup \supp(\nu)$, converge to
$\supp(\mu)$. This implies that $\supp(\nu)$ is contained in
$\supp(\mu)$ and concludes the proof of the claim.
\end{proof}

\subsection{Zero Entropy}\label{b.iiisection}

The next abstract result shall allow
us to prove item (b.iii) of Theorem \ref{theo4}, that is, that
generic measures of an isolated transitive set have zero entropy:

\begin{proposition}\label{propb.iii}
Let $X$ be a compact metric space, $A: X \rightarrow X$ be a
continuous map, and $\mathcal{M}_A(X)$ denote the space of
$A$-invariant Borel probabilities on $X$, endowed with the weak
topology. Assume that there exists a sequence of measurable finite
partitions $\{\cP_k\}_{k \in \NN}$ of $X$ such that
\begin{itemize}

\item[1)] the partition $\cP_{k + 1}$ is finer than $\cP_k$ for
every $k \in \NN$;

\item[2)]
the product $\bigvee_{k \in \NN} \; \cP_k$ is the Borel
$\sigma$-algebra of $X$.
\end{itemize}

Assume also that there is a dense subset $\cD$ of $\cM_A(X)$ such that every
$\mu \in \cD$ satisfies $\mu(\partial \cP_k) = 0$ and $h(\mu, \cP_k)
= 0$, for every $k \in \NN$. Then there is a residual subset $\cS$
of $\cM_A(X)$ such that every $\mu \in \cS$ satisfies $h(\mu) = 0$.

\end{proposition}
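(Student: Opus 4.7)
The plan is to use Kolmogorov--Sinai together with the boundary-vanishing hypothesis to make the partition entropy functionals locally continuous, then exploit density of $\cD$ to get open dense subsets on which $h(\mu,\cP_k)$ is uniformly small.

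First, since $\cP_{k+1}$ refines $\cP_k$ and $\bigvee_k \cP_k$ generates the Borel $\sigma$-algebra, the Kolmogorov--Sinai theorem gives $h(\mu)=\lim_{k\to\infty} h(\mu,\cP_k)=\sup_k h(\mu,\cP_k)$ for every $\mu\in\cM_A(X)$. Consequently $\{\mu:h(\mu)=0\}=\bigcap_{k,\ell\in\NN}\{\mu:h(\mu,\cP_k)<1/\ell\}$, and so it suffices to show that for every $k,\ell\in\NN$ the set
\[
  \cU_{k,\ell}:=\{\mu\in\cM_A(X):h(\mu,\cP_k)<1/\ell\}
\]
contains a dense open subset of $\cM_A(X)$; intersecting countably many such sets yields the required residual $\cS$.

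Next, I would use that $h(\mu,\cP_k)=\inf_n h_n(\mu,\cP_k)$, where $h_n(\mu,\cP_k):=\tfrac{1}{n}H_\mu(\cP_k^n)$ and $\cP_k^n:=\bigvee_{j=0}^{n-1}A^{-j}\cP_k$. Fix $\mu^*\in\cD$. Since $h(\mu^*,\cP_k)=0$, there exists $n=n(\mu^*,k,\ell)$ with $h_n(\mu^*,\cP_k)<1/\ell$. By hypothesis $\mu^*(\partial\cP_k)=0$, and by $A$-invariance $\mu^*(A^{-j}\partial\cP_k)=0$ for each $j$; hence $\mu^*(\partial P)=0$ for every atom $P$ of $\cP_k^n$. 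Since weak convergence $\nu_m\to\mu^*$ implies $\nu_m(P)\to\mu^*(P)$ for every Borel $P$ with $\mu^*(\partial P)=0$, the map $\nu\mapsto H_\nu(\cP_k^n)=-\sum_{P\in\cP_k^n}\nu(P)\log\nu(P)$ is continuous at $\mu^*$. Therefore there is a weak-open neighborhood $W(\mu^*)$ of $\mu^*$ on which $h_n(\cdot,\cP_k)<1/\ell$, whence $W(\mu^*)\subset\cU_{k,\ell}$.

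Finally, the set $\bigcup_{\mu^*\in\cD}W(\mu^*)$ is open, contained in $\cU_{k,\ell}$, and dense (because it contains $\cD$, which is dense in $\cM_A(X)$ by hypothesis). Thus $\cU_{k,\ell}$ contains a dense open set, and taking $\cS:=\bigcap_{k,\ell}\cU_{k,\ell}$ produces a residual subset of $\cM_A(X)$ on which $h(\mu,\cP_k)=0$ for all $k$, i.e.\ $h(\mu)=0$. The main delicate point is the continuity step: one must know that the boundary of every atom of the refined partition $\cP_k^n$ has $\mu^*$-measure zero, which is where the hypothesis $\mu^*(\partial\cP_k)=0$ together with $A$-invariance enters in an essential way, since for a general $\mu$ neither $H_\mu(\cP_k^n)$ nor $h(\mu,\cP_k)$ is continuous or even upper semicontinuous in the weak topology.
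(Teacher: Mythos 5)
Your proof is correct and follows essentially the same route as the paper's: Kolmogorov--Sinai reduces the problem to the partition entropies $h(\cdot,\cP_k)$, the hypothesis $\mu(\partial\cP_k)=0$ gives (upper semi)continuity of these functionals at the measures of $\cD$, and density of $\cD$ together with a Baire-category argument yields the residual set. The only difference is cosmetic: you verify the continuity step explicitly, via the finite-time entropies $\frac1n H_\nu\bigl(\bigvee_{j=0}^{n-1}A^{-j}\cP_k\bigr)$ and weak convergence on sets with $\mu^*$-null boundary, where the paper simply invokes upper semicontinuity of $\mu\mapsto h(\mu,\cP_k)$ at boundary-null measures.
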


\begin{proof}
By the Kolmogorov-Sinai theorem given any $\mu \in \mx$ then the
entropy $h(\mu)$ of $\mu$ is equal to $sup\;_{k \in \NN} \; \{h(\mu,
\cP_k')\}$.

By assumption, given $k \in \NN$ and $\nu \in \cD$ then
$\nu(\partial \cP_k) = 0$. Thus $\nu$ is a point of
upper-semicontinuity for the map
$$\Theta_k: \mx \rightarrow \RR$$
$\hspace{230pt} \mu \mapsto h(\mu, \cP_k).$

Since $\Theta_k(\nu) = 0$ at every $\nu \in \cD$, it
follows that every $\nu$ is in fact a continuity point of
$\Theta_k$. Since $\cD$ is dense in $\cM_A(X)$, these conditions imply that there
exists a residual subset $\cS_k$ of $\mx$ consisting of measures
$\mu$ such that $h(\mu, \cP_k) = 0$.

Setting $\cS := \bigcap_{k \in \NN} \cS_k$ we obtain a residual
subset of $\mx$ which consists of measures $\mu$ such that $h(\mu,
\cP_k) = 0$ for every $k \in \NN$, and hence which by the
Kolmogorov-Sinai theorem satisfy $h(\mu) = 0$.
\end{proof}

We may now prove item (b.iii) of Theorem \ref{theo4}:

\begin{corollary}
Given $\La$ an isolated transitive set of a $C^1$-generic
diffeomorphism $f$, then there is a residual subset $\cS$ of
$\mfla$ such that every $\mu \in \cS$ has zero entropy.
\end{corollary}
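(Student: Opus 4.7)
The plan is to apply Proposition~\ref{propb.iii} to the compact metric space $X=\La$ equipped with the restriction $A=f|_\La$, taking as the distinguished dense set $\cD$ the family $\pfla$ of periodic measures supported in $\La$. The density of $\cD$ in $\mfla$ is precisely Theorem~\ref{theo4} part (a), so what remains is to produce a sequence of finite measurable partitions $\{\cP_k\}_{k\in\NN}$ of $\La$ which refine one another, whose product generates the Borel $\sigma$-algebra, and which satisfy $\mu(\partial\cP_k)=0$ and $h(\mu,\cP_k)=0$ for every periodic measure $\mu\in\pfla$ and every $k$.

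I would first construct the partitions. The set $\perfla$ of periodic points of $f$ inside $\La$ is countable, so enumerate it as $\{p_i\}_{i\in\NN}$. For each $k$, cover $\La$ by finitely many open balls of radius approximately $1/k$ centered at a finite $1/k$-dense subset of $\La$. By slightly perturbing each radius inside a small interval, one can arrange that each resulting sphere avoids every $p_i$: for fixed center and fixed $i$, the set of bad radii is at most countable, and countably many such conditions can be avoided simultaneously. Taking the common refinement with $\cP_{k-1}$, one obtains a finite measurable partition $\cP_k$ of diameter at most $1/k$, finer than $\cP_{k-1}$, whose boundary is disjoint from $\perfla$. Since $\mathrm{diam}(\cP_k)\to 0$, the $\sigma$-algebra generated by $\bigvee_{k\in\NN}\cP_k$ is the full Borel $\sigma$-algebra of $\La$.

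Verification of the hypotheses of Proposition~\ref{propb.iii} is then immediate. For any periodic orbit $\gamma\subset\La$, the measure $\mu_\gamma$ is concentrated on finitely many periodic points, all of which lie outside $\partial\cP_k$ by construction; hence $\mu_\gamma(\partial\cP_k)=0$. Since $\mu_\gamma$ is supported on a finite invariant set, its Kolmogorov--Sinai entropy vanishes, so $h(\mu_\gamma,\cP_k)\le h(\mu_\gamma)=0$ for every $k$. Proposition~\ref{propb.iii} therefore furnishes a residual subset $\cS\subset\mfla$ on which $h(\mu)=0$, which is the desired conclusion.

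The only step requiring genuine care is the partition construction: one must verify both that a generic choice of radii yields spheres avoiding a prescribed countable set, and that the successive refinements can be organized so as to preserve that avoidance at every step while still shrinking in diameter. I do not expect any deeper obstacle, since the rest of the argument is a direct application of the abstract machinery already set up in Section~\ref{theo4section}.
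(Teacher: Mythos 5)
Your argument is correct and follows essentially the same route as the paper: density of periodic measures from Theorem~\ref{theo4} part (a), a nested sequence of finite generating partitions whose boundaries avoid $\perfla$ (so periodic measures give them zero mass and have $h(\mu_\gamma,\cP_k)=0$), and then Proposition~\ref{propb.iii}. The only difference is cosmetic: you build the partitions directly on $\La$ from balls with generically chosen radii, while the paper takes partitions of $M$ by codimension-zero submanifolds with boundaries avoiding $\perfla$ and intersects them with $\La$; your version in fact supplies the existence argument the paper leaves implicit (note that countability of $\perfla$ is available here because $f$ is $C^1$-generic, hence Kupka--Smale).
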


\begin{proof}
From Theorem~\ref{theo4}, part (a), there is a dense
subset $\cD$ of $\mfla$ which consists of periodic measures.

Let now $\{\cP_k\}_{k \in \NN}$ be a sequence of finite partitions
of $M$ into zero-codimension submanifolds of $M$ and their
boundaries such that:

\begin{itemize}

\item[1)]
$\partial \cP_k \cap \perfla = \empty$ for all $k \in \NN$;

\item[2)]
the partition $\cP_{k + 1}$ is finer than $\cP_k$ for every $k \in
\NN$;

\item[3)]
the product $\bigvee_{k \in \NN} \; \cP_k$ is the Borel
$\sigma$-algebra of $M$.
\end{itemize}

Then the intersection of each of the partitions $\cP_k$ with $\La$
yields a sequence of partitions $\{\cP_k'\}_{k \in \NN}$ of $\La$
which satisfy conditions (1)-(3) above (replacing $M$ by $\La$ in
condition (3)).

Clearly this sequence of partitions $\{\cP_k'\}_{k \in \NN}$
satisfies the hypotheses of Proposition \ref{propb.iii} above, with
$X = \La$ and $\cD$ the set of periodic measures supported in $\La$.
So there is a residual subset of
$\mfla$ consisting of measures with zero entropy.
\end{proof}

We can also use Proposition \ref{propb.iii} to prove that generic
measures of $C^1$-generic diffeomorphisms have zero entropy.
Indeed, by Theorem~\ref{M'sEGDT}, given a $C^1$-generic diffeomorphism $f$ then
the set of finite convex combinations of periodic measures of $f$
is dense in $\mfla$. Moreover, given a sequence of partition
$\{\cP_k\}_{k \in \NN}$ of $M$ as in the proof of Proposition
\ref{propb.iii}, then each such combination satisfies the
hypotheses of Proposition \ref{propb.iii} above. So we obtain
the item (i) of Theorem~\ref{theo2}.

\begin{corollary}
For any $C^1$-generic diffeomorphism $f$, every generic measure
$\mu$ of $f$ satisfies $h(\mu) = 0$.
\end{corollary}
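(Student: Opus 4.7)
The plan is to apply Proposition~\ref{propb.iii} with $X=M$ and with $\cD$ taken to be the set of finite convex combinations of periodic measures of $f$. Density of $\cD$ in $\mfm$ is exactly Theorem~\ref{M'sEGDT}. Since entropy is affine on the simplex of invariant measures and each periodic measure is supported on a finite orbit (hence has zero entropy), every $\mu\in\cD$ satisfies $h(\mu)=0$ and therefore $h(\mu,\cP)=0$ for any finite measurable partition $\cP$.

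It remains to exhibit a sequence of partitions $\{\cP_k\}_{k\in\NN}$ of $M$ satisfying conditions (1) and (2) of Proposition~\ref{propb.iii} and such that $\mu(\partial\cP_k)=0$ for every $\mu\in\cD$ and every $k$. Here I use the genericity of $f$: as $f$ is in particular Kupka--Smale, the set $\perfm$ is countable. The plan is to build the $\cP_k$ inductively as finite partitions of $M$ into zero-codimension submanifolds with piecewise smooth boundaries of diameter tending to $0$ as $k\to\infty$, with $\cP_{k+1}$ refining $\cP_k$, and with the additional requirement that $\partial\cP_k\cap \perfm=\emptyset$. The last requirement is achieved by a standard general position / small perturbation argument: at each step, only countably many obstructions must be avoided, so the boundary pieces can be shifted slightly to miss $\perfm$. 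Conditions (1) and (2) of Proposition~\ref{propb.iii} follow by construction (refinement and vanishing diameter), and since every periodic measure $\mu_\gamma$ is supported on $\perfm\subset M\setminus\partial\cP_k$, we obtain $\mu_\gamma(\partial\cP_k)=0$; by convexity, $\mu(\partial\cP_k)=0$ for every $\mu\in\cD$.

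With these hypotheses verified, Proposition~\ref{propb.iii} yields a residual subset $\cS\subset\mfm$ of measures $\mu$ with $h(\mu)=0$, which is the desired conclusion.

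The only real obstacle in this plan is the partition construction, and specifically ensuring simultaneously that the diameters shrink to zero, that the partitions are nested, and that the boundaries avoid the countable set $\perfm$; this is handled by induction and a general position argument, exactly as in the analogous partition construction already used in the proof for isolated transitive sets earlier in this section.
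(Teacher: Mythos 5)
Your proposal is correct and follows essentially the same route as the paper: apply Proposition~\ref{propb.iii} with $X=M$, taking $\cD$ to be the finite convex combinations of periodic measures (dense by Theorem~\ref{M'sEGDT}), together with a nested sequence of finite partitions with shrinking diameters whose boundaries avoid the (countable, since $f$ is Kupka--Smale) set of periodic points, exactly as in the partition construction used for the isolated transitive set case. Your extra remarks on affinity of entropy and the general-position argument for the boundaries only make explicit what the paper leaves implicit.
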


\subsection{Ergodic measures whose support fills a homoclinic class}\label{theo1section}

Given an open set $V\subset M$
and a hyperbolic periodic point $p$ with orbit contained in $V$,
recall that $\Sigma_{V,p}$
denotes the set of periodic orbits heteroclinically related to (the
orbit of) $p$ by orbits contained in $V$.
Let $\cM(\Sigma_{V,p})$ denote the set of periodic measures
associated to orbits in $\Sigma_{V,p}$ and let $\cC\cM(\Sigma_{V,p})$ denote
the closure (in the weak topology) of the convex hull of
$\cM(\Sigma_{V,p})$.
We may now state:

\begin{theo1'}
Let $H_V(p)$ be a homoclinic class of some $f \in \diff$. Then there
is a residual subset $\cS$ of $\cC\cM(\Sigma_{V,p})$ such that every
$\mu \in \cS$ is ergodic, satisfies $\supp(\mu) = H_V(p)$, and has
zero entropy.
\end{theo1'}

In particular, at least one such measure exists, implying Theorem
\ref{theo1}. So it turns out that the proof of Theorem \ref{theo1}
-- whose statement includes no genericity conditions at all --
ultimately relies on generic arguments on the space of measures
supported in $\Sigma_{V,p}$; this is a good illustration of the capacity
of genericity arguments to yield non-generic results.

\begin{proof}[Proof of Theorem \ref{theo1}']
By Proposition \ref{trans}, the set $\cM(\Sigma_{V,p})$ of periodic
measures associated to orbits in $\Sigma_{V,p}$ constitutes a dense
subset of $\cms$. That is, we have that $\cms =
\over{\cM(\Sigma_{V,p})}$.

Now, each element of $\cM(\Sigma_{V,p})$ is ergodic and so it follows by
Lemma \ref{propb.i} that there is some residual subset $\cS_1$ of
$\cms$ such that every $\mu \in \cS_1$ is ergodic. Lemma
\ref{propb.ii} implies that there is some residual subset $\cS_2$ of
$\cms$ such that the support of every $\mu \in \cS_2$ coincides with
$\over{\Sigma_{V,p}} = H_V(p)$. And by Lemma \ref{propb.iii} there is some
residual subset $\cS_3$ of $\cms$ such that every $\mu \in \cS_3$
has zero entropy. Set $\cS := \cS_1 \cap \cS_2 \cap \cS_3$ and
we are done.
\end{proof}

%%%%%%%%%%%%%%%%%%%%%%%%%%%%%%%%%%%%%%%%%%%%%%%%%%%%%%%%%%%%%%%%%%%%%%%%%%%%%%%%%%%%
\section{Approximation of Lyapunov Exponents by Periodic Orbits}\label{lyapexponents}
%%%%%%%%%%%%%%%%%%%%%%%%%%%%%%%%%%%%%%%%%%%%%%%%%%%%%%%%%%%%%%%%%%%%%%%%%%%%%%%%%%%%
One deduces Theorem~\ref{theo3} from the following perturbative result:

\begin{prop}\label{p.Lyap}
Let $\mu$ be an ergodic invariant probability measure of a
diffeomorphism $f$ of a compact manifold $M$. Fix a
$C^1$-neighborhood $\cU$ of $f$, a neighborhood $\cV$ of $\mu$ in
the space of probability measures with the weak topology, a
Hausdorff-neighborhood $\cK$ of the support of $\mu$, and a
neighborhood $O$ of $L(\mu)$ in $\RR^d$. Then there is $g\in\cU$
and a periodic orbit $\gamma$ of $g$ such that the Dirac measure
$\mu_\gamma$ associated to $\gamma$ belongs to $\cV$, its support
belongs to $\cK$, and its Lyapunov vector $L(\mu_\gamma)$ belongs
to $O$.
\end{prop}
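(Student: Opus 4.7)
The plan is to strengthen Ma\~n\'e's Ergodic Closing Lemma by adding control over the Lyapunov exponents of the periodic orbit that it produces. The conditions $\mu_\gamma\in\cV$ and $\supp(\mu_\gamma)\in\cK$ already follow from the classical closing lemma combined with Birkhoff's theorem (compare Corollary~\ref{ergodicclosmet}); the novelty is the requirement $L(\mu_\gamma)\in O$.

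Using the multiplicative ergodic theorem, Birkhoff's ergodic theorem and Lusin's theorem, I would first select a compact set $X\subset \supp(\mu)$ of positive $\mu$-measure and an integer $N_0$ such that every $x\in X$ satisfies: \emph{(a)} the Birkhoff averages of every function in a countable dense family of $C^0(M)$ are within $\varepsilon$ of their $\mu$-integrals for every $N\geq N_0$; \emph{(b)} the Oseledets filtration $V_1(x)\subset\cdots\subset V_d(x)=T_xM$ exists, the Lyapunov exponents at $x$ equal $L(\mu)$, and the singular values of $D_xf^N$ satisfy $|\tfrac{1}{N}\log\sigma_i(D_xf^N)-\lambda_i|<\varepsilon$ for all $N\geq N_0$; \emph{(c)} the map $x\mapsto V_\bullet(x)$ is continuous on $X$. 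By Poincar\'e recurrence inside $X$, I then choose $x\in X$ and $N\geq N_0$ with $f^N(x)\in X$ and $d(f^N(x),x)$ as small as desired, and apply the perturbation technique underlying Ma\~n\'e's Ergodic Closing Lemma to this return to obtain $g\in\cU$ and a periodic point $p$ of period $N$ with $d(g^k(p),f^k(x))<\varepsilon$ for $0\leq k<N$. Property (a) along the shadowing orbit gives $\mu_\gamma\in\cV$, and the shadowing itself gives $\gamma\in\cK$.

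The heart of the argument is showing $L(\mu_\gamma)\in O$. Because $g$ is $C^1$-close to $f$ and $Df$ is uniformly continuous on $M$, the operator $Dg^N(p)$ is close (multiplicatively, up to a factor that can be made as near to $1$ as desired) to $D_xf^N$, viewed as an endomorphism of $T_xM$ via the close-to-identity identification $T_{f^N(x)}M\simeq T_xM$ coming from the closing. By (b), the singular values of this endomorphism are within $e^{N\varepsilon}$ of $e^{N\lambda_i}$. The delicate point is that the Lyapunov exponents of $\gamma$ are log \emph{eigenvalue moduli} of $Dg^N(p)$ divided by $N$, not log singular values. Here (c) enters: the proximity of $f^N(x)$ to $x$ inside $X$ forces $V_\bullet(f^N(x))\approx V_\bullet(x)$, and since $D_xf^N$ sends $V_i(x)$ onto $V_i(f^N(x))$, the transported endomorphism approximately preserves the filtration $V_\bullet(x)$. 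An endomorphism which approximately preserves a filtration has eigenvalue moduli close to the expansion factors on its successive quotients, which by (b) are within $e^{N\varepsilon}$ of $e^{N\lambda_i}$; taking logs and dividing by $N$ then yields $L(\mu_\gamma)\in O$.

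The main obstacle is this final filtration-preservation estimate: since the expansion rates on successive quotients can differ by as much as $e^{N(\lambda_d-\lambda_1)}$, off-diagonal perturbations of the upper-triangular form that look harmless in operator norm could in principle move eigenvalue moduli substantially. One must therefore calibrate the $C^1$-size of the perturbation, the return distance $d(f^N(x),x)$, and the Lusin modulus of continuity in (c) to the scale $e^{N\varepsilon}$, so that all errors remain multiplicatively small after normalization by $e^{N\lambda_i}$. Once these parameter choices are made coherently, the proof becomes a disciplined refinement of Ma\~n\'e's original closing procedure.
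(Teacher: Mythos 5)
Your outline goes wrong at the step you yourself call the heart of the argument: the claim that, because $g$ is $C^1$-close to $f$ and the periodic orbit shadows the segment $x,\dots,f^{N}(x)$, the map $Dg^{N}(p)$ is multiplicatively close to $D_xf^{N}$, with all singular values within a factor $e^{N\varepsilon}$. This is false in general. The closing perturbation has a \emph{fixed} $C^1$-size (dictated by $\cU$ and by the geometry of the recurrence; it cannot be taken exponentially small in $N$), so along the orbit of period $N\to\infty$ you are comparing two products of $N$ matrices whose factors differ by a small but fixed amount. Such per-step errors can transfer components between Oseledets directions with different expansion rates, and the transferred component is then amplified by a factor of order $e^{(N-m)(\lambda_j-\lambda_i)}$; in the absence of a dominated splitting (which is not assumed here) this changes the exponents of the product by a definite amount, no matter how small the per-step error --- this is exactly the Ma\~n\'e--Bochi mechanism used elsewhere to \emph{destroy} exponents by small perturbations. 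You acknowledge the difficulty in your last paragraph, but the proposed cure (``calibrate the $C^1$-size of the perturbation\dots to the scale $e^{N\varepsilon}$'') is not available: the closing lemma does not produce a closing with a perturbation whose size shrinks with the period, and the period is not a parameter you may send to infinity while the allowed perturbation shrinks. (A smaller but related looseness: Poincar\'e recurrence of $x$ into the Lusin set $X$ does not by itself let you close the orbit with a small perturbation, nor does Ma\~n\'e's lemma let you prescribe the period to be your chosen return time.)

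The paper closes precisely this gap with an extra ingredient you are missing: after applying the ergodic closing lemma, it uses Franks' Lemma to modify the perturbed diffeomorphism along the (already closed) periodic orbit so that its derivative at each point of the orbit is, in suitable charts, \emph{exactly} $Df(f^t(x))$ --- so the derivative cocycle along the periodic orbit coincides with that of $f$ along the original segment, and no error accumulates at all. A second application of Franks' Lemma composes the last step with a small isometry $P_n$ chosen so that the images $Df^{t_n}(x)\,E_{i,j}$ of the sums of Oseledets spaces have inclination bounded by a constant $C$ independent of $n$; a cone-field argument (using the uniform Oseledets estimates and subexponential angle decay at the regular point $x$) then pins each group of eigenvalue moduli of $P_n\circ Df^{t_n}(x)$ near $e^{t_n\lambda_i}$ with the right multiplicities. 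Your Lusin-continuity device for matching the flag at the return point with the flag at $x$ is a sensible substitute for the role played by $P_n$, but without the Franks step the comparison between $Dg^{N}(p)$ and $D_xf^{N}$ on which everything else rests does not hold, so the proof as proposed does not go through.
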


\begin{proof}[Proof of Theorem \ref{theo3}]
The proof is similar to the proof of Theorem~\ref{ergodlemma}.
Note first that it is enough to prove the Theorem restricted to
a small $C^1$-neighborhood $\cU$ of an arbitrary diffeomorphism $f_0\in \diff$.
In particular, one may assume that $\log \|Df\|$ and $\log \|Df^{-1}\|$
are bounded by some constant $S>0$ for any $f\in \cU$.

Let $X$ be the space of triples $(\mu,K,L)$ where $\mu$ is a
probability measure on $M$, $K\subset M$ is a compact set, and
$L\in [-S,S]^d$, endowed with the product topology of the weak
topology on the probability measures, the Hausdorff topology on
the compact subspaces of $M$, and the usual topology on $\RR^d$.

To any periodic orbit $\gamma$ of a diffeomorphism $f$ we
associate a triple $x_\gamma = (\mu_\gamma,\gamma, L(\mu_\gamma))$.
We denote by $X_f$ the closure of the set $\{x_\gamma, \gamma\in Per(f)\}$.
This is a compact subset of $X$, and hence an element of the space $\cK(X)$ of compact subsets of $X$
endowed with the Hausdorff topology.

One easily verifies that the map $f\mapsto X_f$ is lower
semi-continuous on the set of Kupka-Smale diffeomorphisms, which
is residual in $\cU$. As a consequence, this map is continuous
on a residual subset $\cR\cap \cU$ of the set of  Kupka-Smale
diffeomorphisms, hence of $\cU$.

Consider $f\in \cR\cap \cU$ and $\mu$ an ergodic probability measure of
$f$. Proposition~\ref{p.Lyap} allows us to create a periodic
orbit $\gamma$ such that $x_\gamma$ is arbitrarily close to $(\mu,\supp(\mu),L(\mu))$; a small
perturbation makes this periodic orbit hyperbolic, and hence
persistent by perturbations; a new small perturbation yields a
Kupka-Smale diffeomorphism. Since $f$ is a continuity
point of $g\mapsto X_g$ in the set of Kupka-Smale diffeomorphisms, one has shown that
$(\mu,\supp(\mu),L(\mu))$ belongs to $X_f$, which implies the theorem.
\end{proof}

\subsection{Approximation by perturbation: proof of Proposition~\ref{p.Lyap}}
We fix an ergodic measure $\mu$ of a diffeomorphism $f$.
Let $\lambda_1<\cdots<\lambda_k$ be the Lyapunov exponents of $\mu$
and for every $i$ let $d_i$ be the multiplicity of the Lyapunov
exponent $\lambda_i$.

We consider a regular point $x$ for $\mu$ in the following sense:

\begin{itemize}

\item The probability measures $\frac 1n \sum_0^{n-1}\delta_{f^i(x)}$ and
$\frac 1n \sum_{0}^{n-1}\delta_{f^{-i}(x)}$ converge to $\mu$
as $n\to +\infty$.

\item $x$ has well-defined Lyapunov exponents and its exponents are
those of $\mu$. Moreover there is a splitting $T_xM=E_1\oplus \cdots \oplus
E_k$, such that:

\begin{itemize}
\item $dim(E_i)=d_i$;

\item the number $\frac1n \log(\|Df^n(u)\|)$ converges uniformly to
$\lambda_i$ on the set of unit vectors $u$ of $E_i$ as  $n$ tends to
$+ \infty$;

\item the angle between between the Lyapunov spaces $Df^n(E_i)$ and $Df^n(E_j)$ decreases at most subexponentially: $$\lim_{n \to +\infty}\frac1n\log \sin (\angle{E_i}{E_j})=0.$$
\end{itemize}

\item $x$ is well closable: for any $C^1$-neighborhood $\cU$ of $f$, any $\varepsilon>0$   any $N>0$ there is $n>N$ and $g\in \cU$ such that $x$ is periodic of period $n$ for $g$ and $d(g^i(x),f^i(x))<\varepsilon$ for $i\in \{1,\dots,n\}$.
\end{itemize}

The set of regular points for $\mu$ has full measure for $\mu$,
according to the Birkhoff ergodic theorem, the Oseledets
subadditive theorem, and Ma\~n\'e's ergodic closing lemma. In
particular, such a point $x$ exists.
Fix a local chart at $x$ such that $E_i$ coincides with the space
$$E_i=\{0\}^{\sum_1^{i-1}d_{\ell}}\times \RR^{d_i}\times
\{0\}^{\sum_{i+1}^k d_{\ell}}$$
and a Riemannian metric on $M$ which coincides with the Euclidian metrics on this local chart. For
$i\leq j$ we denote  $E_{i,j}=E_i\oplus \cdots \oplus E_j$.

Given  a number $C>0$ and two linear subspaces $E,F\subset T_x M$
having the same dimension, we will say that \emph{the inclination
of $F$ with respect to $E$ is less than $C$} if $F$ is transverse
to the orthogonal space $E^\perp$ and if $F$ is the graph of a
linear map $\varphi \colon E\to E^\perp$ of norm bounded by $C$.
\medskip

We divide the proof of Proposition~\ref{p.Lyap} into two main steps
stated now.
In the first step we build the perturbation, and in the second
step we verify the announced properties.

\begin{lemma}\label{l.perturbation} For every $C^1$-neighborhood $\cU$ of $f$, and any $\varepsilon>0$ there are:
\begin{itemize}
\item a number $C>0$,
\item a sequence $(\varepsilon_n)$ of positive numbers with $\lim_{n\to\infty} \varepsilon_n =0$,
\item a sequence of integers $t_n\to+\infty$,
\item a sequence of linear isometries $P_n\in O(\RR,d)$  such that
$\|P_n-Id\|<\varepsilon$,
\item a sequence of diffeomorphisms $f_n\in \cU$,
\end{itemize}
 with the following properties:
\begin{itemize}
\item[a)] The point $x$ is periodic of period $t_n$ for $f_n$.
\item[b)] The distance $d(f^t(x), f_n^t(x))$ remains bounded by $\varepsilon_n$ for $t\in\{0,\dots,t_n\}$.
In particular the point $f^{t_n}(x)$ belongs to the local chart we fixed  at $x=f^{t_n}_n(x)$.
This allows us to consider the derivative $Df^{t_n}(x)$ as an element of $GL(\RR,d)$.
\item[c)] The expression of $Df_n^{t_n}(x)$ in the local coordinates at $x$ coincides with $P_n\circ Df^{t_n}(x)$.
\item[d)] For every $i\leq j\in\{1,\dots,k\}$ the  inclination of $Df_n^{t_n}(x).E_{i,j}$ with respect to $E_{i,j}$ is less than $C$.
\end{itemize}
\end{lemma}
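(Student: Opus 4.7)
The plan is to combine Ma\~n\'e's Ergodic Closing Lemma (applicable since $x$ is well-closable) with a Franks-type perturbation of the derivative along the closed orbit, while choosing the return times $t_n$ so that the Oseledets subspaces at $f^{t_n}(x)$ are close to those at $x$. First, using Lusin's theorem I would fix a compact set $K\subset M$ of positive measure on which each of the measurable maps $y\mapsto E_{i,j}(y)$ is continuous. Birkhoff's theorem applied at the regular point $x$ guarantees that the density of visits of its forward orbit to $K$ is $\mu(K)>0$. Combined with well-closability, this extracts integers $t_n\to+\infty$, numbers $\varepsilon_n\to 0$, and diffeomorphisms $g_n\in \cU$ such that $g_n^{t_n}(x)=x$, $d(f^t(x),g_n^t(x))<\varepsilon_n$ for every $0\le t\le t_n$, and $f^{t_n}(x)\in K$ converges to $x$. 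This already gives properties (a) and (b) for $g_n$.

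Next I would observe that property (d) already holds at the level of $Df^{t_n}(x)$, before any derivative correction. Since the Oseledets splitting is $Df$-invariant one has $Df^{t_n}(x)\cdot E_{i,j}=E_{i,j}(f^{t_n}(x))$; by continuity of the splitting on $K$ and the fact that $f^{t_n}(x)\in K$ converges to $x$, this subspace tends to $E_{i,j}(x)=E_{i,j}$ in the Grassmannian, so its inclination with respect to $E_{i,j}$ in the fixed chart is bounded by some constant $C_0$ independent of $n$. To then achieve (c), I would apply Franks' Lemma at a single point of the $g_n$-orbit of $x$ (for instance at $g_n^{t_n-1}(x)$): a $C^1$-small, $C^0$-tiny supported perturbation permits the composition of $Dg_n^{t_n}(x)$ in the chart with any prescribed orthogonal matrix $P_n$ satisfying $\|P_n-Id\|<\varepsilon$, while leaving the orbit unchanged and preserving the shadowing bound of (b). The resulting $f_n\in\cU$ satisfies (a), (b) and (c); since $\|P_n-Id\|<\varepsilon$, post-composition by $P_n$ distorts inclinations by at most $O(\varepsilon)$, so (d) persists with a constant $C:=C_0+1$.

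The main obstacle is the compatibility of the two localized perturbations near the closing point: both the closing-lemma step (from $f$ to $g_n$) and the Franks correction (from $g_n$ to $f_n$) act on neighborhoods of the orbit and must fit simultaneously inside the prescribed neighborhood $\cU$. I would handle this by first fixing the Franks scale as a function of $\varepsilon$ and of the uniform bound $S$ on $\log\|Df^{\pm 1}\|$, and only then selecting $t_n$ so large that $f^{t_n}(x)$ is close enough to $x$ for the closing perturbation to fit strictly below that scale while still sending $f^{t_n}(x)$ exactly to $x$. This order of quantifiers ensures that both perturbations remain in $\cU$, that the shadowing error $\varepsilon_n\to 0$ is preserved, and that the derivative $Df_n^{t_n}(x)$ in the chart is exactly $P_n\circ Df^{t_n}(x)$ as required.
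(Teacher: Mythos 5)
There is a genuine gap, in fact two. First, your plan hinges on choosing the closing times $t_n$ to be return times of the orbit to a Lusin set $K$ on which the Oseledets splitting is continuous, so that $Df^{t_n}(x)\cdot E_{i,j}=E_{i,j}(f^{t_n}(x))$ is automatically close to $E_{i,j}$ and item (d) comes for free. But well-closability (Ma\~n\'e's Ergodic Closing Lemma) gives no control whatsoever on the period of the closed orbit: the statement only provides \emph{some} $g\in\cU$ and \emph{some} period $n>N$ with the shadowing estimate; you cannot force the closing to occur at a time when $f^{t_n}(x)$ lies in $K$. Since the Oseledets splitting is merely measurable, the position of $Df^{t_n}(x)\cdot E_{i,j}$ relative to $E_{i,j}$ is therefore uncontrolled, and your claimed constant $C_0$ does not exist. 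This is precisely the difficulty the isometries $P_n$ in the statement are designed to resolve: the paper proves, by induction on the number of subspaces, that for any finite family of pairs of subspaces of matching dimensions there is an orthogonal $P$ with $\|P-\mathrm{Id}\|<\eta$ putting each image at inclination $\leq C$ over the corresponding $E_{i,j}$, and then realizes the composition with $P_n$ by a Franks perturbation; taking $P_n=\mathrm{Id}$, as your scheme implicitly would, cannot work.

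Second, your construction does not actually yield item (c). After the closing step the derivative of $g_n$ along the periodic orbit is $Dg_n$, not $Df$ along the shadowed segment; even though $g_n$ is $C^1$-close to $f$, the product $Dg_n^{t_n}(x)$ over $t_n\to+\infty$ steps can differ drastically from $Df^{t_n}(x)$, since per-step errors compound exponentially in $t_n$. A Franks correction at the single point $g_n^{t_n-1}(x)$ only produces $Df_n^{t_n}(x)=P_n\circ Dg_n^{t_n}(x)$, whereas (c) demands exactly $P_n\circ Df^{t_n}(x)$; this exact identity (isometry times the true cocycle of $f$ along the orbit of the regular point $x$) is what makes the cone estimates of Lemma~\ref{l.Lyap} go through. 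The paper obtains it by applying Franks' Lemma along the \emph{entire} periodic orbit, using the finite atlas $\{V_i,W_i\}$ and the uniform constants $\varepsilon_0$ to replace, in charts, $Dh_n$ at each orbit point by $Df$ at the corresponding shadowed point, and only afterwards performs the second, one-step Franks perturbation inserting $P_n$. Your proposal is missing both of these ingredients.
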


As $x$ is a regular point of $\mu$ one gets that the Dirac
measures along the (periodic) orbits of $x$ for $f_n$ converge
weakly to $\mu$, and that the orbits themselves converge to the
support of $\mu$ in the Hausdorff topology as $n\to \infty$. Then
we conclude the proof of Proposition~\ref{p.Lyap} by proving:

\begin{lemma}\label{l.Lyap} The Lyapunov vectors of the orbits of $x$ by $f_n$ converge to the Lyapunov vector of $\mu$ for $f$ when $n\to+\infty$.
\end{lemma}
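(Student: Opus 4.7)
The plan is to identify, for each large $n$, an $A_n$-invariant splitting $\tilde E_1^{(n)}\oplus\cdots\oplus \tilde E_k^{(n)}$ of $T_xM$ close to the Oseledets splitting $E_1\oplus\cdots\oplus E_k$, where $A_n:=Df_n^{t_n}(x)$, and to show that the $d_j$ eigenvalues of $A_n$ on the $j$-th piece all have modulus $e^{t_n(\lambda_j+o(1))}$. Since the orbit of $x$ under $f_n$ is $t_n$-periodic, its Lyapunov exponents are exactly $\tfrac{1}{t_n}\log|\mu|$ for $\mu$ an eigenvalue of $A_n$ (counted with multiplicity), so this will immediately yield $L(\mu_{\gamma_n})\to L(\mu)$.

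First I would extract uniform finite-time estimates from the regularity of $x$. Since $\tfrac{1}{t}\log\|Df^t|_{E_i}\|$ and $\tfrac{1}{t}\log m(Df^t|_{E_i})$ both converge to $\lambda_i$, and the angles $\angle(E_i,E_{i'})$ decay at most subexponentially, one obtains for each cut $j\in\{1,\dots,k-1\}$
\[
\|Df^{t_n}(x)|_{E_{1,j}}\|\le e^{t_n(\lambda_j+\eta_n)},\qquad m(Df^{t_n}(x)|_{E_{j+1,k}})\ge e^{t_n(\lambda_{j+1}-\eta_n)},
\]
with $\eta_n\to 0$. Because $P_n$ is an isometry, the same bounds (with a slightly larger $\eta_n$) hold with $A_n$ in place of $Df^{t_n}(x)$. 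In particular the ratio $\|A_n|_{E_{1,j}}\|\cdot m(A_n|_{E_{j+1,k}})^{-1}$ is of order $e^{-t_n(\lambda_{j+1}-\lambda_j-2\eta_n)}$, exponentially small in $n$.

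Next I would run a one-step graph-transform argument. Condition (d) of Lemma~\ref{l.perturbation} says that $A_n(E_{1,j})$ is the graph over $E_{1,j}$ of a linear map of norm at most $C$, so $A_n$ preserves the cone $\mathcal{C}_j$ of subspaces of the same dimension as $E_{1,j}$ whose inclination relative to $E_{1,j}$ is $\le C$. The enormous norm gap above implies that $A_n$ acts as a contraction on $\mathcal{C}_j$ (in a Grassmannian metric) with contraction factor $O(e^{-t_n(\lambda_{j+1}-\lambda_j-2\eta_n)})$; its unique fixed point is an $A_n$-invariant subspace $\tilde E_{1,j}^{(n)}$ lying at distance $o(1)$ from $E_{1,j}$ as $n\to\infty$. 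Running the same argument for every cut $j$ and intersecting the resulting filtrations produces an $A_n$-invariant splitting $\tilde E_1^{(n)}\oplus\cdots\oplus\tilde E_k^{(n)}$ with $\dim\tilde E_j^{(n)}=d_j$ and $\tilde E_j^{(n)}$ close to $E_j$.

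Finally, since $\tilde E_j^{(n)}$ is close to $E_j$ and $A_n$ differs from the $E_j$-preserving map $Df^{t_n}(x)$ only by the near-identity isometry $P_n$, both the norm and the co-norm of $A_n|_{\tilde E_j^{(n)}}$ are of order $e^{t_n(\lambda_j+o(1))}$. Hence every eigenvalue of $A_n|_{\tilde E_j^{(n)}}$ has modulus $e^{t_n(\lambda_j+o(1))}$. Dividing by $t_n$ and listing in increasing order, the Lyapunov vector of the periodic orbit of $x$ under $f_n$ consists of each $\lambda_j$ repeated $d_j$ times, up to an error tending to $0$, so $L(\mu_{\gamma_n})\to L(\mu)$ as required. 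The main technical hurdle is making the graph-transform step genuinely quantitative, i.e.\ verifying that the single one-step inclination bound (d) suffices to initiate the iteration and that the cross-blocks of $A_n$ arising from $P_n\ne I$ do not spoil the contraction; the key point is that the exponentially small ratio above dominates every such bounded constant term, so the whole construction reduces to a standard domination argument applied to the single linear map $A_n$.
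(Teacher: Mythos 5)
Your overall strategy coincides with the paper's: finite-time Oseledets estimates at the regular point, a cone/graph-transform argument applied to the single return map $A_n=Df_n^{t_n}(x)$, invariant subspaces close to the Oseledets flags, and then reading off the moduli of the eigenvalues of the periodic orbit. However, the key step is run in the wrong direction, and this is a genuine gap. Under the norm gap you establish, the forward graph transform on subspaces of dimension $\dim E_{1,j}$ is \emph{repelled} from the weak flag $E_{1,j}$ and attracted towards the strong space $E_{j+1,k}$: if $W$ is the graph of a nonzero linear map $\varphi\colon E_{1,j}\to E_{j+1,k}$, then the inclination of $Df^{t_n}(W)$ with respect to $E_{1,j}$ grows by a factor of order $e^{t_n(\lambda_{j+1}-\lambda_j-2\eta_n)}$, and composing with the bounded isometry $P_n$ does not repair this. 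So the cone $\mathcal{C}_j$ around $E_{1,j}$ is \emph{not} preserved by $A_n$ — condition (d) of Lemma~\ref{l.perturbation} only controls the image of the core subspaces $E_{i,j}$, not of nearby subspaces — and your $\tilde E_{1,j}^{(n)}$ cannot be obtained as an attracting fixed point of the forward transform. What is true, and what the paper proves in Lemma~\ref{l.Lyap2}, is that the cone $\cC^u_{j,4C}$ around $E_{j+1,k}$ is forward invariant and squeezed, while the complementary cone $\cC^s_{j,4C}$ is invariant under $A_n^{-1}$; the subspaces near the weak flag must therefore be produced by \emph{backward} iteration, and this requires the negative-iterate estimates on $m\bigl(Df^{-n}|_{E_{1,j}}\bigr)$ and $\bigl\|Df^{-n}|_{E_{j+1,k}}\bigr\|$, which your list of finite-time estimates omits.

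This omission also undermines your final step. Since $P_n$ is only $\varepsilon$-close to the identity for a \emph{fixed} $\varepsilon$, the invariant subspaces are a priori only boundedly inclined over the $E_{i,j}$, not $o(1)$-close; consequently the upper bound $\|A_n|_{\tilde E_j^{(n)}}\|\leq e^{t_n(\lambda_j+o(1))}$ cannot be deduced from the forward estimates alone (a vector with a small but not exponentially small component in $E_{j+1,k}$ is still expanded at rate close to $\lambda_{j+1}$); in the paper this bound comes from the backward estimate on the backward-invariant cone (third item of Lemma~\ref{l.Lyap2}). Finally, a single filtration is not enough to ``intersect'': one needs both the forward-constructed strong spaces $G_i^n$ and the backward-constructed weak spaces $F_i^n$, one sets $E_i^n=F_i^n\cap G_{i-1}^n$, and the disjointness of the intervals $[\lambda_i-\nu,\lambda_i+\nu]$ forces the sum $E_1^n+\dots+E_k^n$ to be direct, which is what yields $\dim E_i^n=\dim E_i$ — a fact you assert without argument. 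With these corrections (strong spaces by forward cones, weak spaces by backward cones using the negative-iterate estimates, intersection plus the direct-sum dimension count) your proof becomes exactly the paper's.
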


\subsection{Building the perturbations: Proof of Lemma~\ref{l.perturbation}}

We cover $M$ by finitely many local charts $\varphi_i\colon V_i\to
\RR^d$ and we choose open subsets  $W_i\subset V_i$,
relatively compact in $V_i$, such that the $W_i$ cover $M$.
For every
$t\in\ZZ$ we fix $i(t)$ such that $f^t(x)\in W_{i(t)}$.

Shrinking $\varepsilon$ if necessary we may assume that:
\begin{itemize}
\item every $10\varepsilon$ perturbation of $f$ in $\diff$ is contained in $\cU$,
\item $\varepsilon$ is smaller than the infima of the distances between $W_i$ and the complement of $V_i$.
\end{itemize}
We fix $\varepsilon_0\in (0,\varepsilon)$  such that for any
$y\in W_i$ and any point $z$ such that $d(y,z)<\varepsilon_0$
then:
\begin{itemize}
\item The distance $d(f(y),f(z))$ is smaller than $\varepsilon$. In particular given any $j$ with $f(y)\in W_j$ one has $z \in V_i$ and $f(z)\in
V_j$.

\item There is a linear map $A\colon T_z M\to T_{f(z)}M$ such that $\|A-Df(z)\|<\varepsilon$ and such that the expressions of $A$ and of $Df(y)$ in the charts $V_i$ and $V_j$ coincide.

\item If $g$ is a $\varepsilon_0$-$C^1$-perturbation of $f$ then  $d(g(y),f(y))<\varepsilon$, so that $g(y)\in V_j$. Furthermore,  there is a linear
map $B\colon T_y M\to T_{g(y)}M$ such that
$\|B-Dg(y)\|<\varepsilon$ and such that the expressions of $B$ and
of $Df(y)$ in the charts $V_i$ and $V_j$ coincide.
\end{itemize}
We fix now a sequence $0<\varepsilon_i <\varepsilon_0$ decreasing
to $0$. As the point $x$ is well closable, there is a sequence
of $\varepsilon_n$-perturbations $h_{n}$ of $f$ and integers $t_n\in
\NN$ with $t_n\to +\infty$ such that:
\begin{itemize}
\item the point $x$ is periodic of period $t_{n}$ for $h_{n}$;
\item the distance $d(f^t(x),h_{n}^t(x))$ remains bounded by $\varepsilon_{n}$
for $t\in \{0,\dots,t_{n}\}$.
\end{itemize}
The diffeomorphism $f_{n}$ built below will preserve the orbit of $x$ by $h_{n}$, and hence items a) and b) of the lemma will be satisfied.
\medskip

As $h_n$ is $\varepsilon_n$-close to $f$, for every
$t\in\{0,t_n-1\}$ the map $B_t\colon T_{h_n^t(x)}M\to
T_{h_n^{t+1}(x)}M$ whose expression in the coordinates $V_{i(t)},
V_{i(t+1)}$ is $Df(h_n^t(x))$ satisfies
$\|B_t-Dh_n(h_n^t(x))\|<\varepsilon$. As
$d(f^t(x),h_n^t(x))<\varepsilon_n$, the linear map $A_t\colon
T_{h_n^t(x)}M\to T_{f(h_n^{t}(x))}M$ whose expression in the
coordinates $V_{i(t)}, V_{i(t+1)}$ is $Df(f^t(x))$ satisfies
$\|A_t-Df(h_n^t(x))\|<\varepsilon$.
One deduces
$$\| Dh_{n}(h^t_{n}(x))- Df^(f^t(x))\| <2\varepsilon.$$
By Franks Lemma, there is a diffeomorphism $g_n$ such that $g_n=
h_n$ on the periodic orbit of $x$, $g_n$ is
$3\varepsilon$-$C^1$-close to $h_n$, $g_n=h_n$ out of an
arbitrarily small neighborhood of the orbit of $x$ and such that
the expression of $Dg$ at the point $g_n^t(x)=h_n^t(x)$ in the
coordinates $V_{i(t)}, V_{i(t+1)}$ is  $A_{t}$, i.e. the same as $Df(f^t(x))$.

As a consequence $g_n$ is a $4\varepsilon$-perturbation of $f$ satisfying:
\begin{itemize}
\item $g_{n}$ preserves the orbit of $x$ by $h_{n}$,
\item the expression of $Dg_n^{t_n}(x)$ in the local coordinates $V_{i(0)}$ is the same as $Df^{t_n}(x)$.
\end{itemize}

In order to conclude the proof of Lemma~\ref{l.perturbation}
we need only to control the inclinations. For that we will prove

\begin{claim}Given any $\eta>0$ and given an integer $l>0$ there is $C >0$ such that,
given any pair of $l$-uples  $(F_1,\dots, F_l),(G_1,\dots,G_l)$ of
vector subspaces of $\RR^d$ such that $dim \; F_j= dim \; G_j$ for
all $j\in\{1,\dots,l\}$, there is an orthogonal matrix $P$ such
that $\| P-id\|<\eta$, and the inclination of $P(G_j)$ with
respect to $F_i$ is less that $C$.
\end{claim}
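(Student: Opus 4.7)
The plan is to combine a transversality / Baire-category argument (producing a suitable $P$ for each fixed data) with a compactness argument on Grassmannians (producing uniformity of $C$).

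First, fix $l$-tuples $(F_1,\dots,F_l)$ and $(G_1,\dots,G_l)$ with $\dim F_j=\dim G_j=k_j$. For each $j$, the subset
$$\Sigma_j:=\bigl\{P\in O(d)\,:\, P(G_j)\cap F_j^\perp\neq\{0\}\bigr\}$$
is a proper closed algebraic subvariety of $O(d)$ (cut out by the vanishing of a $k_j\times k_j$ determinant expressing the non-invertibility of the orthogonal projection $P(G_j)\to F_j$). Thus each $\Sigma_j$ has empty interior, and the Baire category theorem applied inside the open ball $\{Q\in O(d):\|Q-\mathrm{id}\|<\eta/2\}$ provides a matrix $P$ in this ball avoiding $\bigcup_j\Sigma_j$. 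For such $P$, every $P(G_j)$ is transverse to $F_j^\perp$ and has the same dimension as $F_j$, so $P(G_j)$ is the graph of a well-defined linear map $F_j\to F_j^\perp$ of finite norm; that is, $\mathrm{incl}(P(G_j),F_j)<\infty$ for each $j$.

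Next I would upgrade this pointwise finiteness to a uniform bound. For each fixed dimension vector $\mathbf{k}=(k_1,\dots,k_l)\in\{0,\dots,d\}^l$, the space $Y_{\mathbf{k}}:=\prod_{j=1}^l\bigl(\mathrm{Gr}(d,k_j)\times \mathrm{Gr}(d,k_j)\bigr)$ of admissible data is compact. On $Y_{\mathbf{k}}$, define
$$\psi_{\mathbf{k}}\bigl((F_j,G_j)_j\bigr):=\inf_{P\in \overline{B_{\eta/2}}}\ \max_{1\leq j\leq l}\,\mathrm{incl}(P(G_j),F_j),$$
where $B_{\eta/2}:=\{Q\in O(d):\|Q-\mathrm{id}\|<\eta/2\}$ and $\mathrm{incl}=+\infty$ when transversality fails. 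The previous paragraph shows $\psi_{\mathbf{k}}$ is finite on $Y_{\mathbf{k}}$. Moreover, $\psi_{\mathbf{k}}$ is upper semicontinuous: if $(F_j^n,G_j^n)_j\to (F_j,G_j)_j$ in $Y_{\mathbf{k}}$ and $P_0\in\overline{B_{\eta/2}}$ nearly realizes $\psi_{\mathbf{k}}((F_j,G_j)_j)$ (so that $P_0(G_j)$ is transverse to $F_j^\perp$ for every $j$), then since transversality is an open condition on the Grassmannian and the inclination is continuous on the transverse locus, the same $P_0$ gives $\max_j\mathrm{incl}(P_0(G_j^n),F_j^n)$ within $\varepsilon$ of $\psi_{\mathbf{k}}((F_j,G_j)_j)$ for $n$ large. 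An upper semicontinuous function on a compact space is bounded above by a finite constant $C_{\mathbf{k}}$.

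Finally, since $\mathbf{k}$ ranges over a finite set, I set $C:=\max_{\mathbf{k}}C_{\mathbf{k}}$, which depends only on $\eta$, $l$, and $d$, and obtain the claim (witnesses $P$ from the closed ball $\overline{B_{\eta/2}}$ satisfy the strict bound $\|P-\mathrm{id}\|<\eta$). The main obstacle I anticipate is the upper semicontinuity of $\psi_{\mathbf{k}}$: the function $\mathrm{incl}$ jumps to $+\infty$ precisely where the transversality condition degenerates, so one must be careful about how the infimum behaves at boundary configurations. This difficulty is absorbed by choosing the witness $P_0$ in the strictly smaller closed ball $\overline{B_{\eta/2}}$, which gives the ``slack'' needed to keep $P_0$ admissible under small perturbations of the data while exploiting the continuity of $\mathrm{incl}$ on the (open) transverse locus.
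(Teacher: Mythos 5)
Your argument is correct, but it takes a genuinely different route from the paper's. The paper proves the claim by induction on $l$: assuming the statement for $l-1$ tuples and $\eta/2$, it performs a further very small orthogonal perturbation of $P$ to control the inclination of $P(G_l)$ with respect to $F_l$ while only doubling the bound on the previously controlled inclinations; this is short but quite terse, and in particular the uniformity of $C$ over all configurations of subspaces is left implicit in the induction. You instead argue in two steps: for fixed data, a generic matrix in the ball of radius $\eta/2$ avoids the finitely many closed determinantal sets where $P(G_j)$ meets $F_j^\perp$, so all inclinations are finite; then uniformity of the constant comes from compactness of the product of Grassmannians $Y_{\mathbf{k}}$ together with upper semicontinuity of the optimal inclination $\psi_{\mathbf{k}}$, where your choice of witnesses in the smaller closed ball $\overline{B_{\eta/2}}$ provides exactly the slack needed to keep the same $P_0$ admissible under small perturbations of the data. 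This buys an induction-free proof in which the uniformity in the subspaces is completely transparent, at the price of a non-constructive constant $C$, whereas the paper's induction is more hands-on but leaves more to the reader. Two small points to tidy up: the inference ``proper closed algebraic subvariety of $O(d)$, hence empty interior'' should be run on the identity component --- note that $SO(d)$ is connected, that a rotation carrying $G_j$ onto $F_j$ shows the determinant is not identically zero there, and that the ball of radius $\eta/2$ about the identity lies in $SO(d)$ (as it does for the small $\eta$ used in the application); and with $C:=\max_{\mathbf{k}}C_{\mathbf{k}}$ you only obtain inclinations $<C+\varepsilon$, so to get the strict inequality of the claim either observe that an upper semicontinuous function on a compact space attains its supremum and take $C+1$, or simply define the final constant as $\max_{\mathbf{k}}C_{\mathbf{k}}+1$. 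Neither point affects the substance of the proof.
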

\begin{proof} The proof is done by induction on $l$. Assuming the
result obtained for $l-1$ and $\eta/2$, we perform a very small
perturbation of the matrix $P$ to control the inclination of
$P(G_l)$ with respect to $F_l$ while keeping the other
inclinations smaller than $2C$.
\end{proof}

Let $K>0$ be a bound on $\|Dg\|$ for any $g\in \cU$
and fix $\eta\in(0,\varepsilon K^{-1})$.
There exists $C>0$ such that the claim is satisfied for any $\ell=d^2$.
Hence, there exists $P_{n}\in O(\RR, d)$ with $\|P_{n}-id\|<\eta\leq\varepsilon$
such that $P_{n}Dg_n(g_n^{t_n-1}(x))$ is an
$\varepsilon$-perturbation of $Dg_n(g_n^{t_n-1}(x))$.
Now,
applying once more Franks Lemma, we obtain a sequence $f_n$
satisfying:
\begin{itemize}
\item $f_n$ is a $6\varepsilon$-perturbation of $f$, and hence belongs to $\cU$,
\item $f_{n}$ preserves the orbit of $x$ by $h_{n}$, and hence satisfies items a) and b) of the lemma,
\item the expression of $Df_n^{t_n}(x)$ in the local coordinates $V_{i(0)}$ is $P_n\circ Df^{t_n}(x)$,
\item for $i\leq j\in\{1,\dots,k\}$ the inclination of $Df_n^{t_n}(x).E_{i,j}$ with respect to $E_{i,j}$ is less than $C$.
\end{itemize}
This ends the proof of Lemma~\ref{l.perturbation}

\subsection{Lyapunov exponents: Proof of Lemma~\ref{l.Lyap}}
We consider the Lyapunov spaces $E_1, \dots, E_k$ of $x$ and for
every $j\in\{1,\dots,k-1\}$ we denote $F_j=E_1\oplus \cdots\oplus
E_j=E_{1,j}$ and $G_j=E_{j+1}\oplus\cdots \oplus E_k=E_{j+1,k}$.
Recall that $m(A)$ denotes the minimal expansion of a linear automorphism $A\in GL(\RR,d)$.

\begin{lemma} For any $\nu>0$ there is $n_\nu \geq 1$ such that for any $n\geq n_\nu$ and $j\in\{1,\dots, k-1\}$ one has:
\begin{equation}\begin{array}{lcrclcr}
\frac1n\log(\|Df^n|_{F_i}\|)&\leq&\lambda_j+\frac 12 \nu&\mbox{ and } &\frac1n\log(m(Df^n|_{G_i}))&\geq&\lambda_{j+1}-\nu,\\
\end{array}
\end{equation}
\begin{equation}\begin{array}{lcrclcr}
\frac1n\log(m(Df^{-n}|_{F_i}))&\geq &-\lambda_j-\frac 12
\nu&\mbox{ and }& \frac 1n
\log(\|(Df^{-n}|_{G_i}\|)&\leq&-\lambda_{j+1}+\nu.
\end{array}
\end{equation}
\end{lemma}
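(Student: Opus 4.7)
The plan is to use the three regularity properties of $x$: the splitting $T_xM=E_1\oplus\cdots\oplus E_k$ has fixed positive angles at $x$, the rate $\tfrac{1}{n}\log\|Df^n u\|$ converges uniformly to $\lambda_i$ on unit vectors of $E_i$ (and analogously to $-\lambda_i$ under $f^{-1}$, since $x$ is Oseledets-regular for $f^{-1}$ too, with the same splitting and negated exponents), and the pairwise angles between the iterated Lyapunov spaces $Df^{\pm n}(E_i)$ decay only subexponentially. Decompose any $v\in T_xM$ as $v=\sum_i v_i$ with $v_i\in E_i$; since the angles at $x$ are fixed numbers depending only on $x$, one has $\|v_i\|\le C_0\|v\|$ for some constant $C_0=C_0(x)$, and also $\|v\|\le\sum_i\|v_i\|$, so at least one $\|v_{i_0}\|\ge\|v\|/k$. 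Fix an auxiliary $\eta>0$ to be chosen small in terms of $\nu$ at the end.

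For the upper bounds, take $v\in F_j$ and combine the triangle inequality with uniform Lyapunov convergence: for $n\ge N_1(\eta)$,
$$\|Df^n v\|\;\le\;\sum_{i=1}^{j}\|Df^n v_i\|\;\le\;C_0\|v\|\sum_{i=1}^{j} e^{n(\lambda_i+\eta)}\;\le\;jC_0\|v\|\, e^{n(\lambda_j+\eta)}.$$
Dividing by $n$ and choosing $\eta\le\nu/4$ with $n$ large enough to absorb $\tfrac{1}{n}\log(jC_0)$ gives $\tfrac{1}{n}\log\|Df^n|_{F_j}\|\le\lambda_j+\nu/2$. Applying the identical argument to $f^{-1}$, whose exponent on $E_i$ is $-\lambda_i$ and whose largest exponent on $G_j$ is $-\lambda_{j+1}$, yields $\tfrac{1}{n}\log\|Df^{-n}|_{G_j}\|\le -\lambda_{j+1}+\nu$.

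For the lower bounds on the co-expansion $m(\cdot)$, the essential input is a standard linear-algebra fact: if subspaces $H_1,\ldots,H_l\subset T_yM$ have pairwise angles at least $\theta$, then the projections $\pi_i\colon H_1\oplus\cdots\oplus H_l\to H_i$ have norm bounded by a quantity $\Theta(l,\theta)$ which is polynomial in $1/\theta$. Applied at $f^n(x)$ to the subspaces $Df^n(E_{j+1}),\ldots,Df^n(E_k)$, whose pairwise angles are at least $e^{-n\eta}$ for $n\ge N_2(\eta)$, this produces a \emph{subexponential} constant $C(n)$ (that is, $\tfrac{1}{n}\log C(n)\to 0$) bounding all such projections. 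For $v\in G_j$, pick $i_0>j$ with $\|v_{i_0}\|\ge\|v\|/(k-j)$; then
$$\|Df^n v\|\;\ge\;C(n)^{-1}\|Df^n v_{i_0}\|\;\ge\;C(n)^{-1}\|v_{i_0}\|\, e^{n(\lambda_{i_0}-\eta)}\;\ge\;\frac{\|v\|}{(k-j)\, C(n)}\, e^{n(\lambda_{j+1}-\eta)},$$
using $\lambda_{i_0}\ge\lambda_{j+1}$. Taking $\tfrac{1}{n}\log$ and letting $n\to\infty$ gives $\tfrac{1}{n}\log m(Df^n|_{G_j})\ge\lambda_{j+1}-2\eta\ge\lambda_{j+1}-\nu$ for $\eta\le\nu/2$. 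The symmetric argument applied to $f^{-1}$ on $F_j$, where the weakest expansion rate of $f^{-1}$ is $-\lambda_j$, yields $\tfrac{1}{n}\log m(Df^{-n}|_{F_j})\ge -\lambda_j-\nu/2$ after choosing $\eta\le\nu/4$.

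The main (mild) difficulty is the subexponential projection bound: one must verify that the pairwise angle decay $e^{-n\eta}$ propagates to a bound on projection norms onto the direct sum that is still subexponential in $n$. This follows by iterating the two-subspace formula $\|\pi\|=1/\sin\theta$ (projecting onto $H_i$ along the direct sum of the remaining subspaces, whose angle with $H_i$ is controlled via the pairwise angles and an inductive argument), so $C(n)$ grows at most like $e^{K n\eta}$ for a combinatorial constant $K=K(k)$; the $\eta$-loss this causes is absorbed by choosing $\eta$ smaller than a fixed fraction of $\nu$. Finally, since there are only finitely many $j\in\{1,\dots,k-1\}$ and finitely many constants $N_1(\eta),N_2(\eta)$ to meet, setting $n_\nu$ to the maximum of the thresholds arising in the individual arguments gives a single threshold valid for every $j$.
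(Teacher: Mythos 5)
Your overall strategy -- upper bounds by the triangle inequality plus the uniform convergence on each $E_i$, lower bounds on the minimal dilatation by projecting onto a dominant component $v_{i_0}$ and controlling the projection through the angles between the iterated Lyapunov spaces, then the symmetric argument for $f^{-1}$ -- is exactly the argument the paper leaves implicit (its proof is a single sentence citing these same ingredients), and the upper-bound half of your proof is complete. The gap is in the ``standard linear-algebra fact'' on which your lower bounds rest: it is \emph{false} that pairwise angle bounds for subspaces $H_1,\dots,H_l$ give a bound, depending only on $l$ and $\theta$, on the norm of the projection onto $H_i$ along the sum of the others. In $\RR^3$ take $H_1=\langle e_1\rangle$, $H_2=\langle e_2\rangle$ and $H_3$ spanned by $e_1+e_2+\varepsilon e_3$: all pairwise angles are at least $\pi/4$, yet the projection onto $H_3$ along $H_1\oplus H_2$ has norm of order $1/\varepsilon$. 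So the inductive scheme ``the angle of $H_i$ with the direct sum of the remaining subspaces is controlled via the pairwise angles'' already fails for three subspaces, and your subexponential bound $C(n)\leq e^{Kn\eta}$ does not follow from the pairwise hypothesis as it is stated in the list of regularity properties of $x$.

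What saves the argument is that Oseledets regularity gives strictly more than pairwise control: for a (two-sided) regular point, the angle between $\bigoplus_{i\in S}Df^n(E_i)$ and $\bigoplus_{i\notin S}Df^n(E_i)$ decays subexponentially for \emph{every} subset $S$ of indices; equivalently, this can be extracted from the fact that $\frac1n\log\bigl|\det\bigl(Df^n|_{\bigoplus_{i\in S}E_i}\bigr)\bigr|$ converges to the corresponding sum of exponents, since the discrepancy between such determinants and products of norms of restrictions is exactly measured by these angles. This stronger statement is what the paper's one-line proof means by ``the angles between the images of the Lyapunov spaces decrease subexponentially.'' If you replace your pairwise-to-sum claim by it, the projection norm $C(n)$ onto $Df^n(E_{i_0})$ along the remaining iterated spaces is genuinely subexponential, your inequality $\|Df^nv\|\geq C(n)^{-1}\|Df^nv_{i_0}\|$ holds, and the rest of your computation -- including the choice of $\eta$ as a fixed fraction of $\nu$, the maximum over the finitely many $j$, and the symmetric treatment of $f^{-1}$ for equation (2) -- goes through verbatim.
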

\begin{proof}
This is an easy consequence of Oseledets theorem: the rate of expansion
on the Lyapunov space $E_i$ converges uniformly to the Lyapunov
exponents by positive and negative iterations, together with the
fact that the angles between the images of the Lyapunov spaces
decrease subexponentially with the number of iterations.
\end{proof}

For $K>0$, let $\cC^u_{j,K}$ be the cone of vectors whose inclination with respect to $G_j$
is smaller than $K$:
$$\cC^u_{j,K}=\{v=v^s+v^u\in T_xM : v^s\in F^j, v^u\in G_j, \|v^s\|\leq  K\|v^u \|\}.$$
We denote by $\cC^s_{j,K}$ the closure of $T_xM \setminus \;
\cC^u_{j,K}.$ Note that, one has
$Df_n^{t_n}(G_j)\subset \cC^u_{j,C}\subset \cC^u_{j,2C}\subset
\cC^u_{j,4C}$.

\begin{lemma}\label{l.Lyap2}
For every $\nu>0$ there is $n'_{\nu}>0$ such that for any $n\geq n'_{\nu}$
and $j\in\{1,\dots,k\}$ one has:
\begin{itemize}
\item The cone $\cC^u_{j,4C}$ is strictly invariant; more precisely:
$$Df_n^{t_n}(\cC^u_{j,4C})\subset \cC^u_{j,2C}\subset
\cC^u_{j,4C}.$$ As a consequence, the cone $\cC^s_{j,4C}$ is
strictly invariant by $Df_n^{-t_n}$.

\item For every unit vector $v\in \cC^u_{j,4C}$  one has $$\frac
1{t_n}\log\|Df_n^{t_n}(v)\|\geq \lambda_{j+1}-\nu.$$

\item For every unit vector $w\in \cC^s_{j,4C}$  one has $$\frac
{-1}{t_n}\log\|Df_n^{-t_n}(w)\|\leq \lambda_{j}+\nu.$$

\end{itemize}
\end{lemma}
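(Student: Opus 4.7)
The plan is to leverage the decomposition $Df_n^{t_n}(x)=P_n\circ Df^{t_n}(x)$ from Lemma~\ref{l.perturbation} in two complementary ways. First, since $P_n$ is an isometry, the Lyapunov-type norm estimates of the preceding lemma applied to $Df^{t_n}$ transfer verbatim to $Df_n^{t_n}$. Second, property (d) of Lemma~\ref{l.perturbation} guarantees that the images $Df_n^{t_n}(F_j)$ and $Df_n^{t_n}(G_j)$ have inclination at most $C$ with respect to $F_j$ and $G_j$ respectively, so they themselves are contained in cones of inclination $\le C$ around these subspaces. The strict gap $\lambda_j<\lambda_{j+1}$ between consecutive Lyapunov exponents of $\mu$ is the engine of the entire argument.

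First I would establish the cone invariance. Choose $\nu$ small enough so that $\lambda_{j+1}-\nu>\lambda_j+\nu/2$ uniformly in $j\in\{1,\dots,k-1\}$, which is possible since the exponents are finitely many and strictly ordered. For $v=v^s+v^u\in\cC^u_{j,4C}$ with $v^s\in F_j$, $v^u\in G_j$ and $\|v^s\|\le 4C\|v^u\|$, the preceding lemma yields
$$
\|Df_n^{t_n}(v^s)\|\le 4C\,e^{t_n(\lambda_j+\nu/2)}\|v^u\|,\qquad \|Df_n^{t_n}(v^u)\|\ge e^{t_n(\lambda_{j+1}-\nu)}\|v^u\|,
$$
so the ratio $\rho_n:=\|Df_n^{t_n}(v^s)\|/\|Df_n^{t_n}(v^u)\|\le 4C\,e^{t_n(\lambda_j-\lambda_{j+1}+3\nu/2)}$ decays exponentially with $n$. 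Decomposing $Df_n^{t_n}(v^s)\in Df_n^{t_n}(F_j)$ and $Df_n^{t_n}(v^u)\in Df_n^{t_n}(G_j)$ in the orthogonal splitting $F_j\oplus G_j$ and using the inclination bound $\le C$, one finds that the $F_j$-component of $Df_n^{t_n}(v)$ is at most $(\rho_n+C)\|Df_n^{t_n}(v^u)\|$ while its $G_j$-component is at least $((1+C^2)^{-1/2}-C\rho_n)\|Df_n^{t_n}(v^u)\|$. For $n$ sufficiently large this forces $Df_n^{t_n}(v)\in\cC^u_{j,2C}$. The strict invariance of $\cC^s_{j,4C}$ by $Df_n^{-t_n}$ follows symmetrically: write $Df_n^{-t_n}(x)=(Df^{t_n}(x))^{-1}P_n^{-1}$ and invoke the lower/upper bounds on $Df^{-t_n}|_{F_j}$ and $Df^{-t_n}|_{G_j}$ furnished by the second pair of inequalities in the preceding lemma.

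The Lyapunov lower bound on $\cC^u_{j,4C}$ is then immediate: for a unit $v=v^s+v^u\in\cC^u_{j,4C}$, Pythagoras gives $\|v^u\|\ge(1+16C^2)^{-1/2}$, and the triangle inequality together with the two estimates above produces
$$
\|Df_n^{t_n}(v)\|\ge \|Df_n^{t_n}(v^u)\|-\|Df_n^{t_n}(v^s)\|\ge e^{t_n(\lambda_{j+1}-\nu)}\|v^u\|\bigl(1-4C\,e^{-t_n(\lambda_{j+1}-\lambda_j-3\nu/2)}\bigr).
$$
Taking logarithms and dividing by $t_n$, the right-hand side converges from below to $\lambda_{j+1}-\nu$, yielding the desired bound for $n\ge n'_\nu$ (the constant factor $\|v^u\|$ and the parenthetical correction vanish in the limit). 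The third estimate, on $Df_n^{-t_n}$ restricted to $\cC^s_{j,4C}$, is entirely analogous, using that $P_n^{-1}$ is again an isometry close to the identity together with the dual estimates on $Df^{-t_n}$.

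The main obstacle I expect is purely bookkeeping: one must verify that the image cone actually contracts from $\cC^u_{j,4C}$ into $\cC^u_{j,2C}$, which in turn requires that the non-orthogonality of the decomposition $Df_n^{t_n}(F_j)\oplus Df_n^{t_n}(G_j)$, whose magnitude is controlled by $C$, does not defeat the cone contraction. The generous factor $4$ in the initial cone size, together with the freedom to take $\nu$ uniform in $j$ and to enlarge $n'_\nu$ as needed, absorbs this technical defect; the whole argument then closes because the Lyapunov gap $\lambda_{j+1}-\lambda_j$ is strictly positive for every $j$.
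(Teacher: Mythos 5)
Your proof takes essentially the same route as the paper: the identity $Df_n^{t_n}(x)=P_n\circ Df^{t_n}(x)$ with $P_n$ an isometry transfers the estimates of the preceding lemma to $Df_n^{t_n}$, item (d) of Lemma~\ref{l.perturbation} (i.e.\ $Df_n^{t_n}(G_j)\subset\cC^u_{j,C}$) drives the cone contraction, and the third item is handled by first decomposing $P_n^{-1}(w)$, exactly as you indicate.

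One bookkeeping correction is needed in the cone-contraction step. You bound the $F_j$-component of $Df_n^{t_n}(v)$ above by $(\rho_n+C)\,\|Df_n^{t_n}(v^u)\|$ and its $G_j$-component below by $\bigl((1+C^2)^{-1/2}-C\rho_n\bigr)\|Df_n^{t_n}(v^u)\|$; the resulting ratio tends to $C\sqrt{1+C^2}$ as $\rho_n\to0$, which exceeds $2C$ whenever $C>\sqrt{3}$, and $C$ (coming from the Claim in the proof of Lemma~\ref{l.perturbation}) is not at your disposal. The fix is to measure both components against the $G_j$-component $u_G$ of $u=Df_n^{t_n}(v^u)$ rather than against $\|u\|$: since $u\in\cC^u_{j,C}$ one has $\|u_F\|\le C\|u_G\|$ and $\|Df_n^{t_n}(v^s)\|\le\rho_n\sqrt{1+C^2}\,\|u_G\|$, so the ratio of the components of $Df_n^{t_n}(v)$ is at most $\bigl(C+\rho_n\sqrt{1+C^2}\bigr)/\bigl(1-\rho_n\sqrt{1+C^2}\bigr)$, which tends to $C<2C$; this is the estimate the paper uses implicitly, and with it the remainder of your argument (growth in $\cC^u_{j,4C}$, and the dual estimates for $Df_n^{-t_n}$ on $\cC^s_{j,4C}$ after controlling how $P_n^{-1}$ mixes the splitting) goes through unchanged.
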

\begin{proof}
Consider  $v=v^s+v^u\in \cC^u_{j,4C}$, $v^s\in F_j$ and $v^u\in
G_j$. By definition we have $\|v^s\|\leq 4C\|v^u\|$.
For $n\geq n_\nu$ we get from equation (1) that:
$$\begin{array}{rcl}
\|Df^n(v^s)\|&\leq &e^{n(\lambda_{j}+\frac12 \nu)}\|v^s\|\\
&\leq& 4C e^{n(\lambda_{j}+\frac12 \nu)}\|v^u\|\\
&\leq& 4C e^{n(\lambda_j-\lambda_{j+1}+\nu)}\|Df^n(v^u)\|.
\end{array}
$$
Hence,
$$
\frac{\|Df^n(v)\|}{\|v\|} \geq \frac{\|v^u\|}{\|v\|}
\frac{\|Df^n(v^u)\|-\|Df^n(v^s)\|}{\|v^u\|} \geq \frac
1{\sqrt{1+16C^2}} (1-4C e^{n(\lambda_j-\lambda_{j+1}+\nu)})
e^{n(\lambda_{j+1}-\frac\nu 2)}.$$
Notice that $4C e^{n(\lambda_j-\lambda_{j+1}+\nu)}$ tends to $0$
when $n\to+\infty$. In particular for $n$ large one has:
$$\inf\left\{\frac1n \log\frac {\|Df^n(v)\|}{\|v\|}, v\in\cC^u_{j,4C}\right\}\geq \lambda_{j+1}-\frac34\nu.$$
Recall that the expression in the chart at
$x$ of  $Df^{t_n}_n$ is the same as $P_n\circ Df^{t_n}$, where
$P_n$ is an isometry. It follows that for $n$ large enough one has
$$\inf\left\{\frac1n \log\frac {\|Df^{t_n}_n(v)\|}{\|v\|}, v\in\cC^u_{j,4C}\right\}\geq \lambda_{j+1}-\nu.$$
Furthermore, $P_n$ has been chosen in such a way that
$Df^{t_n}_n(v^u)$ belongs to the cone $\cC^u_{j,C}$. As a
consequence, for $4C e^{t_n(\lambda_j-\lambda_{j+1}+\nu)}$ small
enough the vectors $Df^{t_n}_n(v)=Df^{t_n}_n(v^u)+Df^{t_n}_n(v^s)$
belong to $\cC^u_{j,2C}$, for all $v\in \cC^u_{j,4C}$.
This proves the two first items of the Lemma~\ref{l.Lyap2}.
\medskip

Consider now
$w=w^s+w^u\in \cC^s_{j,4C}$ with $w^s\in F_j$ and $w^u\in G_j$. By
hypothesis one has $\|w^u\|\leq\frac1{4C}\|w^s\|$.
Let us decompose $\tilde w:=P^{-1}(w)$ as
$\tilde w=\tilde w^s+\tilde w^u$ with $\tilde w^s\in F_j$,$\tilde w^u\in G_i$.
Since $\|P_n^{-1}-id\|=\|P_n-id\|<\varepsilon$, one deduces that
$$\|\tilde w^u\|\leq
\frac{\frac1{4C}+\varepsilon}{1-\varepsilon}\|w^s\|.$$

We denote by $\bar w^s$ and $\bar w^u$ the vectors of
$T_{f^{t_n}(x)}M$ whose expressions in the local coordinates at
$x$ are equal to those of $\tilde w^s$ and $\tilde w^u$,
respectively. Note that, by construction, $Df_n^{-t_n}(w)=
Df^{-t_n}(\bar w^u)+ Df^{-t_n}(\bar w^s)$.
The proof of the third item consists now in estimating and comparing the norms
$\|Df^{t_n}(\bar w^u)\|$ and $\|Df^{t_n}(\bar w^s)\|$ using
equation (2) instead of equation (1), in a similar way as above.
\end{proof}
\bigskip

Let us now end the proof of Lemma~\ref{l.Lyap}.

\begin{proof}[Proof of Lemma~\ref{l.Lyap}]
Fix $\nu$ smaller than $\frac 1{10}\inf_{i\neq
j}\{|\lambda_i-\lambda_j|\}$ and consider $n>n'_{\nu}$. Then Lemma~\ref{l.Lyap2}
implies:

\begin{itemize}
\item $Df_n^{t_n}(x)$ admits a (unique) invariant vector space $G^n_{i}$
of dimension $\dim(G_i)$ in $\cC^u_{i,4C}$.

\item The restriction of $Df_n^{t_n}(x)$ to $G^n_{i}$ has a minimal
dilatation larger than $\lambda_{i+1}-\nu$.

\item $Df_n^{t_n}(x)$ admits a (unique) invariant vector space $F^n_{i}$
of dimension $\dim(F_i)$ in $\cC^s_{i,4C}$.

\item The restriction of $Df_n^{t_n}(x)$ to $F^n_{i}$ has norm
smaller than $\lambda_{i}+\nu$.

\end{itemize}
Set $E^n_{i} := F^n_i \; \cap \; G^n_{i-1}$. It is a vector space of
dimension at least $\dim(F^n_i) + \dim(G^n_{i-1}) - d = \dim(E_i)$. Furthermore,
one has
$$\lambda_i-\nu\leq m(Df_n^{t_n}(x)|_{E^n_i})\leq\|Df_n^{t_n}(x)|_{E^n_i}\|\leq \lambda_{i}+\nu.$$
As $\lambda_{i}+\nu<\lambda_{i+1}-\nu$, one deduces
that the sum $E^n_1+ \dots + E^n_k$ is a direct sum.
It follows that $\dim(E^n_i) \leq \dim(E_i)$, and hence $\dim(E^n_i) = \dim(E_i)$. Hence $x$ has $\dim(E_i)$ Lyapunov exponents contained in $[\lambda_i-\nu,\lambda_i+\nu]$.
This proves that for $n$ large the Lyapunov vector of the measure associated to
the $f_n$-orbit of $x$ is $\nu$-close to the
Lyapunov vector of $\mu$, ending the proof of
Lemma~\ref{l.Lyap}.
\end{proof}

\section{Generic Nonuniform Hyperbolicity}\label{prelimhyp}

In this section we obtain the nonuniform hyperbolicity of the
generic measures over an isolated transitive set (items~(b.iv) and (b.v)  of Theorem \ref{theo4}),
and also of the generic ergodic measures of $C^1$-generic diffeomorphisms
(item (ii) of Theorem \ref{theo2}).
We also give the proof of Corollary \ref{cor.bgv}
which approximates an ergodic measure by period measures whose Lyapunov exponents are almost constant
on the bundles  of the finest dominated splitting.

%\item
%Bonatti-Diaz-Pujals \cite{BDP} have shown that every isolated
%non-trivial transitive set of a generic diffeomorphism admits a
%non-trivial dominated splitting:

%\begin{theorem*}\cite{BDP}
%Given $\La$ an isolated non-trivial transitive set of a generic
%diffeomorphism $f$, then $\La$ admits a dominated splitting $E
%\oplus F$ over $T_{\La}M$.
%\end{theorem*}

%%%%%%%%%%%%%%%%%%%%%%%%%%%%%%%%%%%%%%%%%%%%%%%%%
\subsection{Approximation by periodic orbits with mean Lyapunov exponents}
%%%%%%%%%%%%%%%%%%%%%%%%%%%%%%%%%%%%%%%%%%%%%%%%%%%

Since it is very similar to the proofs of Lemma \ref{ergodlemma} and Theorem \ref{theo3},
we now only sketch out the proof of Corollary \ref{cor.bgv}.
This uses  \cite{BGV}, which constructs perturbations on sets of
periodic orbits which exhibit a lack of domination.
In our context we may state this tool in the following way:

\begin{theorem}[\cite{BGV}] \label{bgv}
Let $\{\gamma_k\}$ be a family of hyperbolic periodic orbits of $f \in \diff$ and $F_1 \oplus_< \cdots \oplus_< F_k$ be the finest dominated splitting over $\overline{\cup_{k \in \NN} \; \gamma_k}$. Assume that there is no infinite subset $\Gamma$ of $\cup_{k \in \NN} \; \gamma_k$ such that the finest dominated splitting over $\overline{\Gamma}$ is strictly finer than $F_1 \oplus_< \cdots \oplus_< F_k$. Then given any $\varepsilon > 0$ there is an $\varepsilon$-perturbation $g$ of $f$ such that $g$ exhibits a periodic orbit, coinciding with one of the original orbits, and whose Lyapunov exponents inside each bundle $F_i$ all coincide.
\end{theorem}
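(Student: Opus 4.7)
The plan is to deduce this statement, which is the main technical result of \cite{BGV}, by combining Franks' Lemma with a careful quantitative exploitation of the hypothesis that the coarse dominated splitting $F_1 \oplus_< \cdots \oplus_< F_k$ cannot be refined on any infinite subfamily. At a high level, I would perturb $Df$ at finitely many points along a suitably chosen $\gamma_{k_0}$ so that the new return map, restricted to each $F_i$, becomes a conformal homothety.

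\emph{Step 1: Reduction to a single bundle.} The dominated splitting $F_1 \oplus_< \cdots \oplus_< F_k$ extends to a neighbourhood of $\overline{\cup \gamma_k}$ and is preserved by small $C^1$-perturbations (property (c) of dominated splittings). Working in a continuous frame adapted to this splitting, Franks' Lemma allows me to realise any perturbation of the derivatives at finitely many points of a single hyperbolic periodic orbit, provided each individual perturbation has norm at most $\varepsilon$. I will choose perturbations that are block-diagonal in the splitting $F_1 \oplus \cdots \oplus F_k$ and non-trivial only inside one block $F_i$ at a time; since the blocks are uniformly transverse (domination), these block perturbations can be superimposed without interfering with each other. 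It then suffices to prove, for each fixed $i$ with $\dim F_i \geq 2$, that one can equalise the exponents inside $F_i$ on one orbit from the family.

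\emph{Step 2: Translating the non-refinement hypothesis.} The hypothesis that no infinite subset $\Gamma \subset \cup_k \gamma_k$ admits a dominated splitting strictly refining $F_1 \oplus_< \cdots \oplus_< F_k$ has to be converted into a Pliss-type quantitative statement: for every $N \in \NN$ and every $i$ with $\dim F_i \geq 2$, one can find an orbit $\gamma_{k(N,i)}$ in the family and a point $x \in \gamma_{k(N,i)}$ such that the restricted cocycle $Df|_{F_i}$ along the entire orbit segment through $x$ admits no $N$-dominated splitting of $F_i$ into proper sub-bundles. Otherwise, a diagonal extraction produces an infinite subset whose closure supports a refinement, contradicting the assumption.

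\emph{Step 3: The perturbation equalising exponents.} This is the core construction of \cite{BGV}. On a periodic orbit where $Df|_{F_i}$ has no $N$-dominated sub-splitting, their main lemma (the ``transitions'' proposition for cocycles) produces, for every pair of directions $u, v \in F_i$ and every small angle $\delta$, a finite sequence of rotations of $F_i$ of norm at most $\delta$, inserted at chosen points of the orbit via Franks' Lemma, whose net effect is to realise any prescribed identification of $u$ with a direction in the iterated image of $v$. Concatenating enough such mixing operations around one full period, all the Lyapunov subspaces inside $F_i$ are ``rotated into each other'', and the perturbed return map restricted to $F_i$ becomes a homothety: all its Lyapunov exponents equal the mean value $\lambda_{E_i}$.

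\emph{Main obstacle.} The hard point — and the real content of \cite{BGV} — is the quantitative match between the size of the allowed perturbation $\varepsilon$ and the rotation budget needed to equalise the exponents. If one of the $F_i$ admitted a robust cone field, any finite collection of small rotations would fail to move vectors between different sub-invariant sub-cones, and equalisation would be impossible; this is exactly why the non-refinement hypothesis is used in its strongest form (no infinite subfamily allows a refinement, not merely a generic failure). Controlling the total rotation, distributing it over a long enough period, and verifying that each individual Franks perturbation stays below $\varepsilon$ is where almost all of the technical effort lies, and I would expect this to be the step requiring the most delicate bookkeeping.
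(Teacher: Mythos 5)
This theorem is \emph{not} proved in the paper: it is stated as a citation to \cite{BGV}, and the authors simply invoke it. There is therefore no internal argument to compare your proposal against; what you are attempting is a reconstruction of the original proof of Bonatti--Gourmelon--Vivier.

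As such a reconstruction, your Steps~1 and~2 are the right preprocessing and roughly match the standard reduction: extend the splitting to a neighbourhood, use Franks' Lemma to realise block-diagonal perturbations of the linear cocycle along a chosen periodic orbit, and convert the ``no infinite subfamily refines the splitting'' hypothesis into a quantitative statement. One small correction in Step~2: you need, for each $N$, a \emph{single} orbit on which \emph{all} the bundles $F_i$ simultaneously admit no $N$-dominated sub-splitting (not one orbit per pair $(N,i)$, as you wrote); this does follow from your hypothesis, since for each $i$, each $N$, and each choice of sub-dimensions, the set of $k$ for which $\gamma_k$ admits such a sub-splitting of $F_i$ must be finite (otherwise its closure carries a refinement), and a finite union of finite exceptional sets is finite.

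The genuine gap is Step~3, and it is the whole problem. You identify the cocycle-perturbation lemma that equalises the exponents inside an undominated bundle as ``the core construction of \cite{BGV}'' and ``the real content of \cite{BGV}'' --- and then you \emph{cite} it rather than prove it. But the statement you are asked to prove is essentially a packaged form of exactly that lemma, so invoking it makes the argument circular. A self-contained proof has to actually carry out the Bochi--Ma\~n\'e-type construction: show that along a sufficiently long segment of a periodic orbit where $F_i$ has no $N$-dominated sub-splitting, one can insert $\varepsilon$-small perturbations of the individual linear maps that produce a ``transition'' carrying a direction near the most contracted Oseledets subspace of $F_i$ to a direction near the most expanded one, and then iterate such transitions around the period to collapse the Oseledets flag inside $F_i$. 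That requires the delicate quantitative matching of $\varepsilon$, $N$, and the period that you flag as ``the main obstacle'' but do not attempt; also, note that the ``transition property'' terminology in this paper refers to the construction in \cite{BDP}, which concerns connecting homoclinically related orbits and is a different tool from the inside-a-bundle perturbation lemma of \cite{BGV}. Finally, ``becomes a conformal homothety'' is stronger than both the theorem and what you need --- equal Lyapunov exponents inside $F_i$ do not imply the return map on $F_i$ is conformal.
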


\begin{proof}[{Proof of Corollary \ref{cor.bgv}}]

Consider $X$ the space of triples $(\mu,K,L)$ where $\mu$ is a
probability measure on $M$, $K\subset M$ is a compact set, and
$L $ is a vector in  $\RR^d$, endowed with the usual product topology. If $\gamma$ is a periodic orbit, we denote by $x_\gamma$ the
triple $(\mu_\gamma,\gamma,L(\mu_\gamma))$ (the measure associated to $\gamma$, its support $\gamma$, and its Lyapunov vector $L(\mu_\gamma)$).
As in the proof of Theorem~\ref{theo3}, the map $f \mapsto X_f$, which to each diffeomorphism $f$ associates the closure $X_f$ of the set
$\{x_\gamma, \gamma\in Per(f)\}$, is continuous on a residual subset $\cG$ of $\diff$.

Consider now, for such a $C^1$-generic $f$, a triple of the form $(\mu, supp(\mu), v)$, where $\mu$ is a generic (and hence ergodic) measure supported in $\supp(\mu)$ and $v$ is the vector given by

$$\begin{array}{crl}
v =& \left\{\frac{\int \log \|det \, Df|_{F_1}\| \; d{\mu}}{\dim(F_1)}\right\}^{\dim(F_1)} &\times \left\{\frac{\int \log \|det \, Df|_{F_2}\| \; d{\mu}}{\dim(F_2)}\right\}^{\dim(F_2)}\times  \ldots\\
& \dots &\times \left\{\frac{\int \log \|det \, Df|_{F_k}\| \; d{\mu}}{\dim(F_k)}\right\}^{\dim(F_k)},
\end{array}$$
where the $F_i$ are the bundles of the finest dominated splitting on $\supp(\mu)$.

We claim that $(\mu, \supp(\mu), v) \in X_f$, which proves Proposition \ref{cor.bgv}.
By Theorem \ref{theo3}, there is a sequence of periodic orbits $\gamma_k$ such that $(\mu_{\gamma_k}, \gamma_k, L(\gamma_k))$ accumulate on $(\mu, \supp(\mu), L(\mu))$. Since these orbits Hausdorff-accumulate on $\supp(\mu)$, it follows that for large enough $K$ the set $\{\gamma_k\}_{k \geq K}$ admits as its finest dominated splitting a continuation of the dominated splitting $F_1 \oplus_< \ldots \oplus_< F_k$ over $\supp(\mu)$, so that no subsequence of $\{\gamma_k\}_{k \geq K}$ admits a finer dominated splitting. Now an application of Theorem~\ref{bgv} yields after a small perturbation a periodic orbit $\gamma'$ whose Lyapunov exponents inside each $F_i$ all coincide. Up to performing a new perturbation we obtain a triple $(\mu_{\gamma''},\gamma'',L(\mu_{\gamma''}))$ close to $(\mu, \supp(\mu), v)$ for some $C^1$-generic $g\in\cG$ arbitrarily close to $f$.
Since $f$ is a continuity point of $f\mapsto X_f$, one gets that $(\mu, \supp(\mu), v) \in X_f$, ending the proof.
\end{proof}

\subsection{Proof of  Theorem \ref{theo4}, items (b.iv) and (b.v)}\label{b.ivsection}
In \cite{BocV} arguments involving flags are used to obtain
semicontinuity properties of the Lyapunov exponents and Lyapunov
spaces \emph{when the diffeomorphism $f$ varies} and keeping
constant a volume measure $\mu$ on $M$. An application of the
Semicontinuity Lemma then shows that $C^1$-generic (conservative)
diffeomorphisms are continuity points for the set of Lyapunov
exponents and their corresponding Lyapunov spaces.

In our dissipative setting, identical arguments yield semicontinuous
variation of the exponents \emph{when the measure $\mu$ varies} and
keeping the diffeomorphism $f$ fixed. The Semicontinuity Lemma then
yields that generic measures are continuity points for the Lyapunov
exponents. That is, we have:

\begin{proposition}\label{BocV}

Given $\La$ a compact invariant set of a diffeomorphism $f$, then
there is a residual subset $\CS^*$ of $\mflerg$ which consists of
ergodic measures $\mu$ which are continuity points for the map
$$\Phi: \mathcal{M}_f^{erg}(\La) \rightarrow \RR^d$$
$\hspace{210pt} \mu \; \mapsto L(\mu) ,$

\noindent where  $L(\mu)=(\la_1^{\mu}, \ldots, \la_d^{\mu})$ denotes the Lyapunov vector of
$\mu$.
\end{proposition}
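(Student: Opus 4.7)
The strategy is to reduce continuity of the whole Lyapunov vector $L(\mu) = (\lambda_1^\mu, \ldots, \lambda_d^\mu)$ to semicontinuity of its partial sums, and then apply a standard Baire category argument. For each $k \in \{1, \ldots, d\}$, set $T_k(\mu) := \lambda_{d-k+1}^\mu + \cdots + \lambda_d^\mu$ (the sum of the $k$ largest exponents). I claim that each $T_k$ is upper-semicontinuous on $\mflerg$. Granted the claim, the bottom-$k$ sum $S_k(\mu) := \lambda_1^\mu + \cdots + \lambda_k^\mu = T_d(\mu) - T_{d-k}(\mu)$ is the difference of a \emph{continuous} function $T_d(\mu) = \int \log|\det Df|\, d\mu$ and an upper-semicontinuous function, hence is lower-semicontinuous. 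The classical fact that a real-valued semicontinuous function on a Baire space has a residual set of continuity points (the analogue for real-valued maps of the Semicontinuity Lemma recalled in Section~\ref{ss.semi}) yields, for each $k \in \{1, \ldots, d-1\}$, a residual subset $\cR_k$ of the Baire space $\mflerg$ (see Proposition~\ref{propb.i}) on which $S_k$ is continuous. Setting $\cS^* := \bigcap_{k=1}^{d-1} \cR_k$, each individual exponent $\lambda_k^\mu = S_k(\mu) - S_{k-1}(\mu)$ depends continuously on $\mu \in \cS^*$, so the whole vector $L$ is continuous on $\cS^*$.

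The key point, upper-semicontinuity of $T_k$, follows from the Kingman subadditive ergodic theorem applied to the sequence of continuous functions $\phi_n^{(k)}(x) := \log \|\Lambda^k Df^n(x)\|$, where $\Lambda^k$ denotes the $k$-th exterior power. Submultiplicativity of $\Lambda^k$ together with the cocycle identity yield the pointwise subadditivity $\phi_{m+n}^{(k)}(x) \leq \phi_m^{(k)}(f^n(x)) + \phi_n^{(k)}(x)$, so for any $f$-invariant $\mu$ the sequence $a_n(\mu) := \int \phi_n^{(k)}\, d\mu$ is subadditive: $a_{m+n}(\mu) \leq a_m(\mu) + a_n(\mu)$. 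For ergodic $\mu$, Kingman's theorem (or equivalently Oseledets applied to exterior powers) identifies
\[
T_k(\mu) \;=\; \lim_{n\to\infty} \frac{a_n(\mu)}{n} \;=\; \inf_{n\geq 1} \frac{a_n(\mu)}{n}.
\]
Since each $\mu \mapsto a_n(\mu)/n$ is continuous in the weak topology (as $\phi_n^{(k)}$ is a fixed continuous function on $M$, depending only on $f$), $T_k$ is an infimum of continuous functions and hence upper-semicontinuous.

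The main content of the argument is the identity $T_k(\mu) = \inf_n a_n(\mu)/n$, which converts a bare limit (a priori giving no semicontinuity) into an infimum, automatically upper-semicontinuous. Everything else is bookkeeping: the Baire space structure of $\mflerg$ is already established in Proposition~\ref{propb.i}, and the passage from semicontinuity of partial sums to continuity of individual exponents is a purely algebraic manipulation. I therefore expect no genuine obstacle here; note in particular that no genericity assumption on $f$ or hyperbolicity assumption on $\La$ is needed, consistent with the generality of the statement. This parallels the Bochi--Viana flag approach in \cite{BocV}, transposed from the conservative setting (where $\mu$ is fixed and $f$ varies) to ours (where $f$ is fixed and $\mu$ varies).
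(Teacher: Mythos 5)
Your proof is correct and takes essentially the same route as the paper: the paper's proof consists of citing the Bochi--Viana semicontinuity arguments (transposed to fixed $f$, varying $\mu$) and then applying the Semicontinuity Lemma, which is precisely the mechanism you implement --- upper semicontinuity of the top-$k$ exponent sums as infima of the continuous functions $\mu\mapsto\frac1n\int\log\|\Lambda^k Df^n\|\,d\mu$, followed by the Baire-category fact that a semicontinuous real function on the Baire space $\mathcal{M}_f^{erg}(\La)$ has a residual set of continuity points. The only difference is that you supply the details (subadditivity, Kingman/exterior powers) that the paper delegates to the reference \cite{BocV}.
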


\begin{remark}
Here we state the continuity restricted to the ergodic measures
simply because that makes it easier to state the continuity;
furthermore, we shall only use the continuity on the set of ergodic
measures.
\end{remark}

We are now ready to prove the
hyperbolicity of generic measures over isolated transitive sets of generic diffeomorphisms.
In fact, we will prove something stronger:

\begin{proposition}\label{propb.iv}
Let $\La$ be an isolated transitive set of a $C^1$-generic diffeomorphism
$f$, with finest dominated splitting $F_1 \oplus_< \ldots \oplus_< F_k$
over $T_{\La}M$. Then there is a residual subset $\cS$ of $\mfla$
such that for any measure $\mu \in \cS$ and any $i \in \{1, \ldots, k\}$
there is only one Lyapunov exponent $\la_i$ of $\mu$ in $F_i$, which
furthermore is non-zero.
\end{proposition}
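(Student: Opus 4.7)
My plan is to combine three ingredients already established in the paper: the generic ergodicity of measures in $\mfla$ (Proposition~\ref{propb.i}), the genericity of the continuity points of the Lyapunov vector map $L\colon\mflerg\to\RR^d$ (Proposition~\ref{BocV}), and the refined approximation Corollary~\ref{cor.bgv}, which forces the exponents of the approximating periodic orbits to become asymptotically constant on each bundle of the finest dominated splitting.

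First I would fix the residual subset $\cS^{*}\subset\mfla$ of ergodic measures that are continuity points of $L$, obtained by intersecting the residual sets given by Propositions~\ref{propb.i} and~\ref{BocV}. Given $\mu\in\cS^{*}$, Corollary~\ref{cor.bgv} provides a sequence $(\gamma_n)$ of periodic orbits of $f$ with $\mu_{\gamma_n}\to\mu$ weakly, such that for each $i$ the $\dim(F_i)$ Lyapunov exponents of $\gamma_n$ lying inside $F_i$ converge to the common mean value
$$
\la_{F_i}^{\mu}\;:=\;\frac{1}{\dim(F_i)}\int\log\|\det Df|_{F_i}\|\,d\mu.
$$
Continuity of $L$ at $\mu$ yields $L(\mu_{\gamma_n})\to L(\mu)$ componentwise. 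Since the finest dominated splitting separates the Lyapunov exponents of any ergodic measure (the $\dim(F_i)$ components of $L(\mu)$ corresponding to $F_i$ are consecutive in the ordered spectrum), they must all coincide with the limit $\la_{F_i}^{\mu}$. Hence $\mu$ has exactly one Lyapunov exponent inside each bundle $F_i$.

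For the non-zero conclusion I would analyze the continuous affine functional
$$
\phi_i\colon\mfla\to\RR,\qquad \phi_i(\mu)\;=\;\int\log\|\det Df|_{F_i}\|\,d\mu\;=\;\dim(F_i)\cdot\la_{F_i}^{\mu},
$$
which is well defined once $F_i$ is extended continuously to a neighborhood of $\La$ (possible by the stability of dominated splittings). The heart of the matter is to show that, for $C^1$-generic $f$, this functional is not identically zero on $\mfla$. This follows from Franks' Lemma: the condition $|\det Df^{\Pi(\gamma)}|_{F_i}|=1$ along a hyperbolic periodic orbit $\gamma\subset\La$ is a codimension-one condition on $f$, so by intersecting the countable family of such conditions (indexed by the countably many periodic orbits and by a countable basis of candidate isolating neighborhoods) one obtains a residual set of diffeomorphisms on which $\phi_i(\mu_\gamma)\neq 0$ for at least one periodic orbit in each isolated transitive set. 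Since periodic measures are dense in $\mfla$ by Theorem~\ref{theo4}(a), $\phi_i$ is non-constant on $\mfla$; by affinity together with convexity of $\mfla$, its zero set then has empty interior, hence is nowhere dense, so its complement is open and dense.

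Taking the intersection of $\cS^{*}$ with the open dense subsets $\{\phi_i\neq 0\}$ for $i=1,\dots,k$ produces the required residual subset $\cS\subset\mfla$ on which $\mu$ has exactly one non-zero Lyapunov exponent in each $F_i$. The main obstacle is the non-zero step: one must carefully set up the Franks-style perturbation so that the desired genericity holds uniformly over all isolated transitive sets and all bundles of their finest dominated splittings, which is what the countable-intersection reduction outlined above is designed to achieve.
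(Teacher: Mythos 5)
Your proposal follows essentially the paper's route: Proposition~\ref{BocV} (generic continuity of the Lyapunov vector map on $\mflerg$) combined with Corollary~\ref{cor.bgv} to force a single exponent inside each bundle, equal to $\frac{1}{\dim F_i}\int\log\|\det Df|_{F_i}\|\,d\mu$, plus a Franks-type genericity argument to exclude the value zero. Two points, however, need repair. First, Corollary~\ref{cor.bgv} concerns the finest dominated splitting over $\supp(\mu)$, while the conclusion of Proposition~\ref{propb.iv} is about the splitting $F_1\oplus_<\dots\oplus_<F_k$ over $\La$. If $\supp(\mu)\subsetneq\La$, the splitting over $\supp(\mu)$ may be strictly finer than the restriction of $F_1\oplus_<\dots\oplus_<F_k$, and your limit argument then only gives one exponent per \emph{finer} bundle, so several distinct exponents could survive inside a single $F_i$. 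You must add to your residual set the full-support property $\supp(\mu)=\La$ (item (b.ii) of Theorem~\ref{theo4}, i.e.\ Proposition~\ref{propb.ii} together with part (a)); the paper's proof does exactly this before invoking Corollary~\ref{cor.bgv}.

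Second, in the non-vanishing step your countable-intersection scheme is indexed by conditions that mention $F_i$ (and implicitly $\La$), but these objects depend on $f$, so the family of ``codimension-one conditions'' is not an $f$-independent countable family and the genericity argument does not close up as stated — you yourself flag this as the main obstacle. The clean fix, which is what the paper does, is to make the generic condition intrinsic to periodic orbits: for $C^1$-generic $f$, every periodic orbit has all partial sums $\lambda_i+\dots+\lambda_j$, $1\le i\le j\le d$, of its ordered Lyapunov exponents nonzero (for each period bound this is open and dense, density by Franks' Lemma, openness from hyperbolicity and finiteness of orbits of bounded period for Kupka--Smale diffeomorphisms). Since the exponents of a periodic orbit inside a bundle of a dominated splitting form a consecutive block, this yields $\phi_i(\mu_\gamma)\neq0$ for \emph{every} periodic orbit $\gamma\subset\La$ and every $i$, with no reference to a prechosen $\La$ or $F_i$. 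Granting that, your convexity/affinity observation is correct: the zero set of the continuous affine functional $\phi_i$ on the convex set $\mfla$ is closed with empty relative interior once $\phi_i$ is not identically zero, so $\{\phi_i\neq0\}$ is open and dense. This is a mild variant of the paper's argument, which instead gets density of $\{\phi_i\neq 0\}$ from the fact that \emph{all} periodic measures satisfy $\phi_i\neq0$ together with the density of $\pfla$ in $\mfla$ (Theorem~\ref{theo4} part (a)); with these two corrections the rest of your proof coincides with the paper's.
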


\begin{remark}
The Proposition above shows that even if $\La$ is nonhyperbolic, and
thus contains periodic orbits of distinct indices (see \cite{BDPR}),
the generic hyperbolic measures it supports may all have the same
index. Indeed, by the Proposition the indices of the generic $\mu$'s
are restricted by the (dimensions of the) bundles of the finest
dominated splitting over $\La$. There are examples of
nonhyperbolic robustly transitive sets -- and hence of isolated
transitive sets of $C^1$-generic diffeomorphisms -- whose finest dominated
splitting  has only two bundles $E$ and $F$, see \cite{BonV}.
Thus in such examples all of the generic measures provided by
Proposition \ref{propb.iv} above must have the same index (namely,
the dimension of $E$), even though the set $\La$ is nonhyperbolic
and thus contains periodic orbits of distinct indices.
\end{remark}

\begin{proof}[Proof of Proposition \ref{propb.iv}]
Let $\La$ be a non-trivial isolated transitive set of a $C^1$-generic
diffeomorphism $f$.
Let us fix any bundle $F=F_i$ of the finest dominated splitting.
Given any $\mu \in \mfla$, we set

\begin{equation}\label{e.exp}
I(\mu)  :=  \int \log \|det \, Df|_F\| \; d{\mu}.
\end{equation}
Note that since $F$ is a continuous bundle, $I(\mu)$ varies
continuously with $\mu$ in the weak topology.

On the other hand, if $\mu \in \mflerg$ then
$$I(\mu) = \sum_{\tf(\la_i) \subset F} \la_i \, \dim(\tf(\la_i)), $$
where $\lambda_i$ and $F(\lambda_i)$
are respectively the Lyapunov exponents and the Lyapunov spaces inside $F$.

For any periodic measure $\mu_\gamma$, Franks lemma allows one to perturb
the diffeomorphism in $\diff$ so that
each sum $\lambda_i+\dots+\lambda_j$ for $1\leq i\leq j \leq d$ is different from zero.
An easy genericity argument hence implies that under a $C^1$-genericity assumption on $f$,
the quantity $I(\mu)$ never vanishes on the periodic measures of $f$.

Now, by Theorem~\ref{ergodlemma}, there exists a dense set
$\cD\subset \mfla$, consisting of hyperbolic
periodic measures such that $I(\nu) \neq 0$ for every $\nu \in \cD$.
Since the integral $I(\mu)$ varies continuously with $\mu$, we
conclude that $I(\mu) \neq 0$ in an open and dense subset of
$\mfla$.

If $\mu \in \mfla$ is a generic measure, we know that it is
ergodic, that $\supp(\mu)=\La$, that $I(\mu) \neq 0$, and that
(by proposition~\ref{BocV})
it is a continuity point for the map $\nu\to L(\nu)$ defined on $\mflerg$.
Using Corollary~\ref{cor.bgv} there is a sequence of periodic orbits $\gamma_{\ell}$
such that $L_{|F}(\mu_{\gamma_{\ell}})$ converges to some single
value $\la_F$. Since $\mu$ is a continuity point for $\nu\to L_{|F}(\nu)$ it
follows that $\la_F$ is the only Lyapunov exponent of $\mu$ in
$F$: this proves that the finest dominated splitting on $\supp(\mu)$ coincides with
the Oseledets splitting of $\mu$.
Moreover we must have
$$\la_F  = \frac{I(\mu)}{\dim(F)} \neq 0,$$
implying that $\mu$ is nonuniformly hyperbolic.
\end{proof}

\subsection{Proof of Theorem \ref{theo2}}\label{theo2section}
The argument is very similar to the proof of Proposition \ref{propb.iv}.
Since $f$ is $C^1$-generic, then for each periodic orbit $\gamma$,
the sum $\lambda_i+\dots+\lambda_j$ for $1\leq i\leq j \leq d$ is different from zero,
where $\lambda_1,\dots,\lambda_d$ denote the Lyapunov exponents of $\gamma$ with multiplicities.

Any generic ergodic measure $\mu$ is a continuity point
of the map $\mu\mapsto \supp(\mu)$ on $\mflerg$.
As a consequence the finest dominated splitting $F_1\oplus_{<}\dots\oplus_{<} F_k$
on $\supp(\mu)$ extends to the support of any ergodic measure $\nu$ close to $\mu$
in the weak topology. In particular any bundle $F=F_i$ of the finest splitting
extends to $\supp(\nu)$ and the map $\nu\mapsto I(\nu)$,
giving the sum of the Lyapunov exponents of $\nu$ inside $F$, varies continuously with $\nu$
on a neighborhood of $\mu$.
By Theorem~\ref{ergodlemma}, there exists a sequence of periodic measures $\mu_\gamma$
which converges to $\mu$ and such that $I(\mu_\gamma)\neq 0$. Since $\mu$ is generic, one
thus gets $I(\mu)\neq 0$.

By Proposition~\ref{BocV}, $\mu$
it a continuity point for the map $\nu\to L(\nu)$ defined on $\mflerg$.
Using Corollary~\ref{cor.bgv} the periodic measures may be chosen so that
the Lyapunov exponents in $L_{|F}(\mu_{\gamma_{\ell}})$ converge to some single
value $\la_F$. It follows that $\la_F$ is the only Lyapunov exponent of $\mu$ in
$F$: this proves that the finest dominated splitting on $\supp(\mu)$ coincides with
the Oseledets splitting of $\mu$. The argument proves that $\lambda_F$
is non-zero, and hence that $\mu$ is nonuniformly hyperbolic. 
%%%%%%%%%%%%%%%%%%%%%%%%%%%%%%%%%%%%%%%%%%%%%%%%%%%%%%%%%%%%%%%%%%%%%%%%%%%%%%%%%%%%%%%%%%%%%%%%%%%%%%%%%%%%%%%%%%%%%%%%%%
\section{Invariant Manifolds for Dominated Hyperbolic Measures}
%%%%%%%%%%%%%%%%%%%%%%%%%%%%%%%%%%%%%%%%%%%%%%%%%%%%%%%%%%%%%%
\label{pesinsection}

In this section we will prove a stronger version of  Theorem \ref{theo5} stated in Proposition~\ref{p.invariant-manifold}.
Fix a $C^1$-diffeomorphism $f$ of the manifold $M$ and an ergodic measure
$\mu$ whose support admits a dominated splitting $E\oplus_{_<}F$. One assumes that $E$ is non-uniformly contracted
for $\mu$ (i.e. the  Lyapunov exponents of $\mu$ in $E$ are all negative); notice that we do not assume that vectors in $F$ are (non-uniformly) expanded. We will prove the existence of stable manifolds tangent to $E$ for $\mu$-almost every point, and control the rate of approximation of the points in these stable manifolds.

%%%%%%%%%%%%%%%%%%%%%%%%%%%%%%%%%%%%%%%%%%%%%%%%%%%%%%%%%%%%%%%%%%%%%%%%%%%%%%%%%%%%%%%%%%%%%%%%%%%%%%%%%%%%
\subsection{Adapted metrics}\label{ss.metric}
%%%%%%%%%%%%%%%%%%%%%%%%%%%%%%%%%%%%%%%%%%%%%%

We first build an Euclidian metric on the tangent space at $\mu$-almost every point, depending in a measurable way on  the point, and which is adapted to
the tangent dynamics.

\begin{definition}
We say that a sequence $(A_n)$ of positive numbers \emph{varies
sub-exponentially} if for any $\eta > 0$, there exists a constant $C>0$ such that
$$C^{-1}.e^{-\eta.n}<A_n<C.e^{\eta.n}$$
\noindent for every $n \in \NN$.
\end{definition}

\begin{proposition}\label{p.metric}
Let $f$ be a $C^1$-diffeomorphism and $\mu$ be an
ergodic invariant probability measure.  Assume that there
is a $Df-invariant$ continuous subbundle $E\subset
T_{\supp(\mu)}M$ defined over the support of $\mu$. Let $\lambda_E^+$ be the
maximal Lyapunov exponent of the measure $\mu$ in $E$.

Then for any $\varepsilon>0$ there exists an integer $N\geq 1$ and a
measurable function $A$ from $M$ to $(0,+\infty)$ such that :
\begin{itemize}
\item the sequences $\left(A(f^n(x))\right)_{n\in \NN}$  and $\left(A(f^{-n}(x))\right)_{n\in \NN}$ vary
sub-exponentially for each $x\in M$;
\item if $\|.\|'_x$  denotes the metric on $E_x$ defined by
$$\|v\|'_x=\sum_{0\leq k< N} e^{-k.(\lambda_E^++\varepsilon)}.A(f^k(x)).\|D_xf^k.v\|,$$
then for  $\mu$-almost every point $x$, for every $v\in E_x$ one has
$$\|D_xf.v\|'_{f(x)}\leq e^{\lambda_E^++\varepsilon}.\|v\|'_x.$$
\end{itemize}
\end{proposition}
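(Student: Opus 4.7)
The construction is a classical Pesin-theoretic adapted (Lyapunov) metric: the finite sum defining $\|\cdot\|'_x$ produces an approximate $e^{\lambda_E^++\varepsilon}$-contraction up to a measurable weight $A$ whose role is to absorb the finite-time fluctuations of $\|Df^N|_E\|$ around its ergodic average.

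\emph{Reduction to a cocycle inequality.} A direct telescoping of the sum defining $\|\cdot\|'_x$ shows that, for every $v\in E_x$,
\[
\|Df(v)\|'_{f(x)} = e^{\lambda_E^++\varepsilon}\,\Big(\|v\|'_x - A(x)\|v\| + e^{-N(\lambda_E^++\varepsilon)}\,A(f^Nx)\,\|Df^N(v)\|\Big).
\]
Consequently, the desired contraction $\|Df(v)\|'_{f(x)}\leq e^{\lambda_E^++\varepsilon}\,\|v\|'_x$ is equivalent to the single cocycle-type inequality
\[
A(f^N x)\,\|Df^N|_{E_x}\|\;\leq\; e^{N(\lambda_E^++\varepsilon)}\,A(x)
\]
holding for $\mu$-a.e.\ $x$.

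\emph{Choice of $N$ and construction of $A$.} By Kingman's subadditive ergodic theorem applied to $n\mapsto\log\|Df^n|_E\|$, the averages $\tfrac 1n\int\log\|Df^n|_E\|\,d\mu$ decrease to $\lambda_E^+$. Fix $N$ with $\tfrac1N\int\log\|Df^N|_E\|\,d\mu<\lambda_E^++\varepsilon/2$, set $g:=f^N$, and let $\phi(x):=\log\|Df^N|_{E_x}\|-N(\lambda_E^++\varepsilon)$; the function $\phi$ is bounded and continuous on $\supp(\mu)$, with $\int\phi\,d\mu<-N\varepsilon/2<0$. Define
\[
A(x):=\sup_{n\geq 0}\exp\!\left(\sum_{k=0}^{n-1}\phi(g^k x)\right),
\]
with the empty sum equal to $0$, so that $A\geq 1$. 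Since $\mu$ is $f$-ergodic, its $g$-ergodic components are cyclically permuted by $f$ and share the same $\phi$-integral, so Birkhoff's theorem for $g$ gives $S_n(x)/n\to\int\phi\,d\mu<0$ for $\mu$-a.e.\ $x$, where $S_n:=\sum_{k=0}^{n-1}\phi\circ g^k$. Hence $S_n\to-\infty$, the supremum defining $A$ is attained at a finite index, and $A$ is $\mu$-a.e.\ finite; extend $A$ by $1$ off this full-measure set. The shift identity $S_n\circ g=S_{n+1}-\phi$ immediately yields $A(gx)\leq e^{-\phi(x)}A(x)$, which is exactly the cocycle inequality required.

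\emph{Sub-exponential variation and main obstacle.} The delicate step is to verify that $A(f^n x)$ varies sub-exponentially. One uses the closed-form identity $\log A(g^nx)=\sup_{\ell\geq n}S_\ell(x)-S_n(x)$. Birkhoff gives $S_\ell(x)/\ell\to c:=\int\phi\,d\mu<0$ for $\mu$-a.e.\ $x$, and the elementary observation that for any $\eta<|c|$ one has $S_\ell\leq(c+\eta)\ell$ for all $\ell$ large --- and that $\ell\mapsto(c+\eta)\ell$ is decreasing since $c+\eta<0$ --- implies $\sup_{\ell\geq n}S_\ell(x)/n\to c$ as well. Subtracting, $\log A(g^nx)/n\to 0$. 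Sub-exponentiality along $f$-orbits then follows by writing $n=kN+j$, $0\leq j<N$, and absorbing the finitely many bounded factors $A\circ f^j/A$; the analogous estimate for negative iterates is obtained by running the whole construction with $f^{-1}$. The principal obstacle is precisely this last step: $\log A$ need not lie in $L^1(\mu)$, so a direct Borel--Cantelli argument applied to $\log A\circ f^n$ is not available, and one must exploit the explicit supremum formula together with the negativity of the drift $\int\phi\,d\mu$. The boundedness of $\phi$ --- granted by the compactness of $M$ and the continuity of the bundle $E$ on $\supp(\mu)$ --- is essential throughout this argument.
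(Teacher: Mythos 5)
Your telescoping identity, the reduction to the cocycle inequality $A(f^Nx)\,\|Df^N|_{E_x}\|\le e^{N(\lambda_E^++\varepsilon)}A(x)$, the definition $A=\sup_{n\ge0}e^{S_n}$ and the supremum formula $\log A(f^{nN}x)=\sup_{\ell\ge n}S_\ell(x)-S_n(x)$ are exactly the mechanism of the paper's proof (Lemmas~\ref{l.iterate} and~\ref{l.slow}). The genuine gap is the sentence asserting that the $f^N$-ergodic components of $\mu$ ``share the same $\phi$-integral''. This is false in general: $\phi(x)=\log\|Df^N|_{E_x}\|-N(\lambda_E^++\varepsilon)$ is not $f$-invariant, and if $\mu=\frac1m(\mu_1+\cdots+\mu_m)$ with $\mu_{i+1}=f_\ast\mu_i$, then $\int\phi\,d\mu_{i+1}=\int\phi\circ f\,d\mu_i$, which can differ from $\int\phi\,d\mu_i$ by up to $2C_f$ ($C_f$ a bound on $\log\|Df^{\pm1}\|$) at each step; since $m$ may be comparable to $N$, an individual component average of $\log\|Df^N|_E\|$ may exceed the $\mu$-average by an amount of order $C_f\,m$, which swamps $N\varepsilon/2$. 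Subadditivity only forces each component average to be $\ge N\lambda_E^+$, so your single condition $\frac1N\int\log\|Df^N|_E\|\,d\mu<\lambda_E^++\varepsilon/2$ does not force every component average below $N(\lambda_E^++\varepsilon)$. On a bad component the a.e.\ drift $\lim S_n/n$ could be $\ge0$, in which case $A=\sup_n e^{S_n}$ is infinite on a set of positive $\mu$-measure and the subexponential estimate (which needs a strictly negative drift) collapses. Repairing this is precisely the content of the paper's Lemma~\ref{l.birkhoff}, and it needs a two-scale argument you do not have: first choose $n_0$ with $\frac1{n_0}\int\log\|Df^{n_0}|_E\|\,d\mu\le\lambda_E^++\varepsilon/2$, select one $f^{n_0}$-ergodic component $\mu_{i_0}$ whose average is also $\le\lambda_E^++\varepsilon/2$, then take $N>4n_0C_f/\varepsilon$ and cut each length-$N$ block of orbit along the returns to the support of $\mu_{i_0}$, using submultiplicativity of the norms and the uniform bound $C_f$ to absorb the two boundary pieces; this gives, for $\mu$-a.e.\ $x$, that the $f^N$-Birkhoff averages of $\log\|Df^N|_E\|$ converge to a limit in $[N\lambda_E^+,\,N(\lambda_E^++\varepsilon))$, which is the statement your construction actually requires.

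Two smaller points. First, the ratios $A(f^jx)/A(x)$ are not bounded (the sums $S_k(f^jx)$ and $S_k(x)$ can differ by an amount linear in $k$), so the passage from $f^N$-orbits to $f$-orbits should not go through ``absorbing bounded factors''; instead apply the $f^N$-orbit estimate at each of the $N$ starting points $f^jx$, $0\le j<N$, which is legitimate because the relevant full-measure set can be taken $f$-invariant. Second, the backward statement concerns the same function $A$ (built from forward products), so ``running the whole construction with $f^{-1}$'' is not enough: as in the paper, one must compare the forward sums evaluated at $f^{-nN}(x)$ with the Birkhoff sums $\widetilde S_k$ of the same function for $f^{-N}$, via $S_k(f^{-nN}x)=\widetilde S_k(f^{-(n-k)N}x)$ for $0\le k\le n$, together with the fact that forward and backward averages converge a.e.\ to the same negative limit.
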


\begin{remark} Since the integer $N$ is uniformly bounded, the
(measurable) metric $\|.\|'$ is quasi-conformally equivalent to the
initial metric $\|.\|$.
\end{remark}

Before proving Proposition~\ref{p.metric}, let us first explain how the Lyapunov
exponents may be computed as a limit of Birkhoff sums given by the
derivative of $f$.
\begin{lemma}\label{l.birkhoff}
Let $f$ be a $C^1$-diffeomorphism, $\mu$ be an
ergodic invariant probability measure, and $E\subset T_{\supp(\mu)}M$ be a $Df$-invariant
continuous subbundle defined over $\supp(\mu)$. Let $\lambda^+_E$
be the upper Lyapunov exponent in $E$ of the measure $\mu$.

Then, for any $\varepsilon>0$, there exists an integer
$N_\varepsilon$ such that, for $\mu$-almost every point $x\in M$ and
any $N\geq N_\varepsilon$, the Birkhoff averages
$$\frac{1}{k.N}\sum_{\ell=0}^{k-1}\log\|Df^N_{|E}(f^{\ell.N}(x))\|$$
converge towards a number contained in
$[\lambda_E^+,\lambda_E^++\varepsilon)$, when $k$ goes to $+\infty$.
\end{lemma}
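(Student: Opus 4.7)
My plan is to treat $\phi_n(x) := \log\|Df^n_{|E}(x)\|$ as a subadditive cocycle over $(f,\mu)$, apply Kingman's subadditive ergodic theorem to extract the exponent $\lambda_E^+$, and then recast the averages in the statement as genuine Birkhoff averages of the continuous function $\phi_N$ under the power $f^N$.

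First I would verify subadditivity: the chain rule and submultiplicativity of operator norms on the $Df$-invariant bundle $E$ give $\phi_{n+m}(x) \leq \phi_n(x) + \phi_m(f^n x)$. Since $E$ is continuous over the compact set $\supp(\mu)$, the function $\phi_1$ is bounded and in particular $\mu$-integrable. The sequence $b_n := \int \phi_n\, d\mu$ is then subadditive, so by Fekete's lemma $b_n/n$ decreases to $\inf_n b_n/n$. Kingman's subadditive ergodic theorem identifies this infimum with the $\mu$-a.e.\ limit of $\phi_n/n$, and $f$-ergodicity forces that limit to be a constant, namely $\lambda_E^+$. Consequently $b_n/n \searrow \lambda_E^+$, so I pick $N_\varepsilon$ large enough that $b_N/N \in [\lambda_E^+, \lambda_E^+ + \varepsilon/2)$ for every $N \geq N_\varepsilon$.

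For any such $N$, $\phi_N$ is a bounded continuous function on $\supp(\mu)$. Applying Birkhoff's ergodic theorem to $\phi_N$ under the transformation $f^N$ (which preserves $\mu$, though $\mu$ need not be $f^N$-ergodic) yields, for $\mu$-a.e.\ $x$, a limit
\[
\Phi_N(x) := \lim_{k\to\infty}\frac{1}{k}\sum_{\ell=0}^{k-1}\phi_N(f^{\ell N}(x)),
\]
which is $f^N$-invariant and satisfies $\int \Phi_N\, d\mu = b_N$. Dividing by $N$ gives the averages in the statement. The lower bound $\Phi_N(x)/N \geq \lambda_E^+$ is then immediate from submultiplicativity: $\phi_{kN}(x) \leq \sum_{\ell=0}^{k-1}\phi_N(f^{\ell N}(x))$, so dividing by $kN$ and letting $k\to\infty$ the left-hand side tends $\mu$-a.e.\ to $\lambda_E^+$ by Kingman while the right-hand side tends to $\Phi_N(x)/N$.

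The main obstacle is the pointwise upper bound $\Phi_N(x)/N < \lambda_E^+ + \varepsilon$ for $\mu$-a.e.\ $x$: the integral bound $\int \Phi_N/N \, d\mu < \lambda_E^+ + \varepsilon/2$ together with the a.e.\ lower bound only gives $L^1$-smallness of $\Phi_N/N - \lambda_E^+$, which is not pointwise because $\mu$ may fail to be $f^N$-ergodic and $\Phi_N$ may then genuinely vary across the $f^N$-ergodic components of $\mu$. To upgrade to pointwise control, my approach would be to first strengthen Kingman's a.e.\ convergence to essentially uniform convergence via Egorov's theorem, obtaining for $N \geq N_\varepsilon$ a set $G_N$ of $\mu$-measure arbitrarily close to $1$ on which $\phi_N/N < \lambda_E^+ + \varepsilon/3$, and then apply the Birkhoff ergodic theorem to the indicator of $G_N$ on each $f^N$-ergodic component to show that for $\mu$-a.e.\ $x$ the $f^N$-orbit spends asymptotic frequency close to one in $G_N$. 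Combining this with the uniform bound $|\phi_N/N| \leq \sup \log\|Df\|$ on the complementary set then yields the pointwise bound. Handling the $f^N$-non-ergodicity this way, and quantifying the trade-off between the measure of $G_N$ and the uniform bound on $\phi_N/N$, is the technical crux of the argument.
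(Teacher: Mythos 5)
Your reduction to Kingman plus Birkhoff for $f^N$ is fine through the lower bound (and it matches the paper's use of submultiplicativity there), and you correctly isolate the real difficulty: the limit $\Phi_N$ is only $f^N$-invariant and $\mu$ need not be $f^N$-ergodic. But the mechanism you propose to resolve this does not work as stated. If $\nu$ is an $f^N$-ergodic component of $\mu$, then for $\nu$-a.e.\ $x$ the asymptotic frequency of visits of the $f^N$-orbit to $G_N$ is exactly $\nu(G_N)$, and the only thing $\mu(G_N)>1-\delta$ gives you is $\nu(G_N)\ge 1-m\delta$, where $m$ is the number of $f^N$-ergodic components of $\mu$. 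Since $\mu$ is $f$-ergodic, $m$ divides $N$, but it can be as large as $N$ and is unbounded as $N\to\infty$, while Egorov (or Kingman) provides no rate of convergence; so you cannot arrange $\delta\ll 1/N$ simultaneously for all $N\ge N_\varepsilon$. Nothing prevents the exceptional set of measure $\delta$ from essentially containing an entire ergodic component once $m>1/\delta$ (each component has $\mu$-mass $1/m\le\delta$), and for points of that component the visit frequency to $G_N$ is then close to $0$, not to $1$, so the pointwise upper bound fails on a set of positive $\mu$-measure. Thus the step ``apply Birkhoff to the indicator of $G_N$ on each $f^N$-ergodic component to get frequency close to one for $\mu$-a.e.\ $x$'' is precisely where the argument breaks; the conclusion you need there is equivalent to controlling $\frac1N\int\log\|Df^N_{|E}\|\,d\nu$ on every component, which is essentially the statement being proved.

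The paper avoids this by handling non-ergodicity at one fixed scale instead of at every scale $N$. It first fixes $n_0$ with $\frac1{n_0}\int\log\|Df^{n_0}_{|E}\|\,d\mu\le\lambda_E^++\frac\varepsilon2$, decomposes $\mu=\frac1m(\mu_1+\cdots+\mu_m)$ into its finitely many ($m\le n_0$) cyclically permuted $f^{n_0}$-ergodic components carried by sets $A_1,\dots,A_m$, and chooses one index $i_0$ with $\frac1{n_0}\int\log\|Df^{n_0}_{|E}\|\,d\mu_{i_0}\le\lambda_E^++\frac\varepsilon2$. Then, for arbitrary $N$, each term $\log\|Df^{N}_{|E}(f^{\ell N}(x))\|$ is bounded by submultiplicativity along consecutive blocks of length $n_0$ anchored at returns of the orbit to $A_{i_0}$, at the cost of a boundary error $2n_0C_f$; the only ergodic theorem invoked is Birkhoff for the fixed ergodic system $(f^{n_0},\mu_{i_0})$ at a $\mu$-regular point, and the error $2n_0C_f/N$ is absorbed by taking $N_\varepsilon>4n_0C_f/\varepsilon$. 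If you want to keep your framing, you need an ingredient of this kind --- a fixed intermediate scale $n_0$ (so the number of ergodic components involved is bounded once and for all), or some rate making $N\cdot\mu(M\setminus G_N)$ small --- neither of which follows from Kingman and Egorov alone.
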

\begin{proof}
The exponent $\lambda_E^+$ is given by:
$$\lambda_E^+=\lim_{n\to +\infty} \frac{1}{n}\int \log \|Df^n_{|E}\|d\mu.$$
One fixes an integer $n_0\geq 1$ large enough so that for any $n\geq
n_0$ we have:
\begin{equation}\label{e.mean}
\left|\frac{1}{n}\int \log \|Df^n_{|E}\|d\mu-\lambda_E^+\right|\leq
\frac{\varepsilon}{2}.
\end{equation}
The measure $\mu$ is ergodic for the dynamics of $f$, but it may happen that $\mu$ is not ergodic for $f^{n_0}$. Hence, it
decomposes as
$$\mu=\frac{1}{m}\left(\mu_1+\dots+\mu_m\right),$$
where $m\in\NN\setminus\{0\}$ divides $n_0$ and each $\mu_i$ is an ergodic $f^{n_0}$-invariant measure
such that $\mu_{i+1}=f_\ast \mu_i$ for each $i$ $(\text{mod } m)$.
Let $A_1\cup\dots\cup A_m$ be a measurable partition of $(M,\mu)$
such that $f(A_i)=A_{i+1}$ for each $i$ $(\text{mod } m)$ and
$\mu_i(A_i)=1$.

Note that by~(\ref{e.mean}), there exists $i_0\in \{1,\dots,m\}$,
such that
%%%%%%%%%%%%%%%%%%%%%%%%%%%%%%%%%%%%%%%%%%%%%%%%%%%%%%%%%%%%%%%%%%%%%%%%%%%%%%%%%%%%%
\begin{equation}\label{e.mean2}
\frac{1}{n_0}\int \log \|Df^{n_0}_{|E}\|d\mu_{i_0}\leq \lambda_E^+ + \frac{\varepsilon}{2}.
\end{equation}
%%%%%%%%%%%%%%%%%%%%%%%%%%%%%%%%%%%%%%%%%%%%%%%%%%%%%%%%%%%%%%%%%%%%%%%%%%%%%%%%%%%%%%

For $N\geq 1$, and $\mu$-a.e. point $x$, one decomposes the segment
of $f$-orbit of length $N$ of $x$ as $(x,f(x),\dots, f^{j-1}(x))$,
$(f^j(x),\dots, f^{j+(r-1).n_0-1}(x))$ and
$(f^{j+(r-1).n_0}(x),\dots, f^{N-1}(x))$ such that $j<n_0$,
$j+r.n_0\geq N$ and all the points $f^j(x),
f^{j+n_0}(x),\dots,f^{j+r.n_0}$ belong to $A_{i_0}$. One deduces
that

%%%%%%%%%%%%%%%%%%%%%%%%%%%%%%%%%%%%%%%%%%%%%%%%%%%%%%%%%%%%%%%%%%%%%%%%%%%%%%%%%%%%%%%%%%%%%%%%%%%%%%%
\begin{equation*}
\begin{array}{rl}
\|Df^{N}_{|E}(x)\|\leq \|Df^{j}_{|E}(x)\|.&\left(\|Df^{n_0}_{|E}(f^j(x))\|\right.\dots\\
&\left.\dots\|Df^{n_0}_{|E}(f^{j+(r-2).n_0}(x))\|\right).\|Df^{N-(j+(r-1).n_0)}_{|E}(f^{j+(r-1).n_0}(x))\|.
\end{array}
\end{equation*}
%%%%%%%%%%%%%%%%%%%%%%%%%%%%%%%%%%%%%%%%%%%%%%%%%%%%%%%%%%%%%%%%%%%%%%%%%%%%%%%%%%%%%%%%%%%%%%%%%%%%%%%%%%%%%%%%%%%%%%%%%%%%

Hence, for $\mu$-almost every point one has:
$$\log\|Df^{N}_{|E}(x)\|\leq 2n_0.C_f+\sum_{s=0}^{r-2}\log\|Df^{n_0}_{|E}(f^{j+s.n_0}(x))\|,$$
where $C_f$ is an upper bound for both $\log \|Df\|$ and $\log
\|Df^{-1}\|$.

The point $f^j(x)$ is regular for the dynamics
$(\mu_{i_0},f^{n_0})$. One deduces that the average
$\frac{1}{k.n_0}\sum_{\ell=0}^{k-1}\log\|Df^{n_0}_{|E}(f^{j+\ell.n_0}(x))\|$ converges to
$\frac{1}{n_0}\int \log \|Df^{n_0}_{|E}\|d\mu_{i_0}$. Hence

%%%%%%%%%%%%%%%%%%%%%%%%%%%%%%%%%%%%%%%%%%%%%%%%%%%%%%%%%%%%%%%
$$
\lim_{k\to +\infty} \frac{1}{k.N}\sum_{\ell=0}^{k-1}\log\|Df^N_{|E}(f^{\ell.N}(x))\|
\leq \frac{2n_0.C_f}{N}+ \lim_{k\to
+\infty}\frac{1}{k.n_0}\sum_{\ell=0}^{k-1}\log\|Df^{n_0}(f^{j+\ell.n_0}(x))\|.
$$
%%%%%%%%%%%%%%%%%%%%%%%%%%%%%%%%%%%%%%%%%%%%%%%%%%%%%%%%%%%%%%%%%%%

Hence, choosing $N> \frac{4n_0.C_f}{\varepsilon}$ and using the inequality (\ref{e.mean2}), one gets

%%%%%%%%%%%%%%%%%%%%%%%%%%%%%%%%%%%%%%%%%%%%%%%%%%%%%%%%%%%%%%%%%%
$$
\lim_{k\to +\infty} \frac{1}{k.N}\sum_{\ell=0}^{k-1}\log\|Df^N_{|E}(f^{\ell.N}(x))\|
< \lambda_E^++\varepsilon.
$$
%%%%%%%%%%%%%%%%%%%%%%%%%%%%%%%%%%%%%%%%%%%%%%%%%%%%%%%%%%%%%%%%%%%

One the other hand, using that the norms are sub-multiplicative, one gets

%%%%%%%%%%%%%%%%%%%%%%%%%%%%%%%%%%%%%%%%%%%%%%%%%%%%%%%%%%%%%%%%%%%%%%%%%%%%
$$
\lim_{k\to +\infty} \frac{1}{k.N}\sum_{\ell=0}^{k-1}\log\|Df^N_{|E}(f^{\ell.N}(x))\|
\geq \lim_{n\to +\infty} \frac{1}{n}\log \|D f^n(x)\|=\lambda_E^+.
$$
%%%%%%%%%%%%%%%%%%%%%%%%%%%%%%%%%%%%%%%%%%%%%%%%%%%%%%%%%%%%%%%%%%%%%%%%%
\end{proof}

One now comes to the proof of Proposition~\ref{p.metric}: one
considers a constant $\varepsilon>0$ and an integer $N\geq 0$ given
by Lemma~\ref{l.birkhoff} such that at $\mu$-almost every
point, the Birkhoff averages for $f^N$ of the functions
$x\mapsto \frac{1}{N}\log\|Df^N_{|E}(x)\|$ converge towards some numbers in
$[\lambda_E^+,\lambda_E^++\varepsilon)$.
In particular the sequence
$\sum_{\ell=0}^{k-1}\log \|Df^{N}_{|E}(f^{\ell.N}(x))\|$ is bounded
by $k.N.(\lambda_E^++\varepsilon)$ when $k$ is large.

This allows us to define the quantity
\begin{equation}\label{e.A}
A(x)=\max_{k\geq 0} \left( e^{-k.N.(\lambda_E^++\varepsilon)}.\prod_{\ell=0}^{k-1}\|Df^{N}_{|E}(f^{\ell.N}(x))\|\right),
\end{equation}
with the convention $\prod_{\ell=0}^{k-1}\|Df^{N}_{|E}(f^{\ell.N}(x))\|=1$ for $k=0$. Note that $A(x)\geq 1$, by definition.

The Proposition~\ref{p.metric} now follows from the next two lemmas.
\begin{lemma}\label{l.iterate}
At $\mu$-almost every point $x$, the metric
$$\|v\|'_x=\sum_{0\leq j<N} e^{-j.(\lambda_E^++\varepsilon)}.A(f^j(x)).\|D_xf^j_{|E}.v\|,$$
on $E_x$ satisfies
$$\|D_xf.v\|'_{f(x)}\leq e^{\lambda_E^++\varepsilon}.\|v\|'_x.$$
\end{lemma}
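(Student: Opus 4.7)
The plan is to prove the lemma by direct algebraic manipulation of the defining sum for $\|\cdot\|'$, using the key telescoping property built into the definition~\eqref{e.A} of $A$. Concretely, I would expand the left-hand side, shift the summation index, and show that the excess term is controlled by $A(x)$ itself.

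More precisely, I would start by computing
\[
\|D_xf.v\|'_{f(x)} \;=\; \sum_{j=0}^{N-1} e^{-j(\lambda_E^++\varepsilon)}\,A(f^{j+1}(x))\,\|D_{f(x)}f^j_{|E}\cdot D_xf.v\|
\;=\; \sum_{j=0}^{N-1} e^{-j(\lambda_E^++\varepsilon)}\,A(f^{j+1}(x))\,\|D_xf^{j+1}_{|E}.v\|,
\]
and then re-index with $i=j+1$, which produces a factor $e^{\lambda_E^++\varepsilon}$. After dividing through by this factor, the desired inequality $\|D_xf.v\|'_{f(x)}\leq e^{\lambda_E^++\varepsilon}\|v\|'_x$ reduces to
\[
e^{-N(\lambda_E^++\varepsilon)}\,A(f^N(x))\,\|D_xf^N_{|E}.v\|\;\leq\; A(x)\,\|v\|,
\]
which in turn would follow (by taking suprema over unit $v\in E_x$) from the single scalar inequality
\[
e^{-N(\lambda_E^++\varepsilon)}\,A(f^N(x))\,\|Df^N_{|E}(x)\|\;\leq\; A(x).
\]

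The verification of this last inequality is the core of the argument, and it is purely formal once one unfolds~\eqref{e.A}. I would write
\[
A(f^N(x))\;=\;\max_{k\geq 0}\; e^{-kN(\lambda_E^++\varepsilon)}\prod_{\ell=1}^{k}\|Df^N_{|E}(f^{\ell N}(x))\|,
\]
multiply by $e^{-N(\lambda_E^++\varepsilon)}\|Df^N_{|E}(x)\|$, and absorb the extra factor into the product to obtain
\[
\max_{m\geq 1}\; e^{-mN(\lambda_E^++\varepsilon)}\prod_{\ell=0}^{m-1}\|Df^N_{|E}(f^{\ell N}(x))\|,
\]
which is dominated by the same maximum taken over $m\geq 0$, namely $A(x)$. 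Here the reason that the $m=0$ slot is available in $A(x)$ (with the convention that the empty product equals $1$, so $A(x)\geq 1$) is exactly what makes the one-step estimate work.

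There is essentially no obstacle beyond this book-keeping: the nontrivial content is already in the construction of $A$ and in Lemma~\ref{l.birkhoff}, which guarantees that $A(x)$ is finite $\mu$-almost everywhere. I would conclude by noting that the inequality holds at every point $x$ for which $A(x)$ and $A(f^N(x))$ are finite, i.e.\ on a full $\mu$-measure set, so that the contraction property of $\|\cdot\|'$ under $Df$ holds $\mu$-a.e., as required.
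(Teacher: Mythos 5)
Your proof is correct and follows essentially the same route as the paper: expand the sum defining $\|D_xf.v\|'_{f(x)}$, re-index to extract the factor $e^{\lambda_E^++\varepsilon}$, and reduce everything to the single estimate $e^{-N(\lambda_E^++\varepsilon)}A(f^N(x))\,\|Df^N_{|E}(x)\|\leq A(x)$, which both you and the paper verify by shifting the index in the maximum defining $A(f^N(x))$ and observing that the resulting maximum over $k\geq 1$ is dominated by the maximum over $k\geq 0$ defining $A(x)$. The only cosmetic difference is that you cancel the common terms $i=1,\dots,N-1$ exactly, whereas the paper keeps them as an inequality and treats the last term separately; the content is identical.
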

\begin{proof}
We write :
%%%%%%%%%%%%%%%%%%%%%%%%%%%%%%%%%%%%%%%%%%%%%%%
\begin{equation*}\label{e.iterate}
\begin{split} %%%%% Le mode split ici marche mieux que array (ligne trop longue sinon). %%%%%
\|D_xf.v\|'_{f(x)}=\sum_{j=0}^{N-2} e^{-j.(\lambda_E^++\varepsilon)}.A(f^{j+1}(x)).\|D_xf^{j+1}.v\|+ e^{-(N-1).(\lambda_E^++\varepsilon)}.A(f^N(x)).\|Df^{N}(v)\|\\
\leq e^{\lambda_E^++\varepsilon}.\sum_{j=1}^{N-1} e^{-j.(\lambda_E^++\varepsilon)}.A(f^j(x)).\|D_xf^{j}.v\|+ e^{-(N-1).(\lambda_E^++\varepsilon)}.A(f^N(x)).\|Df^{N}_{|E}\|.\|v\|.
\end{split}
\end{equation*}
%%%%%%%%%%%%%%%%%%%%%%%%%%%%%%%%%%%%%%%%%%%%%%%%%%%%%%%%%%%%%%%%%%%%%%
Hence one obtains the required estimate from the following:

\begin{claim}
$$A(f^N(x)).\|Df^N_{|E}(x)\|\leq e^{N.(\lambda_E^++\varepsilon)}.A(x).$$
\end{claim}
\noindent The proof of the claim is the following computation:
$$\begin{array}{rl}
A(f^N(x))&=\max_{k\geq 0} \left(e^{-k.N.(\lambda_E^++\varepsilon)}.\prod_{\ell=0}^{k-1}\|Df^{N}_{|E}(f^{(\ell+1).N}(x))\|\right)\\
&= e^{N.(\lambda_E^++\varepsilon)}.\max_{k\geq 0} \left(e^{-(k+1).N.(\lambda_E^++\varepsilon)}.\prod_{\ell=1}^{k}\|Df^{N}_{|E}(f^{\ell.N}(x))\|\right)\\
&= e^{N.(\lambda_E^++\varepsilon)}.\max_{k\geq 1} \left(e^{-k.N.(\lambda_E^++\varepsilon)}.\prod_{\ell=1}^{k-1}\|Df^{N}_{|E}(f^{\ell.N}(x))\|\right).
\end{array}
$$
Hence
$$\begin{array}{rl}
A(f^N(x)).\|Df^N_{|E}(x)\|&=e^{N.(\lambda_E^++\varepsilon)}.\max_{k\geq 1} \left(e^{-k.N.(\lambda_E^++\varepsilon)}.\prod_{\ell=0}^{k-1}\|Df^{N}_{|E}(f^{\ell.N}(x))\|\right)\\
&\leq e^{N.(\lambda_E^++\varepsilon)}.\max_{k\geq 0} \left(e^{-k.N.(\lambda_E^++\varepsilon)}.\prod_{\ell=0}^{k-1}\|Df^{N}_{|E}(f^{\ell.N}(x))\|\right)\\
&=e^{N.(\lambda_E^++\varepsilon)}\cdot A(x).
\end{array}
$$
This ends the proofs of the claim and of Lemma~\ref{l.iterate}.
\end{proof}
\medskip

\begin{lemma}\label{l.slow}
At $\mu$-almost every point $x$, the sequences
$\left(A(f^n(x))\right)_{n\in \NN}$ and $\left(A(f^{-n}(x))\right)_{n\in \NN}$ vary
sub-exponentially.
\end{lemma}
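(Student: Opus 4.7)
\textbf{Proof proposal for Lemma~\ref{l.slow}.}

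My plan is to turn the key relation obtained in the proof of Lemma~\ref{l.iterate} into a bounded cohomology equation for $\log A$ under $f^N$ and then apply Birkhoff's ergodic theorem.

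First, revisiting the chain of identities in the proof of the claim inside Lemma~\ref{l.iterate}, one actually obtains an \emph{equality}
$$A(f^N(x))\,\|Df^N_{|E}(x)\|\;=\;e^{N(\lambda_E^++\varepsilon)}\max_{m\ge 1}b_m(x),$$
where $b_m(x)=e^{-mN(\lambda_E^++\varepsilon)}\prod_{\ell=0}^{m-1}\|Df^N_{|E}(f^{\ell N}(x))\|$. Setting $u(x):=\log\|Df^N_{|E}(x)\|-N(\lambda_E^++\varepsilon)$, which is bounded by compactness of $\supp(\mu)$ and continuity of $Df$, and $\rho(x):=\log A(x)-\log\max_{m\ge 1}b_m(x)\ge 0$, the identity becomes
$$\phi(x)\;:=\;\log A(f^N(x))-\log A(x)\;=\;-u(x)-\rho(x).$$
A short case analysis shows that $\rho$ is uniformly bounded by $\|u\|_\infty$: on $\{A>1\}$ the supremum defining $A(x)$ must be attained at some $m\ge 1$ (since $b_0=1<A(x)$), so $\rho\equiv 0$; on $\{A=1\}$ one has $b_1(x)=e^{u(x)}\le 1$, forcing $u(x)\le 0$, and then $\rho(x)\le -\log b_1(x)=-u(x)\le \|u\|_\infty$. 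Thus $\phi\in L^\infty(\mu)$.

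Second, I claim that $\int\phi\,d\mu=0$. Since $\log A$ need not be $\mu$-integrable a priori, one cannot simply invoke invariance of $\mu$ on the coboundary $\log A\circ f^N-\log A$. Instead I would argue by truncation: for each $T>0$ set $\Phi_T:=\min(\log A,T)\in L^\infty(\mu)$, so by $f^N$-invariance of $\mu$,
$$\int (\Phi_T\circ f^N-\Phi_T)\,d\mu=0.$$
Analyzing the four cases according to whether each of $\log A(x)$ and $\log A(f^N(x))$ exceeds $T$, and using that $|\log A\circ f^N-\log A|=|\phi|\le\|\phi\|_\infty$, one verifies $|\Phi_T\circ f^N-\Phi_T|\le\|\phi\|_\infty$ uniformly in $T$. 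Since $\log A$ is $\mu$-a.e.\ finite, $\Phi_T\circ f^N-\Phi_T\to\phi$ pointwise a.e.\ as $T\to\infty$, and Lebesgue's dominated convergence theorem yields $\int\phi\,d\mu=0$.

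Third, I would apply Birkhoff's theorem to the bounded function $\phi\in L^1(\mu)$: the averages $\tfrac 1n\sum_{j=0}^{n-1}\phi(f^{jN}(x))$ converge $\mu$-a.e.\ to $\bar\phi(x):=\mathbb{E}[\phi\mid\mathcal{I}_{f^N}](x)$, with $\int\bar\phi\,d\mu=0$. Telescoping, this same average equals $\tfrac 1n(\log A(f^{nN}(x))-\log A(x))$, so $\log A(f^{nN}(x))/n\to\bar\phi(x)$ a.e. But $A\ge 1$ forces $\log A(f^{nN}(x))/n\ge -\log A(x)/n\to 0$, so $\bar\phi\ge 0$ a.e.; combined with $\int\bar\phi=0$ this gives $\bar\phi=0$ a.e. Hence $\log A(f^{nN}(x))=o(n)$ for $\mu$-a.e.\ $x$. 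Writing a general $n$ as $jN+k$ with $0\le k<N$ and applying the same conclusion to the $\mu$-generic point $f^k(x)$ yields $\log A(f^n(x))=o(n)$; together with $\log A\ge 0$, this is the desired sub-exponential growth of $(A(f^n(x)))_{n\in\NN}$. The backward statement for $(A(f^{-n}(x)))_{n\in\NN}$ follows by iterating the identity $\log A\circ f^N-\log A=\phi$ in reverse and applying Birkhoff's theorem to $f^{-N}$, which shares with $f^N$ the same invariant $\sigma$-algebra and hence the same vanishing ergodic mean~$\bar\phi$.

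The main obstacle is establishing $\int\phi\,d\mu=0$ (second paragraph), which one cannot obtain by naive integration of the coboundary since $\log A$ may fail to lie in $L^1(\mu)$ in the generality considered here. The truncation and dominated convergence argument saves the day precisely because of the uniform boundedness of $\rho$ observed at the end of the first paragraph, which keeps the cohomology equation inside $L^\infty(\mu)$.
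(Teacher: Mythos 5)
Your proposal is correct, but it follows a genuinely different route from the paper's proof. The paper works directly with the Birkhoff sums $S_k$ of $x\mapsto -N(\lambda_E^++\varepsilon)+\log\|Df^N_{|E}(x)\|$ for the dynamics of $f^N$: since $S_k(x)/k$ converges $\mu$-a.e.\ to a negative limit $\lambda$, one has two-sided linear bounds $(\lambda-\eta)k-C\le S_k(x)\le (\lambda+\eta)k+C$, and the cocycle relation $S_k(f^{nN}(x))=S_{k+n}(x)-S_n(x)$ then yields the explicit estimate $0\le \log A(f^{nN}(x))=\max_{k\ge 0}S_k(f^{nN}(x))\le 2\eta n+2C$, with a short additional manipulation (writing $S_k(f^{-nN}(x))=\widetilde{S_k}(f^{-(n-k)N}(x))$ for $0\le k\le n$) for the backward iterates. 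You instead sharpen the claim inside Lemma~\ref{l.iterate} to an equality, namely $A(f^N(x))\,\|Df^N_{|E}(x)\|=e^{N(\lambda_E^++\varepsilon)}\max_{m\ge 1}\bigl(e^{-mN(\lambda_E^++\varepsilon)}\prod_{\ell=0}^{m-1}\|Df^N_{|E}(f^{\ell N}(x))\|\bigr)$, which exhibits $\phi=\log A\circ f^N-\log A$ as an explicitly bounded function; you then prove $\int\phi\,d\mu=0$ by truncation (indeed necessary, since $\log A$ need not be integrable), and Birkhoff's theorem applied to $\phi$, combined with $A\ge 1$, forces the conditional mean $\bar\phi$ to vanish, giving $\log A(f^{nN}(x))=o(n)$; the backward statement then comes essentially for free because forward and backward Birkhoff limits agree a.e. Both arguments correctly avoid assuming that $\mu$ is $f^N$-ergodic (the paper via an $x$-dependent negative $\lambda$, you via conditional expectations). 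What the paper's route buys is a completely elementary and quantitative bound, using only the negativity of $\lambda$ and no discussion of coboundaries or integrability; what your route buys is a cleaner structural statement---$\log A$ is the transfer function of a bounded, mean-zero $f^N$-coboundary, hence grows sublinearly along a.e.\ orbit---and a uniform treatment of positive and negative iterates. If you write it up, make explicit that the identity $\phi=-u-\rho$ is only used on the full-measure invariant set where $A$ is finite along the entire $f^N$-orbit, and that passing from $n\in N\NN$ to arbitrary $n$ requires intersecting the full-measure set over the finitely many translates $f^{-k}$, $0\le k<N$, which is what your appeal to the genericity of $f^k(x)$ amounts to.
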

\begin{proof}
For $k\in\NN$ we consider the Birkhoff sum $S_k$ of the function $x\mapsto
-N(\lambda_E^++\varepsilon)+\log\|Df^N_{|E}(x)\|$ relative to the dynamics of
$f^N$. For $\mu$-a.e. point $x$, the Birkhoff average $\frac{S_k(x)}k$ converges when
$k$ tends to  $+\infty$ towards a number $\lambda<0$. One deduces that for any small $\eta>0$, there exists
$C>0$ such that we have for any $k\in \NN$:
%%%%%%%%%%%%%%%%%%%%%%%%%%%%%%%%%%%%%%%%%%%%%%%%%%%%%%%%%
$$
(\lambda-\eta).k-C\leq S_k(x)\leq (\lambda+\eta).k+C.
$$
%%%%%%%%%%%%%%%%%%%%%%%%%%%%%%%%%%%%%%%%%%%%%%%%%%%%%%%%%
For any integer $n\geq 0$, one has $S_k(f^{n.N}(x))=S_{k+n}(x)-S_n(x)$ so
that:
%%%%%%%%%%%%%%%%%%%%%%%%%%%%%%%%%%%%%%%%%%%%%%%%%%%%%%%%%%
$$
(\lambda-\eta)k-2\eta n-2C\leq S_k(f^{n.N}(x))\leq (\lambda+\eta)k+2\eta n+2C.
$$
%%%%%%%%%%%%%%%%%%%%%%%%%%%%%%%%%%%%%%%%%%%%%%%%%%%%%%%%%%%%

In particular, using that $\lambda$ is negative and
$\eta<|\lambda|$, we get
%%%%%%%%%%%%%%%%%%%%%%%%%%%%%%%%%%%%%%%%%%%%%%%%%%%%%%%%%%%
\begin{equation}\label{e.eta}
0\leq \max_{k\geq 0}S_k(f^{n.N}(x)) \leq 2\eta n+2C.
\end{equation}
%%%%%%%%%%%%%%%%%%%%%%%%%%%%%%%%%%%%%%%%%%%%%%%%%%%%%%
This implies the subexponentiality of the sequence $\left(A(f^{n.N}(x))\right)_{n\in \NN}$ since
$$\log A(f^{n.N}(x))=\max_{k\geq 0}S_k(f^{n.N}(x)).$$
The subexponentiality of the sequence $\left(A(f^n(x))\right)_{n\in \NN}$ follows from the subexponentiality of the sequence $\left(A(f^{n.N}(x))\right)_{n\in \NN}$.

We now show the subexponentiality of the sequence $\left(A(f^{-n.N}(x))\right)_{n\in \NN}$ for $\mu$-almost every point.
We first notice that, for $k>n$, one can decompose $S_k(f^{-n.N}(x))$ in $S_n(f^{-n.N}(x))+S_{k-n}(x)$. Hence we have
%%%%%%%%%%%%%%%%%%%%%%%%%%%%%%%%%%%%%%%%%%%%%%%%%%%%%%%%%%
$$
\max_{k\geq 0}S_k(f^{-n.N}(x))\leq \max_{0\leq k\leq n}S_k(f^{-n.N}(x))+\max_{k\geq 0}S_k(x)
$$
%%%%%%%%%%%%%%%%%%%%%%%%%%%%%%%%%%%%%%%%%%%%%%%%%%%%%%%%%%%
The subexponentiality of the sequence $\left(A(f^{-n.N}(x))\right)_{n\in \NN}$  thus follows from the following claim:

\begin{claim} For any $\eta>0$, there is a contant $C>0$ such that for any $n\geq 0$,
$$
0\leq {\max}_{0\leq k\leq n} S_k(f^{-n.N}(x)) \leq 2\eta n+2C.
$$
\end{claim}
For proving the claim, we consider the Birkhoff sum $\widetilde {S_k}$ of  the function $x\mapsto
-N(\lambda_E^++\varepsilon)+\log\|Df^N_{|E}(x)\|$ for the dynamics of
$f^{-N}$. For $\mu$-a.e. point $x$, the Birkhoff averages $\frac{\widetilde{S_k}(x)}k$ and $\frac{S_k(x)}k$ converges (when
$k$ tends to  $+\infty$) towards the same number $\lambda<0$. Applying to $f^{-N}$ the same argument we applied to $f^N$ for  proving the inequality (\ref{e.eta}), this gives that for any $\eta>0$, there exists $C>0$
such that for any $k,n\geq 0$  the following inequality holds:
%%%%%%%%%%%%%%%%%%%%%%%%%%%%%%%%%%%%%%%%%%%%%%%%%%%%%%%%%%%
\begin{equation}\label{e.etatilde}
\widetilde{S_k}(f^{-n.N}(x)) \leq (\lambda+\eta)k+2\eta n+2C.
\end{equation}
%%%%%%%%%%%%%%%%%%%%%%%%%%%%%%%%%%%%%%%%%%%%%%%%%%%%%%
One concludes the claim (and hence the lemma) by noticing that, for every $0\leq k\leq n$, one has
$$
S_k(f^{-n.N}(x))= \widetilde{S_k}(f^{-(n-k).N}(x)),
$$
which implies
$$
0\leq {\max}_{0\leq k\leq n} S_k(f^{-n.N}(x))\leq 2\eta n+2C.
$$

\end{proof}

%%%%%%%%%%%%%%%%%%%%%%%%%%%%%%%%%%%%%%%%%%%%%%%%%%%%%%%%%%%%%%%%%%%%%%%%%%%%%%%%%%%
\subsection{Building the invariant manifolds}\label{ss.manifold}
%%%%%%%%%%%%%%%%%%%%%%%%%%%%%%%%%%%%%%%%%%%%%%%%%%%%%%%%%%%%%%%%%%%%%%%%%%%%%%%%%%%

In this section we build the local stable manifolds at the regular
points of an ergodic measure $\mu$, associated to a dominated splitting $E\oplus_{<} F$ on the
support of the measure $\mu$, under the assumption that the largest Lyapunov exponent $\lambda^+_E$ of $\mu$ in $E$ is negative.

We introduce a cone field on a neighborhood of $\supp(\mu)$: for any
$K>0$, there exists a continuous splitting $E'\oplus F'$ on a
neighborhood of $\supp(\mu)$ which allows us to define the cones
$$\cC^E_x=\{v=v_1+v_2\in T_xM=E'_x\oplus F'_x,\; \|v_2\|\leq K \|v_1\|\}.$$
Moreover, at any point $x\in \supp(\mu)$, we have $E_x\subset
\cC^E_x$.

Theorem~\ref{theo5} is a direct consequence of the next proposition:

\begin{proposition}\label{p.invariant-manifold}
Let $f$ be a $C^1$-diffeomorphism and $\mu$ be an
ergodic invariant probability measure whose support admits a
dominated splitting $E\oplus_{<} F$. Let $\lambda_E^+<\lambda^-_F$ be
the maximal Lyapunov exponent in $E$ and the minimal Lyapunov
exponent in $F$ of the measure $\mu$.

If $\lambda_E^+$ is strictly negative, then at $\mu$-almost every
point $x\in M$, there exists an injectively immersed $C^1$-manifold
$W^E(x)$ with $\dim W^E(x)=\dim E$, tangent to $E_x$, and which is a stable manifold: for any
$\lambda\leq 0$ contained in $(\lambda_E^+,\lambda_F^-)$ and
$\mu$-a.e. point $x$, we have
$$W^E(x)=\left\{y\in M,\; d(f^n(x),f^n(y)).e^{-\lambda.n}\underset{n\to +\infty}\longrightarrow 0\right\}.$$

Moreover, at $\mu$-a.e. point $x$ there exists a local manifold
$W^E_\loc(x)\subset W^E(x)$ satisfying:
\begin{enumerate}
\item\label{p.manifold1} $W^E_\loc(x)$ is an embedded $C^1$-disk centered at $x$, of radius $L^E(x)$ and tangent to $\cC^E$;
\item\label{p.manifold2} the sequence $\left(L^E(f^n(x))\right)_{n\in \ZZ}$ varies sub-exponentially;
\item\label{p.manifold3} $W^E(x)=\bigcup_{n\geq 0} f^{-n}(W^E_\loc(f^n(x)))$.
\end{enumerate}
\end{proposition}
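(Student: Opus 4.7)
The plan is to construct $W^E_{\loc}(x)$ as the graph of a $C^1$ map over $E_x$, obtained as the limit of a backward Hadamard--Perron graph transform in adapted coordinates. The adapted metric of Proposition~\ref{p.metric} supplies uniform contraction on $E$ along $\mu$-a.e.\ orbit, while the dominated splitting $E\oplus_<F$ supplies the cone invariance that replaces the role of $C^{1+\alpha}$ regularity in classical Pesin theory.

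First, fix $\varepsilon>0$ small enough that $\lambda_E^++\varepsilon<\lambda<\lambda_F^--\varepsilon$ and $\lambda_E^++\varepsilon<0$, and use Proposition~\ref{p.metric} to get a measurable norm $\|\cdot\|'$ on $E$ with uniform contraction $\|Df|_E\|'\leq e^{\lambda_E^++\varepsilon}$, whose conformal factor $A(\cdot)$ varies sub-exponentially. In an exponential chart around $f^n(x)$ adapted to the splitting, write $f$ as $(s,t)\mapsto (A_n s+\alpha_n(s,t),\,B_n t+\beta_n(s,t))$ with $\alpha_n,\beta_n$ vanishing to first order at the origin. By the dominated splitting (applied to a suitable iterate) there exist $K>0$ and $N\geq 1$ such that the cones $\cC^E(y)=\{v_E+v_F:\|v_F\|\leq K\|v_E\|\}$ satisfy $Df^{-N}(\cC^E(y))\subset\interior(\cC^E(f^{-N}y))$ for every $y$ in a neighborhood of $\supp(\mu)$.

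Next, on a neighborhood of radius $L_n$ around $f^n(x)$ (with $L_n$ varying sub-exponentially, fixed by the chart distortion bounds), let $\mathcal{G}_n$ be the space of $K$-Lipschitz graphs $\varphi\colon B_{E_n}(0,L_n)\to F_n$ with $\varphi(0)=0$. Cone invariance implies that the backward pullback $\Gamma_n^{-1}\colon\mathcal{G}_{n+1}\to\mathcal{G}_n$ (using $f^{-N}$ if necessary) is well-defined. Starting from the zero graph at $f^{kN}(x)$ for large $k$ and iterating $\Gamma^{-1}$ back to level $0$, one obtains graphs $\varphi_0^{(k)}\in\mathcal{G}_0$. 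The key estimate is that $\Gamma_n^{-1}$ contracts the $C^0$-distance between graphs at rate essentially $\|A_n\|'/m(B_n)$; combining the uniform bound on $\|A_n\|'$ with the domination $\|Df^N|_E(y)\|/m(Df^N|_F(y))\leq 1/2$ yields summable contraction along the orbit, so $(\varphi_0^{(k)})_k$ is Cauchy and converges to some $\varphi_0$. A standard bootstrapping argument using strict invariance of narrower cones promotes $\varphi_0$ from Lipschitz to $C^1$ with $T_xW^E_{\loc}(x)=E_x$, and measurability of $\varphi_0$ in $x$ is inherited from the measurable inputs.

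Setting $W^E_{\loc}(x):=\mathrm{graph}(\varphi_0)$, any $y\in W^E_{\loc}(x)$ satisfies $f^n(y)\in W^E_{\loc}(f^n x)$, and its adapted distance to $f^n(x)$ decays like $\prod_{k<n}\|A_k\|'\leq e^{n(\lambda_E^++\varepsilon)}$; translating back via the sub-exponential conformal factor $A$ gives decay at rate $\lambda$. The reverse inclusion, that every point of the stable set at rate $\lambda$ lies in $\bigcup_n f^{-n}(W^E_{\loc}(f^n x))$, uses the cone $\cC^E$ and the hypothesis $\lambda<\lambda_F^-$ to forbid escape in the $F$-direction. Finally $W^E(x):=\bigcup_{n\geq 0}f^{-n}(W^E_{\loc}(f^n x))$ is the injectively immersed global manifold, and $L^E(f^n x)$ inherits sub-exponential variation from $A(f^n x)$ and the chart distortion. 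The main obstacle is precisely this contraction estimate: in Pesin's $C^{1+\alpha}$ theory, Hölder regularity of the relevant bundles controls the nonlinear graph transform, but here we have only $C^1$, so $m(B_n)$ may fail to exceed $1$ and the contraction must be extracted from the relative rate $\|A_n\|/m(B_n)$ via the dominated splitting rather than from any absolute expansion of $B_n$. Pairing this quantitative domination with the slowly varying adapted metric is the technical heart of the argument.
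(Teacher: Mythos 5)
Your construction of the local discs is a legitimate alternative route: the paper runs no graph transform at all, but invokes the Hirsch--Pugh--Shub plaque family theorem to obtain a continuous, locally invariant family of $C^1$ discs $\widehat{D^E_x}$ tangent to $E$ over $\supp(\mu)$, and then only uses the scalar function $A$ of Proposition~\ref{p.metric} to select sub-discs of measurable radius $L^E(x)=\delta/A(x)$ which an inductive estimate (Lemma~\ref{l.growth-disk}) shows are forward contracted at rate $e^{\lambda_E^++3\varepsilon}$. If you insist on redoing the Hadamard--Perron step along the orbit, be aware that your key estimate is wrong as stated: the backward graph transform does \emph{not} contract the plain $C^0$-distance at rate $\|A_n\|'/m(B_n)$; the honest computation gives a factor of order $1/m(B_n)$, which may exceed $1$ since $F$ need not be expanded. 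To extract the domination factor $\|Df^N|_E\|/m(Df^N|_F)\leq 1/2$ you must measure graphs in a weighted, slope-type metric such as $\sup_{s\neq 0}\|\varphi(s)-\psi(s)\|/\|s\|$ (or rescale the fibres along the orbit); this is standard, but it is exactly the step you yourself call the ``technical heart'', so it cannot be left in its present form.

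The genuine gap is the reverse inclusion in the characterization $W^E(x)=\left\{y:\; d(f^n(x),f^n(y))\,e^{-\lambda n}\to 0\right\}$, which you dispatch in one clause (``uses the cone $\cC^E$ \dots to forbid escape in the $F$-direction''). First, the relevant cone is the unstable cone around $F$, not $\cC^E$. Second, and more seriously, $\lambda_F^-$ is only a $\mu$-almost everywhere asymptotic exponent, so you have no a priori growth of the actual displacement between $f^n(y)$ and the orbit of $x$ when $y$ is close to but off the local disc; converting the hypothesis $\lambda<\lambda_F^-$ into a usable expansion is where the paper spends all of Section~\ref{ss.speed}: it builds a uniformly expanded cone field $\cC^F_a$ on a neighborhood of $\supp(\mu)$ (Lemma~\ref{l.conefield}), controls geodesic segments tangent to this cone field under iteration (Lemma~\ref{l.geodesic}), projects $f^n(y)$ onto the plaque $\widehat{D^E_{f^n(x)}}$ along such a segment (Lemma~\ref{l.projection}), and uses recurrence of $x$ to the set $\left\{L^E>\delta_2\right\}$ to have plaques of uniform size at infinitely many times; the dichotomy of Lemma~\ref{l.speed} then shows that a point whose forward iterates never land on a local disc separates at rate at least $e^{\lambda' n}$ with $\lambda'\in(\lambda,\lambda_F^-)$, hence cannot approach at rate $e^{\lambda n}$. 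None of this machinery, nor any substitute for it, appears in your sketch; without it the set equality defining $W^E(x)$ --- and with it item~3 of the proposition, the independence of $W^E(x)$ from $\lambda$, and the proof that the increasing union of backward images of local discs is an injectively immersed manifold --- remains unproved.
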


The aim of Sections~\ref{ss.manifold} and \ref{ss.speed} is the proof of Proposition~\ref{p.invariant-manifold}. In this section (Section~\ref{ss.manifold}) we build the local stable manifolds $W_{loc}^E(x)$ and we prove the exponential decay of $d(f^n(x),f^n(y))$ for $y\in W_{loc}^E(x)$.  Section~\ref{ss.speed} ends the proof by showing that this exponential decay characterizes the points in the stable manifold.
In fact the proof does not use the adapted metric built in the previous section, but the function $A$
provided by Proposition~\ref{p.metric}.

Our main tool is the plaque family
theorem~\cite[theorem 5.5]{HPS} of Hirsch-Pugh-Shub:

\begin{theorem*}[Plaque family theorem, Hirsch-Pugh-Shub]
Let $f$ be a $C^1$-diffeomorphism and  $K$ be an $f$-invariant compact set admiting a dominated splitting $E\oplus_{<}
F$. Then, there exists a continuous family $(\widehat {D^E_x})_{x\in K}$
of embedded $C^1$-disks such that:
\begin{itemize}
\item for every $x\in K$, the disk $\widehat{D^E_x}$ is centered at $x$ and tangent to $E_x$;
\item the family $(\widehat{D^E_x})$ is locally invariant: there exists $\delta_0>0$ such that for each $x\in K$, the disk centered at $x$ of radius $\delta_0$
and contained in $\widehat{D^E_x}$ is mapped by $f$ into $\widehat
{D^E_{f(x)}}$.
\end{itemize}
\end{theorem*}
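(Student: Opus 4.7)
The plan is to obtain the plaques as fixed points of a graph transform adapted to the dominated splitting. First, extend the splitting $E\oplus_{<} F$ continuously to a neighborhood of $K$ (this works because $\|D_xf^N|_E\|/m(D_xf^N|_F)<1/2$ is an open condition; the extended bundles need not be $Df$-invariant, only continuous). For each $x\in K$, use the exponential chart $\exp_x$ to identify a neighborhood of $x$ in $M$ with a neighborhood of $0$ in $T_xM=E_x\oplus F_x$, and write $f$ locally as $\widetilde{f}_x\colon E_x\oplus F_x\to E_{f(x)}\oplus F_{f(x)}$ with $\widetilde{f}_x(0)=0$ and $D_0\widetilde{f}_x=D_xf$. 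By replacing $f$ with a sufficiently high iterate $f^N$, one may assume the domination constant is as small as desired; a plaque family for $f^N$ is then promoted to one for $f$ at the end by taking intersections of the $N$ shifted families (and shrinking the local-invariance radius accordingly).

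Next, I would work in the complete metric space $\Sigma$ of continuous families $\{\sigma_x\}_{x\in K}$ where $\sigma_x\colon B_{E_x}(0,\delta)\to F_x$ is $C^1$ with $\sigma_x(0)=0$, $D_0\sigma_x=0$, and Lipschitz constant at most $1$, equipped with the sup-norm. The graph of each $\sigma_x$ is a $C^1$-disk in $E_x\oplus F_x$ tangent to $E_x$ at the origin. Define the graph transform $\Gamma\colon \Sigma\to\Sigma$ by declaring that the graph of $(\Gamma\sigma)_{f(x)}$ is the image $\widetilde{f}_x(\mathrm{graph}(\sigma_x))$ reparametrized as a graph over $B_{E_{f(x)}}(0,\delta)$. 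The cone-field characterization of dominated splittings and the implicit function theorem guarantee that, for $\delta$ small enough (uniform in $x$), this image is indeed the graph of a $C^1$-section over a ball of radius at least $\delta$: vectors in a cone around $E_x$ are mapped into a narrower cone around $E_{f(x)}$ precisely because of the domination inequality.

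The core observation is that domination makes $\Gamma$ a contraction. If $\sigma,\sigma'\in\Sigma$ have graphs differing by vertical distance $\eta$ at $x$, applying $\widetilde{f}_x$ expands this vertical gap by at most $\|Df|_F\|\cdot\eta$; but reparametrizing the image as a graph over $E_{f(x)}$ requires rescaling horizontally by $\|Df|_E\|^{-1}$, so the new vertical discrepancy is of order $(\|Df|_E\|\cdot\|Df^{-1}|_F\|)\cdot\eta<\eta/2$, uniformly in $x$. Hence $\Gamma$ has a unique fixed point $\sigma^{\ast}\in\Sigma$, and the family $\widehat{D^E_x}:=\exp_x(\mathrm{graph}(\sigma^{\ast}_x))$ is a continuous family of embedded $C^1$-disks tangent to $E$. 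Continuity in $x$ follows from uniqueness of the fixed point combined with continuity of the data $x\mapsto\widetilde{f}_x$, $x\mapsto E_x\oplus F_x$. Local invariance with some $\delta_0<\delta$ is immediate from the fixed-point identity $\widetilde{f}_x(\mathrm{graph}(\sigma^{\ast}_x))=\mathrm{graph}(\sigma^{\ast}_{f(x)})$ read on a sufficiently small sub-ball.

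The main obstacle is obtaining genuine $C^1$-regularity of the plaques (with continuously varying tangent planes), rather than just Lipschitz graphs. I would handle this via the Hirsch--Pugh--Shub fiber-contraction theorem: on top of the base contraction $\Gamma$ on $\Sigma$, one sets up an auxiliary transform on the space of continuous families of linear maps $\{L_x\colon B_{E_x}(0,\delta)\to\mathrm{Hom}(E_x,F_x)\}$ modeling candidate derivatives $D\sigma_x$. Domination again forces this fiberwise transform to be a contraction on each fiber with uniform rate, so the fiber-contraction theorem promotes the Lipschitz fixed point $\sigma^{\ast}$ to a $C^1$-section whose derivative is the fixed point of the auxiliary transform and depends continuously on $x$. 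The only subtlety left is that the construction is genuinely local: the disks $\widehat{D^E_x}$ are not claimed to be $f$-invariant globally, but only that the central sub-disk of radius $\delta_0$ lands inside $\widehat{D^E_{f(x)}}$, which follows from the uniform size estimate on the domain of the graph transform.
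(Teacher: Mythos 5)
A preliminary remark: the paper does not prove this statement at all — it is quoted as an external tool, \cite[Theorem 5.5]{HPS}, so there is no internal proof to compare with; your proposal has to be judged on its own, and its blueprint (lifted local maps in exponential charts, a graph transform, and the Hirsch--Pugh--Shub fiber-contraction theorem for $C^1$ regularity) is indeed the standard route. The core mechanism, however, is stated backwards, and as written the key step fails. For a dominated splitting $E\oplus_{<}F$ it is the cones around the \emph{dominating} bundle $F$ that are invariant and narrowed under $Df$; cones around $E$ are invariant only under $Df^{-1}$. In the linear model $\widetilde f_x(u,v)=(Au,Bv)$ with $A=D_xf|_E$, $B=D_xf|_F$, your forward transform sends the graph of $\sigma$ to the graph of $u\mapsto B\sigma(A^{-1}u)$. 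Take $\dim E=\dim F=1$, $A=2$, $B=8$ (perfectly dominated): the Lipschitz-$1$ graph $\sigma(u)=u$ goes to a graph of slope $4$, so your space $\Sigma$ is not preserved, and the sup-distance between $\sigma(u)=u$ and $\sigma'(u)=0$ is multiplied by $4$, not contracted. Your estimate (vertical gap times $\|Df|_F\|$, horizontal rescaling by $\|Df|_E\|^{-1}$, hence a factor $\|Df|_E\|\cdot\|Df^{-1}|_F\|$) is not a valid computation: reparametrizing the base does not shrink a vertical sup-discrepancy, and the $C^0$ Lipschitz constant of the forward transform is of order $\|Df|_F\|$, which may be large. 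The domination constant $\|Df^N|_E\|\cdot\|(Df^N|_F)^{-1}\|\le 1/2$ is genuinely the contraction rate, but for the graph transform associated to $f^{-1}$ acting on graphs over $E$ (equivalently, for the induced action on slopes/1-jets, i.e. in a metric like $\sup_u\|\sigma(u)-\sigma'(u)\|/\|u\|$), not for your forward transform in the sup metric. As proposed, the graph $\sigma\equiv 0$ is a repelling, not attracting, fixed point of your iteration, so it does not converge; the plaques tangent to $E$ must be produced by transforming under $f^{-1}$ (or by swapping the roles of the bundles), and forward local invariance of the resulting family is then read off from the fixed-point identity.

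Two further points are currently unjustified, though both are fixable by standard devices that your write-up omits. First, even with the correct direction, the image of a graph over the $\delta$-ball need not be a graph over a ball of radius at least $\delta$ (the induced base map can contract), so one cannot simply "reparametrize over $B_{E}(0,\delta)$"; the Hirsch--Pugh--Shub proof globalizes the lifted maps, cutting $\widetilde f_x$ off so that it coincides with $D_xf$ outside a small ball and has small nonlinear Lipschitz constant, works with graphs over the whole fiber, and only at the end restricts to a radius $\delta_0$ — which is exactly why the conclusion is merely \emph{local} invariance. Second, the passage from $f^N$ back to $f$ by "taking intersections of the $N$ shifted families" does not work as stated: finitely many $C^1$ disks through $x$ all tangent to $E_x$ need not intersect in a disk of uniform size. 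The usual remedies are either an adapted metric realizing the domination in one step, or the observation that the (correctly oriented) transform $\Gamma$ for $f$ satisfies that $\Gamma^N$ is a contraction, so its unique fixed point is automatically $\Gamma$-invariant and no intersection argument is needed.
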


In order to prove Proposition~\ref{p.invariant-manifold}, we fix a
small positive constant $\varepsilon<-\frac  {\lambda^+_E} 3$.
In the previous section we obtained an integer $N$ and a measurable map  $A\geq 1$
associated to the bundle $E$ and to $\varepsilon$, which is well-defined on the set of $\mu$-regular points.
Let $C_f>1$ be a bound on the norm of the derivative $Df$.

One also chooses a small constant $\delta_1\in (0,\delta_0)$, so that for
any point $x\in \supp(\mu)$, any point $y\in \widehat{D_x^E}$ with
$d(x,y)<\delta_1$ and any vector $v\in T_y\widehat{D_x^E}$, we have
\begin{equation}\label{e.control-cone}
\|D_yf^N.v\|\leq e^{N.\varepsilon} \|Df_E^N(x)\|.\|v\|.
\end{equation}
Consider now $\delta>0$. For every $\mu$-regular point $x$  we denote by $D_x\subset\widehat{ D^E_x}$ the
disk centered at $x$ of radius  $L^E(x)=\delta/A(x)$. By choosing $\delta$
small enough, the disk $D_x$ is tangent to the cone field $\cC^E$.

The next lemma shows that, choosing $\delta>0$ small enough, the disk $D_x$ is contained in the stable manifold at $x$.
\begin{lemma}\label{l.growth-disk}
For $\delta$ smaller than $C^{-N}_f.\delta_1$, and $\mu$-a.e. point $x\in M$, each
forward iterate $f^n(D_x)$ of the disk $D_x$ is contained in the
corresponding disk $\widehat{D_{f^n(x)}^E}$ and has a diameter bounded by
$\delta_1$.

Moreover, the diameter $\diam(f^n(D_x))$ tend exponentially fast to $0$ when $n\to +\infty$; more precisely,  the sequence
$\left(\diam(f^n(D_x)).e^{-n(\lambda_E^++3\varepsilon)}\right)_{n\geq
0}$ goes to $0$ when $n$ tends to $+\infty$.
\end{lemma}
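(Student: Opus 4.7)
The plan is to prove both assertions simultaneously by induction on blocks of $N$ iterates, using the plaque family's local invariance to stay in the plaques, the pointwise estimate~(\ref{e.control-cone}) to control how the diameter grows under $f^N$, and the defining inequality of the function $A$ from~(\ref{e.A}) to absorb the cumulative product $\prod \|Df^N_{|E}(f^{jN}(x))\|$. Writing $x_k=f^{kN}(x)$ and $D_k=f^{kN}(D_x)$, the inductive hypothesis at step $k$ is that $D_k\subset \widehat{D^E_{x_k}}$ with $\diam(D_k)\leq \delta_1$; the base case holds since $\diam(D_0)\leq 2\delta/A(x)\leq 2\delta$ and $\delta< C_f^{-N}\delta_1$.

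For the inductive step, I would integrate the cone-field estimate~(\ref{e.control-cone}) along curves inside $D_k$ (which is connected, contains $x_k$, and has all its points at distance less than $\delta_1$ from $x_k$) to obtain
$$\diam(D_{k+1})\leq e^{N\varepsilon}\,\|Df^N_{|E}(x_k)\|\cdot \diam(D_k).$$
Iterating this from $k=0$ and using $\diam(D_0)\leq 2\delta/A(x)$ gives
$$\diam(D_k)\leq \frac{2\delta}{A(x)}\cdot e^{kN\varepsilon}\prod_{j=0}^{k-1}\|Df^N_{|E}(x_j)\|.$$
At this point the crucial input is the definition~(\ref{e.A}) of $A$, which yields $\prod_{j=0}^{k-1}\|Df^N_{|E}(x_j)\|\leq A(x)\,e^{kN(\lambda_E^++\varepsilon)}$. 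Substituting collapses the expression to
$$\diam(D_k)\leq 2\delta\cdot e^{kN(\lambda_E^++2\varepsilon)}.$$
Since $\varepsilon<-\lambda_E^+/3$, the exponent $\lambda_E^++2\varepsilon$ is strictly negative, so $\diam(D_k)$ decays exponentially; moreover the gap $\varepsilon$ between $\lambda_E^++2\varepsilon$ and the target rate $\lambda_E^++3\varepsilon$ yields $\diam(D_k)\cdot e^{-kN(\lambda_E^++3\varepsilon)}\leq 2\delta\,e^{-kN\varepsilon}\to 0$, which is the desired statement along multiples of $N$. For an arbitrary iterate $n=kN+j$ with $0\leq j<N$, the Lipschitz bound $\diam(f^n(D_x))\leq C_f^j\diam(D_k)\leq C_f^N\diam(D_k)$ preserves the same exponential rate up to a multiplicative constant.

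The main delicate point is keeping the induction alive: to apply~(\ref{e.control-cone}) at step $k$ I need $\diam(D_k)\leq \delta_1$, and to apply the plaque family's local invariance at every intermediate iterate $f^{kN+j}$ I need the set $f^{kN+j}(D_x)$ to remain within the plaque $\widehat{D^E_{f^{kN+j}(x)}}$, i.e., of diameter bounded by $\delta_0$. This is exactly where the hypothesis $\delta<C_f^{-N}\delta_1$ is used: during the first block, $\diam(f^j(D_x))\leq C_f^j\cdot 2\delta/A(x)\leq 2C_f^N\delta< 2\delta_1\leq 2\delta_0$, so the intermediate iterates stay in the plaques; for subsequent blocks the exponential decay obtained above guarantees $\diam(D_k)$, and hence the $C_f^N$-blown diameter of the intermediate iterates, only shrinks. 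Once this is checked the induction closes and the two announced estimates (containment in the plaque and the exponential decay of the diameter at rate $\lambda_E^++3\varepsilon$) follow simultaneously.
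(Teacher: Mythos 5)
Your proof is correct and follows essentially the same route as the paper: an induction kept alive by the plaque family's local invariance, the cone estimate~(\ref{e.control-cone}) applied block-by-block, the defining maximum in~(\ref{e.A}) to absorb the product $\prod_{\ell<k}\|Df^N_{|E}(f^{\ell N}(x))\|$ into $A(x)e^{kN(\lambda_E^++\varepsilon)}$, the choices $\varepsilon<-\lambda_E^+/3$ and $\delta<C_f^{-N}\delta_1$, and the $C_f^N$ Lipschitz factor for the remainder $n-kN<N$, yielding the decay at rate $\lambda_E^++3\varepsilon$ exactly as in the paper (your organization by blocks of $N$ versus the paper's induction on every $n$, and your explicit factor $2$ from radius versus diameter, are only cosmetic differences).
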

\begin{proof}

One proves the first part of the lemma inductively on $n$.
Let us assume that all the forward iterates
$f^m(D_x)$ up to an integer $n-1\geq 0$ are contained
in the corresponding disk $\widehat{D^E_{f^n(x)}}$ and have diameters
bounded by $\delta_1$. Since $\delta_1<\delta_0$
one first concludes  that the iterate $f^{n}(D_x)$ also is contained in the
disk $\widehat{D^E_{f^{n}(x)}}$. We will prove that its diameter also is
bounded by $\delta_1$.
Let $k \geq 0$ denote the largest integer such that $k.N\leq n$.
By the estimate~(\ref{e.control-cone}),
one deduces the following upper bound
$$
\diam\left(f^{n}(D_x)\right)\leq
C_f^{N}\;.\quad e^{k.N.\varepsilon}\prod_{0\leq \ell < k}
\left\|Df^N_{{|E}}(f^{\ell N}(x))\right\|\quad .\;\diam(D_x),
$$
Now by definition~(\ref{e.A}) of $A$ one deduces

\begin{equation}
\diam\left(f^{n}(D_x)\right) \leq C_f^{N}\;.\quad e^{k.N.(\lambda^+_E+2\varepsilon)}A(x)\quad .\;\frac{\delta}{A(x)}.
\label{e.growth-disk}
\end{equation}

By our choice of $\varepsilon$ and $\delta$, this gives as required:
\begin{equation}\label{e.growth}
\diam\left(f^{n}(D_x)\right)< e^{k.N.(\lambda^+_E+2\varepsilon)}\delta_1 \leq \delta_1.
\end{equation}

In order to get the second part of the lemma, one considers again
the estimate~(\ref{e.growth}) which has been now established
for all the forward iterates of $D_x$.
It implies:
$$\diam(f^n(D_x)).e^{-n(\lambda_E^++3\varepsilon)}
\leq e^{-(n-kN).(\lambda_E^++2\varepsilon)}.e^{-n\varepsilon}\delta_1\leq e^{-N.(\lambda_E^++2\varepsilon)}.e^{-n\varepsilon}\delta_1,$$
which goes to $0$ as $n\to +\infty$.
\end{proof}

\begin{coro}\label{c.speed} There is $\delta_2>0$ such that, for  every $\lambda>\lambda^+_E$, for $\mu$-almost every point $x\in M$,
for any point $y\in\widehat{D^E_x}$ one has

$$
\text{sup}_{n\geq 0} \; d(f^n(x),f^n(y))\leq \delta_2\Longrightarrow \left\{ \begin{array}{l}f^n(y)\in \widehat{D^E_{f^n(x)}} \mbox{ for all } n\geq 0,\\
\lim_{n\to+\infty}e^{-\lambda.n}d(f^n(x),f^n(y))=0.\end{array}\right.
$$
 \end{coro}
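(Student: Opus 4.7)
I plan to choose $\delta_2>0$ small enough that, uniformly in $z\in\supp(\mu)$: (a) $\delta_2\leq\delta_0$, so local invariance of the plaque family applies; (b) by the $C^1$-regularity of the plaques, any two points in a plaque $\widehat{D^E_z}$ at manifold-distance at most $\delta_2$ are joined inside the plaque by a path of length at most $2\delta_2$; and (c) $\delta_2\leq\delta_1(\varepsilon_0)/3$, where $\varepsilon_0<-\lambda^+_E/3$ is the constant fixed in the preamble of Section~\ref{ss.manifold}.

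The first conclusion is a direct induction on $n\geq 0$: the case $n=0$ is the hypothesis $y\in\widehat{D^E_x}$, and in the inductive step the bound $d(f^n(x),f^n(y))\leq\delta_2\leq\delta_0$ places $f^n(y)$ in the central subdisk of radius $\delta_0$ of $\widehat{D^E_{f^n(x)}}$, whose image under $f$ lies in $\widehat{D^E_{f^{n+1}(x)}}$. For the decay, I first apply the length argument with the preamble constant $\varepsilon_0$: let $N_0$ and $A_0$ be the integer $N$ and function $A$ provided by Proposition~\ref{p.metric} for $\varepsilon=\varepsilon_0$, and let $\ell_n$ denote the plaque-length of the path from $f^{nN_0}(x)$ to $f^{nN_0}(y)$ in $\widehat{D^E_{f^{nN_0}(x)}}$. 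By (b), $\ell_n\leq 2\delta_2\leq\delta_1(\varepsilon_0)$, so~(\ref{e.control-cone}) applies and integrates to
\[\ell_{n+1}\leq e^{N_0\varepsilon_0}\,\|Df^{N_0}_{|E}(f^{nN_0}(x))\|\,\ell_n.\]
This telescopes via~(\ref{e.A}) into $\ell_n\leq e^{nN_0(\lambda^+_E+2\varepsilon_0)}A_0(x)\,\ell_0$, yielding exponential decay of $d(f^m(x),f^m(y))$ at rate $\lambda^+_E+2\varepsilon_0<0$ (with intermediate iterates controlled by the factor $C_f^{N_0}$).

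To sharpen the decay rate to any $\lambda>\lambda^+_E$, I bootstrap as follows. Given $\lambda$, choose $\varepsilon<(\lambda-\lambda^+_E)/3$ and re-apply Proposition~\ref{p.metric} with this $\varepsilon$ to obtain corresponding $N(\varepsilon)$, $A$, and $\delta_1(\varepsilon)$. The exponential decay from the first stage guarantees the existence of an iterate $n_0$ (depending on $\varepsilon$, $x$, and $y$, but not on the universal $\delta_2$) such that $d(f^{n_0}(x),f^{n_0}(y))<\delta_1(\varepsilon)/3$. Restarting the length estimate at time $n_0$ with the refined $\varepsilon$ then yields decay at the sharper rate $\lambda^+_E+2\varepsilon<\lambda$ from step $n_0$ onwards, and hence $e^{-\lambda m}d(f^m(x),f^m(y))\to 0$. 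The main technical obstacle is precisely this bridge: a single $\varepsilon$ cannot suffice since the corollary requires decay faster than every $\lambda>\lambda^+_E$, yet $\delta_2$ must be chosen once and for all, independently of $\varepsilon$, so one cannot rely on $\delta_2\leq\delta_1(\varepsilon)$ for arbitrarily small $\varepsilon$.
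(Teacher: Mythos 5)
Your argument is correct in outline, but it follows a genuinely different route from the paper's. The paper chooses $\delta_2$ so that $\mu\{z: L^E(z)>\delta_2\}>0$, invokes ergodicity (Poincar\'e recurrence) to find iterates $n$ at which $L^E(f^n(x))>\delta_2$, so that the hypothesis $d(f^n(x),f^n(y))\leq\delta_2$ traps $f^n(y)$ inside the measurable stable disk $D_{f^n(x)}$ of Lemma~\ref{l.growth-disk}, and then simply quotes that lemma; the sharper rate $\lambda$ is obtained by redoing this with a smaller $\varepsilon_1$ (new $A_1$, $L^E_1$, $D_{x,1}$, $\delta_{2,1}$) and a countable intersection over $\lambda\downarrow\lambda_E^+$. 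You instead bypass recurrence and the disks $D_x$ altogether: you rerun the length/derivative telescoping of Lemma~\ref{l.growth-disk} directly along the plaques, observing that the standing hypothesis $\sup_n d(f^n(x),f^n(y))\leq\delta_2$ re-supplies the smallness needed to apply~(\ref{e.control-cone}) at \emph{every} block, which is exactly what replaces the shrinking radius $\delta/A(x)$ of the paper's construction; the bootstrap with $\varepsilon<(\lambda-\lambda_E^+)/3$ then plays the role of the paper's $\varepsilon_1$-step. What your route buys is independence from ergodicity and from the auxiliary positive-measure sets (only the definedness of $A$ at $\mu$-a.e.\ point is used), and it makes transparent why one uniform $\delta_2$ serves all $\lambda$; what the paper's route buys is brevity, since Lemma~\ref{l.growth-disk} is already proved and does all the analytic work. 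Two details in your sketch should be nailed down, both fixable in the same way the paper handles them: (i) for the recursion $\ell_{n+1}\leq e^{N_0\varepsilon_0}\|Df^{N_0}_{|E}(f^{nN_0}(x))\|\,\ell_n$ you must take $\ell_n$ to be the intrinsic plaque distance (re-choosing a short path at each stage, with your property (b) converting the ambient bound $\delta_2$ into a plaque bound), since the iterated image of a single initial path is only controlled by $A_0(x)\,e^{nN_0(\lambda_E^++2\varepsilon_0)}$, which at intermediate stages may exceed $\delta_1$; and (ii) pushing a path through a block of $N_0$ iterates while staying in the plaques requires the intermediate images to remain in the $\delta_0$-subdisks, so $\delta_2$ must also absorb a factor $C_f^{-N_0}$ (the analogue of the paper's requirement $\delta<C_f^{-N}\delta_1$ in Lemma~\ref{l.growth-disk}), and similarly at the bootstrap stage one should take $n_0$ large enough that $\sup_{n\geq n_0}d(f^n(x),f^n(y))\leq C_f^{-N(\varepsilon)}\delta_1(\varepsilon)/3$. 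With these adjustments your plan yields the corollary.
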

\begin{proof}
We choose $\delta_2\in(0,\delta_1)$ such that $\mu\{x\in M, L^E(x)>\delta_2\}>0$
Note that such a $\delta_2$ exists because $L^E$ is a positive
measurable map which is strictly positive at $\mu$-almost every point.

By definition of $\delta_0$, if $y\in \widehat{D_x^E}$ satisfies $d(x,y)\leq \delta_0$ then $f(y)\in \widehat{D^E_{f(x)}}$. As $\delta_2<\delta_0$ a simple inductive argument shows that, for every $x\in\supp(\mu)$ and every $y\in\widehat{D^E_x}$ one has
$$\text{sup}_{n\geq 0} \; d(f^n(x),f^n(y))\leq \delta_2\Longrightarrow f^n(y)\in \widehat{D^E_{f^n(x)}} \mbox{ for all } n\geq 0.$$

Now, the ergodicity of $\mu$ implies that,  for $\mu$-almost every point $x$, there are infinitely many $n>0$ for which $L^E(f^n(x))>\delta_2$,
implying that $f^n(y)\in D_{f^n(x)}$. Now Lemma~\ref{l.growth-disk}  implies that  $\left(d(f^n(x),f^n(y)).e^{-n(\lambda_E^++3\varepsilon)}\right)_{n\geq
0}$ goes to $0$ when $n$ tends to $+\infty$. In particular $d(f^n(x),f^n(y))$ tends to $0$.

For ending the proof, we fix now $\lambda>\lambda_E^+$.  We choose $\varepsilon_1\in (0,\frac{\lambda_E^+}3)$ such that $\lambda_E^++3\varepsilon_1<\lambda$.  This gives us a new function $A_1$ and a new function $L^E_1$ and thus a new family of disks $D_{x,1}\subset \widehat{D^E_x}$, and finally a new number $\delta_{2,1}$ such that $\mu\{x\in M, L^E_1(x)>\delta_{2,1}\}>0$. Notice that one may apply Lemma~\ref{l.growth-disk} to $\varepsilon_1$. Thus, the same argument as above proves that, for $\mu$-almost every $x$ and every $y\in\widehat{D^E_x}$ one has
$$\text{sup}_{n\geq 0} \; d(f^n(x),f^n(y))\leq \delta_2\Longrightarrow \lim_{n\to+\infty}e^{-\lambda.n}d(f^n(x),f^n(y))=0.$$

As a countable intersection of sets with $\mu$-measure equal to $1$ has measure equal to $1$, by choosing a sequence of $\lambda$ decreasing to $\lambda_E^+$ one gets that for $\mu$-almost every $x$, every $\lambda>\lambda_E^+$  and every $y\in\widehat{D^E_x}$ one has
$$\text{sup}_{n\geq 0} \; d(f^n(x),f^n(y))\leq \delta_2\Longrightarrow \lim_{n\to+\infty}e^{-\lambda.n}d(f^n(x),f^n(y))=0.$$
\end{proof}
As a direct corollary of Lemma~\ref{l.growth-disk} and
Corollary~\ref{c.speed} one gets:

\begin{coro}\label{c.speed2}For $\mu$-almost every point $x\in M$,
for any point $y\in D_x$,  for  every $\lambda>\lambda^+_E$ one has
$$
\lim_{n\to+\infty}e^{-\lambda.n}d(f^n(x),f^n(y))=0.
$$
\end{coro}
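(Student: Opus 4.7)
The plan is to combine Lemma~\ref{l.growth-disk}, which keeps $f^n(y)$ inside the plaques $\widehat{D^E_{f^n(x)}}$ and forces their diameters to shrink at some (possibly crude) exponential rate, with Corollary~\ref{c.speed}, which converts ``once you are close and stay close along the plaque'' into the sharp decay rate $\lambda^+_E$.

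First, I would observe that for any $y \in D_x$, Lemma~\ref{l.growth-disk} directly gives $f^n(y) \in \widehat{D^E_{f^n(x)}}$ for every $n \geq 0$ and the bound $d(f^n(x), f^n(y)) \leq \operatorname{diam}(f^n(D_x))$. Because $\varepsilon$ was chosen smaller than $-\lambda^+_E/3$ at the start of Section~\ref{ss.manifold}, the exponent $\lambda^+_E + 3\varepsilon$ is strictly negative, so the conclusion of Lemma~\ref{l.growth-disk} forces $\operatorname{diam}(f^n(D_x))$ to tend to $0$ exponentially. In particular, there exists $n_0 = n_0(x,y)$ such that
\[
\sup_{n \geq 0}\, d\bigl(f^{n_0+n}(x),\, f^{n_0+n}(y)\bigr) \leq \delta_2.
\]

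Next, I would feed the shifted pair $\bigl(f^{n_0}(x),\, f^{n_0}(y)\bigr)$ into Corollary~\ref{c.speed}: the point $f^{n_0}(y)$ lies in $\widehat{D^E_{f^{n_0}(x)}}$ by the plaque inclusion just established, and the sup hypothesis holds by the choice of $n_0$. The corollary then yields, for every $\lambda > \lambda^+_E$,
\[
\lim_{n \to +\infty} e^{-\lambda n}\, d\bigl(f^{n_0+n}(x),\, f^{n_0+n}(y)\bigr) = 0,
\]
and the substitution $m = n_0 + n$ (absorbing the constant factor $e^{\lambda n_0}$) immediately gives the desired conclusion $\lim_{m \to +\infty} e^{-\lambda m}\, d(f^m(x), f^m(y)) = 0$.

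The only point that deserves care is that $f^{n_0}(x)$ must itself lie in the $\mu$-full-measure set $\Omega$ on which Corollary~\ref{c.speed} is asserted; this is handled by replacing $\Omega$ with its $f$-invariant saturation $\bigcap_{k \in \mathbb{Z}} f^k(\Omega)$, which remains of full measure since $\mu$ is $f$-invariant, and then intersecting with the analogous full-measure set coming from Lemma~\ref{l.growth-disk}. I do not anticipate any substantive obstacle: the statement is genuinely a ``direct corollary'', with Lemma~\ref{l.growth-disk} playing the role of delivering $y$ into the hypothesis of Corollary~\ref{c.speed} after a controlled number of iterations, and Corollary~\ref{c.speed} then supplying the optimal rate.
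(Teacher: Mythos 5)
Your argument is correct and is exactly the route the paper intends: Corollary~\ref{c.speed2} is stated there as a direct consequence of Lemma~\ref{l.growth-disk} (which keeps the iterates of $D_x$ inside the plaques and shrinks their diameters, since $\lambda_E^++3\varepsilon<0$) together with Corollary~\ref{c.speed} applied after a finite shift $n_0$ once the distances drop below $\delta_2$. Your two additional remarks --- absorbing the factor $e^{\lambda n_0}$ and replacing the full-measure set by its $f$-invariant saturation --- are precisely the routine points left implicit in the paper.
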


%%%%%%%%%%%%%%%%%%%%%%%%%%%%%%%%%%%%%%%%%%%%%%%%%%%%%%%%%%%%%%%%%%%%%%%%%%%%%%%%%%%%%%%%%%%%%%
\subsection{Characterization of the invariant manifolds $W^E(x)$ by the speed of approximation}\label{ss.speed}
%%%%%%%%%%%%%%%%%%%%%%%%%%%%%%%%%%%%%%%%%%%%%%%%%%%%%%%%%%%%%%%%%%%%%%%%%%%%%%%%%%%%%%%%%%%%%%

Lemma~\ref{l.speed} below  will end the proof of Proposition~\ref{p.invariant-manifold} (and therefore of Theorem~\ref{theo5}) by showing that the speed of approximation of $y\in D_x$ given by Corollary~\ref{c.speed2} provides a
characterization of the points in the stable manifold $W^E(x)$.

\begin{lemma}\label{l.speed} For $\mu$-almost every point $x$,  for every point $y\in M$ such that
$$
\lim_{n\to +\infty}  d(f^n(x),f^n(y))
=0 ,$$
we have the following dichotomy:
\begin{itemize}
\item either there is $n>0$ such that $f^n(y)\in D_{f^n(x)}$ (and so we have exponential convergence);
\item or for  every $\lambda\in (\lambda^+_E,\lambda^-_F)$ one has
$$
\lim_{n\to+\infty}e^{-\lambda.n}d(f^n(x),f^n(y))=+\infty.
$$

\end{itemize}
\end{lemma}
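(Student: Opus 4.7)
The plan is to argue by contradiction. Fix $\lambda\in(\lambda_E^+,\lambda_F^-)$, pick $\eps>0$ so small that $\lambda_E^++3\eps<\lambda<\lambda_F^--3\eps$, and suppose the conclusion fails: $d(f^n(x),f^n(y))\to 0$, the iterate $f^n(y)$ never lies in $D_{f^n(x)}$, and along some subsequence $n_k\to+\infty$ one has $d(f^{n_k}(x),f^{n_k}(y))\leq Ce^{\lambda n_k}$. The first step is to set up the ``unstable'' counterpart of the machinery of Section~\ref{ss.metric}. Applying Proposition~\ref{p.metric} to $f^{-1}$ and the subbundle $F$, whose maximal Lyapunov exponent for $f^{-1}$ equals $-\lambda_F^-$, produces an integer $N'$ and a measurable sub-exponentially varying function $B\colon M\to[1,+\infty)$ such that vectors in $F$ are dilated by $Df^m$ at rate at least $e^{(\lambda_F^--\eps)m}$ up to the factor $B$. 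At the same time the plaque family theorem of Hirsch--Pugh--Shub, applied both to $E\oplus_{<}F$ for $f$ and to its reverse for $f^{-1}$, furnishes two locally invariant continuous families $(\widehat{D^E_z})$ and $(\widehat{D^F_z})$ of $C^1$-discs over $\supp(\mu)$, tangent respectively to $E$ and $F$, which at each $\mu$-regular point provide a local product structure on a neighborhood of sub-exponentially varying radius.

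For $n$ large enough that $f^n(y)$ lies in the local product chart at $f^n(x)$, write it uniquely as $f^n(y)=[p_n,q_n]$ with $p_n\in \widehat{D^F_{f^n(x)}}$ and $q_n\in\widehat{D^E_{p_n}}$. I split into two scenarios. If $p_n=f^n(x)$ for some large $n$, i.e.\ $f^n(y)\in\widehat{D^E_{f^n(x)}}$, then the local invariance of the $E$-plaque family together with Corollary~\ref{c.speed} shows that all forward iterates of $f^n(y)$ remain in the corresponding $E$-plaques of $f^m(x)$; combining this with the ergodicity of $\mu$ (which forces $L^E(f^m(x))>\delta_2$ along a subsequence of density $1$) and with $d(f^m(x),f^m(y))\to 0$, one eventually finds an $m\geq n$ with $d(f^m(x),f^m(y))<L^E(f^m(x))$, hence $f^m(y)\in D_{f^m(x)}$, contradicting the assumption. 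In the remaining scenario $p_n\neq f^n(x)$ for every large $n$, so $d_n:=d(f^n(x),p_n)>0$; mimicking Lemma~\ref{l.growth-disk} but for the backward dynamics on the $F$-plaques, the lower expansion bound on $F$ should yield
$$d_{n+m}\geq B(f^{n+m}(x))^{-1}\,e^{(\lambda_F^--3\eps)m}\,d_n,$$
and the local product structure gives $d(f^{n+m}(x),f^{n+m}(y))\geq c\, d_{n+m}$ for a uniform $c>0$. Evaluating at $m=n_k-n$ and using the sub-exponentiality of $B$ together with $\lambda<\lambda_F^--3\eps$, the right-hand side dominates $Ce^{\lambda n_k}$ for $k$ large, the sought contradiction.

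The main technical obstacle will be converting the ``$F$-plaque displacement'' $p_n$ into an honest dynamical quantity governed by Proposition~\ref{p.metric}: the plaques $\widehat{D^F}$ are only $C^1$ graphs and local invariance relates $\widehat{D^F_z}$ to $\widehat{D^F_{f(z)}}$ only on a small sub-disk, so the quoted lower bound on $d_{n+m}$ requires either a careful graph-transform iteration in the sub-exponentially adapted metric on $F$, or, equivalently, a direct Lyapunov-type estimate on the distance from the forward orbit of $y$ to the $E$-plaques of the orbit of $x$. Once this technical step is in place, the rest of the proof is essentially a transcription of the arguments of Section~\ref{ss.manifold} to the time-reversed, $F$-expanding setting.
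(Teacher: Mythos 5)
Your overall strategy (contradiction, plus an adapted ``unstable'' metric from Proposition~\ref{p.metric} applied to $f^{-1}$ and $F$) is reasonable, but the argument has a genuine gap exactly where you flag it, and the gap is not merely technical bookkeeping. First, the local product structure you invoke is not provided by the Hirsch--Pugh--Shub theorem: the plaque families $(\widehat{D^E_z})$ and $(\widehat{D^F_z})$ are defined only for $z$ in the invariant compact set $\supp(\mu)$, whereas the decomposition $f^n(y)=[p_n,q_n]$ requires an $E$-plaque centered at $p_n$, a point of $\widehat{D^F_{f^n(x)}}$ which in general does not belong to $\supp(\mu)$; extending the plaques continuously off the support destroys the local invariance you then use, and the uniform constant $c$ in $d(f^{n+m}(x),f^{n+m}(y))\geq c\,d_{n+m}$ also presupposes angle control at such off-support points. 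Second, and more seriously, the estimate $d_{n+m}\geq B(f^{n+m}(x))^{-1}e^{(\lambda_F^--3\eps)m}d_n$ carries the entire weight of the contradiction and is not proved: the $F$-plaques are only locally invariant (under $f^{-1}$), the projections defining $p_n$ do not commute with $f$, so the displacement decomposition at time $n+m$ is not the image of the one at time $n$, and a lower bound on the growth of $d_n$ cannot be read off from Proposition~\ref{p.metric} without precisely the graph-transform/cone argument you defer. Since you yourself describe this as ``the main technical obstacle'' still to be overcome, the proposal does not yet constitute a proof.

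It is worth noting how the paper closes exactly this hole without ever introducing $F$-plaques or a product structure. One projects $f^n(y)$ onto the single plaque $\widehat{D^E_{f^n(x)}}$ along a geodesic segment tangent to an invariant cone field $\cC^F_a$ around $F$ (Lemmas~\ref{l.conefield} and~\ref{l.projection}); the projected point $z$ lies in $D_{f^n(x)}$, so $e^{-\lambda k}d(f^k(z),f^{n+k}(x))\to 0$ by Corollary~\ref{c.speed2}, while an induction using Lemma~\ref{l.geodesic} shows that the geodesic segments $[f^{n+k}(y),f^k(z)]_{geo}$ remain tangent to $\cC^F_a$, stay short, and have length expanded at rate at least $e^{\lambda}$ per iterate; the triangle inequality then gives the second alternative of the dichotomy. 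The invariant cone field acting on geodesic segments is precisely the device that replaces your missing lower bound, because cone invariance and expansion hold on a whole neighborhood $U_0$ of $\supp(\mu)$ and hence apply to the actual orbit of $y$, which need not lie on any invariant plaque. If you want to complete your route, you would essentially have to reprove these cone-field lemmas, at which point the $F$-plaque/product-structure scaffolding becomes unnecessary.
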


One now finishes the proof of Proposition~\ref{p.invariant-manifold}.
\begin{proof}[End of the proof of Proposition~\ref{p.invariant-manifold}]
By corollary~\ref{c.speed2}, for any
$\lambda>\lambda^+_E$, for $\mu$-a.e. point $x$ and any
$y\in D_x$, we have shown
\begin{equation}\label{e.converge}
d(f^n(x),f^n(y)).e^{-\lambda.n}\underset{n\to+\infty}\longrightarrow
0.
\end{equation}
With Lemma~\ref{l.slow} above, one obtains
the properties~(\ref{p.manifold1}) and~(\ref{p.manifold2}) of
Proposition~\ref{p.invariant-manifold} by defining
$W^E_\loc(x)=D_x$.

For some $\lambda\leq 0$ contained in $(\lambda^+_E,\lambda_F^-)$
and at $\mu$-a.e. point $x\in M$, one now considers any point $y\in
M$ which satisfies~(\ref{e.converge}). In particular
$d(f^n(x),f^n(y))$ goes to $0$ as $n$ goes to $+\infty$.
We are not in the second case
of Lemma~\ref{l.speed}, hence, there is a forward  iterate $f^n(y)$
of $y$ that belongs to $W^E_\loc(f^n(x))$. One deduces that the two
following sets coincide:
$$\left\{y\in M,\; d(f^n(x),f^n(y)).e^{-\lambda.n}\underset{n\to +\infty}\longrightarrow 0\right\}=\bigcup_{n\geq 0} f^{-n}\left(W^E_\loc(f^n(x))\right).$$
This set does not depend on the choice of $\lambda\leq 0$
in $(\lambda_E^+,\lambda_F^-)$ and will be denoted by $W^E(x)$.

Let us now remark that the forward iterates of any local manifold
$W^E_\loc(f^m(x))$ have a diameter which goes to $0$. Since  the
local manifolds at infinitely many iterates of $x$ have a radius
uniformly bounded away from zero, one deduces that any finite union
$\bigcup_{0 \leq k\leq m} f^{-k}\left(W^E_\loc(f^k(x))\right)$ is
contained in an embedded manifold
$f^{-n}\left(W^E_\loc(f^n(x))\right)$ for some large $n$. This implies
that $W^E(x)=\bigcup_{n\geq 0} f^{-n}\left(W^E_\loc(f^n(x))\right)$ is
an injectively immersed submanifold.
\end{proof} 

The end of the section will be devoted to the proof of Lemma~\ref{l.speed}.
We choose $\delta_{2}$ such that $\mu\{x\in M, L^E(x)>\delta_{2}\}>0$.
Hence for $\mu$-almost every point $x$, there exist infinitely many forward iterates $f^n(x)$ such that
$L^E(f^n(x))>\delta$.

We first consider some $\lambda\in (\lambda^+_E,\lambda^-_F)$.
The proof uses an invariant cone-field defined in a neighborhood of $\supp\mu$. More precisely
we will use the following classical result:
\begin{lemma}\label{l.conefield}
Consider any $\lambda' \in( \lambda,\lambda_F^-)$. Then there are
\begin{itemize}
\item  an integer $n_0>0$,
\item a neighborhood $U_0$ of $\supp(\mu)$
\item two  continuous bundles
$T_xM=E_0(x)\oplus F_0(x)$ for $x\in U_0$ such that $E_0(x)=E(x)$ and $F_0(x)=F(x)$ for $x\in\supp(\mu)$;
for every $K>0$ we denote by $\cC^F_K$ the cone field defined  for  $x\in U_0$ by
$$\cC^F_K(x)= \{v=v_1+v_2\in T_xM=E_0(x)\oplus F_0(x),\; \|v_1\|\leq K \|v_2\|\},$$
\item two positive numbers $0<b<a$ (hence, for every $x\in U_0$, one has $\cC^F_b(x)\subset\cC^F_a(x)$),
\end{itemize}

such that one has the following properties:
\begin{itemize}
\item for every $x\in U_0$ and every $v\in \cC^F_a(x)$ one has $\|Df^{n_0}(v)\|\geq e^{\lambda'.n_0}\|v\|$;
\item for every $x\in U_0$ such that $f^{n_0}(x)\in U_0$ one has $Df^{n_0}(\cC^F_a(x))\subset \cC^F_b(f^{n_0}(x))$.
\end{itemize}
\end{lemma}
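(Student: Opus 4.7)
The plan is to deduce the cone field and its invariance from the domination hypothesis, and then to upgrade the non-uniform Lyapunov bound $\lambda_F^->\lambda'$ into the desired one-step uniform expansion on the $F$-cone. First I would extend the dominated splitting $E\oplus F$ from $\supp(\mu)$ to continuous (not necessarily $Df$-invariant) subbundles $E_0\oplus F_0$ on a neighborhood $V$ of $\supp(\mu)$: by property (c) of dominated splittings recalled in Section~2.4, any sufficiently small neighborhood of $\supp(\mu)$ carries a dominated splitting on each of its invariant subsets with the prescribed dimensions, and a partition-of-unity argument on the Grassmannian produces continuous extensions $E_0,F_0$ on $V$ coinciding with $E,F$ on $\supp(\mu)$. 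The cone field $\cC^F_K(x)=\{v_1+v_2\in E_0(x)\oplus F_0(x):\|v_1\|\le K\|v_2\|\}$ is then continuous on $V$.

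Next I would establish the cone-invariance (second bullet) by a standard one-step computation. The domination on $\supp(\mu)$ yields an integer $N$ with $\|Df^N|_{E(x)}\|\le \frac{1}{2}\,m(Df^N|_{F(x)})$ for every $x\in\supp(\mu)$. For $v=v_1+v_2\in \cC^F_a(x)$ with $\|v_1\|\le a\|v_2\|$, decomposing $Df^N v$ along $E_0(f^N(x))\oplus F_0(f^N(x))$ gives $\|(Df^N v)_1\|\le \frac{1}{2}\,a\,m(Df^N|_F)\|v_2\|$ against $\|(Df^N v)_2\|\ge m(Df^N|_F)\|v_2\|$, so $Df^N v\in \cC^F_{a/2}$. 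Setting $b=a/2$ and using uniform continuity of $Df$ and of the extended splitting, the same inclusion survives on a shrunken neighborhood $U_0\subset V$ of $\supp(\mu)$ whenever $f^N(x)\in U_0$. I would take $n_0$ to be a large multiple of $N$ in order to leave room for the remaining step.

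The hardest step is the uniform expansion $\|Df^{n_0}v\|\ge e^{\lambda' n_0}\|v\|$ on $\cC^F_a(x)$ for every $x\in U_0$. Kingman's subadditive ergodic theorem applied to $-\log m(Df^n|_F)$ yields $\tfrac{1}{n}\int\log m(Df^n|_F)\,d\mu\to \lambda_F^-$; fixing $\lambda''\in(\lambda',\lambda_F^-)$ provides $n_0$ for which $\int\log m(Df^{n_0}|_F)\,d\mu>\lambda''n_0$. To turn this averaged estimate into a pointwise lower bound valid at every $x\in U_0$, I would mimic the adapted-metric construction of Proposition~\ref{p.metric} applied to $f^{-1}$ on $F$, producing a norm that expands uniformly by $e^{\lambda''}$ in one step at $\mu$-almost every point; then continuity of $x\mapsto m(Df^{n_0}|_{F_0(x)})$, compactness of $\supp(\mu)$, and $n_0$ chosen large enough relative to the sub-exponential distortion of the adapted norm carry the estimate first to $\supp(\mu)$ and then by continuity to a small enough $U_0$. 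The cone-invariance established above guarantees that for $v\in \cC^F_a(x)$ the $F_0$-component of $Df^{n_0}v$ dominates, so uniform expansion of $F_0$ transfers to uniform expansion on the cone. The main obstacle is precisely this passage from a $\mu$-a.e.\ Lyapunov lower bound to a pointwise uniform estimate on the whole neighborhood $U_0$: combining the subadditive-ergodic estimate, the adapted norm, and the cone-invariance to control distortion across orbits is the delicate technical point on which the lemma rests.
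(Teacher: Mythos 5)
A preliminary remark: the paper itself offers no proof of this lemma (it is invoked as a ``classical result''), so your argument has to stand on its own. Its first two steps are fine and are the expected ones: extending $E\oplus F$ to continuous bundles $E_0\oplus F_0$ on a neighborhood of $\supp(\mu)$, and deducing the cone invariance $Df^{n_0}(\cC^F_a)\subset\cC^F_b$ from the domination constant on $\supp(\mu)$ together with a continuity/compactness margin for the fixed time $n_0$, are standard and correct.

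The genuine gap is your third step, and it cannot be closed within your scheme. The subadditive estimate gives $\frac1{n_0}\int\log m(Df^{n_0}|_F)\,d\mu>\lambda'' $ for large $n_0$, and the adapted norm of Proposition~\ref{p.metric} (applied to $f^{-1}$ and $F$) gives one-step expansion at $\mu$-almost every point \emph{in a norm that is only measurable}, defined $\mu$-a.e., and comparable to the Riemannian norm with constants $A(x)$ controlled only sub-exponentially along orbits. Neither fact yields the pointwise inequality $\|Df^{n_0}v\|\geq e^{\lambda' n_0}\|v\|$ at every point of $\supp(\mu)$: a $\mu$-a.e. inequality phrased in a discontinuous norm cannot be transported to the closure of the set of generic points by ``continuity of $x\mapsto m(Df^{n_0}|_{F_0(x)})$ plus compactness'', and the corresponding inequality in the original norm for a single block of $n_0$ iterates is not even true $\mu$-a.e. in general. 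Worse, the inequality you are aiming at is not a consequence of the hypotheses at all, because $\supp(\mu)$ may carry invariant subsets invisible to the exponents of $\mu$. Take a linear horseshoe (with $E$ the stable and $F$ the unstable bundle, so $\lambda_E^+<0$) having two fixed points $p,q$ with unstable eigenvalues $e^{\sigma_p}<e^{\sigma_q}$, and let $\mu$ be the measure of maximal entropy: then $\supp(\mu)$ is the whole horseshoe and $\lambda_F^-\in(\sigma_p,\sigma_q)$, and for any $\lambda'\in(\sigma_p,\lambda_F^-)$ (allowed, since $\lambda\leq 0<\sigma_p$), any $n_0$, any $U_0$ and any cones, the first bullet fails at $x=p$, $v\in F(p)$. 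So no argument using only the Lyapunov exponents of the single measure $\mu$ can produce the uniform statement ``for every $x\in U_0$''. To make this step honest one must either add a hypothesis ensuring uniformity (for instance that every invariant measure of the maximal invariant set in a neighborhood has all its exponents along $F$ above $\lambda'$, in which case the uniform cone expansion follows from the standard empirical-measure compactness argument), or weaken the conclusion to a Pliss-type statement, namely expansion at rate $\lambda'$ at a positive-density set of times along $\mu$-generic orbits, which is what your subadditive estimate genuinely provides and which would then require reworking the way the lemma is used in Lemmas~\ref{l.geodesic} and~\ref{l.speed}.
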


We consider $r_0>0$ such that any two points $x,y\in M$ with $d(x,y)\leq r_0$ are joined by a unique geodesic segment of length bounded by $r_0$, which we denote by $[x,y]_{geo}$. Notice that the length $\ell([x,y]_{geo})$ is precisely $d(x,y)$.

\begin{lemma}\label{l.geodesic} Given any $r\in(0,r_0)$, there is a neighborhood $U_1\subset U_0$ of $\supp(\mu)$,  and $\delta_3\in(0,r) $ with the following property.

Consider $y,z\in U_1$ with $d(y,z)<\delta_3$, such that the segment $[y,z]_{geo}$ is contained in $U_0$ and is tangent to $\cC^F_a$. Then:

$$
f^{n_0}(y)\in U_1 \Longrightarrow
\left\{
\begin{array}{l}
d(f^{n_0}(y),f^{n_0}(z))<r,\\
{[f^{n_0}(y),f^{n_0}(z)]_{geo}}\subset U_1 \mbox{ and is tangent to } \cC^F_a,\\
d(f^{n_0}(y),f^{n_0}(z))\geq e^{\lambda.n_0} d(y,z).
\end{array}\right.
$$
\end{lemma}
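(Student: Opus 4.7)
The plan is to prove Lemma~\ref{l.geodesic} by combining the cone-field invariance of Lemma~\ref{l.conefield} with a short-scale comparison between the arc length of a cone-tangent curve and the distance between its endpoints. First I would choose $U_1\subset U_0$ with $f^j(U_1)\subset U_0$ for $j=0,\dots,n_0$ (by compactness of $\supp(\mu)$ and continuity of the iterates), and then shrink $\delta_3$ using uniform continuity of $f^j$ so that, for $y,z\in U_1$ with $d(y,z)<\delta_3$ and $f^{n_0}(y)\in U_1$, both the segment $\gamma:=[y,z]_{geo}$ and each iterate $f^j(\gamma)$, $j\leq n_0$, remain in $U_0$. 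Requiring in addition $C_f^{n_0}\delta_3<r$ (with $C_f$ a bound on $\|Df\|$) yields the first conclusion $d(f^{n_0}(y),f^{n_0}(z))\leq C_f^{n_0}d(y,z)<r$.

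For the expansion estimate, set $\tilde\gamma:=f^{n_0}\circ\gamma$. Since $\gamma$ is tangent to $\cC^F_a$ and both $\gamma$ and $f^{n_0}(\gamma)$ lie in $U_0$, Lemma~\ref{l.conefield} gives $\tilde\gamma'(t)\in \cC^F_b$ everywhere and
$$\text{length}(\tilde\gamma)\geq e^{\lambda' n_0}\,d(y,z).$$
The heart of the proof is to convert this length bound into a lower bound for $d(f^{n_0}(y),f^{n_0}(z))$. I would work in the exponential chart of small radius $r$ at $f^{n_0}(y)$. Pulling $\tilde\gamma$ back through $\exp^{-1}$ yields a curve $\hat\gamma$ in $T_{f^{n_0}(y)}M$ starting at the origin. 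By continuity of the splitting $E_0\oplus F_0$ on $U_0$ and smoothness of $\exp$, each tangent of $\hat\gamma$ lies within an enlarged cone of aperture $\arctan b+o(1)$ around $F_0(f^{n_0}(y))$, where $o(1)\to 0$ as $r\to 0$. Consequently the projection of $\hat\gamma$ onto that fixed direction has length at least $\cos(\arctan b+o(1))\cdot\text{length}(\hat\gamma)$, the chord of $\hat\gamma$ is at least this projection length, and the chart is $(1\pm o(1))$-isometric. Combining the estimates gives
$$d(f^{n_0}(y),f^{n_0}(z))\geq (1-o(1))\cdot\frac{1}{\sqrt{1+b^2}}\cdot e^{\lambda' n_0}\,d(y,z).$$
Since any $\lambda'\in(\lambda,\lambda_F^-)$ is permitted in Lemma~\ref{l.conefield}, I would pick $\lambda'$ (and accordingly $n_0$) large enough relative to $b$ so that $e^{(\lambda'-\lambda)n_0}>\sqrt{1+b^2}$, and then make $\delta_3$ small enough that $(1-o(1))\,e^{\lambda'n_0}/\sqrt{1+b^2}\geq e^{\lambda n_0}$; this gives the required $d(f^{n_0}(y),f^{n_0}(z))\geq e^{\lambda n_0}d(y,z)$.

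For the second conclusion, the initial unit tangent of $[f^{n_0}(y),f^{n_0}(z)]_{geo}$ is, in the exponential chart at $f^{n_0}(y)$, the chord direction of $\hat\gamma$, which differs from $\tilde\gamma'(0)\in \cC^F_b$ by an angle $o(1)$; choosing $\delta_3$ small with $b<a$ ensures it lies in $\cC^F_a$. Continuity of the cone field across the short geodesic propagates this tangency along the whole segment, and requiring $r<\mathrm{dist}(\supp(\mu),\partial U_1)$ gives the containment in $U_1$.

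The delicate point I expect is the coordination between the cone aperture $b$, the spectral gap $\lambda'-\lambda$, and the scale-dependent distortion $o(1)$: one must verify that Lemma~\ref{l.conefield} leaves enough room, via the freedom to push $\lambda'$ closer to $\lambda_F^-$ (forcing $n_0$ to grow), to absorb the geometric constant $1/\sqrt{1+b^2}$ arising from the cone-to-chord conversion. All the other ingredients (uniform continuity, quasi-isometry of $\exp$ at small scale, propagation of cone tangency along short geodesics) are standard once this scale-vs.-gap balance is set up correctly.
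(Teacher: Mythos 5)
Your overall strategy (cone invariance from Lemma~\ref{l.conefield} plus a small-scale comparison between the length of $f^{n_0}([y,z]_{geo})$ and the distance between its endpoints) is the same as the paper's, but the quantitative step where you convert length expansion into chord expansion has a genuine gap. First, the cone-to-chord conversion: from the fact that the tangents of $\hat\gamma$ lie in a cone of aperture $\arctan b$ around the \emph{subspace} $F_0(f^{n_0}(y))$ you cannot conclude that the chord is at least $\cos(\arctan b+o(1))$ times the length when $\dim F\geq 2$. The $F_0$-component of the tangent may rotate inside the multidimensional subspace, so the projection of $\hat\gamma$ onto $F_0$ can wind (its length does not bound the chord), and the projection onto any fixed direction in $F_0$ has no sign control; the cone condition alone gives no uniform lower bound of the form $d(f^{n_0}(y),f^{n_0}(z))\geq (1+b^2)^{-1/2}\,\ell(\hat\gamma)$. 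Second, even granting that factor, your way of absorbing it requires $e^{(\lambda'-\lambda)n_0}>\sqrt{1+b^2}$, and you propose to ``pick $\lambda'$ (and accordingly $n_0$) large enough relative to $b$''. But $\lambda'$, $n_0$, $a$, $b$, $U_0$ are outputs of Lemma~\ref{l.conefield}, fixed \emph{before} Lemma~\ref{l.geodesic}, and $n_0$ appears in the very statement you are proving; Lemma~\ref{l.conefield} asserts no relation between $b$ and $e^{(\lambda'-\lambda)n_0}$, so this coordination is not available inside the present proof.

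The paper's argument sidesteps both issues: since $Df^{n_0}$ is uniformly continuous on a compact neighborhood of $\supp(\mu)$, as $\delta_3\to 0$ the tangent directions along $f^{n_0}([y,z]_{geo})$ become nearly constant, i.e.\ the image curve is $C^1$-close to the geodesic $[f^{n_0}(y),f^{n_0}(z)]_{geo}$, so the ratio $d(f^{n_0}(y),f^{n_0}(z))/\ell\bigl(f^{n_0}([y,z]_{geo})\bigr)$ tends to $1$ uniformly. Then $d(f^{n_0}(y),f^{n_0}(z))\geq (1-o(1))\,e^{\lambda' n_0}d(y,z)\geq e^{\lambda n_0}d(y,z)$ for $\delta_3$ small, the correction being absorbed by the fixed gap $e^{(\lambda'-\lambda)n_0}>1$ alone, with no dependence on the cone aperture $b$. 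So the fix for your proof is to replace the cone-aperture projection estimate by this near-straightness of the image curve (which you already use implicitly for the chart distortion and for the cone membership of the new geodesic's tangents); once that is done, the ``scale-vs.-gap balance'' you flag as delicate disappears.
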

\begin{proof}[Idea of the proof. ]
The proof of Lemma~\ref{l.geodesic} follows from the invariance of the conefield $\cC^F_a$ and from  the fact that, for $\delta_3$ small enough, the segment $f^{n_0}([y,z]_{geo})$ is very close (in the $C^1$-topology) to the geodesic segment $[f^{n_0}(y),f^{n_0}(z)]_{geo}$; in particular the  ratio $\frac{d(f^{n_0}(y),f^{n_0}(z))}{\ell(f^{n_0}([y,z]_{geo})}$, where $\ell(f^{n_0}([y,z]_{geo})$ is the length of the segment $f^{n_0}([y,z]_{geo})$, is almost $1$.
\end{proof}

Finally, the next lemma defines a kind of projection on $\widehat{D^E_x}$ of the points close enough to $x$:
\begin{lemma}\label{l.projection} For $r>0$ small enough, there is $C_1>0$ and $\delta_4\in(0,\delta_3)$
such that, for every  $x\in \supp(\mu)$, for every $y\in M$ with $d(x,y)\leq \delta_4$, one has:
\begin{itemize}
\item there is $z\in \widehat{D^E_x}$ such that $[y,z]_{geo}$ is tangent to $\cC^F_a$ and $d(y,z)<r$;
\item for every $z\in \widehat{D^E_x}$ such that $[y,z]_{geo}$ is tangent to $\cC^F_a$ and $d(y,z)<r$,  one has:
$$d(y,z)<C_1 d(x,y).$$
\end{itemize}
\end{lemma}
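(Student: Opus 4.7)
The plan is to prove Lemma~\ref{l.projection} via a uniform transversality argument in exponential charts. The disks $\widehat{D^E_x}$ are tangent to the bundle $E$ at $x\in\supp(\mu)$, while the cone field $\cC^F_a$ contains $F_0$ in its interior (indeed $v\in F_0$ satisfies $v_1=0$, hence $\|v_1\|\leq a\|v_2\|$); combined with the compactness of $\supp(\mu)$ and the continuity of the plaque family from Hirsch--Pugh--Shub, this yields uniform transversality between $\widehat{D^E_x}$ and $\cC^F_a$ over $\supp(\mu)$. The point $z$ is then constructed as the projection of $y$ onto $\widehat{D^E_x}$ along the $F$-direction.

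More precisely, I would fix $x\in\supp(\mu)$ and pass to the exponential chart at $x$. In this chart $\widehat{D^E_x}$ is a $C^1$-graph of a map $\varphi_x\colon E(x)\to F(x)$ with $\varphi_x(0)=0$ and $D_0\varphi_x=0$, and the bundles $E_0,F_0$ are continuous near $0$ with $E_0(0)=E(x)$, $F_0(0)=F(x)$. Compactness of $\supp(\mu)$ and continuity of the plaque family give, uniformly in $x$: a common chart radius, an arbitrarily small Lipschitz bound on $\varphi_x$ on a ball of fixed radius, a uniform modulus of continuity of the cone field, and a positive angle by which $F_0(x)$ lies in the interior of $\cC^F_a(x)$. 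Given $y$ with $d(x,y)<\delta_4$, write its image in the chart as $(y_E,y_F)\in E(x)\oplus F(x)$ and set $z:=\exp_x(y_E,\varphi_x(y_E))\in\widehat{D^E_x}$. Then in the chart $y-z=(0,y_F-\varphi_x(y_E))$ lies in $F(x)$, and the tangent vectors of the geodesic $[y,z]_{geo}$ differ from the constant vector $(y-z)/\|y-z\|\in F(x)$ by $O(\delta_4)$; for $\delta_4$ small enough they remain in $\cC^F_a$ at every point of the segment. The estimate $d(y,z)<C_1 d(x,y)$ then follows from $\|y-z\|=\|y_F-\varphi_x(y_E)\|\leq(1+\mathrm{Lip}\,\varphi_x)\|y\|$ combined with the bi-Lipschitz character of $\exp_x$ on a uniform neighborhood of $0$.

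For the second item, given any $z'\in\widehat{D^E_x}$ with $[y,z']_{geo}$ tangent to $\cC^F_a$ and $d(y,z')<r$, I would parametrize the geodesic by arc length as $\gamma(t)$ with $\gamma(0)=y$ and $\gamma(t_0)=z'$. Since $\gamma'(t)\in\cC^F_a(\gamma(t))$, the $F_0$-component of $\gamma'(t)$ has norm at least $1/\sqrt{1+a^2}$; integrating in the chart and combining with $|\varphi_x(z'_E)-\varphi_x(y_E)|\leq\mathrm{Lip}(\varphi_x)\,|z'_E-y_E|$ on the graph, for $r$ and $\mathrm{Lip}(\varphi_x)$ small enough one obtains an inequality of the form $t_0\leq C'(|y_E|+|y_F|)$, hence $d(y,z')\leq C_1 d(x,y)$.

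The main obstacle is uniformity in $x\in\supp(\mu)$, together with controlling the deviation of geodesics from Euclidean segments in the chart so that the tangent vectors remain inside the open cone $\cC^F_a$ throughout the segment; both are handled by the compactness of $\supp(\mu)$, the continuity of the Hirsch--Pugh--Shub plaque family, and the fact that $F_0$ lies strictly in the interior of $\cC^F_a$ (since $a>0$).
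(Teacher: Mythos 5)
Your proposal is correct and follows essentially the same route as the paper, which only sketches the argument: compactness of $\supp(\mu)$ together with the $C^1$-continuity of the Hirsch--Pugh--Shub plaque family (hence a compact family of $C^1$-disks uniformly transverse to the cone field $\cC^F_a$). Your exponential-chart construction of $z$ as the projection of $y$ along the $F$-direction, and the length estimate from the cone condition plus the small Lipschitz constant of the graphs near $x$, is just a detailed write-up of that same idea (only harmless constant conventions, e.g.\ $1/\sqrt{1+a^2}$ versus a non-orthogonal splitting bound, differ).
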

\begin{proof}[Idea of the proof.]The proof follows from the compactness of $\supp(\mu)$ and from the fact that the familly $\{\widehat{D^E_x}\}_{x\in\supp(\mu)}$ is a continuous family for the $C^1$ topology, hence is a compact family of $C^1$-disks.
\end{proof}

One can choose the constant $\delta_{4}>0$ small enough so that $e^{\lambda.n_{0}}.C_{1}.\delta_{4}<r$.
In particular, from Lemma~\ref{l.geodesic}, the segment $[y,z]_{geo}$ in Lemma~\ref{l.projection}
also satisfies $d(f^{n_{0}}(y), f^{n_{0}}(z))<r$.
 One also can always assume that $\delta_{3}+\delta_{4}<\lambda_{2}$.

\begin{proof}[Proof of Lemma~\ref{l.speed}]
Let us assume that the first case of the lemma does not occur and fix
some $\lambda\in (\lambda_{E}^+,\lambda_{F}^-)$.
One may choose an iterate $f^n(x)$ such that $L^E(f^n(x))>\delta_{2}$
and $d(f^{n+k}(x),f^{n+k}(y))<\delta_4$ for every $k\geq 0$. By Lemma~\ref{l.projection}, there is $z\in \widehat{D^E_x}$ such that the segment $[f^{n}(y),z]_{geo}$ is tangent to $\cC^F_a$ and has length bounded by
$\delta_3$. The distance $d(f^n(x),z)$ is thus less than $\lambda_{2}$ and $z$ belongs to $D_{f^n(x)}$.
Since we are not in the first case of the lemma, one has $f^n(y)\neq z$.
Hence, by Corollary~\ref{c.speed2} one has
$$\lim_{k\to+\infty}e^{- \lambda.k} d(f^k(z),f^{n+k}(x))=0.$$
One verifies by induction that, for every $k>0$, the segment $[f^{n+k}(y),f^k(z)]_{geo}$ is tangent to $\cC^F_a$ and has length bounded by $\delta_3$.
All the iterates $f^{n+k}(y)$ are contained in $U_1$, so Lemma~\ref{l.geodesic} implies that
$$\lim_{k\to+\infty}e^{-\lambda.k}d(f^{n+k}(y),f^k(z))=+\infty.$$
So $\lim_{k\to+\infty} e^{-\lambda.k}  d(f^{n+k}(y),f^{n+k}(x))=+\infty$, which gives the second case of the lemma as required.
\end{proof}
%%%%%%%%%%%%%%%%%%%%%%%%%%%%%%%%%%%%%%%%%%%%%%%%%%%%%%%%%%%%%%%%%%%%%%%%%%%%%%%%%%%%%%%%%%%%%%%%%%%%%%%%%%
\section{Irregular Points}\label{irregular}

\subsection{Irregular$^+$ points of generic diffeomorphisms}

Theorem \ref{t.irregular} is a direct consequence of the two following results:

\begin{prop}\label{p.irregular} Let $p$ be a hyperbolic periodic saddle of a diffeomorphism $f$, whose homoclinic class
$H(p)$ is not reduced to the orbit of $p$ (i.e., $p$ has transverse homoclinic points). Let $K=\overline{W^s(\cO(p))}$ be the
closure of the stable manifold of the orbit of $p$. Then generic points in $K$ are irregular$^+$.
\end{prop}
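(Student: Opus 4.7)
My plan is to apply a Baire category argument in the compact metric space $K$. The core idea is to produce two ergodic invariant measures supported in $H(p)$ whose Birkhoff averages of some continuous observable differ and whose stable sets are both dense in $K$; generic points in $K$ then see both averages infinitely often along their forward orbit, forcing their time averages to oscillate. Since $H(p)$ is non-trivial, $p$ admits a transverse homoclinic point, and hence there is a horseshoe inside $H(p)$ containing a periodic orbit $\cO(q) \neq \cO(p)$ homoclinically related to $p$. As $\mu_{\cO(p)} \neq \mu_{\cO(q)}$, I fix a continuous $\psi\colon M\to\RR$ with $a := \int \psi\, d\mu_{\cO(p)} < \int \psi\, d\mu_{\cO(q)} =: b$ and set $\delta := (b-a)/4$.

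Next I show that both $W^s(\cO(p))$ and $W^s(\cO(q))$ are dense in $K$. The first is immediate from $K = \overline{W^s(\cO(p))}$. For the second, I pick a point $y$ in a transverse intersection of $W^u(\cO(p))$ with $W^s(\cO(q))$ (which exists because $p$ and $q$ are homoclinically related) together with a small disk $D \subset W^s(\cO(q))$ through $y$ transverse to $W^u(\cO(p))$; since $p$ and $q$ have the same stable dimension, $\dim D = \dim W^s(\cO(p))$. Applying the $\lambda$-lemma to $f^{-1}$ at $\cO(p)$, the backward iterates $f^{-n}(D) \subset W^s(\cO(q))$ $C^1$-accumulate on compact pieces of $W^s(\cO(p))$, so $W^s(\cO(q))$ is dense in $\overline{W^s(\cO(p))} = K$.

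For the Baire step, I define
$$R_k := \left\{x\in K:\ \exists\, n,m > k \text{ with } \tfrac{1}{n}\sum_{t=0}^{n-1}\psi(f^t x) < a+\delta \text{ and } \tfrac{1}{m}\sum_{t=0}^{m-1}\psi(f^t x) > b-\delta\right\}.$$
Each $R_k$ is open in $K$ by continuity of the finite Birkhoff sums. For density, given a nonempty open $V \subset K$, I choose $x_0 \in V \cap W^s(\cO(p))$: since $\tfrac{1}{n}\sum \psi(f^t x_0) \to a$, some $n > k$ makes the first inequality hold on an open neighborhood $V' \subset V$ of $x_0$; then I choose $x_1 \in V' \cap W^s(\cO(q))$, and some $m > k$ makes the second inequality hold on a neighborhood of $x_1$ in $V'$, so $V \cap R_k \neq \emptyset$. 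By the Baire theorem, $R := \bigcap_k R_k$ is residual in $K$, and every $x \in R$ has $\liminf$ and $\limsup$ of the $\psi$-averages separated by at least $(b-a)/2$, hence is irregular$^+$. The main technical point is the density of $W^s(\cO(q))$ in $K$, where the homoclinic relation between $p$ and $q$ is used essentially via the $\lambda$-lemma applied to $f^{-1}$ at the transverse point of $W^u(\cO(p)) \cap W^s(\cO(q))$.
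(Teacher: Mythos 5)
Your argument is correct, and it reaches the conclusion by a route that differs from the paper's in the key step. Both proofs share the same Baire skeleton: fix a continuous observable separating two distinct periodic measures in the class and show that the open sets of points whose finite Birkhoff averages oscillate (at arbitrarily late times) are dense in $K=\overline{W^s(\cO(p))}$. The paper establishes density by first proving, via Markov partitions and an itinerary construction, that generic points of a non-trivial hyperbolic basic set containing the two periodic orbits are already irregular$^+$ (its Lemma 6.5), and then transferring this to $K$ through two classical facts: the decomposition of a basic set into topologically mixing pieces and the density of $\overline{W^s(z)}$ in $\overline{W^s(K)}$ for $z$ in a mixing basic set; the stable manifold of one oscillating point is then contained in every open set $W_n$ and is dense. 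You avoid symbolic dynamics entirely: you only need that $W^s(\cO(p))$ and $W^s(\cO(q))$ are both dense in $K$ (the latter via the inclination lemma applied to $f^{-1}$ at a transverse point of $W^u(\cO(p))\cap W^s(\cO(q))$, which is a correct and standard consequence of the homoclinic relation), together with the observation that points on the stable manifold of a periodic orbit have Birkhoff averages converging to the periodic average; the nested two-step choice ($x_0$ giving the small average on a subneighborhood $V'$, then $x_1\in V'$ giving the large average) correctly produces a point of $V\cap R_k$. Your version is more elementary and self-contained for this statement; the paper's heavier machinery is essentially the same toolkit it needs anyway for the companion Proposition on Lyapunov-irregular$^+$ points, where the transfer along stable manifolds is not available for derivative averages and the explicit itinerary construction becomes essential, so your shortcut would not extend to that case.
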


\begin{prop}\label{p.Lyapirregular}
Let $p$ and $q$ be hyperbolic periodic saddles of a diffeomorphism $f$ which are
homoclinically related, and hence whose homoclinic classes $H(p)$ and $H(q)$ coincide. Assume that the largest Lyapunov
exponents of $p$ and $q$ are different. Let $K=\overline{W^s(\cO(p))}$ be the closure of the stable manifold of the orbit of $p$.
Then generic points in $K$ are Lyapunov-irregular$^+$.
\end{prop}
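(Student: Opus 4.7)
The plan is a Baire category argument on the compact space $K$, in the spirit of Proposition~\ref{p.irregular} but adapted to Lyapunov exponents. We may assume without loss of generality that $\lambda_p^+<\lambda_q^+$, and fix real numbers $\alpha,\beta$ with $\lambda_p^+<\alpha<\beta<\lambda_q^+$. The functions $\Phi_n(x):=\frac{1}{n}\log\|D_xf^n\|$ are continuous on $M$, and we consider the two $G_\delta$ subsets of $K$
$$A=\bigcap_{N\ge 1}\bigcup_{n\ge N}\{x\in K:\Phi_n(x)<\alpha\},\qquad B=\bigcap_{N\ge 1}\bigcup_{n\ge N}\{x\in K:\Phi_n(x)>\beta\}.$$
Any $x\in A\cap B$ satisfies $\liminf_n\Phi_n(x)<\alpha<\beta<\limsup_n\Phi_n(x)$, so its top Lyapunov exponent is not defined and $x$ is Lyapunov-irregular$^+$. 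It thus suffices to prove that $A$ and $B$ are each dense in $K$; then by Baire's theorem $A\cap B$ is residual in the compact metric space $K$.

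Two ingredients drive the density. First, both $W^s(\cO(p))$ and $W^s(\cO(q))$ are dense in $K$. Density of $W^s(\cO(p))$ is by definition of $K$. For $W^s(\cO(q))$: from the homoclinic relation choose a transverse intersection point $z\in W^s(\cO(p))\trans W^u(\cO(q))$; a small disk of $W^s(\cO(p))$ containing $z$ is transverse to $W^u(\cO(q))$, so by Smale's $\lambda$-lemma applied in backward time the backward iterates $f^{-n}(W^s(\cO(p)))=W^s(\cO(p))$ accumulate $C^1$ on $W^s(\cO(q))$, giving $W^s(\cO(q))\subset K$. The reverse inclusion of closures follows symmetrically using a point of $W^u(\cO(p))\trans W^s(\cO(q))$.

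The second ingredient is the asymptotic behavior of $\Phi_n$ along these stable sets: I claim that for $y\in W^s(\cO(p))$ one has $\limsup_n\Phi_n(y)\le\lambda_p^+$, and for $y\in W^s(\cO(q))$ one has $\liminf_n\Phi_n(y)\ge\lambda_q^+$. For the upper bound, let $\pi$ denote the period of $p$ and fix a large multiple $m=k_0\pi$; by submultiplicativity
$$\frac{1}{km}\log\|D_yf^{km}\|\le\frac{1}{m}\cdot\frac{1}{k}\sum_{j=0}^{k-1}\log\|D_{f^{jm}(y)}f^m\|,$$
and since $f^{jm}(y)\to p$ the Ces\`aro average on the right converges to $\frac{1}{m}\log\|D_pf^m\|$; taking $m\to\infty$ along multiples of $\pi$ and using Gelfand's formula $\frac{1}{k_0\pi}\log\|(D_pf^\pi)^{k_0}\|\to\lambda_p^+$ yields the claim (one passes from the subsequence $n=km$ to arbitrary $n$ by a standard remainder estimate using the boundedness of $\log\|Df\|$). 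For the lower bound, one uses the invariant unstable cone field $\cC^u$ on a neighborhood $V$ of $\cO(q)$ provided by the uniform hyperbolicity of $\cO(q)$: $D_zf^\pi$ maps $\cC^u(z)$ strictly inside $\cC^u(f^\pi(z))$ with expansion factor at least $e^{\pi(\lambda_q^+-\varepsilon)}$ for any prescribed $\varepsilon>0$. Since $y\in W^s(\cO(q))$, the forward orbit $f^n(y)$ lies in $V$ from some time $N$ on; pulling back any unit vector in $\cC^u(f^N(y))$ by $(D_yf^N)^{-1}$ to a vector $w\in T_yM$ yields $\|D_yf^n\|\ge\|D_yf^n\,w\|/\|w\|\ge c\,e^{(n-N)(\lambda_q^+-\varepsilon)}$, so $\liminf_n\Phi_n(y)\ge\lambda_q^+-\varepsilon$; letting $\varepsilon\to 0$ concludes.

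Granting these estimates, $A$ is dense: given any nonempty open $U\subset K$ and any $N$, density of $W^s(\cO(p))$ in $K$ yields $y\in W^s(\cO(p))\cap U$, and the upper bound forces $\Phi_n(y)<\alpha$ for all $n$ large enough, in particular for some $n\ge N$; density of $B$ is symmetric. The step I expect to be technically the most delicate is the lower bound above: controlling $\limsup_n\Phi_n(y)$ from below along $W^s(\cO(q))$ cannot be deduced from the weak convergence of the empirical measures of $y$ to $\mu_{\cO(q)}$ (which only produces the upper bound $\int\log\|Df\|\,d\mu_{\cO(q)}$, typically strictly larger than $\lambda_q^+$), and genuinely relies on the invariant cone field inherited from the uniform hyperbolicity of $\cO(q)$.
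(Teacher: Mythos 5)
Your strategy is genuinely different from the paper's, and at the level of architecture it works. The paper does not intersect two dense $G_\delta$ sets attached to the two saddles; it fixes the open sets $U_n\subset K$ of points whose finite-time exponent drops below $\alpha$ and rises above $\beta$ after time $n$, and proves their density by building, inside a hyperbolic basic set containing $\cO(p)\cup\cO(q)$, one point $y$ whose itinerary alternates between neighborhoods of $p$ and of $q$ with sojourn times $t_i$ growing so fast that $t_i/\sum_{j<i}t_j\to\infty$; then \emph{every} point of $W^s(y)$ lies in $U_n$ (the orbit segment preceding the current sojourn is negligible), and $W^s(\cO(y))$ is dense in $K$ by the mixing decomposition of the basic set. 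The paper takes this detour precisely because finite-time exponents are not Birkhoff averages of a continuous observable, so irregularity does not automatically propagate along stable manifolds. Your argument sidesteps that issue rather than confronting it: you only need the upper bound $\limsup_n\Phi_n\le\lambda_p^+$ on $W^s(\cO(p))$, the lower bound $\liminf_n\Phi_n\ge\lambda_q^+$ on $W^s(\cO(q))$, the density of both stable manifolds in $K$ (correctly obtained from the inclination lemma), and Baire. This avoids Markov partitions and shadowing altogether and is arguably simpler; your claim (a) and its proof (submultiplicativity over blocks of length a large multiple of the period, plus Gelfand) are fine, and your remark that empirical-measure convergence cannot give the lower bound is correct.

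The one step that is wrong as written is the justification of claim (b). The standard unstable cone field of the hyperbolic orbit $\cO(q)$ is expanded only at the rate of the \emph{weakest} unstable eigenvalue of $D_qf^{\Pi(q)}$: if $\dim E^u(q)\ge 2$ and the unstable eigenvalues have different moduli, the weak unstable direction lies in $\cC^u$ and is expanded at the smaller rate, so the asserted expansion factor $e^{\Pi(q)(\lambda_q^+-\varepsilon)}$ on all of $\cC^u$ fails. The conclusion you need is nevertheless true: for every $\varepsilon>0$ there is a neighborhood $V$ of $\cO(q)$ and $c>0$ such that any point whose first $n$ iterates stay in $V$ satisfies $\|Df^n\|\ge c\,e^{n(\lambda_q^+-\varepsilon)}$. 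To prove it, replace $f$ by $f^m$ for a large multiple $m$ of $\Pi(q)$ and use a thin cone around the generalized eigenspace $E_{\mathrm{top}}$ of the eigenvalues of $D_qf^{\Pi(q)}$ of maximal modulus: all singular values of $D_qf^{m}|_{E_{\mathrm{top}}}$ grow like $e^{m\lambda_q^+}$ up to subexponential factors, and $E_{\mathrm{top}}$ dominates its invariant complement in $T_qM$, so this cone is invariant and its vectors are expanded by at least $e^{m(\lambda_q^+-\varepsilon)}$ under derivatives taken at points sufficiently close to the orbit; this yields the norm bound (it is also the estimate the paper itself uses tacitly when asserting $\frac1{s_{2i+1}}\log\|Df^{s_{2i+1}}(z_{2i+1})\|\to\lambda_q$). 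With claim (b) repaired in this way, your proof is complete.
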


\begin{proof}[Proof of Theorem~\ref{t.irregular}]
Let $f$ be a $C^1$-generic tame diffeomorphism and $x$ be a generic point of $M$.  Assume that $\omega(x)$ is not a sink. Then, according to  \cite{MP,BC},  $\omega(x)$ is a non-trivial attracting homoclinic class $H(p)$, and hence $x$ belongs to the basin of this attracting class. Furthermore  $W^s(p)$ is dense in the open set $W^s(H(p))$. So $x$ belongs to the interior of closure $\overline{W^s(p)}$. As a consequence, a generic point $x$ of $M$ which belongs to $W^s(H(p))$  is a generic point in $\overline{W^s(p)}$.
As $H(p)$ is non-trivial and $f$ is generic, there is a hyperbolic periodic point $q\notin \cO(p)$ homoclinically related to $p$ such that the largest Lyapunov exponents of $p$ and $q$  are distinct.

Now, Propositions~\ref{p.irregular} and~\ref{p.Lyapirregular} imply that $x$ is irregular$^+$ and Lyapunov irregular$^+$, respectively.
\end{proof}

We now need only prove Propositions~\ref{p.irregular}
and~\ref{p.Lyapirregular}.

\subsection{Proofs of Propositions~\ref{p.irregular} and~\ref{p.Lyapirregular}}

The two propositions are consequences of three lemmas, the first
two of which are classical results from hyperbolic theory:

\begin{lemma}\label{l.mix}
If $K$ is a hyperbolic basic set there is $k \in \NN$ and a
compact subset $K_0\subset K$ such that $K$ is the disjoint union
$K_0\cup f(K_0)\cup\dots\cup f^{k-1}(K_0)$ and $K_0$ is invariant
by $f^k$ and is a topologically mixing basic set of $f^k$.
\end{lemma}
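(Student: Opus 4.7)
The statement is the classical Spectral Decomposition Theorem of Smale applied to a single hyperbolic basic set, so my plan is to follow its standard proof via density of periodic orbits and the local product structure of $K$. First I would fix a periodic point $p\in K$ of minimal period $m$, using density of periodic points in $K$ (Anosov closing lemma combined with transitivity). Define the set of \emph{homoclinic return times}
$$G=\{\,n\in\ZZ: f^n(W^s_\loc(p))\cap W^u_\loc(p)\cap K\neq\emptyset\,\},$$
which, thanks to the local product structure of $K$, is closed under addition and hence is a subgroup of $\ZZ$ containing $m$; so $G=k\ZZ$ for some $k$ dividing $m$. This $k$ will be the period of the decomposition.

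Next I would set $K_i:=\overline{W^u(f^i(p))\cap K}$ for $i=0,1,\ldots,k-1$. Since $W^u(\cO(p))\cap K$ is dense in $K$ (local product structure plus transitivity of $K$), the union $K_0\cup\cdots\cup K_{k-1}$ equals $K$. The definition of $k$ forces $K_i=K_j$ precisely when $i\equiv j\pmod{k}$: any point in a would-be intersection $K_i\cap K_j$ with $i\not\equiv j\pmod{k}$ would, via local product structure, produce a homoclinic return time in $G\setminus k\ZZ$, contradicting $G=k\ZZ$. In particular $K_k=K_0$, so $f^k(K_0)=K_0$ and the sets $K_0,\ldots,K_{k-1}$ are pairwise disjoint, yielding the first assertions of the lemma.

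The main and most delicate step is to show that $f^k|_{K_0}$ is topologically mixing. Given non-empty open sets $U,V\subset K_0$, I would pick periodic points $q_1\in U$ and $q_2\in V$ whose periods are multiples of $k$ (available by density of periodic points in $K_0$ together with the fact that $f$ permutes the $K_i$ cyclically). Using transitivity of $K$ together with local product structure, one obtains a heteroclinic point in $W^u(q_1)\cap W^s(q_2)\cap K$; the shadowing lemma then produces, for every sufficiently large $n$, a true orbit that shadows $q_1$ for a controlled time, passes along the heteroclinic, and shadows $q_2$, with total length exactly $nk$, giving $f^{nk}(U)\cap V\neq\emptyset$. The obstacle I expect here is precisely upgrading from transitivity of $f^k|_{K_0}$ (which only gives the conclusion for infinitely many $n$) to mixing (for all sufficiently large $n$); the gcd characterization of $k$, combined with careful arithmetic on the shadowing times, is what enables this upgrade.
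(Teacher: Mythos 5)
The paper itself offers no proof of this lemma: it is quoted (together with Lemma~7.4) as a classical fact from hyperbolic theory, going back to Smale's spectral decomposition as refined by Bowen (one can also obtain it by taking a Markov partition and invoking the cyclic decomposition of an irreducible subshift of finite type). So your outline is to be measured against the standard argument, and while its architecture is the right one, two steps do not work as written. First, the set $G$ of \emph{exact} transition times is not a subgroup of $\ZZ$. For $n>0$, a point $y\in W^s_\loc(p)$ with $f^n(y)\in W^u_\loc(p)$ has $d(f^t(y),f^t(p))\leq\varepsilon$ for $t\geq 0$ and $d(f^t(y),f^{t-n}(p))\leq\varepsilon$ for $t\leq n$, so $d(f^t(p),f^{t-n}(p))\leq 2\varepsilon$ on $[0,n]$, which for small local manifolds forces $n\in m\ZZ$: the positive half of $G$ is just $m\ZZ$ and carries no information, while the meaningful data are the lengths of genuine homoclinic excursions (unstable side first, stable side later). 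Moreover, concatenating two excursions via the local product structure costs a turn-around near $\cO(p)$, so it produces an excursion of length $n_1+n_2+jm$ for some large $j$, not exactly $n_1+n_2$; additivity only holds modulo $m$. The correct invariant is the subgroup of $\ZZ$ generated by $m$ and the excursion lengths (equivalently their gcd $k$), and with that reformulation your bracket argument does prove what is needed. Relatedly, in the disjointness step, bracketing a point of $W^u(f^i(p))\cap K$ with a point of $W^u(f^j(p))\cap K$ gives no control on forward orbits and hence no return time; you must bracket against points of $W^s(\cO(p))\cap K$ (dense in $K$) or against nearby periodic points to produce two heteroclinic times whose difference shows $i\equiv j \pmod k$.

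The more serious gap is in the mixing step. With the single route ``shadow $q_1$, follow one heteroclinic, shadow $q_2$'', the realizable transition times are of the form $a\,\Pi(q_1)+T+b\,\Pi(q_2)$, hence lie in a single residue class modulo $\gcd(\Pi(q_1),\Pi(q_2))$, and nothing in your construction prevents this gcd from being a proper multiple of $k$ (both periods could equal $2k$); no ``careful arithmetic'' on $a,b$ can then produce all sufficiently large multiples of $k$, so the asserted upgrade from transitivity to mixing does not follow from the data you use. The standard repair is to splice into the pseudo-orbit homoclinic loops of $p$ itself: by the very definition of $k$, finitely many excursion lengths together with $m$ have gcd $k$, concatenation is available modulo $m$, and the numerical-semigroup fact (a sub-semigroup of $\NN$ generated by integers of gcd $k$ contains every large multiple of $k$) then yields, for every sufficiently large $N\in k\ZZ$, a pseudo-orbit of length exactly $a+N+b$ from a point of $U\cap W^s(p)$ pushed into $W^s_\loc(p)$ to a point of $V\cap W^u(p)$ pulled back into $W^u_\loc(p)$, with $a,b$ fixed multiples of $k$; shadowing plus local maximality places the true orbit in $K$, and clopenness of the pieces puts its endpoints in $U$ and $V$. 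With these repairs (work modulo $m$, use stable-manifold or periodic points in the disjointness bracket, and insert $p$-loops realizing the gcd in the mixing step) your proof becomes the classical one.
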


\begin{lemma}\label{l.stablebasic} Let $K$ be a topologically mixing hyperbolic basic set. Then for any point $z\in K$  one has
$$\overline{W^s(z)}=\overline{W^s(K)}.$$
\end{lemma}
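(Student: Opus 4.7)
The plan is to establish $\overline{W^s(z)}\supset \overline{W^s(K)}$ (the reverse inclusion is immediate) in two stages: first show that $W^s(z)\cap K$ is dense in $K$, then upgrade this, via continuity of local stable manifolds on $K$ together with the identity $W^s(z)=f^{-n}(W^s(f^n(z)))$ valid for every $n\geq 0$, to $W^s(K)\subset \overline{W^s(z)}$.

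For the density of $W^s(z)\cap K$ in $K$, I plan to invoke a Markov partition $\{R_1,\dots,R_\ell\}$ of $K$ (Bowen); after refining, I may assume $\diam(R_i)<\varepsilon$ for each $i$. Topological mixing of $f|_K$ is equivalent to primitivity of the associated transition matrix $A$, so there exists $N_0$ with $A^N_{ij}>0$ for every $N\geq N_0$ and all indices $i,j$. Fix $y\in K$, let $j_0$ be the index of a rectangle containing $y$, and let $(i_n)_{n\in\mathbb{Z}}$ be an itinerary of $z$, i.e.\ $f^n(z)\in R_{i_n}$. For some $N\geq N_0$ I pick an admissible word $(j_0,j_1,\dots,j_{N-1},i_N)$; prepending any admissible past ending at $j_0$ and appending the tail $(i_n)_{n\geq N}$ of $z$ yields an admissible bi-infinite sequence, and by Bowen's semi-conjugacy this sequence corresponds to a point $w\in K$. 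By construction $w\in R_{j_0}$, so $d(w,y)<\varepsilon$; moreover $f^n(w)$ and $f^n(z)$ lie in the same rectangle $R_{i_n}$ for every $n\geq N$, and the Markov property then forces $f^N(w)\in W^s_{\loc}(f^N(z))$, so that $w\in f^{-N}(W^s_{\loc}(f^N(z)))\subset W^s(z)$, as required.

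For the extension step, given $y\in K$ I choose a sequence $y_k\in W^s(z)\cap K$ with $y_k\to y$. Since $y_k\in W^s(z)$, one has $W^s_{\loc}(y_k)\subset W^s(y_k)=W^s(z)$; by continuity of local stable manifolds on the hyperbolic set $K$, $W^s_{\loc}(y_k)\to W^s_{\loc}(y)$ in the Hausdorff topology, hence $W^s_{\loc}(y)\subset \overline{W^s(z)}$. Applying the same reasoning to the pair $(f^n(z),f^n(y))$ in place of $(z,y)$ yields $W^s_{\loc}(f^n(y))\subset\overline{W^s(f^n(z))}$ for every $n\geq 0$; taking $f^{-n}$-preimages and using the identity $f^{-n}(\overline{W^s(f^n(z))})=\overline{W^s(z)}$ gives $f^{-n}(W^s_{\loc}(f^n(y)))\subset \overline{W^s(z)}$. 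The union over $n\geq 0$ equals $W^s(y)$, so $W^s(y)\subset \overline{W^s(z)}$, and varying $y$ over $K$ concludes the argument.

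The hard part will be the density step. Arguments based purely on topological mixing combined with the shadowing lemma or with the local product structure of $K$ naturally produce points lying in $W^s(f^{-n}(z))$ rather than in $W^s(z)$, because $f^{-n}(W^s(z))=W^s(f^{-n}(z))$ shows that $W^s(z)$ is not an $f$-invariant subset of $M$. The Markov partition circumvents this obstacle by permitting one to impose the genuine tail condition $f^N(w)\in W^s_{\loc}(f^N(z))$ on a constructed point $w$, which is exactly what is needed to place $w$ on the stable leaf of $z$ itself rather than on some shifted leaf.
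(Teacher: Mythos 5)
Your argument is correct. Note, however, that the paper does not prove this lemma at all: it is stated (together with Lemma~\ref{l.mix}) as a classical fact from hyperbolic theory and used as a black box in the proofs of Propositions~\ref{p.irregular} and~\ref{p.Lyapirregular}. What you have written is essentially the standard classical proof (Bowen): code the dynamics by a Markov partition, use aperiodicity of the transition matrix to connect the rectangle of an arbitrary $y\in K$ to the forward itinerary of $z$, and conclude that the constructed point $w$ lies on $W^s(z)$ itself rather than on a shifted leaf --- your closing remark correctly identifies this as the point where naive shadowing arguments fail, and the Markov coding is exactly the right fix. Two places where you lean on standard facts without stating them: (i) ``topological mixing of $f|_K$ is equivalent to primitivity of $A$'' requires the (classical, due to Bowen) transfer of mixing from the basic set to the associated subshift, or alternatively the direct argument that $f^n(\interior R_i)\cap\interior R_j\neq\emptyset$ produces, via a point whose orbit avoids the partition boundaries, an admissible word of length $n$ from $i$ to $j$; (ii) ``the Markov property forces $f^N(w)\in W^s_{\loc}(f^N(z))$'' is really the statement that two points of $K$ whose forward orbits lie in the same rectangles for all positive times lie on the same local stable manifold, which needs the partition diameter to be below the local product structure/expansivity scale --- worth one sentence. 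The second stage (upgrading density of $W^s(z)\cap K$ in $K$ to $W^s(K)\subset\overline{W^s(z)}$ via continuity of $x\mapsto W^s_{\loc}(x)$ on $K$, the identity $W^s(y)=\bigcup_{n\geq 0}f^{-n}(W^s_{\loc}(f^n(y)))$, and $f^{-n}(\overline{W^s(f^n(z))})=\overline{W^s(z)}$) is correct and complete as written.
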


The third lemma requires a proof:

\begin{lemma}\label{l.oscillation}
Let $K$ be a non-trivial hyperbolic basic set. Then generic points in $K$ are ir\-re\-gu\-lar$^+$.
\end{lemma}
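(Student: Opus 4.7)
Since $K$ is a non-trivial hyperbolic basic set, it contains at least two distinct periodic orbits $\gamma_1,\gamma_2$. Choose a continuous $\psi:M\to\RR$ with $a:=\int\psi\,d\mu_{\gamma_1}\neq\int\psi\,d\mu_{\gamma_2}=:b$, and set $r:=|a-b|/3$. Denote $A_n\psi(x)=\tfrac1n\sum_{t=0}^{n-1}\psi(f^t(x))$ and let
$$R:=\left\{x\in K:\;\limsup_{n\to\infty}A_n\psi(x)-\liminf_{n\to\infty}A_n\psi(x)>r\right\}.$$
The set $R$ is a countable intersection (over $N\in\NN$) of the open subsets $\bigcup_{n_1,n_2\geq N}\{x:|A_{n_1}\psi(x)-A_{n_2}\psi(x)|>r\}$ of $K$, hence a $G_\delta$ in the compact Baire space $K$. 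Every point of $R$ is irregular$^+$, witnessed by $\psi$, so it suffices to show $R$ is dense in $K$; Baire's theorem then yields that $R$, and a fortiori the set of irregular$^+$ points, is residual.

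Fix $y_0\in K$ and $\varepsilon>0$; the goal is to produce $x\in B(y_0,\varepsilon)\cap K$ belonging to $R$. Pick $\delta<\varepsilon$ so small that $|\psi(u)-\psi(v)|<r/100$ whenever $d(u,v)<\delta$. Since $K$ is a locally maximal hyperbolic set, it enjoys the shadowing property within $K$: there is $\beta\in(0,\delta)$ such that every $\beta$-pseudo-orbit contained in $K$ is $\delta$-shadowed by a genuine $f$-orbit contained in $K$. Combining this with topological transitivity of $K$ --- which provides, for any two open subsets of $K$, an orbit segment of $K$ joining them --- I would build a $\beta$-pseudo-orbit $(z_n)_{n\geq 0}$ in $K$ with $z_0=y_0$, consisting of an initial short transit from $y_0$ to a $\beta$-neighborhood of $\gamma_1$; then $T_1$ iterates around $\gamma_1$; then a short transit to $\gamma_2$, $T_2$ iterates around $\gamma_2$; then back to $\gamma_1$ for $T_3$ iterates, and so on, alternating indefinitely.

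The main technical step is selecting the block lengths $T_j$ to grow fast enough --- for instance $T_{j+1}>10^j\cdot(T_1+\cdots+T_j+jL)$, where $L$ bounds the length of any single transit --- that the partial averages $\tfrac1{N_j}\sum_{t=0}^{N_j-1}\psi(z_t)$ evaluated at the end of the $j$-th block differ from $a$ (for $j$ odd) or from $b$ (for $j$ even) by less than $r/50$. A shadowing orbit of this pseudo-orbit then produces $x\in K$ with $d(f^n(x),z_n)<\delta$ for all $n\geq 0$, so in particular $d(x,y_0)<\delta<\varepsilon$; moreover, by the choice of $\delta$, one has $|A_n\psi(x)-\tfrac1n\sum_{t=0}^{n-1}\psi(z_t)|<r/100$, and hence the genuine Birkhoff averages $A_n\psi(x)$ oscillate with amplitude exceeding $|a-b|-r/10>r$, placing $x$ in $R$. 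The principal obstacle to pin down in full detail is this oscillation estimate along the shadowing orbit; a minor subtlety is that $K$ need not be topologically mixing, so the classical specification property is unavailable, but transitivity together with shadowing --- both valid for any basic set --- suffice for the construction above and sidestep any recourse to the cyclic decomposition of Lemma~\ref{l.mix}.
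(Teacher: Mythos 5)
Your argument is correct, and it reaches the same goal as the paper by a slightly different technical route. The paper fixes $\phi$ equal to $0$ on one periodic orbit and $3$ on another, defines the explicitly open sets $O_n$ of points whose finite-time averages dip below $1$ and rise above $2$ after time $n$, and proves their density by realizing prescribed itineraries through a fine Markov partition; you instead define a $G_\delta$ oscillation set and realize the alternating itinerary by concatenating genuine orbit segments into a pseudo-orbit (long stays on $\gamma_1$, then on $\gamma_2$, with rapidly growing block lengths) and invoking shadowing plus local maximality to produce a true orbit in $K$ near the prescribed starting point. The combinatorial heart --- alternating ever-longer stays near two periodic orbits with distinct $\psi$-averages so that the Birkhoff averages oscillate by a definite amount --- is identical; what differs is only the tool used to realize the itinerary (shadowing versus symbolic dynamics), and both are standard for basic sets, so the trade-off is negligible. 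Two small remarks: your displayed countable intersection is really $\{\limsup_n A_n\psi-\liminf_n A_n\psi\geq r\}$ rather than the set with strict inequality, but since your constructed points oscillate by more than $r$ and every point of the intersection is irregular$^+$, nothing is affected; and your closing comment about avoiding Lemma~\ref{l.mix} is moot --- the paper's proof of this lemma does not use it either (the cyclic decomposition only enters later, in the proofs of Propositions~\ref{p.irregular} and~\ref{p.Lyapirregular}, to get density of stable manifolds), and indeed the paper's Markov-partition argument also needs only transitivity, not mixing.
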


\begin{proof}
Consider a continuous map $\phi\colon M\to \RR$ which equals $0$
on a periodic orbit $\gamma_0 \subset K$ and $3$ on a periodic
orbit $\gamma_1 \subset K$.

One denotes
$$O_n=\{x\in K| \exists m_1>n, \frac 1{m_1}\sum_{i=0}^{m_1-1}\phi(f^i(x))< 1 \mbox{ and } \exists m_2>n, \frac 1{m_2}\sum_{i=0}^{m_2-1}\phi(f^i(x))> 2\}.$$
$O_n$ is open in $K$ and one easily verifies that $O_n$ is dense.
For that one considers a fine Markov partition of $K$ and given
any point $z\in K$ one considers a point $x$ whose itinerary
coincides with that of $z$ an arbitrarily large number of periods
(so that the point $x$ is arbitrarily close to $z$), then with the
itinerary of $\gamma_1$ an arbitrarily large number of periods
(larger that $n$)(so that the average $\frac
1{m_1}\sum_{i=0}^{m_1-1}\phi(f^i(x))$ will be as close to $0$ as
we want), and next with the itinerary of $\gamma_2$ an arbitrarily
large number (so that the average  $\frac
1{m_2}\sum_{i=0}^{m_2-1}\phi(f^i(x))$ will be close to $3$).

Any point in the intersection  $R_0=\bigcap_0^\infty O_n$ is irregular$^+$.
\end{proof}

\begin{proof}[Proof of Proposition~\ref{p.irregular}]
Let $p$ be a periodic saddle point whose homoclinic class
$H(p)$ is not trivial and consider two periodic orbits
$\gamma_0\neq\gamma_1$ homoclinically related to $p$. We fix a
continuous function $\phi\colon M\to \RR$ such that
$\phi(\gamma_0)=0$ and $\phi(\gamma_1)=3$.

We define $$W_n=\{x\in \overline{W^s(\cO((p))}| \exists m_1>n,
\frac 1{m_1}\sum_{i=0}^{m_1-1}\phi(f^i(x))< 1 \mbox{ and } \exists
m_2>n, \frac 1{m_2}\sum_{i=0}^{m_2-1}\phi(f^i(x))> 2\}.$$ Again,
this is an open set of $\overline{W^s(\cO(p))}$ and any point
in $G_0=\bigcap_0^\infty W_n$ is irregular$^+$. It remains to
prove that $W_n$ is dense.

We consider a hyperbolic basic set $K \subset H(p)$
containing $\gamma_0$ and $\gamma_1$. Let $x$ be a point in the
residual subset $R_0$ built in the proof of
Lemma~\ref{l.oscillation}. Notice that the stable manifold
$W^s(f^i(x))$ is contained in $W_n$ for every $n>0$ and every
$i\in \ZZ$.   By lemmas~\ref{l.mix} and~\ref{l.stablebasic}, the stable manifold of
the orbit of $x$ is dense in $\overline{W^s(\cO(p))}$. So the
open sets $W_n$ are dense and $G_0$ is residual, concluding the
proof of Proposition~\ref{p.irregular}.

\end{proof}

The proof of Proposition~\ref{p.Lyapirregular} is more delicate
because, a priori, points in the stable manifold of a Lyapunov
irregular$^+$ point may be Lyapunov regular. For this reason we
will follow a more subtle strategy.

\begin{proof}[Proof of Proposition~\ref{p.Lyapirregular}]
Let $p$ be a periodic saddle point point whose homoclinic class
$H(p)$ is not trivial and consider a periodic point $q$
homoclinically related to $p$. We assume that $p$ and $q$ have
distinct largest Lyapunov exponents $0<\lambda_p <\alpha <\beta
<\lambda_q$, for some positive numbers $\alpha,\beta$.

We define $$U_n=\{x\in \overline{W^s(\cO(p))}| \exists m_1,m_2>n, \frac 1{m_1}\log \|D(f^{m_1}(x)\|< \alpha \mbox{ and }  \frac 1{m_2}\log \|D(f^{m_2}(x)\|> \beta\}.$$

The set $U_n$ is open and any point in $G_1=\bigcap_0^\infty U_n$
is Lyapunov irregular$^+$. In order to prove Proposition~\ref{p.Lyapirregular} it suffices to prove that the
$U_n$ are dense in $\overline{W^s(\cO(p))}$. For that we consider
a basic set $K$ containing the orbits of $p$ and $q$ and we will
prove
\begin{claim}
There is  a  point  $x\in K$ whose stable manifold $W^s(x)$ is contained in $U_n$.
\end{claim}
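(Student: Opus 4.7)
The strategy is to use the specification property of the basic set $K$ to build a single point $x\in K$ whose forward orbit alternately shadows longer and longer arcs of $\cO(p)$ and of $\cO(q)$; the block lengths will be tuned so that the finite-time growth rates $\tfrac1m\log\|Df^m(x)\|$ oscillate between a value close to $\lambda_p$ and a value close to $\lambda_q$. Since each point of $W^s(x)$ has forward orbit asymptotic to that of $x$, the same oscillation will occur along its orbit, giving the required inclusion.

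First, using Lemma~\ref{l.mix} I pass to a mixing piece $K_0\subset K$ of $f^k$ which contains $p$, and replace $q$ by a suitable $f$-iterate so that $q\in K_0$ too (this does not affect $\lambda_q$). Since $f^k|_{K_0}$ is mixing on a hyperbolic basic set, it satisfies the specification property. Choose sequences of integers $T_1\ll S_1\ll T_2\ll S_2\ll\cdots$ tending to infinity and a sequence $\varepsilon_j\downarrow 0$ of shadowing precisions, with $\varepsilon_j$ chosen exponentially small compared with $T_1+S_1+\cdots+T_j+S_j$. By specification, there exists $x\in K_0$ whose $f$-orbit $\varepsilon_j$-shadows, for each $j$, a segment of $\cO(p)$ of length $T_j$ followed by a segment of $\cO(q)$ of length $S_j$. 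Let $m_1^{(j)}$ and $m_2^{(j)}$ denote the times at the end of the $j$th $\cO(p)$-block and of the $j$th $\cO(q)$-block respectively.

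For the derivative estimate, observe that $\tfrac1T\log\|Df^T(p)\|\to\lambda_p$ as $T\to\infty$, since $\lambda_p$ is the largest Lyapunov exponent of $\cO(p)$; likewise for $q$. By uniform continuity of $Df$, if $\varepsilon_j$ is small enough relative to $T_j$ then the operator norm of the product $Df^{T_j}(\cdot)$ along an $\varepsilon_j$-shadow of $\cO(p)$ differs from $\|Df^{T_j}(p)\|$ by at most a factor $e^{o(T_j)}$. Imposing $T_j\gg T_1+S_1+\cdots+T_{j-1}+S_{j-1}$ makes the contribution of prior $\cO(q)$-blocks (and of the bounded transitions between blocks) negligible on the $\tfrac1{m_1^{(j)}}\log$-scale, whence $\tfrac1{m_1^{(j)}}\log\|Df^{m_1^{(j)}}(x)\|<\alpha$ for $j$ large. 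A symmetric argument with $S_j\gg T_j$ yields $\tfrac1{m_2^{(j)}}\log\|Df^{m_2^{(j)}}(x)\|>\beta$ for $j$ large. Thus $x$ satisfies the oscillation condition defining $U_n$ for every $n$.

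Finally, any $y\in W^s(x)$ has $f^n(y)\to f^n(x)$ exponentially, so for $j$ large enough the iterate $f^{m_1^{(j)}}(y)$ still $2\varepsilon_j$-shadows $\cO(p)$, and similarly at times $m_2^{(j)}$; the derivative estimate above applies verbatim to $y$, giving $y\in\bigcap_n U_n$. Lemma~\ref{l.stablebasic} applied to $K_0$ under $f^k$, together with $p\in K_0$, yields $\overline{W^s(x)}=\overline{W^s(K_0)}\subset\overline{W^s(\cO(p))}$, hence $W^s(x)\subset\overline{W^s(\cO(p))}$. Combining, $W^s(x)\subset U_n$ for every $n$, which proves the claim. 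The main technical difficulty is the derivative estimate in the third paragraph: $\|Df^T\|$ is a non-commutative operator-norm cocycle rather than a Birkhoff sum, so $C^0$-small errors in the derivative can in principle compound exponentially; what saves the argument is that taking $\varepsilon_j$ exponentially small in the block length keeps the compounded error to a factor $e^{o(T_j)}$, which disappears on the logarithmic scale $\tfrac1T\log\|\cdot\|$ that is the only quantity tested by the definition of $U_n$.
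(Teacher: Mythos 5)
Your overall architecture is the same as the paper's: construct one orbit in $K$ that alternates longer and longer stays near $\cO(p)$ and near $\cO(q)$ with super-exponentially growing block lengths (the paper prescribes this by a Markov itinerary $a^{t_0}T_{pq}b^{t_1}T_{qp}\cdots$ with $t_i/\sum_{j<i}t_j\to\infty$; you invoke specification, which is an equivalent tool here), estimate $\frac1m\log\|Df^m\|$ at the ends of the blocks, and note that, because membership in $U_n$ only tests finite-time quantities at arbitrarily late times, the property passes to every point of $W^s(x)$; your use of Lemmas~\ref{l.mix} and~\ref{l.stablebasic} to place $W^s(x)$ inside $\overline{W^s(\cO(p))}$ is also as in the paper. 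The gap is in the shadowing/derivative step, which is exactly the point you flag as the main difficulty but resolve incorrectly.

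First, specification provides a \emph{fixed} precision $\varepsilon$ with transition gaps of bounded length $M(\varepsilon)$; it does not produce a single orbit that $\varepsilon_j$-shadows the $j$-th block with $\varepsilon_j\to 0$ exponentially small in the block lengths, and in fact no orbit can do this: at the last iterates of a $\cO(p)$-block the point must reach a neighborhood of $\cO(q)$ within a bounded number of iterates, so its distance to $\cO(p)$ there is bounded below by a constant; more generally, reaching (or leaving) an $\varepsilon_j$-neighborhood of $\cO(p)$ costs at least $\log(1/\varepsilon_j)/\log\sup\|Df^{\pm 1}\|$ iterates, so vanishing precision can only hold on a trimmed interior of each block. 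Second, even after trimming, your error control is the wrong mechanism for a map that is merely $C^1$: $Df$ has no modulus-of-continuity rate, so the precision needed to keep the compounded error of your term-by-term perturbation of the product at $e^{o(T_j)}$ may force $\log(1/\varepsilon_j)$ to exceed $T_j$ itself, and then the required approach no longer fits inside the block; moreover your estimate as stated only addresses the upper bound, while at the $\cO(q)$-blocks you need a \emph{lower} bound on an operator norm. The standard repair — and what the paper's proof does with its $k_i$-offsets — needs no fine precision at all: discard a sub-linear initial portion of each stay so that the orbit lies in a fixed small neighborhood of $\cO(p)$ (resp.\ $\cO(q)$) on the remaining $s_i$ iterates, then estimate in steps of a fixed length $N$: by continuity of $Df^N$, $\|Df^N\|\le e^{N(\lambda_p+\delta)}$ on a fixed neighborhood of $\cO(p)$, and submultiplicativity gives $\frac1m\log\|Df^m(x)\|<\alpha$; near $\cO(q)$ one uses an invariant cone around the eigendirections of maximal modulus of $Df^{\Pi(q)}(q)$, in which $Df^N$ expands by $e^{N(\lambda_q-\delta)}$, together with $\|Df^{r+s}(z)\|\ge \|Df^{s}(f^r z)\|/\|(Df^{r}(z))^{-1}\|$ and $r=o(s)$ to absorb the earlier blocks, giving $\frac1m\log\|Df^m(x)\|>\beta$. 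With that replacement your scheme coincides with the paper's argument; as written, the exponential-precision shadowing it leans on is unavailable.
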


Now Lemmas~\ref{l.mix} and~\ref{l.stablebasic} implies that $U_n$ is dense in
$\overline{W^s(\cO(p))}$, concluding the proof of
Proposition~\ref{p.Lyapirregular}. It remains to prove the claim.
\end{proof}

\begin{proof}[Proof of the claim]

We fix a Markov partition generating $K$ and we denote by $a$ and
$b$ the itineraries of $p$ and $q$ in terms of this partition. We
denote by $T_{pq}$ (resp. $T_{qp}$) some itinerary from the
rectangle containing $p$ (resp. $q$) to the rectangle containing
$q$ (resp. $p$). We denote by $\ell(m)$ the length of a word $m$.

We consider the positively infinite word $$a^{t_0}T_{pq}b^{t_1}
T_{qp}a^{t_2}\dots a^{t_i}T_{pq}b^{t_{i+1}}
T_{qp}a^{t_{i+2}}\dots,$$ with

$$\lim_{i\to +\infty} \frac{t_i}{\sum_{j=0}^{i-1} t_j}=+\infty.$$

Let $y$ be a point of $K$ having this itinerary and $z\in W^s(y)$.
We will show that $z\in U_n$. After some time, the point $z$ has
the same itinerary as the point $y$. We look at the successive
stays of the orbit of $z$ close to $p$ and $q$.

We fix a non-decreasing sequence of positive integers $k_n\to
+\infty$ satisfying $\lim_{n\to \infty} \frac{k_n}n=0.$

We consider the point $z_{i}= f^{r_i}(z)$ with
$$r_{2i}=k_{2i}.\ell(a)\;+\; \ell(a).\sum_0^{i-1}t_{2j}\;+\;
\ell(b).\sum_0^{i-1}t_{2j+1}\; +\;i.\left(\ell(T_{pq})+\ell(T_{qp})\right)$$
and
$$r_{2i+1}=k_{2i+1}.\ell(b)\;+\; \ell(a).\sum_0^{i}t_{2j}\;+\;
\ell(b).\sum_0^{i-1}t_{2j+1}\;+\;T_{pq}\;+\;i.\left(\ell(T_{pq})+\ell(T_{qp})\right).$$

One easily verifies that for any neighborhoods $U_p$ of the orbit
of $p$ and $U_q$ of the orbit of $q$, there is $i_0$ such that for
$i\geq i_0$ one has :
$$z_{2i},f(z_{2i}), \dots, f^{(t_{2i}-k_{2i})\cdot \ell(a)}(z_{2i})\in U_p, $$
and
$$z_{2i+1},f(z_{2i+1}), \dots, f^{\left(t_{2i+1}-k_{2i+1}\right)\cdot \ell(b)}(z_{2i+1})\in U_q .$$

We write $s_{2i}=(t_{2i}-k_{2i})\cdot \ell(a)$ and  $s_{2i+1}=
\left(t_{2i+1}-k_{2i+1}\right)\cdot \ell(b)$.

One deduces that
$$\lim_{i\to +\infty}\frac1{s_{2i}}\log\|Df^{s_{2i}}(z_{2i})\|= \lambda_p<\alpha\mbox{, and } \lim_{i\to +\infty}\frac1{s_{2i+1}}\log\|Df^{s_{2i+1}}(z_{2i+1})\|=\lambda_q>\beta.$$

Furthermore, $\lim_{i\to \infty} \frac{s_i}{r_i}=+\infty$. As a
consequence, one obtains that the norms  $\|Df^{r_i}(z)\|$ and
$\|(Df^{r_i}(z))^{-1}\|$ are very small in comparison with
$(\frac\alpha{\lambda_p})^{s_i}$ and
$(\frac{\lambda_q}\beta)^{s_i}$. One deduces that, for $i$ large
enough, one has

$$\frac1{r_{2i}+s_{2i}}\log\|Df^{r_{2i}+s_{2i}}(z)\|<\alpha \mbox{, and }\frac1{r_{2i+1}+s_{2i+1}}\log\|Df^{r_{2i+1}+s_{2i+1}}(z)\|>\beta.$$

We proved $z\in U_n$ for every $n$, concluding the proof of the claim.
\end{proof}

\subsection{Generic points of generic diffeomorphisms are irregular}

Let $f$ be a $C^1$-generic diffeomorphism: by~\cite{BC} the chain-recurrent set
coincides with the non-wandering set of $f$;
moreover for each connected component $U$ of $\interior \Omega(f)$,
there exists a periodic orbit $\cO$
whose homoclinic class is non-trivial and such that the closure $K$ of $W^s(\cO)$
contains $U$.
Let us consider a generic point $x\in M$. Two cases occurs.
\begin{itemize}
\item Either $x$ belongs to $M\setminus \Omega(f)$ and in this case,
it is non-recurrent.
So Conley theory~\cite{Con} implies that the omega- and the alpha-limit sets of $x$
are contained in different chain recurrence classes of $f$ which are disjoint compact sets.
This implies that $x$ is irregular:
the positive and the negative averages along the orbit of $x$ of a continuous map
$\varphi$ with $\varphi(\alpha(x))=0$ and $\varphi(\omega(x))=1$ will converge to $0$
and $1$ respectively.

\item Or $x$ belongs to $\interior(\Omega(f))$ and is generic in the closure $K$ of
$W^s(\cO)$ for a periodic orbit $\cO$ having a non-trivial homoclinic class.
By proposition~\ref{p.irregular}, such a point $x$ is irregular$^+$.
\end{itemize}

%%%%%%%%%%%%%%%%%%%%%%%%%%%%%%%%%%%%%%%%%%%%%%%%%%%%%%%%%

\vspace{10pt}

\noindent \textbf{Flavio Abdenur} (flavio@mat.puc-rio.br)

\noindent Departamento de Matem\'atica, PUC-Rio de Janeiro

\noindent 22460-010 Rio de Janeiro RJ, Brazil

\vspace{10pt}

\noindent \textbf{Christian Bonatti} (bonatti@u-bourgogne.fr)

\noindent  CNRS - Institut de Math\'ematiques de Bourgogne, UMR 5584

\noindent  BP 47 870,  21078 Dijon Cedex, France

\vspace{10pt}

\noindent \textbf{Sylvain Crovisier}
(crovisie@math.univ-paris13.fr)

\noindent  CNRS - LAGA, UMR 7539, Universit\'e Paris 13

\noindent  99, Av. J.-B. Cl\'ement, 93430 Villetaneuse, France
\end{document}